\newtheorem{theorem}{Theorem}[section]
\newtheorem{lemma}[theorem]{Lemma}
\newtheorem{corollary}[theorem]{Corollary}
\newtheorem{proposition}[theorem]{Proposition}
\theoremstyle{definition}
\newtheorem{definition}[theorem]{Definition}
\newtheorem{assumption}[theorem]{Assumption}
\newtheorem{remark}[theorem]{Remark}
\newtheorem{example}[theorem]{Example}
\numberwithin{equation}{section}
\theoremstyle{plain}
\numberwithin{equation}{section} %% Comment out for sequentially-numbered
\numberwithin{figure}{section} %% Comment out for sequentially-numbered
\theoremstyle{plain}
\theoremstyle{plain}
\theoremstyle{remark}
\newtheorem*{acknowledgement*}{Acknowledgement}
\theoremstyle{example}
\newcommand{\cA}{{\mathcal A}}
\newcommand{\cB}{{\mathcal B}}
\newcommand{\cC}{{\mathcal C}}
\newcommand{\cD}{{\mathcal D}}
\newcommand{\cF}{{\mathcal F}}
\newcommand{\cG}{{\mathcal G}}
\newcommand{\cH}{{\mathcal H}}
\newcommand{\cL}{{\mathcal L}}
\newcommand{\cS}{{\mathcal S}}
\newcommand{\cX}{{\mathcal X}}
\newcommand{\cY}{{\mathcal Y}}
\newcommand{\cZ}{{\mathcal Z}}
\newcommand{\te}{{\theta}}
\newcommand{\Om}{{\Omega}}
\newcommand{\om}{{\omega}}
\newcommand{\ve}{{\varepsilon}}
\newcommand{\del}{{\delta}}
\newcommand{\sig}{{\sigma}}
\newcommand{\al}{{\alpha}}
\newcommand{\ka}{{\kappa}}
\newcommand{\la}{{\lambda}}
\newcommand{\bbC}{{\mathbb C}}
\newcommand{\bbE}{{\mathbb E}}
\newcommand{\bbN}{{\mathbb N}}
\newcommand{\bbP}{{\mathbb P}}
\newcommand{\bbR}{{\mathbb R}}
\newcommand{\bbZ}{{\mathbb Z}}
\newcommand{\bbI}{{\mathbb I}}
\begin{document}
\title[]{Statistical properties of  Markov shifts (part I)}  %{Statistical properties of inhomogeneous Mixing shifts}  
 \vskip 0.1cm
 \author{Yeor Hafouta}
\address{
Department of Mathematics, The University of Florida}
\email{yeor.hafouta@mail.huji.ac.il}%

\thanks{ }
%\subjclass[2010]{60F05, 37H99, 60K37, 37D20, 37A25}%
%\keywords{Markov chains; random environment; random dynamical systems; limit theorems; central limit theorem;
%local limit theorem; renewal theorem;}%%
\dedicatory{  }
 \date{\today}

%\footnotetext[1]{}
\maketitle
\markboth{Y. Hafouta}{ } 
\renewcommand{\theequation}{\arabic{section}.\arabic{equation}}
\pagenumbering{arabic}

\begin{abstract}
We prove central limit theorems, Berry-Esseen type theorems, almost sure invariance principles, large deviations and Livsic type regularity for partial sums of the form $S_n=\sum_{j=0}^{n-1}f_j(...,X_{j-1},X_j,X_{j+1},...)$, where $(X_j)$ is an inhomogeneous  Markov chain satisfying some mixing assumptions and $f_j$ is a sequence of sufficiently regular functions. Even though the case of non-stationary chains and time dependent functions $f_j$ is more challenging, our results seem to be new already for stationary Markov chains. They also seem to be new for non-stationary Bernoulli shifts (that is when $(X_j)$ are independent but not identically distributed). This paper is the first one in a series of two papers. In \cite{Work} we will prove local limit theorems including developing the related reduction theory in the sense of \cite{DolgHaf LLT, DS}.

Our results apply to  Markov shifts in random dynamical environment, 
products of random non-stationary positive matrices and other operators, %products of invertible matrices
random Lyapunov exponents, non-autonomous non-uniformly expanding transformations, as well as several processes arising in statistics and applied probability like linear processes, inhomogeneous  iterated random functions and GARCH processes. Most of these examples seem to only be treated in literature for iid $X_j$ and here we are able to drop both the stationarity and the independence assumptions. However, even in the classical setup of Anosov maps, subshifts of finite type and Gibbs-Markov maps our results seem to be new when working with measures of maximal entropy since we can consider observables which are only H\"older continuous on average. 

Our proofs are based on conditioning on the future instead of the regular conditioning on the past that is used to obtain similar results when $f_j(...,X_{j-1},X_j,X_{j+1},...)$ depends only on $X_j$ (or on finitely many variables). In particular we generalize the Berry-Esseen theorem in \cite{DolgHaf PTRF 1} to functions which depend on the entire path of the chain, and the results in \cite{DolgHaf PTRF 2} about Markov chains to more general chains. Moreover, we obtain results that solely depend on regularity properties of $f_j$ and mixing rates, without assuming any form of ellipticity. 

Our results are significant for both practitioners from statistics and applied probability and theorists in probability theory, ergodic theory and dynamical systems (e.g. we generalize \cite{HennionAoP97} from iid matrices to non-stationary Markovian ones and get optimal rates in the setup of \cite{KestFurs61}).  We expect many other applications of our abstract results, for instance, to Markovian inhomogeneous random walks on $\text{GL}_d(\bbR)$, but in order not to overload the paper this will be discussed in future works, together with the local limit theorems mentioned above. 
%This is not accurate per se since we have some special assumptions in the LLT
%Our results can be viewed as a probabilistic counterpart to \cite{DolgHaf1}, \cite{DolgHaf},...finite state Markov chains
%Mention the discussion in Section 1.5 in \cite{DS}...they say that the case of $f_n(X_n,X_{n+1},...)$ is of great interest
\end{abstract}

%Problem: I need to contol hight order moments to get this with k large...so I neeed to modify the assumptions...

\section{Introduction}
Let $(Y_j)$ be an independent sequence of zero mean square integrable random variables, and let $S_n=\sum_{j=1}^nY_j$. The classical CLT states that if $\lim_{n\to\infty}\sigma_n=\infty, \sigma_n=\|S_n\|_{L^2}$ then $S_n/\sigma_n$ converges in distribution to the standard normal law if and only if the Lindeberg condition\footnote{Namely that $\lim_{n\to\infty}\sig_n^{-2}\sum_{j=1}^n\bbE[Y_j^2\bbI(|Y_j|\geq \varepsilon \sig_n)]=0$ for all $\varepsilon>0$.} holds. In particular, when $Y_j$ are identically distributed and non-constant then the weak limit is Gaussian.
Note that when $\sig_n\not\to\infty$ then by Kolmogorov's three-series theorem $S_n$ converges almost surely (and also in $L^2$) and so there is no weak limit in general. 
\subsection{CLT rates for independent summands}

The CLT is only an asymptotic result, and in order to make it useful in applications some convergence rate is needed. It is customary to quantify the convergence of $S_n/\sig_n, \sig_n=\|S_n\|_{L^2}$ to the standard normal law by the quantity
$$
\Delta_n:=\sup_{t\in\bbR}\left|\bbP(S_n\leq t\sig_n)-\Phi(t)\right|
$$
where $\Phi(t)$ is the standard normal distibution function.
Then the classical Berry-Esseen theorem (see \cite{Berry, Esseen}) in the iid case states that when $Y_1\in L^3$ then $\Delta_n=O(\sig_n^{-1})=O(n^{-1/2})$. 
 In general, the rate $O(\sig_n^{-1})$ is optimal. Indeed, by a classical result of Esseen \cite{Ess56} in the iid case $\Delta_n=o(n^{-1/2})$ if and only if $\bbE[Y_1^3]=0$ and $Y_1$ does not take values on a lattice\footnote{i.e. a set of the form $a+b\bbZ$ for some $a\in\bbR$ and $b>0$.}.   
 
 In this paper we are interested in the behavior of non-stationary processes. In the context of independent summands this means that $Y_j$ are not identically distributed. In that case the classical result of Berry and Esseen shows that $\Delta_n$ is at most of order $\sig_n^{-3}\sum_{j=1}^n\bbE[|Y_j|^3]$. Similarly to the iid case, when dropping the requirement of identical distribution   when $\sigma_n\to\infty$ then $\Delta_n=o(\sig_n^{-1})$ if and only if $(Y_j)$ is not reducible to a lattice valued sequence and $\sum_{j=1}^n\bbE[Y_j^3]=o(\sig_n^2)$ (see \cite{DolgHaf PTRF 1}). 
All of the above results are obtained by using Fourier analysis methods applied with the Fourier transform of the measure induced by  the law of $S_n/\sig_n$ (i.e. the characteristic function of $S_n/\sig_n$).

\subsection{Weakly dependent summands}
Independence is a very strong and unrealistic assumption. Many real life models are based on weakly dependent variable $Y_k$ instead of independent ones. In the sections below we will briefly review the literature and explain our main results in three main cases: stationary systems, sequential dynamical systems (i.e. purely non-autonomous systems) and 
 random dynamical systems (i.e. random variables in random environments).

\subsubsection{\textbf{Stationary sequences}}
The literature on limit theorems for sums of the form $S_n=\sum_{j=0}^{n-1}f\circ T^j$ for sufficiently regular functions $f$ and sufficiently fast mixing dynamical system $T$ is vast.
In his  seminal paper \cite{Nag61} Nagaev developed an approach which by now is refereed to as the Nagaev-Guivaech method (or the spectral method) and proved that if $X_j$ is a stationary sufficiently well (elliptic) mixing Markov chain and $Y_j=f(X_j)$ for some measurable function $f$ such that $0<\|Y_1\|_{L^2}\leq \|Y_1\|_{L^3}<\infty$ then 
$$
\Delta_n=O(n^{-1/2}).
$$
Note that the CLT itself is due to Dobrushin \cite{Dob56}, see the next section.
Since then optimal CLT rates $O(n^{-1/2})$ (aka Berrry Esseen theorems) were obtained for many classes of stationary processes $Y_k$ under some mixing (weak dependence) and moment assumptions on $Y_k$, see \cite{HH, GH, GO, Jirak,Rio, RE} for a few general approaches for chaotic dynamical systems, Markov chains, Bernoulli shifts and bounded $\phi$-mixing sequences.  Such result have applications in other areas of probability and statistics like products of random matrices (see \cite{PelMat1}), iterated function systems and other processes arising in statistics and applied probability \cite{[15],Jirak}, and many others.
 Of course, there are many other results in literature but it is beyond the scope of this paper to  provide a full list. In the stationary setting our results seem to be new as the setting of functions that depend on the entire path of a Markov chain was not treated, but for sufficiently regular functions we expect such results to follow from \cite{HH}. However,  note that for Markov measures (including measures of maximal entropy (MME)) on subshifts of finite type, Gibbs Markov maps or  Anosov maps (via symbolic representations) our results apply to functions $f$ which are only H\"older continuous on average. This was not treated in literature and does not seem to immediately follow from existing results.

\subsection{Nonstationary sequences}
Traditionally, in literature most results concerning limit theorems are obtained for stationary sequences, which can be viewed as an autonomous dynamical system generated by a single deterministic map preserving the probability law generated by the process. One of the current challenges in the field of stochastic processes and dynamical systems  is to better understand non-stationary processes, namely
random and time-varying dynamical systems, in particular to develop novel probabilistic techniques to prove limit theorems. This direction of research, the ambition of which is to approach more the real by taking in account a time dependence inherent\footnote{e.g. external forces affect the local laws of physics, the uncertainty principle etc.} in some phenomena, has recently seen an enormous amount of activity. Many difficulties and questions emerge from this non-stationarity and time dependence. Let us mention for example the existence of many open questions about the establishment of quenched and sequential limit theorems  for systems with random or non-autonomous dynamics. The study of these systems opens new interplays between probability theory and dynamical systems, and leads to interesting insights in other areas of science. In what follows we  discuss the progress that has been done in recent years concerning non-autonomous dynamical systems and its interplay with our main results.

\subsubsection{\textbf{Sequential dynamical systems and non-stationary processes}}
A sequential dynamical system is formed by composition of different maps $T_j$. The dynamics is described by the time dependent orbits of a point $x$,
$$
x,\,\,\, T_0x,\,\,\, T_1\circ T_0x,\,\,\,T_2\circ T_1\circ T_0x,...
$$
In this setup the goal is to prove limit theorems for Birkhoff sums of the form
$$
S_nf=\sum_{j=0}^{n-1}f_j\circ T_{j-1}\circ\cdots\circ T_1\circ T_0
$$
considered as random variables on an appropriate probability space. Note that given a sequence of random variables $(X_j)$ with values in spaces $\cX_j$ it induces a natural sequence of left shifts $T_j:\cY_j\to\cY_{j+1}$ on the shifted path spaces $\cY_j:=\{(x_{j+k})_{k\in\bbZ}, x_m\in\cX_m\}$. Thus the theory of compositions of different maps coincides with the theory of nonstationary sequences of random variables.

The ``story" here begins with Dobrushin's CLT.
In \cite{Dob56} Dobrushin provided sufficient conditions for the CLT for sufficiently well contracting bounded Markov chains $(Y_j)$, where some growth conditions on $\|Y_j\|_{L^\infty}$ is allowed. This seems to be the first CLT beyond the independent case. We refer to \cite{PelCLT, SV} for a modern presentation and strengthening Dobrushin's CLT.
Since the, the central limit theorem was studied for many classes of non-stationary sequences and time dependent dynamical systems. We refer to  \cite{Bk95,CR,CLBR,LD1,Nonlin, HNTV,  NSV12,NTV ETDS 18, CLT3, CLT2} for a very partial list.

Concerning optimal CLT rates, the first result of this kind beyond the case when the variance of $S_n$ grows linearly fast seems to appear in \cite{DolgHaf PTRF 1}, where Berry-Esseen theorems were obtained for summands of the form $Y_j=f_j(X_j,X_{j+1})$ for uniformly bounded functions $f_j$ and uniformly elliptic inhomogeneous Markov chain $X_j$. In \cite{DolgHaf PTRF 2} we, in particular, extended this result for uniformly elliptic finite state Markov chains  and H\"older continuous functions $f_j=f_j(...,X_{j-1},X_j,X_{j+1},...)$ of the entire path of the chain $(X_m)$.

In this paper we will prove central limit theorems with optimal rates and large deviations for sequences of random variables of the form $Y_j=f_j(...,X_{j-1},X_j,X_{j+1},...), j\geq0$, where $(X_k)$ is a sufficiently well mixing inhomogeneous  Markov chain and $f_j$ are sufficiently regular functions. Even though the main difficulties arise due to time dependence of the functions $f_j$ and non-stationarity of the chain, there seem to be very little results in literature already in the case of a single function $f_j=f$ and a stationary chain beyond the case when $f$ depends only on finitely many coordinates as discussed above. While it has its own theoretical interest, we note that the dependence on the entire path of the chain is inherent in many application like products of random matrices and other operators,
random Lyapunov exponents, non-uniformly expanding transformations, as well as several processes arising in statistics and applied probability like linear processes,  iterated random functions and GARCH sequences, see Section \ref{App}. To demonstrate this natural phenomenon we recall that stationary iterated random function are defined in recursion by $Y_k=G(Y_{k-1},X_k)$ for some measurable function $G(y,x)$ such that $G(\cdot,X_0)$ is contracting on average. Then $Y_k$ depends on $X_j, j\leq k$. Such processes have a wide range of applications in applied probability, see \cite{[15]}, when the case of iid $X_j$ is considered. Here we can drop the independence and the stationarity assumptions and consider inhomogenuous Markov chains instead, which is a more realistic model for random noise than the iid setting. Moreover, we can consider time dependent functions $G_k$ such that $Y_k=G_k(Y_{k-1},X_k)$ which we believe is more realistic. Similarly,  as opposed to almost all results in literature we can consider non-stationary Markov dependent products of random matrices etc. We refer to Section \ref{App} for several other examples.

From a ``dynamical" point of view, compared with \cite{Jirak} we are able to consider Markov shifts instead of Bernoulli shifts (although with exponential approximation coefficients). Already the case when $X_j$ are independent but not identically distributed is not covered in \cite{Jirak}, and so even this case is new. As noted above, from a dynamical perspective we prove optimal CLT rates for H\"older on average observables $f_j$ which are not covered in literature already in the stationary case for Anosov maps and subsfhits (although we need to consider Markov measures like MME). Let us also mention that our setup compliments many recent results for sequential chaotic dynamical systems, see \cite{CR, DolgHaf PTRF 2} and references therein. Indeed, our results falls withing this framework of a sequential dynamical system.

From a ``Markovian" point of view our results extend the results in \cite{Nag61} to functions that depend on the entire path already in the stationary case and are not necessarily uniformly H\"older continuous. As noted before, it seems like this was not directly treated in literature even in the stationary case. 
In the non-stationary case our results extend \cite{DolgHaf PTRF 1} to functions that depend on the entire path of the chain and for more general chains which are not necessarily elliptic. Compared with the Markovian case in \cite{DolgHaf PTRF 2} where dependence on the entire path is allowed, we can consider more general chains (not necessarily finite state or elliptic) and functions $f_j=f_j(...,X_{j-1},X_j,X_{j+1},...)$ which are only H\"older on average in an appropriate sense. In fact, all that we need that $\sup_j\|f_j-\bbE[f_j|X_{j+k}; |k|\leq r]\|_{L^p}$ decays exponentially fast as $r\to\infty$ for appropriate $p$'s, which is much weaker than H\"older continuity on average. %Compared with the case of Anosov or subshifts of finite type in \cite{GH}, already in the stationary case (for MME or other Markov measures) we strengthen these results by considering H\"older on average functionals, but we can also consider non-stationary shifts and small perturbations of hyperbolic maps.

Finally, let us discus some other applications. For instance, we are able to provide optimal CLT rates in the Markovian case in the CLT for products of positive matrices in the CLT of Furstenberg and Kesten \cite{KestFurs61}. Our results also extend the results in \cite{HennionAoP97} for positive matrices from the iid case to Markovian non-stationary matrices, using a different approach. In a sense, our approach is closer to \cite{KestFurs61}, although philosophically it is also close in spirit to \cite{HennionAoP97} since both use projective metrics. Other examples in ergodic theory concern random Lyapunov exponent of Markov dependent hyperbolic matrices, see Section \ref{Lyp Sec}.
As noted before, our results can also be useful for practitioners in statistics and applied probability since we are able to capture more general iterative processes  that are generated by an inhomogeneous Markov chain (see Section \ref{Iter}). It seems that all the results in literature concern only the iid case (see \cite{Jirak} for the most general result for such applications).
As noted above, we strongly believe that working with iid driving systems is not realistic (both the independence and the stationary), and here we are able to consider non-independent and nonstationary processes.

\subsubsection{\textbf{Random dynamical systems}}
Here we focus our attention on our applications to Markov shifts in random dynamical environment (see Section \ref{LDP}). 
One can view this setup as a special case of a random dynamical system (RDS). 
RDS are motivated by real life phenomenon of random noise which make a given system non-stationary in nature. 
Ergodic theory of RDS has attracted a lot of attention in the past decades, see \cite{Arnold98, Cong97, Crauel2002, Kifer86, LiuQian95, KiferLiu}.
We refer to  the introduction of \cite[Chapter 5]{KiferLiu} for a historical discussion and applications to, for instance, statistical physics, economy and meteorology etc.
The literature on statistical properties (i.e. limit theorems) of random dynamical systems  exploded in recent years. Let us mention only a few results which are most relevant to our work. In \cite{Cogburn} central limit theorems were studied for Markov chains in random dynamical environment (as opposed to Markov shifts).
In \cite{Kifer1998} central limit theorems were studied for a variety of random dynamical systems, while in \cite{Kifer1996} large deviations were obtained.
In \cite{DavorNonlin, DavorCMP, DavorTAMS} central limit theorems, large deviations and almost sure invariance principle were obtained. Berry-Esseen theorems were obtained in \cite{DH1, HK, YH YT}. In the past two decades the number of  papers on the asymptotic behavior of random dynamical systems has exploded, and so again we will not make an attempt to even provide a full list. Our contribution to the theory of random dynamical systems is that we can consider functionals which depend on the entire path of the Markov chain (in the random environment), which includes applications to many other natural processes in random environment. 

\section{Preliminaries and main results}\label{Sec1}

\subsection{Mixing and approximation conditions}
Let $(X_j)_{j\in\bbZ}$ be a Markov chain defined on some probability space $(\Omega,\cF,\bbP)$. For all $-\infty\leq k\leq\ell\leq\infty$ let us denote by $\cF_{k,\ell}$ the $\sig$-algebra generated by $X_s$ for all finite $k\leq s\leq\ell$. %Dont assume Markov property here, assume some regularity of cond expect
Let $1\leq q\leq p\leq\infty$, and recall that (see \cite[Ch. 4]{Brad}) the (reverse) $\varpi_{q,p}$ mixing (weak dependence) coefficients associate with the chain are given by 
$$
\varpi_{q,p}(n):=\sup_{j}\varpi_{q,p}(\cF_{j+n,\infty},\cF_{-\infty,j})
 $$
 where for every sub-$\sigma$-algebras $\cG,\cH$ of $\cF$,
 $$
\varpi_{q,p}(\cG,\cH)=\sup\{\|\bbE[g|\cG]-\bbE[g]\|_{L^p}:g\in L^q(\cH): \|g\|_{L^q}\leq1\}. 
 $$
 Note that $\varpi_{q,p}$ is decreasing in $q$ and increasing in $p$ and that
\begin{equation}\label{mix1}
 \|\bbE[g(...,X_{j-1},X_{j})|X_{j+n},X_{j+n+1},...]-\bbE[g(...,X_{j-1},X_{j})]\|_{L^p}\leq \|g(...,X_{j-1},X_{j})\|_{L^q}\varpi_{q,p}(n)  
\end{equation}
for all $j,n,$ and measurable functions $g$ on $\prod_{k\leq j}\cX_k$.
 In what follows we will always work under the following assumptions, with appropriate $p$ and $q$.
 \begin{assumption}
For some $1\leq q, p\leq\infty$ we have   
\begin{equation}\label{mix}
\lim_{n\to\infty}\varpi_{q,p}(n)=0.   
\end{equation}
\end{assumption}
Note that in Theorem \ref{RPF} we will, in particular, show that $\varpi_{q,p}(n)$ decays exponentially fast under \eqref{mix} if $q\leq p$. Some of our results  will also require
\begin{assumption}
 There exist $1\leq p<q\leq\infty$, $c>0$ and $\gamma\in(0,1)$ such that for all $n\in\bbN$,   
\begin{equation}\label{mix2}
\varpi_{q,p}(n)\leq c\gamma^n.   
\end{equation} 
\end{assumption}
%Note that our results already seem to be new for $m$-dependent sequences $(X_j)$, and in that case $\varpi_{q,p}(n)=0$ for all $n>m$ and so \eqref{mix} and \ref{mix2} trivially hold true.
 \begin{remark}
Recall that (see \cite[Ch.4]{Brad}), the more familiar $\rho,\phi$ and $\psi$ mixing coefficients can be written as  
 $$
\varpi_{2,2}(\cG,\cH)=\rho(\cG,\cH),\,\,\varpi_{\infty,\infty}(\cG,\cH)=2\phi(\cG,\cH),
\,\,
\varpi_{1,\infty}(\cG,\cH)=\psi(\cG,\cH)
 $$
 where 
 $$
\rho(\cG,\cH)=\sup\left\{\text{corr}(g,h): g\in L^2(\cG), h\in L^2(\cH)\right\},
 $$
 $$
\phi(\cG,\cH)=\sup\left\{|\bbP(B|A)-\bbP(B)|: A\in\cG, B\in\cH, \bbP(A)>0\right\},
 $$
 and 
  $$
\psi(\cG,\cH)=\sup\left\{\left|\frac{\bbP(A\cap B)}{\bbP(A)\bbP(B)}-1\right|: A\in\cG, B\in\cH,\, \bbP(A)\bbP(B)>0\right\}.
 $$
 Note that both $\rho$ and $\psi$ are symmetric but $\phi$ is not. Set
 $$
 \rho(n)=\varpi_{2,2}(n),\phi_R(n)=\frac12\varpi_{\infty,\infty}(n)\,\text{ and }\,\psi(n)=\varpi_{1,\infty}(n).
 $$
 Thus when $q=p=2$ condition \eqref{mix} means that the chain is $\rho$-mixing while condition \eqref{mix} when $p=q=\infty$ means that the chain is reverse $\phi$-mixing (see \cite{BradMix}), while when condition \ref{mix} holds with $q=\infty$ and $p=1$ the chain is  $\psi$-mixing. Note that $\varpi_{q,p}(\cdot,\cdot)\leq\varpi_{\infty,1}(\cdot,\cdot)=\psi(\cdot,\cdot)$ and so this is the strongest type of mixing among the above. Moreover,  (see \cite{BradMix}),
$$
\rho(\cG,\cH)\leq 2\sqrt{\phi(\cG,\cH)}
$$
and so $\rho(n)\to 0$ if $\phi_R(n)\to 0$. 
\end{remark}

Let $\cX_j$ be the state space of $X_j$ and let $\cY_j=\cdots\cX_{j-1}\times\cX_{j}\times\cdots\cX_{j+1}\cdots$ be the infinite product. Of course, as sets all $\cY_j$ are identical, but for notational convenience we will keep the subscript $j$ and write $\cY_j=\{(x_{j+k})_{k\in\bbZ}: x_s\in\cX_s\}$. This will come in handy when presenting the approximation coefficients $v_{j,p,\delta}$ defined in 
Let $T_j:\cY_{j}\to\cY_{j+1}$ be the left shift defined below. Let $T_j(x)=(x_{j+k+1})_{k\in\bbZ}$ if $x=(x_{j+k})_{k\in\bbZ}$. Let us denote by $\mu_j$ the law of the random $\cY_j$ valued variable $(...,X_{j-1},X_j,X_{j+1},...)$. Then $(T_j)_*\mu_j=\mu_{j+1}$. Again, both $\mu_j$ and $T_j$ depend on $j$ only because of the different labeling of the coordinates in $\cY_j$.   For $n\in\bbN$ set
$$
T_j^n=T_{j+n-1}\circ\cdots\circ T_{j+1}\circ T_j:\cY_j\to \cY_{j+n}.
$$
Let us fix some $\delta\in(0,1)$, $b,a\geq 1$. Given a measurable function $g:\cY_j\to\bbC$ let 
$$
\|g\|_{j,a,b,\delta}=\|g\|_{L^a(\mu_j)}+v_{j,b,\delta}(g)
$$
where 
\begin{equation}\label{approx}
 v_{j,b,\delta}(g)=\sup_r \delta^{-r}\|g-\bbE[g|\cF_{j-r,j+r}]\|_{L^b(\mu_j)}.   
\end{equation}
Note that $\|\cdot\|_{j,a,b,\delta}$ is increasing in  both $a$ and $b$ and that
$$
\|g(...,X_{j-1},X_j,X_{j+1},...)-\bbE[g(...,X_{j-1},X_j,X_{j+1},...)|X_{j-r},...,X_{j+r}\|_{L^b}\leq v_{j,b,\delta}(g)\delta^r.
$$
Let us denote by $\cB_{j,a,b,\delta}$ the Banach space of all measurable functions $h:\cY_j\to\bbC$ such that $\|h\|_{j,a,b,\delta}<\infty$.
\begin{remark}\label{Rem Hold}
One particular instance that $v_{j,\infty,\delta}(g)<\infty$ is when all $\cX_j$ are metric spaces with metric $d_j$, normalized in size such that $\text{diam}(\cX_j)\leq1$ and $g:\cY_j\to\bbR$ is H\"older continuous with respect to the metric $\rho_j$ on $\cY_j$ given by 
\begin{equation}\label{rho metric}
\rho_j(x,y)=\sum_{k\in\bbZ}2^{-|k|}d_{j+k}(x_{j+k}, y_{j+k}),\,\, x=(x_{j+k}),\, y=(y_{j+k})  
\end{equation}
Here we take $\delta=2^{-\alpha}$ where $\alpha$ is the H\"older exponent of $g$. If $g$ is only H\"older continuous on average, that is 
$$
|g(x)-g(y)|\leq (C(x)+C(y))(\rho_j(x,y))^\alpha, \,\,C(\cdot)\in L^b(\mu_j)
$$
for some $\alpha\in(0,1]$ and $b>0$
then $v_{j,b,\delta}(g)<\infty$.  In both cases by the minimization property of conditional expectations we can just replace $\bbE[g|\cF_{j-r,j+r}]$ in the definition of $v_{j,b,\delta}$ by $g_j(c,X_{j-r},...,X_{j+r},d)$ for appropriate points $c\in\prod_{\ell<j-r}\cX_\ell$ and $d\in\prod_{\ell>j+r}\cX_{\ell}$.     
\end{remark}

\subsection*{Limit theorems}
 Let $f_j:\cY_j\to\bbR$ be measurable functions. Denote 
$$
S_nf=\sum_{k=0}^{n-1}f_k(...,X_{k-1},X_k,X_{k+1},...)=\sum_{k=0}^{n-1}f_k\circ T_0^k(...,X_{-1},X_0,X_1,...).
$$
Suppose that $f_j\in L^2(\mu_j)$ and let $\sigma_n=\sqrt{\text{Var}(S_nf)}$ and for all $t\in\bbR$ denote
$$
F_n(t)=\bbP(S_nf-\bbE[S_nf]\leq t\sig_n)=\bbP((S_nf-\bbE[S_nf])/\sig_n\leq t)
$$
where the second equality holds when $\sig_n>0$.
Let 
$$
\Phi(t)=\frac{1}{\sqrt {2\pi}}\int_{-\infty}^t e^{-\frac12x^2}dx
$$
be the standard normal distribution function.
Recall that the (self-normalized) central limit theorem (CLT) means that for every real $t$, 
$$
\lim_{n\to\infty}F_n(t)=\Phi(t).
$$
Our main results are optimal CLT rates  for the sequence of random variables $(S_nf)_{n=1}^\infty$ under appropriate mixing conditions and assumptions of the form $\sup_{j}\|f_j\|_{j,a,b,\delta}$ for appropriate $a,b$ and $\delta$. Note that in the generality of our setup even the CLT was not discussed before, an issue that will also be addressed in this paper. We will also prove some large deviations type results. The local CLT will be addressed in \cite{Work}.

\begin{remark}
The fact that the functions $f_j$ are allowed to depend on $j$ and on the entire path of the chain allows more flexibility than the classical situation where $f_j$ depends only on $X_j$. In Section \ref{App} we will provide many examples where this kind of dependence arises. For the meanwhile let us note that this setup includes certain sequence of random variables having the form $Z_n=F_n(X_0,...,X_{n-1},X_{n})$. Indeed, let us write 
$$
Z_n=\sum_{j=0}^{n}f_j(X_0,...,X_j)
$$
where $f_j(X_0,...,X_{j-1})=F_{j}(X_0,...,X_{j})-F_{j-1}(X_0,...,X_{j-1})$ and $F_{-1}:=0$.
%This include the case when $F_{j-1}(x_0,...,x_{j-1})=F_j(x_0,...,x_{j-1},a_j)$ for a predetermined sequence $(a_m)\in\prod_{k\geq 0}\cX_k$. 
Now the condition $\sup_{j\geq0}\|f_j\|_{j,a,b,\delta}$ holds if 
$$
\sup_{j\geq 0}\|F_j(X_0,...,X_{j})-F_{j-1}(X_0,...,X_{j-1})\|_{L^a}<\infty
$$
and 
$$
\sup_{j\geq 0}v_{j,b,\delta}(F_j)=\sup_j\sup_{r\leq j}\delta^{-r}\|F_j-\bbE[F_j|\cF_{j-r,j}]\|_{L^b}<\infty
$$
where we view $F_j$ as a function on $\cY_j$ which depends only on the coordinates $x_{j+k}$ for $-j\leq k\leq 0$. This means that our results apply when $F_j$ and $F_{j-1}$ are consistent in the sense that the are not too far in the $L^a$ norm and when $F_j$ depends weakly on the ``past" with exponentially decaying memory. In fact, several of our examples in Section \ref{App} fit this or a similar framework (i.e. the logarithms of products of random positive matrices, iterated random functions and linear processes), but to make the paper reader friendly we prefer to introduce these examples one by one.

Finally, remark than in Assumption \ref{Ass2} below we allow that $\|f_j\|_{j,a,s,\delta}=O((j+1)^\zeta), \zeta>0$ when  $\sig_n^2\geq c_1n^{2(b/a)(1+\zeta)+2\zeta+\varepsilon}$ for all $n$ large enough, where $b=\frac{a-3}{3a}$.
 Thus in the above context we get limit theorems for $Z_n$ when $\|F_j-F_{j-1}\|_{L^a}=O((j+1)^\zeta)$ and $v_{j,s,\delta}(F_j)=O((j+1)^\zeta)$.
\end{remark}

\subsection{Moments and mixing type assumptions needed for optimal CLT rates}

Our results concerning optimal CLT rates  will require that \textbf{one} of the following assumptions hold.

\begin{assumption}\label{Ass1}
 $f_j(...,X_{j-1},X_j,X_{j+1},...)$ depends only on $X_{j+k}, k\geq 0$.
Let $p,q\geq 1$ and $s\geq 2$ be such that  $\frac1p=\frac1{s}+\frac{1}q$ (so $q>p$). Suppose that \eqref{mix2} holds with these $q$ and $p$ and
there exists $\delta\in(0,1)$ such that 
$\sup_{j}\|f_j\|_{j,\infty,s,\delta}<\infty$. Moreover,
$\sig_n\to \infty$. Under this assumption we set $k=\infty$.
\end{assumption}

\begin{assumption}\label{Ass2}
Let $p,q\geq 1$ and $s\geq 3$ be such that  $\frac1p=\frac1{s}+\frac{1}q$ (so $q>p$).
Suppose that \eqref{mix2} holds with these $q$ and $p$. Let $a>s$. 
Suppose that
there exist $\delta\in(0,1)$ and $c_0,\zeta>0$, $a>k_0\geq 3$ such that 
$\|f_j\|_{j,a,s,\delta}\leq c_0(j+1)^{\zeta}, j\geq0$. Moreover,
there exist $c_1,n_0>0$ and $\varepsilon>0$ such that $\sig_n^2\geq c_1n^{(1+\zeta)\frac{2k_0}{a-k_0}+2\zeta+\varepsilon}$ for all $n\geq n_0$, where  when  $a=\infty$ we set $\frac{2k_0}{k_0-3}=0$. Under this assumption we set $k=k_0$.
\end{assumption}

\begin{remark}
Let  
$$
\varepsilon_0=\sup\left\{\varepsilon>0:\,\liminf_{n\to\infty}\frac{\sig_n^2}{n^{\varepsilon+2\zeta+\frac{2k_0(1+\zeta)}{a-k_0}}}>0\right\}.
$$
Then we can always take $\varepsilon<\varepsilon_0$ which is arbitrarily close to $\varepsilon_0$.
\end{remark}

The advantage of Assumption \ref{Ass2} compared with Assumption \ref{Ass1} is that it allows  $\|f_j\|_{j,a,s,\delta}$ to grow in $j$ and to depend on the past $X_k, k<j$, but the disadvantage is that it requires the variance to grow at least as fast as small power on $n$ (depending on $a$ and $\zeta$) and that the rates we obtain under Assumption \ref{Ass2} are of order $n^{\varepsilon/2-w}$ where $w$ is arbitrarily small.  Note that when $\zeta=0$ and $a=\infty$ then $n^{\ve_0}$ is essentially the growth rate of the variance and so in these circumstances and so we get arbitrarily close to optimal rates.

For stationary chains and a single function $f_j=f$, unless $\text{Var}(S_nf)$ is bounded the limit $\sigma^2=\lim_{n\to\infty}\frac{\sig_n^2}{n}$ exists and it is positive. The same holds true for Markov shifts in random dynamical environments (see Section \ref{LDP} and Theorem \ref{VarRDS}). Moreover, for small perturbations of stationary Markov chains $\sig_n^2$ grows linearly fast unless $\sig_n$ is bounded, see Section \ref{LinSec}. 
 Thus, in these circumstances Assumption \ref{Ass2}  allows us to consider functions $f_j$ such that $\|f_j\|_{j,\infty,s,\delta}=O((j+1)^{1/2-w}), w>0$ or when $p<a<\infty$,\, $\|f_j\|_{j,a,s,\delta}=O((j+1)^{\frac12(1-9/a)-w}), w>0$. When $\zeta=0$ and $a$ is large we get rates of order $O(n^{1/2-w})$ for $w=w(a)\to 0$ as $a\to\infty$.

Our next (optional) assumption requires the following notation. Given a finite set $B\subset \bbN_0:=\mathbb N\cup\{0\}$ we write 
$$
S_Bf=\sum_{j\in B}f_j(...,X_{j-1},X_j,X_{j+1},...).
$$
\begin{definition}
Let $A>1$. A variance partition of $\bbN_0$ corresponding to $A$ is 
 a partition $B_{1},B_2,...$ of $\bbN_0$ into intervals in the integers such that $B_j$ is to the left of $B_{j+1}$,\, $A\leq \text{Var}(S_{B_j}f)\leq 2A$
for all $j$ and $k_n=\max\{k: B_k\subset [0,n-1]\}$ satisfies
$A^{-1}\sig_n^2\leq k_n\leq A\sig_n^2$. A variance partition of $N_n=\{0,1,...,n-1\}$ is defined similarly.
\end{definition}

\begin{assumption}\label{Ass1.1}
Let $p,q\geq 2$ and $s\geq 2$ be such that  $\frac1p=\frac1{s}+\frac{1}q$ (so $q>p$). 
Let us assume that there exists $3\leq k_0<a<p/2$ such that $1/k_0=1/p+1/a$.
Suppose that \eqref{mix2} holds with these $q$ and $p$ and
there exists $\delta\in(0,1)$ such that 
$\sup_{j}\|f_j\|_{j,p,s,\delta}<\infty$. Moreover,
$\sig_n\to \infty$ and for all $n$ and $A$ large enough there exists a variance partition corresponding to $A$ such that $\sup_{\ell}\max_{B\subset B_\ell}\|S_{B}\|_{L^p}<\infty$ (where $B$ is a sub-interval whose left end point is the same as $B_\ell$).  Under this assumption we set $k=k_0$.
\end{assumption}

For unbounded functions without growth rates on the variance and without the uniform control over $\|S_{B_\ell}\|_{L^p}$ we will impose the following assumption.

\begin{assumption}\label{DomAss}
Suppose that \eqref{mix} holds with $p=\infty$ and some $1\leq q\leq p$.
Let $k\geq3$. There is a constant $C>0$ such that
\begin{equation}\label{1 cond}
\bbE[|f_j|^k|X_{j+1},X_{j+2},...]\leq C 
\end{equation}
almost surely and  there exist $\cF_{j-r,j+r}$ measurable functions $F_{j,r}$ such that
all $m\geq 0$ and $r\geq m$ we have,
\begin{equation}\label{2 cond}
\bbE[|f_{j+m}-F_{j+m,r}|^k|X_{j},X_{j+1},...]\leq C\delta^{rk}
\end{equation}
almost surely.
\end{assumption}
When $\sup_{j\geq 0}\|f_j\|_{j,\infty,\infty,\delta}<\infty$ then 
conditions \eqref{1 cond} and \eqref{2 cond}  hold  since we can take $F_{j,r}=\bbE[f_j|\cF_{j-r,j+r}]$. Note that by integrating the left hand sides of \eqref{1 cond} and \eqref{2 cond} and using the minimization property of conditional expectations the above assumption implies that $\sup_j\|f_j\|_{j,k,k,\delta}<\infty$. 
We refer to Appendix \ref{Discussion} for a detailed discussion on conditions \eqref{1 cond} and \eqref{2 cond} beyond the case  $\sup_{j\geq 0}\|f_j\|_{j,\infty,\infty,\delta}<\infty$. For instance, we can consider Markov chains satisfying the two sided Doeblin condition \eqref{Doeblin} which means that the laws of $X_{j+1}$ given $X_j$ are uniformly equivalent to the law of $X_j$, and functions $f_j$ with $\sup_{j}v_{j,\infty,\delta}(f_j)<\infty$ which satisfy an appropriate third moment condition. 

%, but in that case one can already use Assumption \ref{Ass1} which does not require that $\sup_{j\geq 0}\|f_j\|_{j,\infty,\infty,\delta}<\infty$.

 The need in one of Assumptions \ref{Ass1}, \ref{Ass2}, \ref{Ass1.1} or \ref{DomAss}  is that each one of them guarantees that appropriate complex perturbations of the operators $\cL_j$ which map a function on $\prod_{k\geq j}\cX_k$ to a function on $\prod_{k\geq j+1}\cX_k$ given by 
 $$
\cL_j g(x)=\bbE\left[g(X_j,X_{j+1},...)|(X_{j+k})_{k\geq1}=x\right]
 $$
 are of class $C^k$ in the parameter that represents the perturbation when considered as a map between $\cB_{j,q,p,\delta}$ to $\cB_{j+1,q,p,\delta}$.

\subsubsection{A moment condition for block decompositions}
Recall that by \cite{Berry,Esseen} already for independent random variables $Y_k$ the optimal CLT rate $O(\sig_n^{-1})$ is known  when 
\begin{equation}\label{Comp}
L_{n,3}:=\sum_{j=0}^{n-1}\bbE[|Y_j-\bbE[Y_j]|^3]=O(\sig_n^2)    
\end{equation}
where $\sig_n^2=\text{Var}(Y_0+...+Y_{n-1})$.
When $\sig_n^2$ grows linearly fast this condition holds when $\sup_j\|Y_j\|_{L^3}<\infty$. The purpose of the following assumption is to address this type of comparison between the sum of the third absolute moments of the individual summands and the variance itself $\sig_n^2$. Like in the classical case, we will not need this assumption when $V:=\liminf_{n\to\infty}\frac{\sig_n^2}{n}>0$.

\begin{assumption}\label{Stand MomAss}
We have $\lim_{n\to\infty}\sig_n=\infty$. Let $k$ be like in one of Assumptions \ref{Ass1}, \ref{Ass2}, \ref{Ass1.1} or \ref{DomAss}, depending on the case. Let $k$ be like in one of Assumptions \ref{Ass1}, \ref{Ass2}, \ref{Ass1.1} or \ref{DomAss}. Suppose that
 either $V:=\liminf_{n\to\infty}\frac1n\text{Var}(S_n)>0$ or that for some finite $3\leq k_0\leq k$
for all $n$ and  $A>1$ large enough there exists a variance partition $(B_{\ell,n})$ of $\{0,1,...,n-1\}$ such that
\begin{equation}\label{LypAss}
\tilde L_{k_0,n}:=\sum_{j=1}^{k_n}\bbE\left[|S_{B_{j,n}f}-\bbE[S_{B_{j,n}f}]|^{k_0}\right]=O(\sig_n^2).
\end{equation}
\end{assumption}
When $k_0=3$ Assumption \ref{Stand MomAss} is very similar in spirit to \eqref{Comp}, except that we need to consider the sums of third moments along  blocks $B_j$. The reason is that our methods are based on a block partition argument which is crucial in overcoming the fact that $\sig_n^2$ can grow sublinearly fast. 
\begin{remark}
   In fact, by taking a closer look at the arguments in the proof of Theorems \ref{BE} and \ref{ThWass} and the proof of the main results in \cite{H} without Assumption \ref{Stand MomAss} we obtain CLT rates of order $\max(\sig_n^{-1}, \sigma_n^{-3}\tilde L_{3,n})$. However we are mostly interested in the case of optimal rates by means of $\sig_n$ and so the details are omitted. 
\end{remark}

We can verify Assumption \ref{Stand MomAss} in various situations. Before we present some sufficient conditions
let us recall that $f_j=f_j(...,X_{j-1},X_j,X_{j+1},...)$ is a reverse martingale difference (with respect to the reverse filtration $\cF_{j,\infty}$) if $f_j$ depends only on $X_{j+k},k\geq0$ and 
$\bbE[f_j(X_j,X_{j+1},...)|\cF_{j+1,\infty}]=0$ almost surely, for all $j\geq0$.  Recall also that $f_j$ is a forward martingale difference (with respect to the filtration $\cF_{-\infty,j}$) if it depends only on $X_{j+k}, k\leq0$ and  $\bbE[f_j(...,X_{j-1},X_j)|\cF_{-\infty,j-1}]=0$ almost surely.

\begin{remark}
It is not very hard to construct examples of reverse martingale differences in our setup. For instance, suppose $f_j(X_j,X_{j+1},...)=g_j(X_j)h_j(X_{j+1},X_{j+2},...)$ for some functions $g_j$ and $h_j$. 
The reversed martingale condition together with the Markov property means that $\bbE[g_j(X_j)|X_{j+1}]=0$, almost surely. For independent $X_j$ this only means that $\bbE[g_j(X_j)]=0$, while in general one can just replace $g_j$ with $g_j-\bbE[g_j|X_{j+1}]$.
Similarly, the forward martingale difference condition  holds when $f_j(...,X_{j-1},X_{j})=g_j(X_j)h_j(...,X_{j-2},X_{j-1})$ and $\bbE[g(X_j)|X_{j-1}]=0$.

Also, note that in the martingale case $\liminf_{n\to\infty}\frac{\sig_n^2}{n}>0$ if 
$\sum_{j=0}^{n-1}\bbE[f_j^2]\geq cn$ for some $c>0$ and all $n$ large enough. This is the case when $\inf_j\bbE[f_j^2]>0$. In these circumstances,  Assumption \ref{Stand MomAss} holds.
\end{remark}

We can verify Assumption \ref{Stand MomAss} in the following circumstances.
\begin{proposition}\label{MomMom}
Suppose $\lim_{n\to\infty}\sig_n=\infty$. Then Assumption \ref{Stand MomAss} holds in the following cases:
\vskip0.1cm 
(1) if \eqref{mix} holds with $p=\infty$ and some $1\leq q\leq p$ and $\sup_j\|f_j\|_{j,\infty,\infty,\delta}<\infty$ for some $\delta\in(0,1)$ the Assumption \ref{Stand MomAss} holds with every finite $k_0$.
 \vskip0.1cm
 (2) Assumption \ref{Stand MomAss} holds with $k_0=4$ if $f_j$ is a reversed martingale difference, 
  \eqref{mix} holds with some $1\leq q\leq p$, $\sup_j\|f_{j}^2\|_{j,q,p,\delta}<\infty$,
  and there exists a constant $C>0$ such that 
 \begin{equation}\label{Lyp Cond}
\sum_{j=0}^{n-1}(\bbE[G_\ell^2]+\|G_\ell\|_{L^u})\leq C\sig_n^2.     
 \end{equation}
 where $G_\ell=f_\ell^2-\bbE[f_\ell^2]$ and $u$ is the conjugate exponent of $p$.
 \vskip0.1cm
 (3) Assumption \ref{Stand MomAss} holds with $k_0=4$ if $f_j$ is a forward martingale difference, 
 \eqref{mix} holds with some $1\leq p\leq q$, $\sup_j\|f_{j}^2\|_{j,p,q,\delta}<\infty$,
 and \eqref{Lyp Cond} holds with $u$ being the conjugate exponent of $q$.
 \vskip0.1cm
 (4) Assumption \ref{Stand MomAss} holds with $k_0=4$ if $\mu_j(f_j)=0$ for all $j$,  and $\varpi_{p,q}(n)\to 0$ for some conjugate exponents $q,p$ with $p\leq 2$
 and for some $\delta\in(0,1)$ we have
$$
\sup_\ell(\|f_\ell\|_{L^4}^2+v_{\ell,2,\delta}(f_\ell^2)+\|f_\ell\|_{L^1}+v_{\ell,q,\delta}(f_\ell))<\infty.
$$
For a fixed $n$ let  $B_1,...,B_{k_n}$ be a block partition of $N_n$ like in Assumption \ref{Stand MomAss} except that \eqref{LypAss} is not assumed to hold.
Set 
$$
U_j=U_{j,n}=\sum_{\ell\in B_j}\bbE[f_\ell^4]+\left(\sum_{\ell\in B_j}\bbE[f_\ell^2]\right)^2+
\sum_{\ell\in B_j}\left(%\|f_\ell\|_{L^4}^2
v_{\ell,2,\delta}(f_\ell^2)+\|f_\ell\|_{L^{3p}}^3+v_{\ell,q,\delta}(f_\ell^3)\right).
$$
Let $V_j=\min(U_j,U_j^{3/4})$. Then Assumption \ref{Stand MomAss} (i.e. \eqref{LypAss}) holds if 
\begin{equation}\label{Lyp Cond 2}
\sum_{j=1}^{k_n}V_j=O(\sig_n^2).     
 \end{equation}
\end{proposition}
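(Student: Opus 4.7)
The plan is to reduce each item to a block-level moment inequality of the form $\bbE[|S_{B_{j,n}}f-\bbE[S_{B_{j,n}}f]|^{k_0}]\leq C_A\cdot\text{Var}(S_{B_{j,n}}f)$ for an appropriate $k_0$. Fixing a variance partition of $\{0,\ldots,n-1\}$ with $\text{Var}(S_{B_j}f)\asymp A$ and $k_n\asymp\sigma_n^2/A$, such a bound summed over the $k_n$ blocks immediately gives $\tilde L_{k_0,n}\lesssim A\sigma_n^2=O(\sig_n^2)$ for $A$ fixed. After centering we may assume $\bbE[f_j]=0$ throughout. In each case the difficulty is to reduce a $k_0$-th moment of the block sum to something comparable to its variance using the available structure (boundedness, martingale property, or just mixing plus approximation).

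For (1), boundedness plus $\psi$-mixing is strong enough that a Rosenthal-type inequality (in its bounded-$\phi$/$\psi$-mixing form) yields $\bbE[|S_{B_j}f|^{k_0}]\leq C(\text{Var}(S_{B_j}f))^{k_0/2}\lesssim A^{k_0/2}$ for any finite $k_0$; summing gives the result. For (2), I would apply Burkholder's inequality for reverse martingales relative to $(\cF_{j,\infty})$ to get
\[
\bbE[S_{B_j}f^{\,4}]\leq C\,\bbE\Big[\Big(\sum_{\ell\in B_j}f_\ell^2\Big)^{\!2}\Big]
=C\Big(\sum_{\ell\in B_j}\bbE[f_\ell^2]\Big)^{\!2}+C\,\text{Var}\Big(\sum_{\ell\in B_j}G_\ell\Big).
\]
The first term is $\lesssim A^2\leq A\cdot\text{Var}(S_{B_j}f)$ up to covariance corrections handled by mixing. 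For $\text{Var}(\sum G_\ell)$ I would bound each $|\text{Cov}(G_\ell,G_m)|$ by replacing $G_\ell,G_m$ with their conditional expectations on $\cF_{\ell-r,\ell+r}$ and $\cF_{m-r,m+r}$ (paying $\delta^r v_{\cdot,p,\delta}(f^2)$), then invoking \eqref{mix1} with the $\varpi_{q,p}$ bound; optimizing $r$ yields geometric decay in $|m-\ell|$, so $\text{Var}(\sum G_\ell)\lesssim\sum_\ell(\bbE[G_\ell^2]+\|G_\ell\|_{L^u})$. Hypothesis \eqref{Lyp Cond} closes the estimate, and case (3) is symmetric using the forward filtration and the ``dual" direction of the mixing inequality.

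For (4), which lacks any martingale structure, I would expand $\bbE[S_{B_j}f^{\,4}]$ as a four-fold sum and group terms by the pattern of gaps between the four indices. Diagonal and pairwise-coincident terms contribute $\sum\bbE[f_\ell^4]$ and $(\sum\bbE[f_\ell^2])^2$ plus a covariance $\sum_{\ell\ne m}\text{Cov}(f_\ell^2,f_m^2)$ controlled by $v_{\ell,2,\delta}(f_\ell^2)$ and $\varpi_{p,q}$; terms with a single isolated index are handled by peeling off $f_\ell$ through conditional expectation and paying with $v_{\ell,q,\delta}(f_\ell)$ together with $\varpi_{p,q}$; terms with one large gap separating a pair from another pair get $\|f_\ell\|_{L^{3p}}^3$ and $v_{\ell,q,\delta}(f_\ell^3)$. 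Adding these contributions gives $\bbE[S_{B_j}f^{\,4}]\lesssim U_j$. Jensen then yields $\bbE[|S_{B_j}f|^3]\leq\bbE[S_{B_j}f^{\,4}]^{3/4}\lesssim U_j^{3/4}$, while a parallel but coarser expansion of $\bbE[|S_{B_j}f|^3]$ using the same ingredients yields the alternative bound $\lesssim U_j$. Taking $k_0=3$ gives $\bbE[|S_{B_j}f|^3]\leq C V_j$, and \eqref{Lyp Cond 2} provides \eqref{LypAss}.

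The main obstacle is case (4): the lack of martingale structure forces a careful bookkeeping of all partitions of the four (or three) indices in $\bbE[S_{B_j}f^{\,k_0}]$, and since the $f_\ell$ depend on the whole path of the chain one cannot apply the one-sided reverse mixing coefficient $\varpi_{q,p}$ directly. The key technical device is to interpose conditional expectations $\bbE[\cdot\,|\,\cF_{\ell-r,\ell+r}]$ at each index, trading a geometric error $\delta^r\,v_{\ell,\cdot,\delta}(\cdot)$ for the right to apply \eqref{mix1} to the coarsened random variables, and then balancing $r$ against the gap so the resulting decay is summable. Cases (2) and (3) use essentially the same device at the level of $\text{Var}(\sum G_\ell)$, which is why the parameter $u$ appears as the conjugate exponent of $p$ (respectively $q$) in \eqref{Lyp Cond}.
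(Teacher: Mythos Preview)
Your strategy for cases (2), (3), and (4) is essentially the paper's own: it packages the block-level moment bounds as Proposition~\ref{Mom prop}, whose parts (iii) and (iv) are proved exactly along the lines you describe (Burkholder for the martingale cases, with the covariance of $\sum G_\ell$ controlled by Proposition~\ref{Thm 4.1'}; a direct four-fold expansion plus a correlation lemma for case (4)). Your observation that $\bbE[|S_{B_j}f|^3]\le U_j^{3/4}$ via Jensen is what produces the $\min(U_j,U_j^{3/4})$ in the statement.

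There is a gap in your treatment of case (1). You invoke a Rosenthal-type inequality ``in its bounded-$\phi$/$\psi$-mixing form'', but the summands $f_j=f_j(\ldots,X_{j-1},X_j,X_{j+1},\ldots)$ depend on the entire path of the chain, not just on $X_j$ (or on finitely many coordinates), so off-the-shelf Rosenthal inequalities for mixing \emph{sequences} do not apply directly; moreover the hypothesis is only reverse $\varpi_{q,\infty}$-mixing, not two-sided $\phi$- or $\psi$-mixing. The paper handles this by first reducing to one-sided functionals via the Sinai-type Lemma~\ref{Sinai}, then passing to a reverse martingale coboundary representation (Lemma~\ref{Mart lemm}), and finally proving the moment bound $\|S_{j,n}f-\bbE[S_{j,n}f]\|_{L^b}\le C_b(1+\text{Var}(S_{j,n}f)^{1/2})$ by induction on $b=2^m$: at each step Burkholder's inequality \eqref{Burk} reduces the $2^{m+1}$-th moment of $S_nM$ to the $2^m$-th moment of the quadratic variation $S_nQ$, whose variance is in turn controlled by Proposition~\ref{Thm 4.1'}. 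This machinery is precisely what substitutes for the Rosenthal inequality you cite, and without it your argument for (1) is incomplete.
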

 Proposition \ref{MomMom} follows from Proposition \ref{Mom prop} which deals with high order moments. 
 The proof of Proposition \ref{Mom prop} appears in Sections \ref{MomSec1} and \ref{MomSec2}.

\subsection*{A discussion about the conditions of Proposition \ref{MomMom}}
 Condition \eqref{Lyp Cond 2}  is similar in spirit to \eqref{Lyp Cond} but it also involves approximation coefficients $v_{k,a,\delta}, a=2,q$ of appropriate powers of $f_k$.
Note that by taking $p=\infty$ (or $q=\infty$) in conditions (2) (or (3)) we have $u=1$ and then $\|G_\ell\|_{L^u}=\bbE[|G_\ell|]\leq 2\bbE[f_\ell^2]$. In that case condition \eqref{Lyp Cond} is equivalent to $\sum_{j=0}^{n-1}\bbE[G_\ell^2]=O(\sig_n^2)$    
which holds when 
\begin{equation}\label{Ly2}
\sum_{j=0}^{n-1}\bbE[f_\ell^4]=O(\sig_n^2)   
\end{equation}
which is very similar to \eqref{Comp}, replacing the third moment by the fourth.

 When $p<\infty$ (or $q<\infty$) its conjugate exponent $u$ is larger than $1$. Taking for instance $p==u=q=2$ we see that for $\rho$-mixing Markov chains in the martingale case  condition \eqref{Lyp Cond} holds when 
 $
\sum_{\ell=0}^{n-1}\|f_\ell\|_{L^{4}}^2=O(\sig_n^2)
 $
 which in some sense is also somewhat close in spirit to \eqref{Comp}, and it holds when 
 $\|f_\ell\|_{L^{4}}\leq C\|f_\ell\|_{L^2}$. Condition \ref{Lyp Cond 2} also shares resemblance with \eqref{Comp}. For instance,  when restricting to that case when $f_k(...,X_{k-1},X_k,X_{k+1},...)$ depends only on $X_{k+m}$ for $|m|\leq m_0$ for some $m_0$ and all $k$ then we can just omit the approximation coefficients $v_{k,2,\delta}$ and under the`` martingale like" condition 
 \begin{equation}\label{Mart Like}
  \sum_{k=j}^{j+n-1}\text{Var}(f_k)\leq C\left(1+\text{Var}\left(\sum_{k=j}^{j+n-1}f_k\right)\right)\,\,\text{for all }j,n   
 \end{equation} 
 condition \eqref{Lyp Cond} holds when 
 $$
\sum_{\ell=0}^{n-1}(\bbE[f_\ell^4]+\|f_\ell\|_{L^{3p}}^3)=O(\sig_n^2)
 $$
 where we recall that $\mu_j(f_j)=0$. When taking $p\leq 4/3$ this condition reduces to $\sum_{\ell=0}^{n-1}\max(\|f_\ell\|_{L^4}^4,\|f_\ell\|_{L^4}^3)=O(\sig_n^2)$.
 Under \eqref{Mart Like} the above condition holds, for instance, when 
 $\bbE[f_\ell^4]+\|f_\ell\|_{L^{3p}}^3\leq C\bbE[f_\ell^2]$ or $\max(\|f_\ell\|_{L^4}^4,\|f_\ell\|_{L^4}^3)\leq C\bbE[f_\ell^2]$ when $p\leq 4/3$.
 This  can happen  when $\sig_n^2=o(n)$ since sublinear growth very often comes from decay of $f_\ell$ to $0$ as $\ell\to\infty$ at an appropriate rate and in an appropriate sense. 

\subsection{The growth of the variance, Livsic regularity and the CLT}

In general, in order for the CLT to hold we need the individual summands to of smaller order than the variance.   In particular, we need to know when the variance is bounded. Let us begin with a characterization of this boundedness.
\begin{theorem}\label{Var them}%what can I say when $q>p$, 
Suppose that either \eqref{mix} holds with some $p\geq q\geq 2$ and $\sup_j\|f_j\|_{j,q,p,\delta}<\infty$, or \eqref{mix} holds with some $q\geq p\geq 1$ and $\sup_j\|f_j\|_{j,\infty,\infty,\delta}<\infty$.  In the first case set $b=p$ and in the second case set $b=\infty$.
The following conditions are equivalent.
\vskip0.2cm
(1) $\liminf_{n\to\infty}\text{Var}(S_nf)<\infty$;
\vskip0.2cm
(2) $\sup_{n\in\bbN}\text{Var}(S_n f)<\infty$;

%(3) $\sup_{n\in\bbN}\|S_n-m_0(S_n)\|_{L^\infty(m_0)}<\infty$
\vskip0.2cm
(3) we can write 
$$
f_j=\bbE[f_j(...,X_{j-1},X_j,X_{j+1},...)]+M_j+u_{j+1}\circ T_j-u_j,\,\ \mu_j-\text{a.s.}
$$
where $\sup_j\|u_j\|_{j,s,p,\delta^{1/2}}<\infty,$\;
$\sup_j\|M_j\|_{j,s,p,\delta^{1/2}}\!\!<\!\!\infty$ for all finite $s\leq b$, $M_j$ depends only on the coordinates $X_k,k\geq j$, $u_j$ and $M_j$ have zero mean and $M_j(X_j,X_{j+1},...),j\geq 0$ is a reverse martingale difference  with respect to the reverse filtration $\cG_j=\cF_{j,\infty}$ and\footnote{Note that by the martingale converges theorem we get that the sum $\sum_{k=0}^\infty M_k(X_k,X_{k+1},...)$ converges almost surely and in $L^s$.} 
$$
\sum_{j\geq 0}\text{Var}(M_j(X_j,X_{j+1},...))\!\!<\!\!\infty.
$$
Moreover, if 
\eqref{mix} holds with  $p=q=\infty$ and $\sup_j\|f_j\|_{j,\infty,\infty,\delta}<\infty$ then $\sup_j\|u_j\|_{j,\infty,\infty,\delta^{1/2}}<\infty$ and
$\sup_j\|M_j\|_{j,\infty,\infty,\delta^{1/2}}\!\!<\!\!\infty$. If $f_j(...,X_{j-1},X_j,X_{j+1},..)$ depends only on $X_j, k\geq j$ then $\delta^{1/2}$ above can be replaced by $\delta$.

\vskip0.1cm
If also  one of Assumptions \ref{Ass1}, \ref{Ass2}, \ref{Ass1.1} or \ref{DomAss} hold (expect for the variance growth) 
then all the above conditions are equivalent to the following condition: there exist measurable functions $H_j:\cY_j\to\bbR$ such that 
$$
f_j=H_{j+1}\circ T_j-H_j,\,\mu_j\text{ a.s.}
$$
In case all the above conditions hold we must have $H_j\in L^s(\mu_j)$ for all finite $s\leq b$ and, in fact, $H_j=\mu_j(H_j)+u_j+\sum_{k\geq j}M_k\circ T_j^{k-j}$.
\end{theorem}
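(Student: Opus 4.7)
The implications $(3)\Rightarrow(2)\Rightarrow(1)$ are the routine direction. Summing the decomposition in $(3)$ from $j=0$ to $n-1$ telescopes the coboundary to give, as random variables, $S_nf-\bbE[S_nf]=\sum_{j=0}^{n-1}\tilde M_j + (\tilde u_n-\tilde u_0)$, where $\tilde M_j$ and $\tilde u_j$ denote the values on the actual trajectory. Because $\bbE[\tilde M_j\mid \cF_{j+1,\infty}]=0$ and, for $k>j$, $\tilde M_k$ is $\cF_{k,\infty}\subset \cF_{j+1,\infty}$-measurable, the reverse martingale differences are pairwise orthogonal, so $\mathrm{Var}(\sum_{j<n} \tilde M_j)=\sum_{j<n}\mathrm{Var}(\tilde M_j)$, which is bounded by hypothesis. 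The coboundary piece is bounded in $L^2$ by $2\sup_j\|u_j\|_{L^2}$ and the cross term by Cauchy--Schwarz, giving $\sup_n\mathrm{Var}(S_nf)<\infty$; and $(2)\Rightarrow(1)$ is trivial.

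The nontrivial direction $(1)\Rightarrow(3)$ is a reverse Gordin--Kifer construction. With $\tilde g_k := \tilde f_k-\bbE[\tilde f_k]$, set
\[
\tilde u_j := \sum_{k=0}^{j-1}\bbE[\tilde g_k\mid\cF_{j,\infty}], \qquad \tilde M_j := \tilde g_j-(\tilde u_{j+1}-\tilde u_j).
\]
The reverse martingale identity $\bbE[\tilde M_j\mid\cF_{j+1,\infty}]=0$ follows from the tower property together with $\cF_{j+1,\infty}\subset\cF_{j,\infty}$. The analytic heart of the argument is the estimate
\[
\|\bbE[\tilde g_k\mid\cF_{j,\infty}]\|_{L^p} \le C\delta^{(j-k)/2} \qquad (j>k),
\]
obtained by approximating $\tilde g_k$ within $v_{k,p,\delta}\delta^r$ by the $\cF_{k-r,k+r}$-measurable $\bbE[\tilde g_k\mid\cF_{k-r,k+r}]$ and then applying the mixing inequality \eqref{mix1} to the finite-range approximation with the balanced choice $r=\lfloor(j-k)/2\rfloor$. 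Summing in $k$ controls $\|\tilde u_j\|_{L^p}$, while applying the same balancing to $\tilde u_j-\bbE[\tilde u_j\mid\cF_{j-r,j+r}]$ yields $v_{j,p,\delta^{1/2}}(u_j)<\infty$; when $f_k$ depends only on $X_m$ with $m\ge k$ the past-approximation step is vacuous and the sharper exponent $\delta$ is recovered. If $f_j$ depends nontrivially on the past then $\tilde M_j$ contains the piece $r_j:=\tilde g_j-\bbE[\tilde g_j\mid\cF_{j,\infty}]$ which violates $\cF_{j,\infty}$-measurability; this is absorbed into the coboundary by setting $\tilde w_j:=\sum_{k\ge j}\bbE[r_k\mid\cF_{j,\infty}]$ (convergent by the same balancing), replacing $\tilde u_j$ by $\tilde u_j+\tilde w_j$ and $\tilde M_j$ by $\tilde M_j-r_j$ after verifying the telescoping relation $r_j=\tilde w_j-\tilde w_{j+1}$. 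Finally, $\sum_j\mathrm{Var}(\tilde M_j)<\infty$ is extracted from $(1)$ by passing to a subsequence realizing the liminf and rearranging the identity $\mathrm{Var}(S_nf)=\sum_{j<n}\mathrm{Var}(\tilde M_j)+\mathrm{Var}(\tilde u_n-\tilde u_0)+2\,\mathrm{Cov}(\sum_{j<n} \tilde M_j,\tilde u_n-\tilde u_0)$ using Cauchy--Schwarz.

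For the last equivalence under one of Assumptions \ref{Ass1}--\ref{DomAss} (minus the variance-growth part), Doob--Burkholder inequalities upgrade $L^2$-convergence of $\sum_{k\ge 0}\tilde M_k$ to $L^s$-convergence for every finite $s\le b$. Setting $\tilde H_j := c_j+\tilde u_j-\sum_{k\ge j}\tilde M_k$ with $c_j$ chosen so that $c_{j+1}-c_j=\bbE[\tilde f_j]$ and solving $c_0$ arbitrarily, a direct computation gives $\tilde H_{j+1}-\tilde H_j=\tilde f_j$ and a uniform $L^s$-bound on $H_j$; this recovers the explicit representation in the statement. The converse direction uses the uniqueness (modulo additive constants) of the Gordin splitting to reconstruct $(u_j,M_j)$ from a given coboundary $(H_j)$. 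The main obstacle throughout is the simultaneous handling of the two independent exponential decays -- the mixing coefficient $\varpi_{q,p}$ and the past-approximation coefficient $v_{k,b,\delta}$ -- which the optimization $r=(j-k)/2$ couples into a single $\delta^{1/2}$-decay; the halved exponent is what places $\tilde u_j$ and $\tilde w_j$ in $\cB_{j,s,p,\delta^{1/2}}$ rather than $\cB_{j,s,p,\delta}$, and appears to be sharp.
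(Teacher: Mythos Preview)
Your argument is essentially correct in the one-sided case and lines up with the paper's Gordin construction (Lemma~\ref{Mart lemm}); the balancing estimate $\|\bbE[\tilde g_k\mid\cF_{j,\infty}]\|_{L^p}\le C\delta^{(j-k)/2}$ is exactly Theorem~\ref{RPF}, and the extraction of $\sum_j\mathrm{Var}(M_j)<\infty$ from the liminf via monotonicity of $n\mapsto\sum_{j<n}\mathrm{Var}(M_j)$ is the paper's argument verbatim.

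The gap is in your reduction from the two-sided to the one-sided case. Your claimed telescoping $r_j=\tilde w_j-\tilde w_{j+1}$ with $\tilde w_j=\sum_{k\ge j}\bbE[r_k\mid\cF_{j,\infty}]$ is false: since $r_k=\tilde g_k-\bbE[\tilde g_k\mid\cF_{k,\infty}]$ and $\cF_{k,\infty}\subset\cF_{j,\infty}$ for $k\ge j$, one has $\bbE[r_k\mid\cF_{j,\infty}]=\bbE[\tilde g_k\mid\cF_{j,\infty}]-\bbE[\tilde g_k\mid\cF_{k,\infty}]$, and in particular the $k=j$ term vanishes. A direct computation then gives
\[
\tilde w_j-\tilde w_{j+1}=\sum_{k\ge j+1}\bigl(\bbE[\tilde g_k\mid\cF_{j,\infty}]-\bbE[\tilde g_k\mid\cF_{j+1,\infty}]\bigr),
\]
which is $\cF_{j,\infty}$-measurable and has nothing to do with $r_j$, which is not $\cF_{j,\infty}$-measurable. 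So your modified $M_j$ is not a function of $X_k,\,k\ge j$, and condition~(3) is not obtained.

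The paper handles this by a genuinely different construction (Lemma~\ref{Sinai}): the coboundary absorbing the past dependence is $u_j=\sum_{k\ge 0}\bigl(f_{j+k}\circ T_j^k-\bbE[f_{j+k}\circ T_j^k\mid\cF_{j,\infty}]\bigr)$, which lives on the \emph{two-sided} space $\cY_j$ and is not $\cF_{j,\infty}$-measurable. The point is that one cannot make the past-absorbing coboundary one-sided; it must depend on the past, and only the resulting $g_j=f_j+u_j-u_{j+1}\circ T_j$ is one-sided. After this reduction your Gordin construction applies to $g_j$. The final coboundary in~(3) is then the sum of the Sinai piece $u_j$ and the Gordin piece $h_j$, which is why the theorem allows $u_j$ to be a function on $\cY_j$ rather than $\cZ_j$.
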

Note that we can just take $p=q=2$ in the above theorem, which shows that for $\rho$ mixing Markov chains we get the result for square integrable functions. However, considering larger $p$'s shows that the same level of regularity is preserved in the martingale coboundary decomposition in condition (3). Note that the last part of Theorem \ref{Var them} is an appropriate version of the, so called, Livsic theory (see \cite{Livsic}) for non-stationary Markov shifts. 
\begin{remark}
Condition in (3) in Theorem \ref{Var them} can also be written as 
$$
f_j=G_{j+1}\circ T_j-G_j,\,\, G_j=\sum_{k=0}^{j-1}\mu_k(f_k)+u_j+\sum_{k\geq j}M_k\circ T_j^{k-j}.
$$
However, in general it is not true that $\sup_{j}\|G_j\|_{j,a,p,\delta}<\infty$, see \cite{BDH} for examples in the case when $X_j$ are iid (using that the dynamics of the doubling map $Tx=2x\text{ mod }1$ is coded by iid Bernoulli shift on $\{0,1\}^\bbN$). Note that (see \cite{BDH}) when $(X_j)$ is stationary and $f_j=f$ do not depend on $j$ or for Markov chains in random dynamical environment discussed in Section \ref{LDP} we can ensure that $M_j=0$ for all $j$, which in this case yields that $\sup_j\|H_j-\mu_j(H_j)\|_{j,q,p,\delta}<\infty$, namely that it has the same level of regularity as $f_j$.    
\end{remark}

Next we address the CLT. First, we describe what is essentially known in literature in our setup.
The following result follows by the discussion in \cite[Section 7.2]{HafSPA}.
\begin{theorem}
$S_nf$ obeys the CLT if \eqref{mix} holds with some $p\geq q>2$, $\sup_j\|f_j\|_{j,q,p,\delta}<\infty$ and  $\sigma_n^2\geq c\ln^{1+\varepsilon}(n)$ for some $c,\varepsilon>0$ and all $n$ large enough.
\end{theorem}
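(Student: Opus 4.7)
The plan is to combine a finite-range approximation with a Bernstein blocking argument, exploiting the mildness of the variance-growth hypothesis through variance-equalized blocks. First, I would approximate each $f_j$ by $f_{j,r_n} = \bbE[f_j|\cF_{j-r_n,j+r_n}]$, choosing $r_n = K\log n$ with $K$ large so that, by $\sup_j v_{j,p,\delta}(f_j)<\infty$, the $L^p$ error in each summand is $O(\delta^{r_n}) = O(n^{-K|\log\delta|})$. Since $\sigma_n^2 \ge c\log^{1+\varepsilon}(n)$, choosing $K$ large enough makes the total $L^2$ error in $S_nf$ of smaller order than $\sigma_n$, so it suffices to prove the CLT for $\tilde S_n = \sum_{j=0}^{n-1} f_{j,r_n}$ after recentering.

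Next, using Theorem \ref{RPF} to get exponential decay of $\varpi_{q,p}(m)$, I would partition $\{0,\dots,n-1\}$ into alternating ``big'' blocks $B_1,\dots,B_{k_n}$ and ``gap'' blocks $I_1,\dots,I_{k_n}$ of length $\ell_n = 3r_n$, arranging the $B_\ell$ so that each satisfies $\text{Var}(S_{B_\ell}\tilde f) \asymp A$ for a fixed large $A$, where $\tilde f_j = f_{j,r_n}-\bbE f_{j,r_n}$. Since $\ell_n \ge 3r_n$ the summands in distinct $B_\ell$ are measurable with respect to $\sigma$-algebras separated by at least $r_n$ indices, so by \eqref{mix1} applied iteratively the joint law of $(S_{B_\ell}\tilde f)_\ell$ can be replaced by that of independent copies at total cost $O(k_n\gamma^{r_n})$, which is negligible by the choice of $K$. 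The gap-block contribution $\sum_\ell S_{I_\ell}\tilde f$ is shown to be $o(\sigma_n)$ in $L^2$ using the $\varpi_{q,p}$ variance bound for mixing sequences together with $\ell_n \ll |B_\ell|$.

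With this reduction, the problem becomes a CLT for a triangular array of independent centered random variables $Z_\ell$, $\ell = 1,\dots,k_n$, with $k_n\asymp\sigma_n^2\to\infty$ and $\text{Var}(Z_\ell)\asymp A$. I would verify the Lindeberg condition via a Lyapunov bound: $\|S_{B_\ell}\tilde f\|_{L^q}^q \le C|B_\ell|^{q/2}$ (which follows from a Rosenthal-type inequality for $\varpi_{q,p}$-mixing sequences, using $q>2$ and $\sup_j\|f_j\|_{L^q}<\infty$), so $\sum_\ell \bbE |Z_\ell|^q = O(\sigma_n^2)$ while $\sigma_n^q$ grows faster, yielding $\sum_\ell \bbE|Z_\ell/\sigma_n|^q \to 0$ as required.

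The main obstacle is the tightness of the parameters imposed by the very slow variance growth. Since $\sigma_n^2$ may be only $\log^{1+\varepsilon}(n)$, the variance per block $A$ must be fixed while $|B_\ell|$ grows essentially as a power of $\log n$, leaving little room between $r_n$ and $|B_\ell|$. One must therefore rely on the fact that the $\rho$- (or more generally $\varpi_{q,p}$-) mixing with $q>2$ provides enough slack in the Lyapunov exponent to absorb the Lindeberg ratio. A second delicate point is the construction of the variance-equalized partition itself when a single summand $f_j$ might contribute a disproportionate share of the variance; this is handled, as in the paper's \textbf{variance partition} framework, by allowing $\text{Var}(S_{B_\ell})\in[A,2A]$ and placing such exceptional indices in their own block and treating them separately via their uniform $L^q$ bound.
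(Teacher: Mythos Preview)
The paper does not actually prove this theorem; it only records a one-sentence sketch (approximate $f_j$ by $\bbE[f_j\mid\cF_{j-r_n,j+r_n}]$ with $r_n=C\ln n$, then invoke Stein's method, forward martingale approximation, or Bernstein's big block--small block technique) and defers the details to \cite[Section~7.2]{HafSPA}. Your proposal fleshes out the Bernstein option, so at the level of the sketch it agrees with the paper.

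However, your Lyapunov step has a genuine gap. The Rosenthal-type bound $\|S_{B_\ell}\tilde f\|_{L^q}\le C|B_\ell|^{1/2}$ (this is exactly Proposition~\ref{Mom prop}(i)) controls the $q$-th moment by the block \emph{length}, not the block \emph{variance}. Under the minimal growth $\sig_n^2\asymp\log^{1+\ve}(n)$ the variance-equalized blocks have typical length $|B_\ell|\asymp n/\sig_n^2$, so $\sum_\ell |B_\ell|^{q/2}\asymp k_n(n/k_n)^{q/2}\asymp n^{q/2}\sig_n^{2-q}$, and the Lyapunov ratio $\sum_\ell\bbE|Z_\ell|^q/\sig_n^q$ is of order $n^{q/2}/\sig_n^{2q-2}\to\infty$, not $\to 0$. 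Your assertion ``$\sum_\ell\bbE|Z_\ell|^q=O(\sig_n^2)$'' would require a comparison $\|Z_\ell\|_{L^q}\le C(1+\|Z_\ell\|_{L^2})$; Proposition~\ref{Mom prop}(ii) delivers this only under the strictly stronger hypothesis $\sup_j\|f_j\|_{j,\infty,\infty,\delta}<\infty$. The same pathology affects the gap blocks: with $k_n\asymp\sig_n^2$ gaps each of length $\ell_n\asymp\log n$, the only available bound on their total variance is $O(k_n\ell_n)\asymp\sig_n^2\log n$, which is not $o(\sig_n^2)$. Both obstructions are manifestations of one fact: when $\sig_n^2$ grows only logarithmically, moment bounds expressed in terms of $n$ or $|B_\ell|$ are useless, and the Bernstein scheme as you describe it does not close. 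The reference \cite{HafSPA} presumably proceeds via one of the other two routes the paper names.
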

 The idea is standard. We can approximate $f_j=f_j(...,X_{j-1},X_j,X_{j+1},...)$ by $f_{n,r(n)}=\bbE[f_j|X_{j-r(n)},...,X_{j+r(n)}]$ with $r(n)=C\ln n$ for $C$ large enough. Then, for instance, one can apply Stein's method, use standard forwrad martingale approximation or use Bernstein's big block small block approach, see \cite[Section 7.2]{HafSPA}. However, without growth rates on the variance such an approximation procedure seems to fail even for independent $X_j$ since then the dependency range is of logarithmic order in $n$, while the variance might be of smaller magnitude. Without growth assumptions on $\sig_n$ beyond $\sig_n\to\infty$  we can prove the following CLT.
 \begin{theorem}\label{CLT}
(i) In the circumstances of Proposition \ref{MomMom} (iv), the CLT holds when the LHS of \eqref{Lyp Cond 2} is of order $o(\sig_n^4)$.
 \vskip0.1cm
 
 (ii)  Let \eqref{mix} hold with some $1\leq q, p\leq\infty$. Suppose  $\sig_n\to\infty$. Then  Then $S_nf$ obeys the CLT if \textbf{all} of the following conditions hold.
  \vskip0.2cm
 (1) Either $p\geq q$ and $f_j(...,X_{j-1},X_{j},X_{j+1},...)$ depends only  $X_{j+k},k\geq 0$ and  it is a reverse martingale difference with respect to the reverse filtration $\cF_{j,\infty}$ or $q\geq p$, it depends only on $X_{j+k},k\geq 0$ and it is a forward martingale difference with respect to the filtration $\cF_{-\infty,j}$.
  \vskip0.2cm
(2) $\sup_j\|f_{j}^2\|_{j,q,p,\delta}<\infty$ (with $b=\max(q,p)$ this holds true when $\sup_j\|f_{j}^2\|_{j,2b,2b,\delta}<\infty$).
  \vskip0.2cm
 (3)
 $(f_j)$ satisfies  the Lindeberg condition, that is, for every $\varepsilon>0$ we have 
 \begin{equation}\label{Lind}
\lim_{n\to\infty}\sig_n^{-2}\sum_{j=0}^{n-1}\bbE[|f_j|^2\bbI(|f_j|\geq \ve\sig_n)]=0  
 \end{equation}
 where $\bbI(A)$ denotes the indicator function of en event $A$. 
 \vskip0.2cm
 (4) In the reversed martingale case let $u$ be the conjugate exponent of $p$ while in the forward martingale case let $u$ be the conjugate exponent of $q$. 
 With $G_j=f_j^2-\mu_j(f_j^2)$ we have
 \begin{equation}\label{Special condition}
 \lim_{n\to\infty}\sigma_n^{-4}\sum_{j=0}^{n-1}(\bbE[G_j^2]+\|G_j\|_{L^u})=0.    
 \end{equation}
 \end{theorem}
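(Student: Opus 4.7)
The plan for part (ii) is to invoke the martingale CLT in its reverse or forward form. In the reverse-martingale case, $(f_j)$ is adapted to the decreasing filtration $\cG_j=\cF_{j,\infty}$ with $\bbE[f_j\mid\cG_{j+1}]=0$, so the Brown--Eagleson CLT (equivalently, McLeish's theorem) yields $S_nf/\sigma_n\Rightarrow \cN(0,1)$ provided (a) the Lindeberg condition \eqref{Lind} holds and (b) the normalized quadratic variation $\sigma_n^{-2}\sum_{j=0}^{n-1}f_j^2$ tends to $1$ in probability. Condition (a) is hypothesis (3), so the whole weight of the proof rests on deducing (b) from hypothesis (4).

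Since $\sigma_n^2=\sum_{j<n}\bbE[f_j^2]$ for a martingale difference sequence, (b) reduces to $\sigma_n^{-2}\sum_{j<n}G_j\to 0$ in probability, where $G_j:=f_j^2-\bbE[f_j^2]$. By Chebyshev it suffices to show $\|\sum_{j<n}G_j\|_{L^2}^2=o(\sigma_n^4)$, and expanding the square we must bound the diagonal and off-diagonal contributions. The diagonal equals $\sum_{j<n}\bbE[G_j^2]$, which is $o(\sigma_n^4)$ directly from \eqref{Special condition}. For the off-diagonal $2\sum_{i<j<n}\bbE[G_iG_j]$, I would truncate: replace $G_i$ by $G_i^{(r)}:=\bbE[G_i\mid\cF_{i-r,i+r}]$ with $r=\lfloor(j-i)/2\rfloor$, so that $G_i^{(r)}\in\cF_{-\infty,i+r}$ while $G_j\in\cF_{j,\infty}$ are separated by a gap $j-i-r$. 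The mixing inequality \eqref{mix1} then gives
$$
|\bbE[G_i^{(r)}G_j]|\leq \|G_i^{(r)}\|_{L^u}\,\varpi_{q,p}(j-i-r)\,\|f_j^2\|_{L^q},
$$
and the truncation error is controlled by the approximation coefficient $v_{i,p,\delta}(f_i^2)$, which is finite by hypothesis (2). Summing the resulting geometric series in $j-i$ yields an off-diagonal bound of the form $C\sum_j(\bbE[G_j^2]+\|G_j\|_{L^u})$, once again $o(\sigma_n^4)$ by \eqref{Special condition}. The forward-martingale case follows by the symmetric argument, interchanging past and future and using that $u$ is the conjugate of $q$ rather than of $p$.

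For part (i), which drops the martingale property, I would instead use a Bernstein block decomposition. Fix $n$ and choose a variance partition $(B_{\ell,n})_{\ell=1}^{k_n}$ as in Proposition \ref{MomMom}(iv), let $Z_\ell:=S_{B_{\ell,n}}f-\bbE[S_{B_{\ell,n}}f]$, and consider the triangular array $(Z_\ell/\sigma_n)$. The fourth-moment estimates on block sums that underlie Proposition \ref{MomMom}(iv) express $\bbE[Z_\ell^4]$ in terms of the quantities $U_{\ell,n}$ (hence $V_{\ell,n}$), so the strengthening $\sum_\ell V_{\ell,n}=o(\sigma_n^4)$ delivers exactly Lyapunov's condition $\sum_\ell\bbE[Z_\ell^4]=o(\sigma_n^4)$ for the blocks. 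Approximating each $Z_\ell$ by its $\cF_{\min B_{\ell,n}-r,\,\max B_{\ell,n}+r}$-conditional version with $r$ of logarithmic order in $n$, the exponential decay of $\varpi_{q,p}$ decouples the truncated blocks into an essentially independent triangular array with negligible coupling error in the weak topology, and the Lyapunov CLT for independent summands produces the desired CLT for $S_nf/\sigma_n$.

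The main obstacle in both parts is the interplay between the mixing rate and the sub-linear variance regime. Without Assumption \ref{Stand MomAss} one cannot use the standard $\sum_j\bbE|Y_j|^3=O(\sigma_n^2)$ bound, so the truncation scale $r$ in part (ii) and the block widths in part (i) must be tuned so that the approximation error is small compared to $\sigma_n$ while the residual cross-pair and cross-block dependencies remain summable. Hypothesis \eqref{Special condition}, respectively the $o(\sigma_n^4)$ strengthening of \eqref{Lyp Cond 2}, is precisely what forces the bookkeeping to close: a second-order condition on $\sum G_j$ (resp.\ on block-sum fourth moments) tailored to the potentially sub-linear growth of $\sigma_n^2$.
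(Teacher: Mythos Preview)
Your treatment of part (ii) is essentially the paper's own argument. Both proofs invoke the martingale CLT (the paper cites Brown, you cite McLeish---these are equivalent under Lindeberg) and reduce to showing $\bigl\|\sum_{j<n}G_j\bigr\|_{L^2}^2=o(\sigma_n^4)$. The paper packages this estimate as Proposition~\ref{Thm 4.1'}, whose proof uses the transfer-operator decay of Theorem~\ref{RPF}; you redo the same covariance calculation inline using the mixing coefficient and a truncation of $G_i$. These are the same argument in different dress: Theorem~\ref{RPF} is itself proved by exactly the truncation-plus-mixing estimate you describe. One small correction: in your displayed bound the H\"older exponents are swapped. Since $G_i^{(r)}$ is $\cF_{-\infty,i+r}$-measurable and $G_j$ is $\cF_{j,\infty}$-measurable, \eqref{mix1} gives $\|\bbE[G_i^{(r)}\mid\cF_{j,\infty}]\|_{L^p}\le\|G_i^{(r)}\|_{L^q}\varpi_{q,p}(j-i-r)$, so after H\"older the factor $\|G_j\|_{L^u}$ (with $u$ conjugate to $p$) goes on the \emph{later} index $j$, matching condition~\eqref{Special condition}.

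For part (i) the paper takes a different route: it does not argue via block independence and Lyapunov, but defers to the characteristic-function machinery developed for Theorems~\ref{BE} and~\ref{ThWass} (Lemma~\ref{Approx Lemma}, Lemma~\ref{L35}, Proposition~\ref{Growth Prop}), which gives quantitative control of $\Lambda_{0,n}^{(s)}(t)$ and hence a Berry--Esseen rate, from which the bare CLT is immediate. Your block-Lyapunov approach is more elementary in spirit, but there is a gap: you assert that $\sum_\ell V_{\ell,n}=o(\sigma_n^4)$ ``delivers exactly'' $\sum_\ell\bbE[Z_\ell^4]=o(\sigma_n^4)$. The fourth-moment bound in Proposition~\ref{Mom prop}(iv) gives $\bbE[Z_\ell^4]\le CU_\ell$, not $CV_\ell$, and since $V_\ell=\min(U_\ell,U_\ell^{3/4})$ can be much smaller than $U_\ell$ when $U_\ell$ is large (which happens for long blocks), the hypothesis on $\sum V_\ell$ does not directly control $\sum U_\ell$. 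Passing to third moments via $\bbE|Z_\ell|^3\le C U_\ell^{3/4}$ does not close the gap either, because Lyapunov with exponent $3$ needs $o(\sigma_n^3)$, not $o(\sigma_n^4)$. So as written your part (i) argument does not go through without an additional idea tying $V_\ell$ back to a usable moment of $Z_\ell$.
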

% In fact, in the circumstances of (i) above we can already get optimal CLT rates, see Theorem \ref{BE}, but we decided to formulate the CLT separately for the sake of clarity. 

Recall that for independent summands $f_j$ the Lindeberg condition \eqref{Lind} is equivalent to the CLT. In our case, when $\max(p,q)=\infty$ (so $u=1$) note that since 
$$
\sum_{j=0}^{n-1}\|Q_j\|_{L^1}\leq 2\sum_{j=0}^{n-1}\bbE[f_j^2]=2\sigma_n^2
$$
condition \eqref{Special condition} means that $\sum_{j=0}^{n-1}\bbE[Q_j^2]=o(\sigma_n^4)$. This condition holds when $\bbE[|f_j|^4]\leq \varepsilon_n\sigma_n^2\bbE[|f_j|^2]$ for $\varepsilon_n\to0$, and in particular when $\bbE[|f_j|^4]\leq C\bbE[|f_j|^2]$ for some constant $C>0$. When $p<\infty$ (or $q<\infty$), $u>1$
 and then condition \eqref{Special condition} holds when also $\sum_{j=0}^{n-1}\|f_j\|_{L^{2u}}^{2}=o(\sigma_n^4)$ which is the case when  $\|f_j\|_{L^{2u}}\leq \varepsilon_n\sigma_n\|f_j\|_{L^2}$ with $\varepsilon_n\to 0$. 
 \begin{remark}\label{CLT Rem}
 Since this paper is more focused on  CLT rates we did not try to optimize the conditions for the CLT under which our methods work. For instance, some growth rates in $j$ of either $\sup_j\|f_j\|_{j,\infty,\infty,\delta}$ or
 $\sup_j\|f_j^2\|_{j,q,p,\delta}$ may be allowed. Additionally, assumptions like $\|f_{j}\|_{j,q,u,\delta}\leq C\|f_k\|_{k,q,u,\delta}$ for $k\leq j$ and similar ones should yield the CLT without the martingale difference condition. The idea is that by combining Lemmata  \ref{Sinai} and \ref{Mart lemm} $f_j$ is cohomologous to a martingale difference and then the conditions of Theorem \ref{CLT} (ii) should be checked for these martingales, and that under such assumptions the martingale difference $M_j$ satisfies $\|M_j\|_{L^a}\leq C\|f_j\|_{L^a}$ for appropriate $a$'s. %Other type of conditions that seems to yield the CLT is an assumption of the form $\sum_{j=0}^{n-1}\|f_j\|_{L^3}=O(\sig_n^2)$.
 \end{remark}

\subsection{Optimal CLT rates and moment estimates}
In this section we will state our results concerning optimal CLT rates (aka Berry Esseen theorems).

\begin{theorem}\label{BE}
Let \textbf{one } of Assumptions \ref{Ass1}, \ref{Ass2}, \ref{Ass1.1} or \ref{DomAss} be in force and let $k,k_0$ be as described in the assumptions. Under one of Assumption \ref{Ass1} and \ref{DomAss} let $u=0$. While under Assumption \ref{Ass2} let $u=1-\varepsilon(1-\varepsilon)/2$ and under Assumption \ref{Ass1.1} let $u=2a/p<1$.
Suppose also that Assumption \ref{Stand MomAss} holds.

\vskip0.2cm
(i)  for all finite $0\leq s\leq k_0-1$ there is a constant $C_s$ such that 
$$
\sup_{t\in\bbR}(1+|t|^s)\left|F_n(t)-\Phi(t)\right|\leq C_s\sig_n^{-(1-u)}.
$$ 
\vskip0.1cm
(ii) for all $q>\frac1{k_0}$ we have 
$
\left\|F_n-\Phi\right\|_{L^q(dx)}=O(\sig_n^{-(1-u)}).
$
\vskip0.1cm
(iii)  for all finite $1\leq s\leq k_0-1$  there is a constant $C_s$ such that
for every absolutely continuous function $h:\bbR\to\bbR$  such that 
$H_s(h):=\int \frac{|h'(x)|}{1+|x|^s}dx<\infty$ we have 
$$
\left|\bbE[h((S_n-\bbE[S_n])/\sig_n)]-\int h d\Phi\right|\leq C_s H_s(h)\sig_n^{-(1-u)}.
$$
\end{theorem}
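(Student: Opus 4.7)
The plan is to prove all three parts via a Fourier-theoretic approach based on Esseen-type smoothing inequalities applied to the characteristic function $\varphi_n(t)=\bbE[\exp(it(S_nf-\bbE[S_nf])/\sig_n)]$. The core analytic task is a Taylor expansion of $\varphi_n$ around $t=0$ up to order $k_0$, with explicit control of the remainder of size $\sig_n^{-(1-u)}|t|^{k_0}$ on an interval $|t|\leq T_n$ whose length grows like a power of $\sig_n$, together with fast decay of $|\varphi_n(t)|$ outside this interval.

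First I would invoke the variance block partition of $\{0,\dots,n-1\}$ into intervals $B_{1,n},\dots,B_{k_n,n}$ supplied by Assumption \ref{Stand MomAss}, and write $S_nf-\bbE[S_nf]=\sum_\ell Z_\ell$ with $Z_\ell=S_{B_{\ell,n}}f-\bbE[S_{B_{\ell,n}}f]$, so that $\text{Var}(Z_\ell)$ is bounded above and below. Inserting short buffer gaps of length $L_n\asymp\log\sig_n$ between consecutive blocks, the exponential mixing \eqref{mix2} together with the approximation coefficients $v_{j,b,\delta}(f_j)$ decaying in $r$ lets one factorize
$$
\varphi_n(t)\approx \prod_\ell \bbE\bigl[e^{itZ_\ell/\sig_n}\bigr]
$$
with error controlled by $\sum_\ell$ of mixing/approximation terms that are summable in $\ell$ and negligible at the scale we are after.

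The key step is the expansion of each block factor. Here is where the four assumptions enter uniformly: each one is designed precisely to guarantee (as the remark after Assumption \ref{DomAss} points out) that the perturbed conditional-expectation operators $\cL_j^{it}:=\cL_j(e^{itf_j}\cdot)$ acting between the spaces $\cB_{j,q,p,\delta}$ are of class $C^{k_0}$ in $t$, so that one obtains an expansion
$$
\bbE\bigl[e^{itZ_\ell/\sig_n}\bigr]=1-\frac{t^2\,\text{Var}(Z_\ell)}{2\sig_n^2}+\sum_{r=3}^{k_0}\frac{(it)^r}{r!\,\sig_n^r}\bbE[Z_\ell^r]+R_\ell(t),
$$
with $|R_\ell(t)|\leq C|t|^{k_0}\sig_n^{-k_0}\bbE[|Z_\ell|^{k_0}]$. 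Summing the third and higher moment contributions across the blocks contributes exactly the quantity $\tilde L_{k_0,n}=O(\sig_n^2)$ from Assumption \ref{Stand MomAss}, so that the cumulative remainder is of order $\sig_n^{-(k_0-2)}|t|^{k_0}$. Under Assumption \ref{Ass2} the polynomial growth of $\|f_j\|_{j,a,s,\delta}$ costs a factor $n^{u}$ (hence the exponent $1-u=\ve(1-\ve)/2$), and under Assumption \ref{Ass1.1} the loss $u=2a/p$ is introduced by the local $L^p$ control of block sums. This reproduces the factor $\sig_n^{-(1-u)}$ in the final rate. Taking the logarithm of the product and exponentiating yields, on $|t|\leq T_n$,
$$
\varphi_n(t)=e^{-t^2/2}\bigl(1+O\bigl(\sig_n^{-(1-u)}(1+|t|^{k_0})\bigr)\bigr).
$$

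Once the characteristic function bound is in hand, part (i) follows by the weighted Esseen smoothing inequality in the spirit of \cite{DolgHaf PTRF 1,DolgHaf PTRF 2}: the kernel $\int_{-T_n}^{T_n}|t|^{s-1}|\varphi_n(t)-e^{-t^2/2}|\,dt$ converts the above pointwise bound into $\sup_t(1+|t|^s)|F_n(t)-\Phi(t)|\leq C_s\sig_n^{-(1-u)}$ for each $s\leq k_0-1$, with the contribution from $|t|>T_n$ absorbed using the fast decay of $\varphi_n$. Part (ii) is an immediate consequence of (i) by integrating $(1+|x|^s)^{-q}$, which is finite precisely when $sq>1$; optimizing over $s\leq k_0-1$ gives the range $q>1/k_0$. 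Part (iii) is obtained from the integration-by-parts identity
$$
\bbE[h(W_n)]-\int h\,d\Phi=-\int_{-\infty}^{\infty} h'(x)\bigl(F_n(x)-\Phi(x)\bigr)dx
$$
with $W_n=(S_nf-\bbE[S_nf])/\sig_n$, multiplying and dividing by $1+|x|^s$, and applying (i).

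The main obstacle will be establishing the $C^{k_0}$ regularity of $t\mapsto \cL_j^{it}$ uniformly in $j$ under each of the four assumptions, and tracking the polynomial losses $u$ through the block decomposition when $\sig_n^2$ is allowed to grow sub-linearly (Assumption \ref{Ass2}) or when only partial $L^p$ control on blocks is available (Assumption \ref{Ass1.1}); this bookkeeping, combined with the need to handle functions $f_j$ that depend on the entire two-sided path, is where the technical heart of the proof lies.
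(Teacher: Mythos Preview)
Your approach is a genuinely different route from the paper's. The paper does \emph{not} factorize $\varphi_n$ via buffer gaps and near-independence of blocks. Instead, it first reduces to one-sided functionals $g_j(X_j,X_{j+1},\dots)$ (via Sinai's Lemma~\ref{Sinai}, or the re-centering/truncation of Sections~\ref{RedSec1}--\ref{RedSec2} under Assumptions~\ref{Ass2} and~\ref{Ass1.1}), then uses the sequential Perron--Frobenius theorem (Corollary~\ref{Cor11}) to obtain an \emph{exact} additive decomposition of the log-characteristic function, $\Lambda_{0,n}(t)=\sum_{k=0}^{n-1}\Pi_k(t)+O(1)$, where $\Pi_k$ is the branch of $\ln\lambda_k(t)$. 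The block partition from Assumption~\ref{Stand MomAss} enters only afterwards, to bound the $k_0$-th derivative of each block sum $\sum_{k\in B_\ell}\Pi_k(t)$ by the block moment $\bbE[|S_{B_\ell}f|^{k_0}]$ (Lemma~\ref{L35}), yielding $|\Lambda_{0,n}^{(k_0)}(t)|\leq C\sig_n^2$ on a fixed neighborhood of $0$. The non-uniform Berry--Esseen and Wasserstein bounds then follow from the black-box results of \cite{H}. No buffer gaps, no approximate independence, no direct Esseen smoothing.

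Your Bernstein-type route can be made to work, but note two points. First, the $C^{k_0}$ regularity of $t\mapsto\cL_{j,t}$ is \emph{not} what gives the Taylor expansion of $\bbE[e^{itZ_\ell/\sig_n}]$; that expansion needs only $Z_\ell\in L^{k_0}$, which is supplied by Assumption~\ref{Stand MomAss}. In the paper the operator regularity is used for a different purpose: it makes the eigenvalue $\lambda_j(t)$ and hence $\Pi_j(t)$ of class $C^{k_0}$, which is what drives the additive pressure decomposition. Second, you omit the reduction to one-sided functions; without it the transfer operators $\cL_j$ do not act on $f_j$, and the truncation/rescaling that produces the losses $u$ under Assumptions~\ref{Ass2} and~\ref{Ass1.1} (Sections~\ref{RedSec1}--\ref{RedSec2}) is where those exponents actually come from, not from the block bookkeeping per se. What the paper's spectral route buys is that the decoupling is exact up to $O(1)$, so one avoids the buffer-gap error accounting entirely; what your route buys is that it is closer to classical probability and does not require building the sequential spectral theory.
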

One example of functions in (iii) above are $h(x)=x^{a}, a<s$. Then Theorem \ref{BE} (iii) provides estimates form the moments of $S_nf-\bbE[S_nf]$ by means of the variance of $S_n$ and the standard normal moments.

Under Assumption \ref{DomAss} the rates in Theorem \ref{BE} (i), say with $s=0$, are consistent with the classical Berry Esseen theorem for stationary  Markov chains with some ellipticity and functions of the form $f_j=f(X_j)$ that state the when $\|f(X_1)\|_{L^3}<\infty$ then the optimal rate $O(n^{-1/2})$ is achieved (see \cite{Nag61}). Note that in this setting ellipticity ensures \eqref{1 cond} while \eqref{2 cond} trivially holds true since $f(X_j)$ depends only on $X_j$.
Note that (see \ref{Comp}) for independent (zero mean) summands $Y_j$ the optimal CLT rate $O(\sig_n^{-1})$ are is achieved  when $\sup_j\|Y_j\|_{L^\infty}<\infty$, which is consistent with Assumption \ref{Ass1} . 

Note that for uniformly elliptic inhomogenuous Markov chains $X_j$ the optimal rates $O(\sig_n^{-1})$ were  achieved with $Y_j=f_j(X_j,X_{j+1})$ also  for uniformly bounded functions $f_j$, see \cite[Theorem]{DolgHaf PTRF 2}. This is also consistent with Assumption \ref{Ass1}, but however here we can consider functions that depend on the entire path of the chain, and we can consider more general chains which are not necessarily elliptic. Remark also that in \cite{DolgHaf PTRF 2} when restricting the results to Markov chains we obtained optimal CLT rates for uniformly elliptic \textbf{finite state} Markov chains $X_j$ and uniformly H\"older continuous functions $f_j=f_j(...,X_{j-1},X_j,X_{j+1},...)$, which always satisfy $\sup_j\|f_j\|_{j,\infty,\infty,\delta}<\infty$ for some $\delta$. Compared with these results we are able to consider much more general chains without ellipticity conditions, and, under suitable conditions, unbounded functions $f_j$ or functions exhibiting some growth in $j$. In fact, the results in \cite{DolgHaf PTRF 2} were mostly about uniformly expanding or hyperbolic systems, and in Section \ref{DSY} we will apply our results for some classes of non-uniformly expanding or hyperbolic maps and get optimal rates for H\"older on average functions $f_j$.

Next, recall that the $p$-th Wasserstien distance between two probability measures $\mu,\nu$ on $\bbR$ with finite absolute moments of order $b$ is given by 
$$
W_p(\mu,\nu)=\inf_{(X,Y)\in\mathcal C(\mu,\nu)}\|X-Y\|_{L^b}
$$
where $\mathcal C(\mu,\nu)$ is the class of all  pairs of random variables $(X,Y)$ on $\bbR^2$ such that $X$ is distributed according to $\mu$, and $Y$ is distributed according to $\nu$. 
 %Combining our estimates with
%the main results in \cite{H} also yields the following result.

\begin{theorem}\label{ThWass}
Let \textbf{one}  of Assumptions \ref{Ass1}, \ref{Ass2}, \ref{Ass1.1} \ref{DomAss} be in force and let $k,k_0$ be as described at the assumptions. Suppose also that Assumption \ref{Stand MomAss} holds. 
Then, for every finite $b<k_0-1$ we have 
$$
W_b(dF_n, d\Phi)=O(\sig_n^{-(1-u)})
$$
where $dG$ is the measure induced by a distribution function $G$ and $u$ is like in Theorem \ref{BE}.
\end{theorem}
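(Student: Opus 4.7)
The plan is to work with the one-dimensional optimal (monotone) coupling, so that
$$
W_b(dF_n,d\Phi)^b=\int_0^1|F_n^{-1}(u)-\Phi^{-1}(u)|^b\,du=\int_\bbR |F_n^{-1}(\Phi(v))-v|^b\phi(v)\,dv,
$$
where $\phi=\Phi'$. Setting $w_v:=F_n^{-1}(\Phi(v))$, the identity $F_n(w_v)=\Phi(v)$ combined with the mean value theorem gives $|w_v-v|=|\varepsilon_n(w_v)|/\phi(\xi_v)$ for some $\xi_v$ between $v$ and $w_v$, where $\varepsilon_n:=F_n-\Phi$. This is the bridge that turns the weighted Kolmogorov control of Theorem~\ref{BE}~(i) into a pointwise bound on the integrand.

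I would then split the integral at a threshold $M=M(n)$. On the central region $\{|v|\leq M,\ |w_v|\leq M\}$ the bound $\phi(\xi_v)\geq\phi(M)$ together with $|\varepsilon_n(x)|\leq C_s\sig_n^{-(1-u)}(1+|x|^s)^{-1}$ (for any $s\leq k_0-1$) yields a central contribution of order $\phi(M)^{-b}\sig_n^{-b(1-u)}\asymp e^{bM^2/2}\sig_n^{-b(1-u)}$. On the complementary set I would use $|w_v-v|^b\phi(v)\leq 2^{b-1}(|w_v|^b+|v|^b)\phi(v)$; the $|v|^b\phi(v)$ piece is super-exponentially small for $|v|>M$, while the $|w_v|^b\phi(v)$ piece, after the change of variables $u=\Phi(v)$, becomes the truncated moment $\bbE[|\tilde S_n|^b;\,|\tilde S_n|>M]$ with $\tilde S_n=(S_nf-\bbE[S_nf])/\sig_n$. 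Because $b<k_0-1$ by hypothesis there is a strict gap $\eta\in(0,k_0-1-b)$, so applying Theorem~\ref{BE}~(iii) to $h(x)=|x|^{b+\eta}$ with weight $s\in(b+\eta,k_0-1]$ gives $\sup_n\bbE[|\tilde S_n|^{b+\eta}]<\infty$, hence $\bbP(|\tilde S_n|>M)=O(M^{-(b+\eta)})$. In parallel, combining $\|F_n-\Phi\|_\infty\leq C\sig_n^{-(1-u)}$ from Theorem~\ref{BE}~(i) with the Gaussian tail yields the mixed estimate $\bbP(|\tilde S_n|>M)\leq Ce^{-M^2/2}+C\sig_n^{-(1-u)}$, which is the shape that actually matches the growth of the central bound.

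The main obstacle is the optimization of $M$: the central estimate grows like $e^{bM^2/2}$, and only a sub-Gaussian-type tail decay can balance it at the scale $M\asymp\sqrt{\log\sig_n}$. Polynomial moment control alone gives the weaker rate $(\log\sig_n)^{-(k_0-1-b)/2}$, so the mixed bound from the previous paragraph is essential: taking $M=c\sqrt{\log\sig_n}$ with $c$ small but large enough for the sub-Gaussian term to dominate the remainder, and sending $\eta\nearrow k_0-1-b$, both the central and the tail contributions become $O(\sig_n^{-b(1-u)})$ up to arbitrarily small polynomial loss, which delivers $W_b(dF_n,d\Phi)=O(\sig_n^{-(1-u)})$ as claimed. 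No new probabilistic input beyond Theorem~\ref{BE} is needed; the argument is an interpolation on top of the weighted Kolmogorov bound, the polynomial moment control via $H_s$-regular test functions, and the $L^q$ estimate of (ii), mirroring the classical Esseen--Rio passage from Kolmogorov distance to Wasserstein distance in the presence of enough moments, but here carried out in the non-stationary Markov-shift setting produced by the rest of the paper.
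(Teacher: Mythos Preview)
Your approach is fundamentally different from the paper's and contains a genuine gap. The paper does \emph{not} derive Theorem~\ref{ThWass} from Theorem~\ref{BE}; instead, both theorems are obtained simultaneously from a single analytic input, namely the bound
\[
\sup_{|t|\le\delta_k}\bigl|\Lambda_{0,n}^{(s)}(t)\bigr|\le C_k\,\sig_n^{2},\qquad 0\le s\le k,
\]
on the derivatives of the log--characteristic function (Proposition~\ref{Growth Prop}), followed by an appeal to the Fourier-analytic results \cite[Theorem~9, Corollary~11]{H}, which produce the Wasserstein bound directly via smoothing and Esseen-type inequalities. The reduction from $S_nf$ to $S_ng$ is handled separately by a Berkes--Philipp coupling (Proposition~\ref{RedProp}).

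The gap in your argument is in the balancing step. Write $\delta_n:=C_s\sig_n^{-(1-u)}$. Using the non-uniform bound of Theorem~\ref{BE}\,(i) with $s=k_0-1$, the truncated moment satisfies
\[
\bbE\bigl[|\tilde S_n|^{b};\,|\tilde S_n|>M\bigr]
\;\le\;\bbE\bigl[|Z|^{b};\,|Z|>M\bigr]\;+\;C\,\delta_n\!\int_M^\infty t^{\,b-1}(1+t^{k_0-1})^{-1}\,dt
\;\asymp\;M^{\,b-1}e^{-M^2/2}\;+\;C'\,\delta_n\,M^{-(k_0-1-b)}.
\]
The second summand is at best of order $\sig_n^{-(1-u)}$, \emph{not} $\sig_n^{-b(1-u)}$: to force $\delta_n M^{-(k_0-1-b)}\le\sig_n^{-b(1-u)}$ you would need $M\gtrsim\sig_n^{(b-1)(1-u)/(k_0-1-b)}$, but then your central bound
\[
\int_{\{|v|\le M,\ |w_v|\le M\}}|w_v-v|^{b}\phi(v)\,dv\;\le\;\phi(M)^{-b}\,\delta_n^{\,b}\;\asymp\;e^{bM^2/2}\sig_n^{-b(1-u)}
\]
explodes super-polynomially. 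Optimizing over $M$ (balancing the $e^{bM^2/2}\delta_n^{\,b}$ term against the tail) yields only $W_b^b=O(\sig_n^{-(1-u)})$, hence $W_b=O(\sig_n^{-(1-u)/b})$, which for $b>1$ is strictly weaker than the assertion. The ``arbitrarily small polynomial loss'' you invoke is in fact a \emph{fixed} polynomial deficit of order $\sig_n^{(1-1/b)(1-u)}$; it cannot be removed by choosing $c$ small in $M=c\sqrt{\log\sig_n}$ because the irreducible tail term $\delta_n M^{-(k_0-1-b)}$ is insensitive to $c$ at the leading polynomial scale. In short, the crude lower bound $\phi(\xi_v)\ge\phi(M)$ on the whole central region is too wasteful, and the quantile-coupling route as written cannot recover the rate claimed; the paper's characteristic-function approach (via \cite{H}) is genuinely needed here.
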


%This theorem extends...

Next, set
$$
S_{j,n}f=\sum_{k=j}^{j+n-1}f_k\circ T_j^{k-j}=\sum_{k=j}^{j+n-1}f_k(...,X_{k-1},X_k, X_{k+1},...).
$$
A key ingredient in the  proof of Theorem \ref{CLT} and Theorems  \ref{BE} and \ref{ThWass}  are the second part and third parts of following proposition, which we believe has its own interest.
\begin{proposition}\label{Mom prop}
(i) Let \eqref{mix} hold with some $p\geq q\geq1$ and suppose $\sup_{j}\|f_j\|_{j,q,p,\delta}<\infty$. Then
for $j\geq0$ and 
$n\!\in\!\bbN$,
$$
\|S_{j,n}f-\bbE[S_{j,n}f]\|_{L^q}\leq C_q\sqrt{n}
$$
for some constant $C_q>0$.
\vskip0.1cm
(ii) Let \eqref{mix} hold with $p=\infty$ and some $1\leq q\leq\infty$. Assume that
$\sup_{j}\|f_j\|_{j,\infty,\infty,\delta}<\infty$.  %Can I assume something about \|f_j\|_{j,\infty,s,\delta} instead? this would give the result under Assumptions \ref{Ass1} or \ref{Ass2}...
Then for every $2\leq b<\infty$ there is a  constant  $C_b$ such that for all $j\geq 0$ and 
$n\!\in\!\bbN$,
$$
\|S_{j,n}f-\bbE[S_{j,n}f]\|_{L^b}\leq C_b\left(1+\sqrt{\text{Var}(S_{j,n}f)}\right).
$$
\vskip0.1cm
(iii) In the (forward or reversed) martingale case and under either \eqref{mix} or \eqref{mix2} 
 there is a constant $C$ such that for all $j\geq 0$ and $n\in\bbN$,
\begin{equation}\label{4 Bound}
 \|S_{j,n}f\|_{L^4}\leq C\left(1+\|S_{j,n}f\|_{L^2}+\left(\beta_{j,n}\right)^{1/4}\right)  \end{equation}
where with $G_\ell=f_\ell^2-\bbE[f_\ell^2]$ and 
$$
\beta_{j,n}=\sum_{\ell=j}^{j+n-1}(\bbE[G_\ell^2]+\|G_\ell\|_{L^u})
$$
and $u$ is the conjugate exponent of $\max(q,p)$.
\vskip0.1cm
(iv)
 Under either \eqref{mix} or \eqref{mix2}  
for all $j$ and $n$ and $\delta\in(0,1)$ we have 
$$
\bbE\left[(S_{j,n}f)^4\right]\leq \sum_{k=j}^{j+n-1}\bbE[f_k^4]+C\left(\sum_{k=j}^{j+n-1}\bbE[f_k^2]\right)^2+C\sum_{k=j}^{j+n-1}\left(\|f_\ell\|_{L^4}^2+v_{k,2,\delta}(f_k^2)+\|f_k\|_{L^{3p}}^3+v_{k,q,\delta}(f_k^3)\right)
$$
where $C=C(R,\delta,f)=2(R+(1-\delta^{1/4})^{-1})\max_{j\leq k\leq j+n-1}(\|f_k\|_{L^4}^2+v_{k,2,\delta}(f_k^2)+\|f_k\|_{L^1}+v_{k,q,\delta}(f_k))$.
\end{proposition}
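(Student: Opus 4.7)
The plan is to prove the four parts of Proposition \ref{Mom prop} largely independently, using a common toolbox: (a) approximation of $f_j$ by $f_{j,r}:=\bbE[f_j\mid\cF_{j-r,j+r}]$ for an $r$ to be chosen, paying an error of $v_{j,\cdot,\delta}(f_j)\delta^r$, (b) the mixing inequality \eqref{mix1} applied after this approximation (so that the two conditional expectations are measurable with respect to well-separated $\sigma$-algebras), and (c) martingale moment inequalities (Burkholder/Rosenthal) in parts (iii)-(iv).

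For part (i), I would first do the case $q=2$ by expanding
\[
\|S_{j,n}f-\bbE[S_{j,n}f]\|_{L^2}^2=\sum_{i,k}\mathrm{Cov}(f_i,f_k),
\]
replacing $f_i, f_k$ by $f_{i,r}, f_{k,r}$ with $r=\lfloor|i-k|/3\rfloor$, so the two resulting $\sigma$-algebras are separated by a gap of order $|i-k|$, and the combination of \eqref{mix1} with the $v$-approximation gives covariances that decay exponentially in $|i-k|$. Summing produces the $O(n)$ variance bound, hence the $O(\sqrt{n})$ bound in $L^2$, and therefore in $L^q$ for $q\le 2$. For $2<q\le p$, one block-decomposes $[j,j+n-1]$ into blocks of logarithmic length, controls the individual block sums by the $L^q$ norm $\sup_j\|f_j\|_{L^q(\mu_j)}$, and then couples non-adjacent block sums with independent ones via \eqref{mix1} to apply a classical Rosenthal inequality.

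Part (ii) upgrades this to a variance-based $L^b$ bound under the boundedness assumption $\sup_j\|f_j\|_{j,\infty,\infty,\delta}<\infty$. The idea is Bernstein's big-block/small-block decomposition: the big-block sums are uniformly bounded and, by \eqref{mix} with $p=\infty$ together with the classical $\phi$-mixing coupling of Berkes-Philipp, can be approximated by an independent sequence whose variance is controlled by $\mathrm{Var}(S_{j,n}f)+O(1)$. Then Rosenthal's inequality for independent bounded summands delivers (ii).

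Part (iii) is the cleanest: Burkholder's inequality for the (forward or reverse) martingale $S_{j,n}f$ gives
\[
\|S_{j,n}f\|_{L^4}^4\le C\,\bbE\Bigl[\Bigl(\sum_{\ell=j}^{j+n-1}f_\ell^2\Bigr)^{\!2}\Bigr]
\le 2C\|S_{j,n}f\|_{L^2}^4+2C\,\bbE\Bigl[\Bigl(\sum_{\ell}G_\ell\Bigr)^{\!2}\Bigr],
\]
with $G_\ell=f_\ell^2-\bbE[f_\ell^2]$. The last term is estimated by the same covariance argument as in (i), but now the mixing inequality is applied with conjugate exponent $u$: expanding gives $\sum_\ell \bbE[G_\ell^2]$ on the diagonal and, off the diagonal, each covariance is bounded via \eqref{mix1} by a constant multiple of $\|G_k\|_{L^u}$ times a geometrically decaying factor, yielding $\beta_{j,n}$.

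For part (iv), without the martingale structure, I would expand $\bbE[(S_{j,n}f)^4]=\sum_{i_1\le i_2\le i_3\le i_4}c_{\bar i}\,\bbE[f_{i_1}f_{i_2}f_{i_3}f_{i_4}]$ and split according to which gap between consecutive indices is largest: apply the $v$-approximation then \eqref{mix1} across that largest gap to factor the expectation (picking up an exponentially small error), and iterate. This is the step I expect to be the main obstacle, since the ``remaining'' three-, two-, and one-point correlations each must be bounded by one of $\|f_k\|_{L^4}$, $v_{k,2,\delta}(f_k^2)$, $\|f_k\|_{L^{3p}}$ or $v_{k,q,\delta}(f_k^3)$, and the combinatorics of the decomposition must balance so that the diagonal terms produce $\sum_k\bbE[f_k^4]$, the doubly-paired terms produce $(\sum_k\bbE[f_k^2])^2$, and all cross terms are absorbed into the final sum with constant $C(R,\delta,f)$. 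Parts (ii) and (iii) should follow from the framework of part (i) plus one standard inequality each, so the serious bookkeeping is concentrated in (iv).
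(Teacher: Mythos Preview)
Your approach to parts (iii) and (iv) is essentially what the paper does (for (iv) the paper actually writes only the two-index expansion $\sum_{m<\ell}\bbE[f_m^af_\ell^b]$ and applies a covariance lemma, so your full four-index largest-gap decomposition is if anything more careful). Part (iii) matches the paper exactly: Burkholder plus a covariance bound on $\sum G_\ell$, the latter being the content of Proposition \ref{Thm 4.1'}.

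For parts (i) and (ii), however, you are missing the paper's main device: the \emph{martingale coboundary representation}. Via Lemmata \ref{Sinai} and \ref{Mart lemm} one writes $\tilde f_j=M_j+(\text{bounded coboundary})$ with $(M_j)$ a reverse martingale difference satisfying $\sup_j\|M_j\|_{j,q,p,\delta}<\infty$. Part (i) is then one line of Burkholder: $\|S_nM\|_{L^q}\le C\bigl(\sum_j\|M_j\|_{L^q}^2\bigr)^{1/2}\le C\sqrt n$. Part (ii) is an induction on $b=2^m$: Burkholder reduces $\|S_nM\|_{2^{m+1}}$ to $\|S_nQ\|_{2^m}^{1/2}$ with $Q_j=M_j^2$, the induction hypothesis applies to $\tilde Q_j$, and Proposition \ref{Thm 4.1'} closes the loop by bounding $\mathrm{Var}(S_nQ)$ in terms of $\mathrm{Var}(S_nf)$.

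Your direct route has two concrete gaps. In (i) for $q>2$ you propose to ``couple non-adjacent block sums with independent ones via \eqref{mix1}'', but \eqref{mix1} only bounds $\|\bbE[g\mid\cF_{j+n,\infty}]-\bbE g\|_{L^p}$; it is not a total-variation or $\phi$-mixing statement, and under the bare hypothesis $p\ge q$ you have no Berkes--Philipp-type coupling available. In (ii) you do have reverse $\phi$-mixing (since $p=\infty$), but the Bernstein big-block/small-block scheme produces in Rosenthal's inequality an extra term $\sum_j\|B_j\|_{L^b}^b$ of size $O(nL^{b-1})$ for blocks of length $L$; this is \emph{not} dominated by $(1+\mathrm{Var}(S_{j,n}f))^{b/2}$ when the variance grows sublinearly (or is bounded), which is exactly the regime the statement is designed to cover. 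The martingale representation avoids both problems because martingale increments are orthogonal, so the variance is additive and Burkholder adapts automatically to $\mathrm{Var}(S_{j,n}f)$.
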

Note then when $\max(q,p)=\infty$ then $u=1$ and then $\beta_{j,n}\leq C(1+\|S_{j,n}f\|_{L^2}^2)$ which is consistent with part (ii).
Like in Remark \ref{CLT Rem} we can prove a version of part (iii) without the martingale condition. However, we would have to replace $G_\ell$ by $M_\ell^2-\bbE[M_\ell^2]$, where $M_\ell$ is a reverse martingale difference which is cohomoologous to $f_\ell$, see Lemmata \ref{Sinai} and \ref{Mart lemm}. The proof of Proposition \ref{Mom prop} reveals that we can also get a version of (iii) with higher moments by applying successively an appropriate version of the Burkholder inequality \eqref{Burk}.  However, without some assumptions on the norms of $f_j$ the upper bounds we get are more complicated and they involve powers of expressions of the form $\sum_{\ell=j}^{n-1}\sum_{k\geq 0}\gamma^k\|f_{\ell-k}\|_{\ell-k,a,d,\delta}$ for appropriate $a,d$ and $0<\gamma<1$. Like in Remark \ref{CLT Rem} we believe that such expressions can be controlled under assumptions of the form $\|f_{j}\|_{j,a,d,\delta}\leq C\|f_k\|_{k,a,d,\delta}$ for $k\leq j$, but in order not to overload the paper we decided not to formulate such results.
Part (iv) is elementary but it allows to avoid using martingales at the expense of adding approximation coefficients to the upper bounds.

\begin{remark}
Using the main results in \cite{DavorYeor ASIP} and a block partition argument  we can get rates of order $O(\sigma_n^{1/2+\varepsilon})$ in the almost sure invariance principle (ASIP), that is we can couple $S_nf$ with a Brownian motion $B(t)$ such that $|S_nf-\bbE[S_nf]-B(\sigma_n^2)|=O(\sigma_n^{1/2+\varepsilon})$, almost surely. This implies the functional central limit theorem and the law of iterated logarithm, for example (see \cite{PS}). It also implies other limit laws like the Arcsine Law and the law of records, see \cite[Appendix C]{Sarig25}.
However, the rates  $O(\sigma_n^{1/2+\varepsilon})$ are suboptimal compared with the rates $O(\ln n)$ established recently for various stationary processes (see \cite{Korepanov Merlevede}). In our setup the optimal rates should be  $O(\ln\sigma_n)$, which seems  to require a different approach. Because of these reasons we decided only to remark on the above ASIP rates, and to address the problem of getting optimal rates elsewhere. 
\end{remark}

\subsection{Large and moderate deviations}
\subsubsection{Moderate deviations}
We begin with the following moderate deviations principle  with optimal scale.
\begin{theorem}\label{MDP}
Let \eqref{mix} hold with $p=\infty$ and some $1\leq q\leq p$. %Work Under Assumption \ref{DomAss}?
Assume that $\sup_j\|f_j\|_{j,\infty,\infty,\del}<\infty$, that
 $\mu_j(f_j)=0$ for all $j$ and that $\sig_n^2\geq cn$ for some $c>0$ and all $n$ large enough.
 Let $(a_n)$ be a sequence such that $a_n\to\infty$ such that $\lim_{n\to\infty}\frac{a_n}{\sqrt n}=\infty$
 but $a_n=o(n)$. Denote $s_n=a_n^2/n$.
Then for every Borel measurable set $\Gamma\subset\bbR$
$$
-\frac12\inf_{x\in \Gamma^o}x^2\leq \liminf_{n\to\infty}\frac{1}{s_n}\ln\bbP((S_{n}f/a_n)\in\Gamma)\leq \limsup_{n\to\infty}\frac{1}{s_n}\ln\bbP((S_{n}f/a_n)\in\Gamma)\leq -\frac12\inf_{x\in \overline{\Gamma}}x^2
$$
where $\Gamma^o$ is the interior of $\Gamma$ and $\overline{\Gamma}$ is it's closure. 
\end{theorem}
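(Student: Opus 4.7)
The plan is to apply the Gärtner--Ellis theorem, reducing the MDP to the computation of the limiting log-moment generating function at the moderate deviations scale. Setting $\la_n := a_n t/n$, one has $s_n^{-1}\ln\bbE[e^{s_n t S_n f/a_n}] = s_n^{-1}\ln\bbE[e^{\la_n S_n f}]$, and I observe that $\la_n \to 0$ while $\la_n^2 n = t^2 s_n \to\infty$: this is the standard moderate deviations window where a second-order Taylor expansion of the cumulant generating function governs the asymptotics, with cubic and higher terms becoming negligible.

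\textbf{Truncation and block decomposition.} First, replace each $f_j$ by $f_j^{(r_n)} := \bbE[f_j \mid \cF_{j-r_n,j+r_n}]$ with $r_n = C\ln n$ for $C$ large. Since $\|f_j - f_j^{(r_n)}\|_\infty \leq v_{j,\infty,\del}(f_j)\del^{r_n}$, the accumulated error is deterministically $O(n\del^{r_n}) = o(a_n)$ for $C$ large, hence negligible at the MDP scale. Then partition $\{0,\ldots,n-1\}$ into alternating big blocks $B_i$ of length $p_n$ and gaps $G_i$ of length $q_n := 2r_n+1$, so that consecutive truncated big-block sums $Y_i := \sum_{j\in B_i}f_j^{(r_n)}$ depend on coordinates in disjoint windows separated by $\geq q_n$. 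Choose $p_n\to\infty$ with $p_n^{1/2} = o(n/a_n)$ (feasible since $a_n = o(n)$); the $k_n\sim n/p_n$ big blocks leave a total gap contribution of size $\leq Ck_nq_n$, which is $o(a_n)$ for appropriate $p_n$ and hence negligible.

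\textbf{Log-MGF asymptotics.} By Theorem \ref{RPF}, $\vp_{q,\infty}(n)$ decays exponentially, say like $\gam^n$. Iterated conditioning, exploiting the $q_n$-separation between big blocks, yields
\[
\bbE\Big[e^{\la_n \sum_i Y_i}\Big] = \prod_{i=1}^{k_n}\bbE[e^{\la_n Y_i}]\cdot\bigl(1+O(k_n \gam^{q_n})\bigr),
\]
and the correction is $\exp(o(s_n))$ by choosing $C$ large enough. For each block, $|\la_n Y_i| \leq |\la_n|Cp_n \to 0$, so Taylor expansion gives
\[
\ln\bbE[e^{\la_n Y_i}] = \tfrac12 \la_n^2 \text{Var}(Y_i) + O\bigl(\la_n^3 \|Y_i-\bbE Y_i\|_{L^3}^3\bigr) = \tfrac12\la_n^2\text{Var}(Y_i) + O(\la_n^3 p_n^{3/2}),
\]
where the $L^3$-bound uses Proposition \ref{Mom prop}(i). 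Summing, using that cross-block covariances decay exponentially in $q_n$ to obtain $\sum_i \text{Var}(Y_i) = \sig_n^2(1+o(1))$, and noting $\la_n^3 k_n p_n^{3/2} = t^3 a_n^3 p_n^{1/2}/n^2 = o(s_n)$, I conclude
\[
\Lambda(t) := \lim_{n\to\infty} s_n^{-1}\ln\bbE[e^{\la_n S_n f}] = \tfrac12 t^2 V, \qquad V := \lim_n \sig_n^2/n.
\]
The Gärtner--Ellis theorem then yields the MDP with rate $I(x)=x^2/(2V)$, which matches the stated $x^2/2$ under the implicit normalization $V=1$; if only $\sig_n^2\geq cn$ is known one first passes to subsequences along which $\sig_n^2/n$ converges and then applies an approximation argument.

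\textbf{Main obstacle.} The delicate point is the trade-off in choosing $p_n$: the cubic cumulant error $O(\la_n^3 p_n^{3/2})$ demands $p_n$ not too large ($p_n^{1/2}=o(n/a_n)$), whereas identifying $\sum_i\text{Var}(Y_i)\sim\sig_n^2$ in the nonstationary setting requires $p_n$ large enough that mixing-induced cross-block covariances become negligible relative to $\sig_n^2$. Reconciling these needs a variance-partition argument (in the spirit of Assumption \ref{Stand MomAss}) together with the exponential decay of $\vp_{q,\infty}$, to ensure that the global variance aggregates from block variances up to error $o(\sig_n^2)$ uniformly across the moderate deviations window.
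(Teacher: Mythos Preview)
Your approach is genuinely different from the paper's and is largely sound, but let me compare the two and flag two soft spots.

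\textbf{Comparison with the paper.} The paper does not use a Bernstein big-block/small-block scheme at all. Instead it invokes the complex sequential Perron--Frobenius theorem (Corollary~\ref{Cor1}) and Lemma~\ref{Approx Lemma1} to write, for real $z$ near $0$,
\[
\ln\bbE[e^{zS_n g}]=\sum_{j=0}^{n-1}\Pi_j(z)+O(1),\qquad \Pi_j(z)=\ln\la_j(z),
\]
where each $\Pi_j$ is analytic with uniformly bounded derivatives. The MDP then follows from a termwise second-order Taylor expansion $\Pi_j(z)=\tfrac12\Pi_j''(0)z^2+O(z^3)$ and the identity $\sum_j\Pi_j''(0)=\sig_n^2+O(1)$, after which G\"artner--Ellis applies exactly as you do. The spectral route thus bypasses the choice of block sizes, the decoupling error, and the variance-aggregation issue you identify in your ``Main obstacle'': the decomposition into individual $\Pi_j$'s already plays the role of blocks of length one, and analyticity replaces the ad hoc cubic-moment control. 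Your approach, by contrast, is more elementary and does not rely on transfer operators, which is a virtue if one wants a proof independent of the RPF machinery.

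\textbf{Two gaps in your argument.} First, your claim that with gaps $q_n=2r_n+1$ the truncated block sums depend on coordinate windows ``separated by $\geq q_n$'' is off: the $r_n$-enlarged supports of consecutive $Y_i$'s are separated by only $q_n-2r_n=1$, so the mixing input you feed into the decoupling step is $\varpi_{q,\infty}(1)$, not $\varpi_{q,\infty}(q_n)$. You should take $q_n$ of order $(\text{const})\ln n$ strictly larger than $2r_n$; this is harmless for the later estimates since everything is logarithmic. Second, your endgame concerning the rate function is not a complete argument: under the stated hypothesis $\sig_n^2\geq cn$ alone, the limit $V=\lim_n \sig_n^2/n$ need not exist, and passing to subsequences does not by itself yield a single MDP with rate $x^2/2$. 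The paper's spectral approach faces the same issue (both methods produce $\Lambda(t)=\tfrac12 t^2\cdot \sig_n^2/n$ before taking a limit), so this is really a matter of the theorem's normalisation rather than a defect specific to your route; but you should not present the subsequence step as a resolution.
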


\begin{remark}
Other moderate type results can be proved using our methods. For instance using the martingale coboundary representation in Lemma \ref{Mart lemm} we can prove exponential concentration inequalities without growth assumptions on the variance, and using the method of cumulants and multiple correlation estimates  we can derive some moderate deviations type results when $\sig_n^2$ grows sub-linearly fast in $n$ (but faster than $n^\varepsilon$ for some $\varepsilon>0$) and other types of concentration inequalities.
However, in order not to overload the paper these results will not be formulated. Moreover, these result still seem to require some growth rates for $\sig_n^2$, and getting any type of  large deviations under the sole assumption that $\sigma_n\to\infty$ seem to require a different approach.
\end{remark}

\subsubsection{\textbf{Large deviations principles for Markov chains in random dynamical environments}}\label{LDP}
Let $(M,\cB,\bbP_0,\te)$ be an invertible ergodic probability preserving system. Let $\cD$ be a measurable space and let $\cX\subset M\times\cD$ be a measurable set such that its fibers
$\cX_\om=\{x\in\cD: (\om,x)\in\cX\}, \om\in M$ are measurable in $\om$.  For instance we can take $\cD$ to be a metric space and $\cX_\om$ to be random closed sets. Let $Q_\om(x,dy), x\in\cX_\om$  be a measurable collection of transition probabilities on $\cX_{\te\om}$. Let us define a Markov chain $(X_{\om,k})_{k\in\bbZ}$ with state spaces $\cX_{\te^k\om}$ by 
$$
\bbP(X_{\om,k+1}\in\Gamma|X_{\om,k}=x)=Q_{\te^k\om}(x,\Gamma).
$$
Define $\cY_{\om}$ to be the product $\prod_{k\in\bbZ}\cX_{\te^{k}\om}$. Let $T_\om:\cY_\om\to\cY_{\te\om}$ to be the left shift and denote by $\mu_\om$ the probability measure that $(X_{\om,k})_{k\in\bbZ}$  induces on $\cY_\om$. Denote 
$$
\varpi_{q,p}(n)=\text{ess-sup}_{\om\in\Om}\left(\sup_{j}\varpi_{q,p}(\cF_{-\infty,j,\om},\cF_{j+n,\infty,\om})\right).
$$
where $\cF_{k,\ell,\om}$ is the $\sig$-algebra generated by $X_{\om,m}$ for all finite $k\leq m\leq \ell$. In what follows we will always assume that $\varpi_{q,p}(n)\to$ for some $p,q$.
Let us take a measurable in $\om$ family of functions $f_\om:\cY_{\om}\to\bbR$ and let us consider random variables of the form 
$$
S_n^\om f=\sum_{j=0}^{k-1}f_{\te^j\om}\circ T_\om^j=\sum_{j=0}^{n-1}f_{\te^j\om}(...,X_{j-1,\om},X_{j,\om},X_{j+1,\om},...)
$$
where $T_\om^j=T_{\te^{j-1}\om}\cdots \circ T_{\te\om}\circ T_\om$. To address measurability of $f_\om$ with respect to $\om$ we may view $f_\om(x)$ as a restriction of a function $f(\om,x)$ on $M\times\cD^\bbZ$.

Let $\|\cdot\|_{\om,q,p,\delta}$ be the norm defined by
$$
\|f_\om\|_{\om,q,p,\delta}=\|f_\om\|_{L^p(\mu_\om)}+\sup_{r\geq 0}\delta^{-r}\left\|f_\om-\bbE[f_\om| X_{k,\om};|k|\leq r]\right\|_{L^q(\mu_\om)}.
$$
In Section \ref{RDS} we will prove the following result.
\begin{theorem}\label{VarRDS}%Formulate results under momement conditions on $\|f_\om\|_{\om,a,p,\delta}$...
Suppose that   $\varpi_{q,p}(n)\to0$ for some $p\geq q\geq 2$ or that \eqref{mix2} holds for some $q>p\geq 2$. Moreover, let us assume that for some $d>2$ and $\delta>0$ we have
 $\om\to \|f_\om\|_{\om,q,p,\delta}\in L^d(M,P_0)$. Then there exists $\Sigma\geq0$ such that for $\bbP_0$-a.a. $\om$ we have
$$
\lim_{n\to\infty}\frac 1n\text{Var}_{\mu_\om}(S_n^\om f)=\Sigma^2.
$$
Moreover, $\Sigma=0$ if and only if there exist measurable functions $H_\om:\cY_\om\to\bbR$ such that for $\bbP_0$-a.a. $\om$,
$$
f_\om=\mu_\om(f_\om)+H_{\te\om}\circ T_\om-H_\om,\, \mu_\om-\text{ a.s}
$$
In the above case we must have $\|H_\om\|_{\om,a,p,\delta^{1/2-\eta}}\in L^d(M,P_0)$ for all $0<\eta<1/2$.
\end{theorem}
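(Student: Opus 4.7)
Expand the variance into diagonal and off-diagonal parts,
\[
\mathrm{Var}_{\mu_\om}(S_n^\om f)=\sum_{j=0}^{n-1}\mathrm{Var}_{\mu_{\te^j\om}}(f_{\te^j\om})+2\sum_{k=1}^{n-1}\sum_{i=0}^{n-1-k}\phi_k(\te^i\om),
\]
where $\phi_k(\om):=\mathrm{Cov}_{\mu_\om}(f_\om,f_{\te^k\om}\circ T_\om^k)$. Under either branch of the hypothesis $\varpi_{q,p}(n)$ decays exponentially (via Theorem~\ref{RPF} in the $p\geq q$ case), so \eqref{mix1} and H\"older give $\|\phi_k\|_{L^1(\bbP_0)}\leq C\gam^k$ from $\|f_\om\|_{\om,q,p,\delta}\in L^d(\bbP_0)\subset L^2(\bbP_0)$. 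Applying Birkhoff's ergodic theorem to each fixed $k$, combined with a Cesaro tail estimate based on this uniform exponential bound, yields the a.s.\ limit $\Sigma^2=\int\mathrm{Var}_{\mu_\om}(f_\om)\,d\bbP_0(\om)+2\sum_{k\geq 1}\int\phi_k\,d\bbP_0$.

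For the ``coboundary $\Rightarrow\Sigma=0$'' direction, iterating the cohomology gives $S_n^\om f-\sum_{j<n}\mu_{\te^j\om}(f_{\te^j\om})=H_{\te^n\om}\circ T_\om^n-H_\om$, so (using the integrability supplied by the converse half of the theorem) $\mathrm{Var}(S_n^\om f)\leq 2\|H_\om\|_{L^2(\mu_\om)}^2+2\|H_{\te^n\om}\|_{L^2(\mu_{\te^n\om})}^2=o(n)$ a.s., by Birkhoff applied to the $L^1(\bbP_0)$-function $\om\mapsto\|H_\om\|_{L^2}^2$.

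The converse is the main content. Mirroring the ``conditioning on the future'' transfer-operator construction used in the proof of Theorem~\ref{Var them}, but applied to the random cocycle over $(M,\te)$, define
\[
H_\om:=-\sum_{k\geq 0}\Bigl(\bbE\bigl[f_{\te^k\om}\circ T_\om^k\bigm|\cF_{1,\infty,\om}\bigr]-\mu_{\te^k\om}(f_{\te^k\om})\Bigr).
\]
By \eqref{mix1}, the $k$-th summand has $L^p(\mu_\om)$-norm at most $\|f_{\te^k\om}\|_{\te^k\om,q,p,\delta}\varpi_{q,p}(k)$, so exponential decay of $\varpi_{q,p}$ together with the $L^d$ hypothesis and Minkowski give a.s.\ $L^p$-convergence of the series. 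Setting $M_\om:=f_\om-\mu_\om(f_\om)-H_{\te\om}\circ T_\om+H_\om$, a direct computation shows that $M_\om$ is a reverse martingale difference with respect to $\cF_{1,\infty,\om}$, so the shifted differences $M_{\te^j\om}\circ T_\om^j$ are pairwise $L^2$-orthogonal and
\[
\mathrm{Var}(S_n^\om f)=\sum_{j=0}^{n-1}\bbE[M_{\te^j\om}^2]+O\bigl(\|H_\om\|_{L^2}^2+\|H_{\te^n\om}\|_{L^2}^2\bigr).
\]
Dividing by $n$ and invoking Birkhoff yields $\Sigma^2=\int\bbE[M_\om^2]\,d\bbP_0(\om)$, so $\Sigma=0$ forces $M_\om\equiv 0$ $\bbP_0$-a.s., which is the desired coboundary identity.

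The regularity $\|H_\om\|_{\om,a,p,\delta^{1/2-\eta}}\in L^d(\bbP_0)$ follows by estimating $\|H_\om-\bbE[H_\om|\cF_{-r,r,\om}]\|_{L^p(\mu_\om)}$ through a split of the defining series at $k\sim r(1/2-\eta)$: for $k$ above the cut the summand already has size $\gam^k$, while for $k$ below the cut one uses $v_{\te^k\om,p,\delta}(f_{\te^k\om})\delta^{r-k}$ to replace the inner conditional expectation on $\cF_{1,\infty,\om}$ by its $\cF_{-r,r,\om}$-truncation. Balancing the two contributions yields the rate $(\delta^{1/2-\eta})^r$ times an $\om$-dependent prefactor controlled by $\sum_k\gam^{k/2}\|f_{\te^k\om}\|_{\te^k\om,q,p,\delta}$, which lies in $L^d(\bbP_0)$ by Minkowski; the slack $\eta>0$ absorbs polynomial corrections from the optimization. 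The main obstacle lies in this converse step: one must ensure that the random series defining $H_\om$ converges with the required regularity on a $\bbP_0$-full set and that the spectral mechanism underlying Theorem~\ref{Var them} propagates through the random cocycle of transfer operators $\cL_{\te^k\om}$, which is precisely where the quantitative mixing of Theorem~\ref{RPF} and the $L^d$-integrability of $\|f_\om\|_{\om,q,p,\delta}$ must interact to preserve integrability across every intermediate estimate.
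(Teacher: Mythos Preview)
Your series definition of $H_\om$ is the wrong way round and does not converge. You sum over \emph{forward} iterates $k\geq 0$ while conditioning on the fixed $\sigma$-algebra $\cF_{1,\infty,\om}$, but already in the one-sided case (say $f_\om(x)=x_0$) one has $f_{\te^k\om}\circ T_\om^k\in\cF_{1,\infty,\om}$ for every $k\geq 1$, so the $k$-th summand equals $\tilde f_{\te^k\om}\circ T_\om^k$, whose $L^2$-norm does not decay. The appeal to \eqref{mix1} is incorrect: that estimate bounds $\bbE[g|\cF_{j+n,\infty}]-\bbE[g]$ for $g$ measurable with respect to $\cF_{-\infty,j}$; here there is no gap between the support of $f_{\te^k\om}\circ T_\om^k$ and the conditioning $\sigma$-algebra in the positive direction, so no factor $\varpi_{q,p}(k)$ appears. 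The paper's construction goes the other way: one first reduces to one-sided functionals $g_\om$ on $\cZ_\om$ via the Sinai-type Lemma~\ref{Sinai1} (a step you omit entirely), and then sets $\chi_\om=\sum_{k\geq 1}\cL_{\te^{-k}\om}^k\tilde g_{\te^{-k}\om}=\sum_{k\geq 1}\bbE[\tilde g_{\te^{-k}\om}|\cF_{0,\infty,\om}]$, summing over \emph{backward} iterates. Theorem~\ref{RPF} then gives $\|\cL_{\te^{-k}\om}^k\tilde g_{\te^{-k}\om}\|_{\om,p,p,\delta}\leq C\gamma^k\|g_{\te^{-k}\om}\|_{\te^{-k}\om,q,p,\delta}$, and the $L^d(\bbP_0)$-integrability of the sum follows from Minkowski. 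With this $\chi_\om$ one verifies that $M_\om=\tilde g_\om+\chi_\om-\chi_{\te\om}\circ\tau_\om$ is a genuine reverse martingale difference, after which Birkhoff (in the form of the mean ergodic theorem) gives $\Sigma^2=\int\ka_\om(M_\om^2)\,d\bbP_0(\om)$.

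Your approach to the existence of $\Sigma^2$ via the covariance expansion and Birkhoff on each $\phi_k$ is a legitimate alternative to the paper's martingale route (the paper derives the limit directly from the martingale decomposition together with $\|\chi_{\te^n\om}\|_{L^2}=o(n^{1/2})$), and would work once the pointwise tail control is made precise; but your regularity argument in the last paragraph, being built on the same flawed $H_\om$, inherits the index error and needs to be redone with the backward-sum definition.
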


\begin{remark}
The case when $(X_j)$ is a stationary chain and $f_j=f$ does not depend on $j$ is included in the above setup by considering the case when $M$ is a singleton.    
\end{remark}
Now we are ready to formulate our local large deviations principle.
\begin{theorem}\label{LDP thm}
Suppose that $\varpi_{q,\infty}(n)\to 0$ for some $1\leq q\leq\infty$ and that $\text{ess-sup}\left(\|f_\om\|_{\om,\infty,\infty,\delta}\right)<\infty$  for some $\delta\in(0,1)$. %Need uniform boundedness since we are working with real perturbations...
If $\Sigma>0$ then there exists
$\varepsilon_0>0$ and a function $c:(-\varepsilon_0,\varepsilon_0)\to\bbR$ which is nonnegative, continuous, strictly
convex, vanishing only at $0$ and such that for $\bbP_0$-a.a. $\om$,
$$
\lim_{n\to\infty}\frac1n\ln\mu_\om(S_n^\om f-\mu_\om(S_n^\om f)>\varepsilon n)=-c(\varepsilon),\,\,\text{for all }\,\,\varepsilon\in(0,\varepsilon_0).
$$
\end{theorem}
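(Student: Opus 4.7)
The plan is to use the Nagaev--Guivarc'h method via random twisted transfer operators and derive the conclusion from a quenched Gärtner--Ellis theorem. The key object is the family of random complex-perturbed operators
$$\cL_{\om,z} g(\cdot) = \bbE\bigl[e^{z f_\om(\ldots,X_{0,\om},\ldots)} g(X_{0,\om}, X_{1,\om},\ldots)\,\big|\,(X_{k,\te\om})_{k\geq 0}=\cdot\bigr]$$
(with the path-dependence of $f_\om$ handled via finite-window approximation), viewed as bounded operators $\cB_{\om,q,\infty,\delta}\to\cB_{\te\om,q,\infty,\delta}$ depending analytically on $z$ in a complex neighborhood of $0$. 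Analyticity and the requisite Lasota--Yorke type bounds should follow from $\text{ess-sup}\|f_\om\|_{\om,\infty,\infty,\delta}<\infty$ combined with $\varpi_{q,\infty}(n)\to 0$ and the random Perron--Frobenius machinery of Theorem \ref{RPF}.

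Next, I would invoke a random multiplicative ergodic theorem for the products $\cL_\om^{(n,z)} = \cL_{\te^{n-1}\om,z}\circ\cdots\circ\cL_{\om,z}$, in the spirit of \cite{HennionAoP97,DavorCMP,DavorTAMS}. At $z=0$ the operators preserve probabilities, so the top Lyapunov exponent vanishes; for small real $z$, a quenched spectral gap (obtained through Birkhoff cones or projective metrics) gives a simple leading Oseledets direction that varies analytically in $z$, and hence a real-analytic function $\Lambda(z)$ on an interval $(-z_0,z_0)$ such that, for $\bbP_0$-a.a. $\om$,
$$\Lambda(z)=\lim_{n\to\infty}\frac1n\ln \bbE_{\mu_\om}\bigl[e^{z(S_n^\om f-\mu_\om(S_n^\om f))}\bigr].$$
Differentiating at $0$ yields $\Lambda(0)=\Lambda'(0)=0$ and $\Lambda''(0)=\Sigma^2>0$ by Theorem \ref{VarRDS}; continuity of $\Lambda''$ then gives a smaller neighborhood $(-z_1,z_1)$ on which $\Lambda$ is strictly convex.

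Finally, a local Gärtner--Ellis argument (as in \cite{Kifer1996}) converts the analyticity and strict convexity of $\Lambda$ into the desired one-sided local LDP with rate function $c(\varepsilon)=\sup_{z\in(-z_1,z_1)}(z\varepsilon-\Lambda(z))$ on some interval $(-\varepsilon_0,\varepsilon_0)$. The standard Legendre-transform calculus then shows that $c$ is nonnegative, continuous, strictly convex, vanishes only at $0$, and satisfies $\inf_{x>\varepsilon}c(x)=c(\varepsilon)$ on $(0,\varepsilon_0)$, which matches the claimed asymptotics of $\mu_\om(S_n^\om f-\mu_\om(S_n^\om f)>\varepsilon n)$.

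The main obstacle is establishing the quenched analytic perturbation theory for the top Lyapunov exponent $\Lambda(z)$. Analytic perturbation of isolated simple eigenvalues does not apply to random products, so one must construct a quenched spectral gap for $\cL_{\om,z}$ (e.g.\ via cone contraction) and verify that the resulting equivariant one-dimensional family of leading eigenfunctions depends analytically on $z$ with enough moment control in $\om$ to extract $\Lambda(z)$ almost surely and identify it with the limit of $\frac{1}{n}\ln\bbE_{\mu_\om}[e^{zS_n^\om f}]$. The extra complication that $f_\om$ depends on the \emph{entire} path of the chain forces the twisted operators to act on the path-dependent Banach spaces $\cB_{\om,q,\infty,\delta}$, but this is exactly the framework developed earlier in the paper for the CLT-rate results and should transport to the present quenched setting.
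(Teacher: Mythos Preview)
Your overall strategy---twisted transfer operators plus a quenched G\"artner--Ellis argument---is exactly what the paper does, and your identification of $\Lambda''(0)=\Sigma^2$ via Theorem~\ref{VarRDS} is correct. However, you take a heavier route than necessary in two places.

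First, you propose letting the twisted operators act on the full two-sided spaces $\cB_{\om,q,\infty,\delta}$ and dealing with the path-dependence of $f_\om$ by ``finite-window approximation''. The paper instead applies Sinai's Lemma (Lemma~\ref{Sinai}/\ref{Sinai1}) \emph{first} to replace $f_\om$ by a one-sided functional $g_\om$ with $\text{ess-sup}\|g_\om\|_{\om,\infty,\infty,\delta^{1/2}}<\infty$, so that the twisted operators $L_{\om,z}h=\cL_\om(e^{zg_\om}h)$ act on the one-sided spaces $\cB_{\om,\infty,\infty,\delta^{1/2}}$ and are uniformly analytic in $z$ by direct inspection. This is exactly the ``framework developed earlier in the paper'' you allude to, but invoking it up front removes your ``extra complication'' entirely.

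Second, and more substantively, you propose extracting $\Lambda(z)$ as the top Lyapunov exponent via a multiplicative ergodic theorem and then worrying about analytic dependence of Oseledets data on $z$---which, as you note, is delicate. The paper avoids this altogether: by Theorem~\ref{RPF} and \cite[Theorem~D.2]{DolgHaf PTRF 2} (Corollary~\ref{Cor1}), for each fixed $\om$ and small $|z|$ the operator $L_{\om,z}$ has a \emph{genuine} simple eigenvalue $\lambda_\om(z)$ with $L_{\om,z}h_\om^{(z)}=\lambda_\om(z)h_{\te\om}^{(z)}$, analytic in $z$ with uniformly bounded derivatives. Since in the random environment $\lambda_j(z)=\lambda_{\te^j\om}(z)$, Lemma~\ref{Approx Lemma1} and Birkhoff's ergodic theorem applied to $\om\mapsto\ln\lambda_\om(z)$ give $\Lambda(z)=\int\ln\lambda_\om(z)\,d\bbP_0(\om)$ directly, with analyticity in $z$ inherited pointwise. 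No Oseledets machinery is needed; your ``main obstacle'' dissolves once you work with the sequential eigenvalue rather than a Lyapunov exponent.
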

We note that without the assumption that $\Sigma>0$ by Theorem \ref{VarRDS} the sums $S_n^\om f$ are uniformly bounded, and so for all $n$ large enough $\mu_\om(S_n^\om f-\mu_\om(S_n^\om f)>\varepsilon n)=0$. This means that formally we get the result with $c(\varepsilon)=\infty$. We also refer to Remark \ref{ConvRem} for a short discussion about large deviations principles for Markov chains with transition probabilities $Q_j(x,dy)$ that converge as $j\to\infty$ in an appropriate sense to a given transition probability $Q(x,dy)$.

\section{Two sided shifts: reduction to one sided shifts and related results}\label{Reduce Section}
\subsection{Reduction to arrays of functions under Assumption \ref{Ass2}}\label{RedSec1}
\subsubsection{A (coordinate-wise) re-centering procedure}
Let us fix some $N\in\bbN$ and let $0\leq j<N$.
Let us define 
$$
g_{j,(N)}=g_{j,(N)}(X_{j-[c\ln N]},X_{j-[c\ln N]+1},...)=\bbE\left[f_j|X_k, 
\cF_{j-[c\ln N],\infty}\right].
$$
Then $\bbE[g_{j,(N)}]=\bbE[f_j]$.
The ideas presented in this section is, for a fixed $N$, to consider $g_{j,(N)}, j<N$ as functions on $\cZ_{j-[c\ln N]}=\prod_{k\geq j-[c\ln N]}\cX_k$.
 This will reduce the problem to triangular arrays of functions that depend only on the present and the future (i.e. the reduction is to one sided shifts).  However, unlike \eqref{g bound} below, this has a certain affect of the approximation coefficients $v_{j-[c\ln N],s,\delta}(g_{j,(N)})$ since $f_j$ is centered around $X_j$ and not $X_{j-[c\ln n]}$. This issue will be addressed in Lemma \ref{v g} below.

Next, since conditional expectations contract $L^u$ norms, for all $u\geq 1$ we have 
\begin{equation}\label{g bound}
\|g_{j,(N)}\|_{L^u}\leq\|f_j\|_{L^u}.    
\end{equation}
Moreover, 
\begin{equation}\label{g approx}
\|f_j-g_{j,(N)}\|_{L^u}\leq v_{j,u,\delta}(f_j)\delta^{[c\ln N]}\leq v_{j,u,\delta}(f_j)N^{c\ln \delta}.
\end{equation}
Consequently, for all $j$ and $m$ such that $j+m<N$, with $S_{j,m,N}g=\sum_{k=j}^{j+m-1}g_{k,(N)}$,  we have
\begin{equation}\label{Sg approx}
\|S_{j,m}f-S_{j,m, N}g\|_{L^u}\leq N^{[c\ln N] \delta}\sum_{k=j}^{j+m-1}v_{k,u,\delta}(f_j).    
\end{equation}
We therefore get the following result. 
\begin{lemma}\label{EST LEMMA}
%(i) Under Assumption \ref{Ass1} if $c=|\ln \delta|^{-1}$ then  for all $j$ and $m$ such that %$j+m<N$,   
%\begin{equation}\label{Sg approx1}
%\|S_{j,m}f-S_{j,m,N}g\|_{L^s}\leq \sup_{k}v_{k,s,\delta}(f_k)<\infty.
%\end{equation}
%\vskip0.1cm
%(ii)% 
Under Assumption \ref{Ass2}, if $c=|\ln \delta|^{-1}(\zeta+1)$ then  for all $j$ and $m$ such that $j+m<N$,  
\begin{equation}\label{Sg approx2}
\|S_{j,m}f-S_{j,m,N}g\|_{L^s}\leq 2c_0+1
\end{equation}
where $c_0$ and $\zeta$ are specified in Assumption \ref{Ass2}. 
\end{lemma}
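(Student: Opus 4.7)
The plan is to apply the estimate \eqref{Sg approx} directly, using the polynomial growth bound on the approximation coefficients coming from Assumption \ref{Ass2}, and then to check that the chosen value of $c$ is exactly the one needed so that the exponential decay factor $\delta^{[c\ln N]}$ cancels the polynomial growth.

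First, under Assumption \ref{Ass2} the hypothesis $\|f_k\|_{k,a,s,\delta}\leq c_0(k+1)^{\zeta}$ immediately yields
\[
v_{k,s,\delta}(f_k)\leq c_0(k+1)^{\zeta}
\]
for every $k\geq 0$, since $v_{k,s,\delta}(f_k)$ is one of the two summands defining the norm $\|\cdot\|_{k,a,s,\delta}$. Plugging this into \eqref{Sg approx} with $u=s$ gives
\[
\|S_{j,m}f-S_{j,m,N}g\|_{L^s}\leq c_0\,\delta^{[c\ln N]}\sum_{k=j}^{j+m-1}(k+1)^{\zeta}.
\]

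Next, since $j+m\leq N$, each factor satisfies $(k+1)^{\zeta}\leq N^{\zeta}$ and there are at most $m\leq N$ terms, so
\[
\sum_{k=j}^{j+m-1}(k+1)^{\zeta}\leq N^{\zeta+1}.
\]
Meanwhile, with the choice $c=|\ln\delta|^{-1}(\zeta+1)$ one has $c\ln\delta=-(\zeta+1)$, and the integer-part bound $[c\ln N]\geq c\ln N-1$ yields
\[
\delta^{[c\ln N]}\leq \delta^{-1}N^{c\ln\delta}=\delta^{-1}N^{-(\zeta+1)}.
\]

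Combining the last three displays,
\[
\|S_{j,m}f-S_{j,m,N}g\|_{L^s}\leq c_0\delta^{-1}N^{-(\zeta+1)}\cdot N^{\zeta+1}=c_0\delta^{-1},
\]
which is a constant independent of $N$, $j$, $m$. The only thing to check to obtain the precise constant $2c_0+1$ claimed in the statement is some mild bookkeeping: for all $N$ large enough the integer-part slack disappears (since $c\ln N$ tends to infinity) and one gets $\|S_{j,m}f-S_{j,m,N}g\|_{L^s}\leq c_0+o(1)\leq 2c_0+1$; for the finitely many small values of $N$ one uses the trivial bound $\|S_{j,m}f-S_{j,m,N}g\|_{L^s}\leq 2\sum_{k=j}^{j+m-1}\|f_k\|_{L^s}$ and adjusts constants (absorbed in the ``$+1$'').

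There is no real obstacle here beyond the choice of $c$: the heart of the argument is the exact balance between the exponential decay $\delta^{[c\ln N]}=N^{-(\zeta+1)}$ and the polynomial growth $\sum_{k<N}(k+1)^{\zeta}\lesssim N^{\zeta+1}$ forced by the allowed $j$-dependence of $\|f_j\|_{j,a,s,\delta}$; the rest is routine estimation and is the reason the particular logarithmic truncation depth $[c\ln N]$ is chosen in this way.
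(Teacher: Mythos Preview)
Your approach is exactly the one the paper takes: the lemma is stated there as an immediate consequence of \eqref{Sg approx} together with the growth bound $v_{k,s,\delta}(f_k)\leq c_0(k+1)^{\zeta}$ from Assumption \ref{Ass2}, and your computation $\delta^{[c\ln N]}\sum_{k<N}(k+1)^{\zeta}\leq \delta^{-1}N^{-(\zeta+1)}\cdot N^{\zeta+1}=c_0\delta^{-1}$ is the entire content.

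One small correction: your claim that ``for all $N$ large enough the integer-part slack disappears'' is not right. The ratio $\delta^{[c\ln N]}/\delta^{c\ln N}=\delta^{-\{c\ln N\}}$ depends on the fractional part of $c\ln N$, which does not converge as $N\to\infty$; it oscillates in $[1,\delta^{-1})$. So you do not get $c_0+o(1)$, only the uniform bound $c_0\delta^{-1}$. This is harmless for the rest of the paper, which only uses that $\sup_N\|S_{j,m}f-S_{j,m,N}g\|_{L^s}<\infty$ (see the reduction in Section \ref{Red CLT}); the specific constant $2c_0+1$ in the statement is cosmetic and the paper does not justify it either.
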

In Section \ref{Red CLT} we will see that this lemma is sufficient to deduce the optimal CLT rates for $S_Nf$ from the optimal CLT rates for $S_Ng=S_{0,N,N}g$.

Next let us obtain some estimates on $v_{j-[N\ln n],s,\delta}(g_{j,(N)})$.
\begin{lemma}\label{v g} 
Let $\eta\in(0,1)$. Then in the circumstances of Assumption \ref{Ass2} for every $0<w<1$ there are constants $C_w>0$ and $\delta_w\in(0,1)$ such that for all $0\leq j\leq N-1$ for  we have 
$$
v_{j-[c\ln N],s,\delta_w}(g_{j,(N)})=\sup_{r\geq1}\delta^{-\eta r}\|g_{j,(N)}-\bbE[g_{j,(n)}|\cF_{j-[c\ln N],j-[c\ln N]+r]}]\|_{L^s}\leq C_wN^{\zeta+w}.
$$
\end{lemma}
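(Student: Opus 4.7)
Write $m := [c\ln N]$ and, for $r \geq m$, set $r_0 := r - m$. The task is to control, uniformly in $r \geq 1$, the quantity $\delta^{-\eta r}\|g_{j,(N)} - \bbE[g_{j,(N)}\mid\cF_{j-m,j-m+r}]\|_{L^s}$. My strategy is a two-case split: a trivial bound when $r < m$, and for $r \geq m$ a comparison with the symmetric two-sided approximation $G_{r_0} := \bbE[f_j\mid \cF_{j-r_0,j+r_0}]$, which by Assumption \ref{Ass2} satisfies $\|f_j - G_{r_0}\|_{L^s} \leq c_0(j+1)^\zeta \delta^{r_0}$.

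The crucial step, and what allows $r$ to range all the way to infinity without the estimate collapsing, is the Markov-property identity
$$
\bbE[G_{r_0}\mid\cF_{j-m,\infty}] \;=\; \bbE[G_{r_0}\mid\cF_{j-m,j+r_0}].
$$
Indeed, $G_{r_0}$ depends only on $(X_{j-r_0},\dots,X_{j+r_0})$; of these, only $X_{j-r_0},\dots,X_{j-m-1}$ (in the case $r_0\geq m$) are not in $\cF_{j-m,\infty}$, and by the Markov property their conditional distribution given $(X_{j-m},X_{j-m+1},\dots)$ depends only on $X_{j-m}$. Hence $\bbE[G_{r_0}\mid\cF_{j-m,\infty}]$ is a function of $(X_{j-m},\dots,X_{j+r_0})$, and the same Markov computation shows it coincides with $\bbE[G_{r_0}\mid\cF_{j-m,j+r_0}]$ (when $r_0<m$ the identity is trivial, as $G_{r_0}$ is already $\cF_{j-m,j+r_0}$-measurable). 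Since $j+r_0 = j-m+r$, this common value is also $\cF_{j-m,j-m+r}$-measurable. Combined with the tower identity $\bbE[g_{j,(N)}\mid\cF_{j-m,j-m+r}] = \bbE[f_j\mid\cF_{j-m,j-m+r}]$, the $G_{r_0}$ pieces cancel and
$$
g_{j,(N)} - \bbE[g_{j,(N)}\mid\cF_{j-m,j-m+r}] \;=\; \bbE[f_j - G_{r_0}\mid\cF_{j-m,\infty}] - \bbE[f_j - G_{r_0}\mid\cF_{j-m,j-m+r}],
$$
so $L^s$-contractivity of conditional expectation gives $\|g_{j,(N)} - \bbE[g_{j,(N)}\mid\cF_{j-m,j-m+r}]\|_{L^s} \leq 2c_0(j+1)^\zeta \delta^{r-m}$ whenever $r \geq m$. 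For $r < m$ the trivial bound $\leq 2\|f_j\|_{L^s}\leq 2c_0(j+1)^\zeta$ suffices (using $a>s$).

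Multiplying by $\delta^{-\eta r}$, in either regime the supremum over $r$ is attained (up to a constant) at $r = m$ and is bounded by $2c_0(j+1)^\zeta \delta^{-\eta m}$. Since $m \leq c\ln N$ with $c = |\ln\delta|^{-1}(\zeta+1)$, one has $\delta^{-\eta m} \leq N^{\eta(\zeta+1)}$; combined with $(j+1)^\zeta\leq N^\zeta$ this gives $\leq 2c_0 N^{\zeta+\eta(\zeta+1)}$. Given $w\in(0,1)$, setting $\eta := w/(\zeta+1)$ (so that $\delta_w := \delta^\eta$) yields the desired bound with $C_w = 2c_0$. The main subtlety is precisely the regime $r \gg m$: the naive approximation restricted to $r' \leq m$ would yield error $\gtrsim \delta^m$, which combined with the factor $\delta^{-\eta r}$ would diverge; the Markov identity above is what makes the choice $r_0 = r - m$ of arbitrary size permissible and saves the argument.
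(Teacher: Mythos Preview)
Your proof is correct and follows the same overall strategy as the paper: split according to the size of $r$, use the trivial bound for small $r$, and for large $r$ invoke the Markov property to make the two conditional expectations of a symmetric approximant $\bbE[f_j\mid\cF_{j-\cdot,j+\cdot}]$ coincide, leaving only terms controlled by $v_{j,s,\delta}(f_j)$.

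The difference lies in the choice of approximation radius. The paper uses radius $\eta r$ (a fixed fraction of $r$), which forces the case boundary at $r(1-\eta)=[c\ln N]$ and then requires an additional optimisation over an auxiliary parameter $\beta=\delta^{1-v}$ to balance the two regimes. Your choice $r_0=r-m$ exactly matches the right endpoint $j+r_0=j-m+r$ of the conditioning window, so the Markov cancellation works for \emph{every} $r\geq m$ and gives the sharp factor $\delta^{r-m}$; this places the case boundary at $r=m$ and makes the supremum over $r$ transparent (it is attained at $r=m$ in both regimes, yielding $\delta^{-\eta m}$ directly). The upshot is a cleaner argument with explicit constants $C_w=2c_0$ and $\delta_w=\delta^{w/(\zeta+1)}$, avoiding the paper's two-parameter juggling. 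Conceptually nothing new is introduced, but the bookkeeping is tighter.
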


\begin{proof}[Proof of Lemma \ref{v g}]
Denote $\cF_{a,b}=\cF_{[a], [b]}$ for all $a,b$. Let $1>\beta\geq \delta$. 
Let $r\geq1$. If $j-[c\ln N]+r\geq j+\eta r$, namely $r(1-\eta)\geq [c\ln N]$ then
 $$
\|g_{j,(N)}-\bbE[g_{j,(N)}|\cF_{j-[c\ln N],j-[c\ln N]+r]}]\|_{L^s}\leq \|f_j-\bbE[f_j|\cF_{j-\eta r,j+\eta r}]\|_{L^s}\leq  v_{j,s,\delta}(f)\delta^{\eta r}\leq v_{j,s,\delta}(f)\beta^{r\eta}.
 $$
 On the other hand, if $j-[c\ln N]+r<j+\eta r$, then noting that in both cases $a\geq s$ we get
 $$
\|g_{j,(N)}-\bbE[g_{j,(N)}|\cF_{j-[c\ln N],j-[c\ln N]+r]}]\|_{L^s}\leq 2\|f_j\|_{L^s}\leq 2\|f_j\|_{L^s}\beta^{\eta r}\beta^{-\eta[c\ln N](1-\eta)}=\beta^{\eta r}\delta^{-\eta[c\ln N](1-\eta)}\left(\frac{\delta}{\beta}\right)^{-\frac{\eta [c\ln N]}{1-\eta}}
$$
$$
\leq 2\delta^{-\eta(1-\eta)}\|f_j\|_{L^s}\beta^{\eta r}N^{\frac{(1+\zeta)\eta}{(1-\eta)}}\left(\frac{\delta}{\beta}\right)^{\frac{\eta [c\ln N]}{1-\eta}}
 $$
 where we used that $\delta^{-c\ln N}=N^{(1+\zeta)}$. Next,
 let us take $\beta=\delta^{1-v}$ for $0\leq v<1$. Then
 $$
\left(\frac{\delta}{\beta}\right)^{\frac{\eta [c\ln N]}{1-\eta}}=\delta^{\frac{v\eta [c\ln N]}{1-\eta}}\leq \delta^{\frac{v\eta c\ln N}{1-\eta}}=N^{-(1+\zeta)\eta v}.
 $$
 Using that $\|f_j\|_{L^s}\leq CN^{\zeta}$ we get that when 
 $j-[c\ln N]+r<j+\eta r$, then 
 $$
\|g_{j,(n)}-\bbE[g_{j,(N)}|\cF_{j-[c\ln N],j-[c\ln N]+r]}]\|_{L^s}\leq C_\eta \beta^{r\eta}N^{\zeta+\frac{(1+\zeta)\eta}{1-\eta}-(1+\zeta)\eta v}=C_\eta \beta^{r\eta} N^{\zeta+(1+\zeta)\eta(\frac{1}{1-\eta}-v)}.
 $$
 Let $w>0$ and let $\eta$ small enough and $v$ close enough to $1$ so that $\frac{1}{1-\eta}-v<w$. Then,  when 
$j-[c\ln N]+r<j+\eta r$, 
 $$
\|g_{j,(n)}-\bbE[g_{j,(N)}|\cF_{j-[c\ln N],j-[c\ln N]+r]}]\|_{L^s}\leq C_\eta \beta^{r\eta} N^{\zeta+(1+\zeta)\eta w}\leq  N^{\zeta+w}
$$
assuming that $(1+\zeta)\eta<1$. This completes the proof of the lemma.
 %Taking 
 %, taking into account that $\|f_j\|_{L^s}\leq CN^{\zeta}$.
 %To prove part (i) we take $v=0$, $\eta=\eta_N$ such that $\frac{\eta}{1-\eta}=\frac{1}{\ln N}$ (namely $\eta=\frac{1}{1+\ln N}$). Then $N^{\frac{\eta(1+\zeta)}{(1-\eta)}}=N^{\frac{1+\zeta}{\ln N}}=e^{1+\zeta}$.
\end{proof}

Using Lemma \ref{v g} the strategy of the proof of Theorems \ref{BE} and \ref{ThWass} %under Assumption \ref{Ass1} is to use a version of the spectral approach for the triangular array $g_{j,(N)}, j<N$ with respect to the time dependent norms $\|\cdot\|_{j,q,p,\delta_N}, j<N$, see Section \ref{Complex RPF}. Using Lemma \ref{v g} the strategy of the proof of Theorems \ref{BE} and \ref{ThWass} 
under Assumption \ref{Ass2} is to use the spectral approach with the norms $\|\cdot\|_{j,q,p,\delta^\eta}, j<N$ for a fixed sufficiently small $\eta\in(0,1)$. To overcome the problem that $\|g_{j,(N)}\|_{j-[c\ln N],q,p,\delta^\eta}$ is not bounded we replace  $g_{j,(N)}$ by $\tilde g_{j,(N)}=N^{-\zeta-w}g_{j,(N)}, w>0$. However, when $a<\infty$ in Assumption \ref{Ass2} this is still not enough to get the desired smoothness of the perturbations in the parameter $t$ of the perturbations of the operators $\cL_j$, since our approach of verifying it requires boundedness of the functions $\tilde g_{j,(N)}$ (note that these perturbations are given by $\cL_{j,t,(N)}(h)=\bbE[he^{it\tilde g_{j,(N)}}], t\in\bbR$). Because of that in Section \ref{TruncSec} we will first truncate $\tilde g_{j,N}$ in a certain way that ensures that the $L^\infty$ norm is of order $N^\te$ for some $\te>\zeta$. Then we take $w$ small enough and divide by $N^{\te}$ to get uniform boundedness in the $\|\cdot\|_{j,q,p,\delta^\eta}$ norms, which will allow us to get the desired smoothness.
When $a=\infty$ we can just use $\tilde g_{j,(N)}$ defined above that since
$\sig_N\geq c_1N^{\varepsilon+\zeta}$ and taking into account Lemma \ref{EST LEMMA} (ii) with $w<\varepsilon$ we still get that 
$$
\lim_{N\to\infty}\text{Var}\left(\sum_{j=0}^{N-1}\tilde g_{j,(N)}\right)=\infty
$$
which reduces the problem to a triangular array of functions which diverging variances. However, such normalization causes the rates to be $O(\sig_N^{-c(a,\zeta)})$ for some $c(a,\zeta)<\frac12$ such that $\lim_{a\infty, \zeta\to 0}c(a,\zeta)=\frac12$.

\subsubsection{A truncation argument}\label{TruncSec}
Let Assumption \ref{Ass2} hold with $a<\infty$ and let $b$ be defined by $1/k_0=1/a+1/b$ (note that $b=\frac{3k_0}{a-k_0}$). Let $\zeta,\varepsilon,c_0,c_1$ be like in that assumption.
Let us fix some $M>0$. Define a function $G_M:\bbR\to\bbR$ as follows. Set
$G_M(x)=x$ if $|x|\leq M$, set $G_m(x)=0$ if $|x|\geq 2M$ and on $[-2M,-M]$,\, let $G_M$ identify with the linear function connecting $(-2M,0)$ and $(-M,-M)$, while on $[M,2M]$ let it identify with the linear function connecting $(2M,0)$ and $(M,M)$. Then 
\begin{equation}\label{G M cond}
|G_M(x)-G_M(y)|\leq |x-y|\, \text{ and }\, |G_M(x)-x|\leq \bbI(|x|\geq M)|x|.    
\end{equation}
Let us take $M_j=(j+1)^{d}$ where $d=(b/a)(1+\zeta)+\zeta+\varepsilon/2-\te$ for some $0<\te<\varepsilon/2$. 
 Let $\bar g_{j,(N)}=G_{M_j}\circ g_{j,(N)}$ for $j<N$. Then for every $\eta\in(0,1)$
$$
v_{j-[c\ln N],s,\delta^\eta}(\bar g_{j,(N)})\leq v_{k,s,\delta}(g_{j,(N)})\leq CN^{\zeta+\frac{\eta(1+\zeta)}{1-\eta}}
$$
since $G_{M_j}$ is Lipschitz continuous with constant $1$. Note that
 $\|\bar g_{j,(N)}\|_{L^\infty}\leq M_j=(j+1)^d$. 
 Now, by the H\"older and the Markov inequalities and that $|G_M(x)-x|\leq \bbI(|x|\geq M)|x|$ we get that
$$
\|\bar g_{j,(N)}-g_{j,(N)}\|_{L^{k_0}}\leq \|g_{j,(N)}\bbI(g_{j,(N)}\geq M_j)\|_{L^3}\leq \|f_j\|_{L^a}\|f_j\|_{L^a}^{a/b}M_j^{-a/b}
$$
where we used that $\|g_{j,(N)}\|_{L^u}\leq \|f_j\|_{L^u}$ for all $u\geq1$.
Now, since $\|f_j\|_{L^a}\leq c_0(j+1)^{\zeta}$ and  $ad/b>1+\zeta+a\zeta/b$, using also \ref{Sg approx2} we get the following result.
\begin{lemma}
For all $j,m$ such that $j+m<N$ we have  
\begin{equation}\label{Sg approx 3}
 \|S_{j,m}f-S_{j,m}\bar g\|_{L^3}\leq c_0\sum_{j=1}^{n}j^{\zeta+a\zeta/b}j^{-ad/b}\leq C_1    
\end{equation}
for some constant $C_1$. 
\end{lemma}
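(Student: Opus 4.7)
The plan is to split the difference as
$$S_{j,m}f-S_{j,m}\bar g=(S_{j,m}f-S_{j,m,N}g)+(S_{j,m,N}g-S_{j,m}\bar g)$$
and handle the two pieces by completely different means. The first piece is already controlled by Lemma \ref{EST LEMMA}: since $s\geq 3$ and we work under a probability measure we have $\|\cdot\|_{L^3}\leq\|\cdot\|_{L^s}$, so \eqref{Sg approx2} immediately gives the uniform bound $\|S_{j,m}f-S_{j,m,N}g\|_{L^3}\leq 2c_0+1$, independent of $j,m,N$.

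For the second piece I would use the triangle inequality summand-by-summand, reducing matters to estimating $\|g_{k,(N)}-\bar g_{k,(N)}\|_{L^3}$ for each individual $k$ in the range $j\leq k<j+m$. Here the key step is a standard H\"older truncation bound applied with the conjugate pair $(a/3,b/3)$, which is legitimate since $1/3=1/a+1/b$. Using the pointwise inequality $|G_{M_k}(x)-x|\leq |x|\bbI(|x|\geq M_k)$ from \eqref{G M cond} together with Markov's inequality and \eqref{g bound}, one obtains
$$\|g_{k,(N)}-\bar g_{k,(N)}\|_{L^3}\leq \|g_{k,(N)}\bbI(|g_{k,(N)}|\geq M_k)\|_{L^3}\leq\|f_k\|_{L^a}^{1+a/b}M_k^{-a/b},$$
so substituting $\|f_k\|_{L^a}\leq c_0(k+1)^\zeta$ and $M_k=(k+1)^d$ yields the summand bound $c_0^{1+a/b}(k+1)^{\zeta+\zeta a/b-ad/b}$.

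The only thing that really needs checking, and essentially the only place where the precise value of $d$ matters, is that the resulting exponent is strictly less than $-1$ so that the series converges uniformly in $m$. Plugging in $d=(b/a)(1+\zeta)+\zeta+\varepsilon/2-\theta$ with $0<\theta<\varepsilon/2$ gives exactly
$$\zeta+\zeta a/b-ad/b=-1-\frac{a}{b}\Big(\frac{\varepsilon}{2}-\theta\Big)<-1,$$
so $\sum_{k\geq 0}(k+1)^{\zeta+\zeta a/b-ad/b}$ converges to a finite constant. Combining this with the contribution from the first piece produces the desired constant $C_1$. There is no real conceptual obstacle here; the argument is essentially routine tracking of exponents, and the mild ``hard part'' is simply verifying that the definition of $d$ was arranged to make the tail summable, which it was by design.
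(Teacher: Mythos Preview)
Your proposal is correct and follows essentially the same approach as the paper: split via $(S_{j,m}f-S_{j,m,N}g)+(S_{j,m,N}g-S_{j,m}\bar g)$, bound the first piece by Lemma \ref{EST LEMMA}/\eqref{Sg approx2}, and for the second piece apply the H\"older--Markov truncation estimate termwise together with the summability coming from $ad/b>1+\zeta+a\zeta/b$. Your explicit verification that the exponent equals $-1-(a/b)(\varepsilon/2-\theta)<-1$ is exactly the inequality the paper invokes.
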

Thus, as will be proven in Section \ref{Red CLT}, using the above Lemma it is enough to prove Theorems \ref{BE} and \ref{ThWass} for the sums $S_n\bar g=\sum_{j=0}^{n-1}\bar g_{j,(N)}$. Let us take $w<\varepsilon/4$  so that $\zeta+w<d$ and define 
$$
\tilde g_{j,(N)}=N^{-d} \bar g_{j,(N)}.
$$
Then there is a constant $C_2>0$ such that for all $N$ and $j<N$ we have 
$$
\sup_{j,N}\|\tilde g_{j,(N)}\|_{j-[c\ln N],\infty,s,\delta^{\eta}}<\infty
$$
Using \eqref{Sg approx 3} we see that there is constant $C_0>0$ such that for all $N$ large enough we have
$$
\text{Var}\left(\sum_{j=0}^{N-1}\tilde g_{j,N}\right)\geq C\sig_N^2 N^
{-2(b/a)(1+\zeta)-2\zeta-\varepsilon+2\te}\geq Cc_1 N^{2(b/a)(1+\zeta)+2\zeta+\varepsilon}\geq Cc_1 N^{2\te}\to \infty.
$$ 

\subsection{Reduction to arrays of functions under Assumption \ref{Ass1.1}}\label{RedSec2}
Let us fix some $n$. Let us define $B_{j,n}=B_j$ if $j<k_n$ and $B_{j,k_n}$ to be the union of $B_{k_n}$ and the part of $B_{k_n+1}$ that is contained in $[1,n]$. Then since we have uniform decay of correlations by taking $A$ large enough we still get
$$
\frac12 A\leq \|S_{B_{j,n}}f\|_{L^2}\leq 2A.
$$
Let $Y_{j,n}=S_{B_{j,n}}f$. Then for $j<k_n$ we have $Y_{j,n}=Y_j=S_{B_j}f$ and
$$
S_n=\sum_{j=1}^{k_n}Y_{j,n}.
$$
\subsubsection{A re-centering procedure}
Write $B_{j}=\{a_j,a_{j}+1,...,b_j\}$.
Let us take some $c>0$ and set $\bar Y_{j,n}=\bbE[Y_{j,n}|\cF_{a_{j-c\ln \sigma_n},\infty}]:=F_{j,n}(X_{j-[c\ln n]},X_{j-[c\ln\sigma_n+1},...)$. Then if $c$ is large enough, using that $a_{j-m}\leq a_j-m, m\geq0$ we have
$$
\sup_{j,n}\|\bar Y_{j,n}-Y_{j,n}\|_{L^p}\leq\sum_{a_j\leq \ell\leq b_j}\|f_\ell-\bbE[f_\ell|\cF_{a_j-[c\ln n],\infty}]\|_{L^p}\leq C\delta^{c\ln \sigma_n}\leq \sigma_n^{-2}.
$$
Let 
$$
\bar S_n=\sum_{j=0}^{n-1}\bar Y_{j,n}.
$$
Then 
\begin{equation}\label{Approx spec}
\sup_n\sum_{j=1}^{k_n}\|\bar Y_{j,n}-Y_{j,n}\|_{L^s}<\infty.   
\end{equation}
In particular,
\begin{equation}\label{In part}
\sup_n\|S_n-\bar S_n\|_{L^s}<\infty.    
\end{equation}
Thus, as will be proven later on, it is enough to obtain optimal CLT rates for $S_n$ by using rates for $\bar S_n$.

Next, using that $\sup_{\ell}\max_{B\subset B_\ell}\|S_{B}\|_{L^p}<\infty$ and the contraction of conditional expectations we see that 
\begin{equation}\label{ap}
 \sup_{j,n}\|\bar Y_{j,n}\|_{L^p}<\infty.   
\end{equation}
Next, we need the following result. Let us view $\bar Y_{j,n}$ as a function on the space $\prod_{k\geq a_{j-[c\ln \sig_n]}}\cX_k$. 
Let $\Upsilon_j=(X_{k})_{k\in B_j}$. Then we can view $\bar Y_{j,n}$ as a function of the path of $\Upsilon_m$, starting from $m=a_{j-[c\ln\sig_n]}$. Arguing like in the proof of Lemma \ref{v g} we get the following result.
\begin{lemma}\label{v g 2}  
Let $\eta\in(0,1)$. Then in the circumstances of Assumption \ref{Ass1.1} for every $0<w<1$ there are constants $C_w>0$ and $\delta_w\in(0,1)$ such that for all $0\leq j\leq n-2$ for  we have 
$$
v_{a_{j-[c\ln n]},s,\delta_w}(\bar Y_{j,n})=\sup_{r\geq1}\delta^{-\eta r}\|Y_{j,n}-\bbE[Y_{j,n}|\cF_{a_{j-[c\ln N]}},b_{j+[c\ln\sig_n]+r]}]\|_{L^s}\leq C_w\sig_n^{w}.
$$
\end{lemma}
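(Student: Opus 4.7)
The plan is to mirror the proof of Lemma \ref{v g}, lifting the coordinate-level argument to the block level. Denote the conditioning $\sigma$-algebra by $\cG_r:=\cF_{a_{j-[c\ln n]},\,b_{j+[c\ln\sig_n]+r}}$. The key observation replacing the coordinate arithmetic of Lemma \ref{v g} is that since the blocks $B_\ell=\{a_\ell,\dots,b_\ell\}$ are pairwise disjoint consecutive intervals of integers with $|B_\ell|\geq 1$, one has $a_j-a_{j-m}\geq m$ and $b_{j+m}-b_j\geq m$ for every $m\geq 0$. Consequently, for every $\ell\in B_{j,n}$, the symmetric conditioning radius around $\ell$ that sits inside the window of $\cG_r$ is at least $\min([c\ln n],\,[c\ln\sig_n]+r)$.

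With an auxiliary parameter $\beta=\delta^{1-v}\geq\delta$ and a small $\eta\in(0,1)$, I split into two regimes in direct parallel with Lemma \ref{v g}. In the regime $r(1-\eta)\geq [c\ln n]$, the conditioning radius around each $\ell\in B_{j,n}$ exceeds $\eta r$, so triangle inequality together with the definition of $v_{\ell,s,\delta}$ yields
$$
\|Y_{j,n}-\bbE[Y_{j,n}|\cG_r]\|_{L^s}\leq\sum_{\ell\in B_{j,n}}v_{\ell,s,\delta}(f_\ell)\,\delta^{\eta r}\leq C\,|B_{j,n}|\,\delta^{\eta r},
$$
using $\sup_\ell v_{\ell,s,\delta}(f_\ell)<\infty$ from Assumption \ref{Ass1.1}. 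In the complementary regime $r(1-\eta)<[c\ln n]$, the trivial bound $\|Y_{j,n}-\bbE[Y_{j,n}|\cG_r]\|_{L^s}\leq 2\|Y_{j,n}\|_{L^s}$ is manipulated via the identity $1=\beta^{\eta r}\beta^{-\eta r}$ and the choice $\beta=\delta^{1-v}$ exactly as in Lemma \ref{v g}: writing $\beta^{-\eta r}\leq\beta^{-\eta[c\ln n]/(1-\eta)}$ and factoring out $\beta^{\eta r}$ produces a polynomial-in-$n$ factor whose exponent can be made smaller than any prescribed $w>0$ by choosing $\eta$ small and $v$ close to $1$.

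Combining the two cases and setting $\delta_w=\beta^\eta$ yields $\delta_w^{-r}\|Y_{j,n}-\bbE[Y_{j,n}|\cG_r]\|_{L^s}\leq C_w\sig_n^w$, provided that both $|B_{j,n}|$ and $\|Y_{j,n}\|_{L^s}$ admit polynomial-in-$\sig_n$ bounds with arbitrarily small exponent. The main technical obstacle not present in Lemma \ref{v g} is precisely establishing such polynomial bounds: although each block has variance in $[A,2A]$, the length $|B_\ell|$ is not a priori uniformly bounded. I would obtain these bounds by combining the uniform control $\sup_\ell\max_{B\subset B_\ell}\|S_B\|_{L^p}<\infty$ from Assumption \ref{Ass1.1} with the moment estimates of Proposition \ref{Mom prop} and an interpolation between $L^p$ and $L^s$, which together absorb the growth of $|B_{j,n}|$ and $\|Y_{j,n}\|_{L^s}$ into the arbitrarily small exponent~$w$; a minor bookkeeping adjustment handles the boundary case $j=k_n$, where $B_{j,n}$ may overlap $B_{k_n+1}$.
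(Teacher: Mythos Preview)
Your approach is the paper's approach (the paper simply says ``arguing like in the proof of Lemma \ref{v g}''), and your two-regime split and choice of $\beta=\delta^{1-v}$ are exactly right. However, there is a genuine gap in your first-regime estimate, and your proposed fix via interpolation and Proposition~\ref{Mom prop} does not close it.

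The problem is that under Assumption~\ref{Ass1.1} one only has $\sigma_n\to\infty$; there is no polynomial relation between $n$ and $\sigma_n$, so $|B_{j,n}|$ (which can be as large as $n$) cannot be ``absorbed into the arbitrarily small exponent $w$'' on $\sigma_n$. Concretely, your first-regime bound after multiplying by $\beta^{-\eta r}$ is $C|B_{j,n}|(\delta/\beta)^{\eta r}=C|B_{j,n}|\delta^{v\eta r}$, and at the boundary $r\approx [c\ln\sigma_n]/(1-\eta)$ this is of order $|B_{j,n}|\,\sigma_n^{-cv\eta|\ln\delta|/(1-\eta)}$, which is \emph{not} $O(\sigma_n^w)$ when $|B_{j,n}|$ is large.

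The clean fix is that the crude bound $\sum_{\ell\in B_{j,n}}\delta^{\eta r}=|B_{j,n}|\delta^{\eta r}$ is unnecessarily wasteful: the coordinate-level conditioning radius for $\ell=a_j+i$ is at least $m+\min(i,\,|B_{j,n}|-1-i)$ (where $m$ is the minimal radius, since neighbouring blocks each contribute at least one coordinate), so
\[
\sum_{\ell\in B_{j,n}}\|f_\ell-\bbE[f_\ell|\cG_r]\|_{L^s}\ \le\ C\sum_{i=0}^{|B_{j,n}|-1}\delta^{m+\min(i,\,|B_{j,n}|-1-i)}\ \le\ \frac{2C}{1-\delta}\,\delta^{m},
\]
a bound independent of $|B_{j,n}|$. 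With this in hand the first regime goes through verbatim as in Lemma~\ref{v g} with no block-length factor, and your moment/interpolation detour is not needed. (A similar remark applies to $\|Y_{j,n}\|_{L^s}$ in the second regime: the uniform bound $\sup_\ell\max_{B\subset B_\ell}\|S_B\|_{L^p}<\infty$ from Assumption~\ref{Ass1.1} is what is actually available, so the trivial bound should be taken in $L^p$ rather than $L^s$.)
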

This lemma shows that upon replacing the chain $(X_j)$ with the new chain $(\Upsilon_j)$ (which inherits the mixing properties of $(X_j)$) we can consider arrays of one sided functionals of  $(\Upsilon_j)$ centered at $a_{j-[c\ln\sig_n]}$.
\subsubsection{A truncation argument}
As before, let  $G_M:\bbR\to\bbR$ be defined as follows. Set
$G_M(x)=x$ if $|x|\leq M$, set $G_m(x)=0$ if $|x|\geq 2M$ and on $[-2M,-M]$,\, let $G_M$ identify with the linear function connecting $(-2M,0)$ and $(-M,-M)$, while on $[M,2M]$ let it identify with the linear function connecting $(2M,0)$ and $(M,M)$. 

Let us take some  $M_n>1$ and let $\tilde Y_{j,n}=G_{M_n}(\bar Y_{j,n})$. 
  the H\"older and the Markov inequalities and since $|G_M(x)-x|\leq \bbI(|x|\geq M)|x|$ we get that
$$
\|\tilde Y_{j,n}-\bar Y_{j,n}\|_{L^3}\leq \|\bar Y_{j,n}\bbI(\bar Y_{j,n}\geq M_n)\|_{L^3}\leq \|S_{B_{j,n}f}\|_{L^p}\|S_{B_{j,n}f}\|_{L^a}^{p/a}M_n^{-p/a}\leq CM_n^{-p/a}
$$
where $1/k_0=1/p+1/a$ for $a\leq p$.
Therefore, with $\tilde S_n=\sum_{j=1}^{k_n}\tilde Y_{j,n}$ we have
\begin{equation}\label{approx11}
\|S_n-\tilde S_n\|_{L^{k_0}}\leq C(1+\sigma_n^2M_n^{-p/a})=O(1) 
\end{equation}
assuming that $M_n=\sigma_{n}^{2a/p}$.  Define 
$$
\textbf{S}_n=\sig_n^{-2a/p}\tilde S_n.
$$
Then 
$$
\|\textbf{S}_n\|_{L^2}\geq C\sig_n^{1-2a/p}
$$
and the summands in $\textbf{S}_n$ are uniformly bounded in the $\|\cdot\|_{\cdot,\infty,s,\delta_w}$ norms, assuming that $w$ is small enough. Thus

\subsection{Sinai's lemma an related results}
A key tool in our proofs is to reduce all the limit theorems to the case when $f_j$ depends only on $X_j,X_{j+1},...$. 
Denote $\cZ_j=\cX_j\times\cX_{j+1}\cdots=\{(x_{j+k})_{k\geq0}: x_s\in\cX_s\}$.
For a measurable function $g:\cZ_j\to\bbR$ denote by
$\|g\|_{j,a,b,\delta}$ the norm of $g$  when viewing $g$ as a function on $\cY_j$ which depends only on the coordinates $x_{j+k}, k\geq0$.
Note that because of the Markov property,
$$
v_{j,a,\delta}(g)=\sup_{r\geq0}\delta^{-r}\|g(X_j,X_{j+1},...)|X_j,X_{j+1},...,X_{j+r}\|_{L^a},
$$
that is, there is no need in conditioning on $X_s$ for $s<j$.
Let $\pi_j:\cY_j\to\cZ_j$ be given by 
$$
\pi_j(y)=(y_{k+j})_{k\geq0}, \,y=(y_{j+k})_{k\in\bbZ}.
$$
Let $\tau_j:\cZ_j\to\cZ_{j+1}$ denote the left shift and set  $\tau_j^n=\tau_{j+n-1}\cdots \tau_{j+1}\circ S_j, n\in\bbN$.
The following result shows that we can reduce limit theorems for sums of the form $S_n=\sum_{j=0}^{n-1}f_j\circ T_j^n$, with $f_j:\cY_j\to\bbR$ to sums of the form $S_n=\sum_{j=0}^{n-1}g_j\circ \tau_j^n$ with  $g_j:\cZ_j\to\bbR$ is based on the following version of Sinai's Lemma.
\begin{lemma}\label{Sinai}
Let $f_j:\cY_j\to\bbR$ be such that $\sup_j\|f_j\|_{j,q,a,\delta}<\infty$ for some $a, q\geq 1$. Then there exist functions $u_j:\cY_j\to\bbR$ and $g_j:\cZ_j\to\bbR$ such that $\sup_j\|u_j\|_{j,a,a,\delta}\leq 2(1-\delta^{1/2})^{-1}\sup_{j}v_{j,a,\delta}(f_j)$,
and
$$
f_j=u_{j+1}\circ T_j-u_j+g_j\circ\pi_j.
$$
The function $g_j$ is given by 
\begin{equation}\label{gj}
g_j=\sum_{m=0}^\infty(\bbE[f_{j+m+1}|X_j,X_{j+1},...]-\bbE[f_{j+m+1}|X_{j+1},X_{j+2},...])+\bbE[f_j|X_j,X_{j+1},...]  
\end{equation}
and  we have $\sup_j\|g_j\|_{j,\min(a,q),a,\delta^{1/2}}\leq 4(1-\delta^{1/2})^{-1}\sup_j\|f_j\|_{j,q,a,\delta}.
$
\end{lemma}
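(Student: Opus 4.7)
The plan is to define $u_j$ by an explicit telescoping series, verify the coboundary identity algebraically, and then bound the two norms separately. Writing $E_j h := \bbE[h|\cF_{j,\infty}]$ for brevity, I would set
$$
u_j := \sum_{m\ge 0}\bigl(E_jf_{j+m}-f_{j+m}\bigr).
$$
Convergence in $L^a$ rests on the key elementary estimate $\|f_{j+m}-E_j f_{j+m}\|_{L^a}\le 2v_{j+m,a,\delta}(f_{j+m})\delta^{m}$: the approximant $\bbE[f_{j+m}|\cF_{j,j+2m}]$ is already $\cF_{j,\infty}$-measurable (so $E_j$ fixes it), lies within $v_{j+m,a,\delta}(f_{j+m})\delta^{m}$ of $f_{j+m}$ in $L^a$ by applying the definition of $v$ at radius $m$, and conditional expectations contract $L^a$. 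Summing the geometric series then produces the stated $L^a$ bound on $u_j$.

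Next I would verify $f_j = u_{j+1}\circ T_j - u_j + g_j\circ\pi_j$ by a one-line manipulation. Reindexing $\ell=m+1$ in the series for $u_{j+1}$ and collecting terms,
$$
u_{j+1}\circ T_j - u_j = (f_j - E_jf_j) + \sum_{\ell\ge 1}(E_{j+1}-E_j)f_{j+\ell} = f_j - g_j,
$$
where the right-hand side is exactly the formula displayed for $g_j$. Since every summand in $g_j$ is $\cF_{j,\infty}$-measurable, $g_j$ factors through $\pi_j$.

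For the $L^{\min(q,a)}$ bound on $g_j$ I would exploit that $E_j - E_{j+1}$ annihilates $\cF_{j+1,\infty}$-measurable functions: approximating $f_{j+m+1}$ by $\bar f:=\bbE[f_{j+m+1}|\cF_{j+1,j+2m+1}]$ (which is $\cF_{j+1,\infty}$-measurable) gives $(E_j-E_{j+1})f_{j+m+1}=(E_j-E_{j+1})(f_{j+m+1}-\bar f)$ and hence $\|(E_j-E_{j+1})f_{j+m+1}\|_{L^a}\le 2v_{j+m+1,a,\delta}(f_{j+m+1})\delta^{m}$, with the analogous bound in $L^q$ following from $\|\cdot\|_{L^q}\le\|\cdot\|_{L^a}$ on a probability space (and from the $L^q$ norm of $f_j$ itself when controlling $E_jf_j$). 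The main obstacle, shared by both $u_j$ and $g_j$, is estimating the approximation coefficient, since a naive term-by-term bound does not give decay in the window radius $r$. The resolution is that each summand admits two competing estimates: one of order $\delta^m$ (reflecting how much of $f_{j+m}$ lives in the window $[j,j+2m]$) and one of order $\delta^{r-m}$ (reflecting how well $E_jf_{j+m}-f_{j+m}$ projects onto $\cF_{j-r,j+r}$, obtained by exploiting the Markov-property identity $\bbE[E_jh|\cF_{j,j+r}]=\bbE[h|\cF_{j,j+r}]$ for $\cF_{j-r,j+r}$-measurable $h$, so that $h-E_jh$ serves as a genuinely $\cF_{j-r,j+r}$-measurable proxy). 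Balancing the two bounds by splitting the sum at $m\approx r/2$ yields $\delta^{r/2}$ decay, which accounts for the exponent $\delta^{1/2}$ on the $g_j$ side and for the constants $(1-\delta^{1/2})^{-1}$ appearing in the stated bounds.
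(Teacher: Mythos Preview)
Your proposal is correct and follows essentially the same route as the paper: the same series definition of $u_j$ (up to an overall sign, which in fact makes your convention match the statement exactly), the same telescoping verification of the coboundary identity, and the same splitting of the sum at $m\approx r/2$ combined with the Markov-property identity $\bbE[E_jh\mid\cF_{j,j+r}]=\bbE[h\mid\cF_{j,j+r}]$ to obtain the $\delta^{r/2}$ decay of the approximation coefficient.
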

\begin{remark}
It is clear that $g_j=f_j$ when $f_j$ depends only on the coordinates $x_{j+k},k\geq 0$. In that case it will follow from the proof of Lemma \ref{Sinai} that $u_j=0$.    
\end{remark}

\begin{proof}[Proof of Lemma \ref{Sinai}]
In the course of the proof we write $X_t=X_{[t]}$ for a real number $t$.
 Define $u_j:\cY_j\to\bbR$ by
 $$
u_j=\sum_{k=0}^\infty \left(f_{j+k}\circ T_j^k-\bbE[f_{j+k}\circ T_j^k|X_{j},X_{j+1},...]\right)
$$
$$
=\sum_{k=0}^\infty \left(f_{j+k}(...,X_{j+k-1},X_{j+k},X_{j+k+1},...)-\bbE[f_{j+k}(...,X_{j+k-1},X_{j+k},X_{j+k+1},...)|X_{j},X_{j+1},...]\right)
 $$
 Then 
 $$
\|u_j\|_{L^a}\leq 2\sum_{k\geq 0}v_{j+k,a,\delta}(f_{j+k})\delta^{k}
$$
$$
+\sum_{k\geq 0}\left\|\bbE[f_{j+k}\circ T_j^k|X_{j},...,X_{j+2k}]-\bbE[\bbE[f_{j+k}\circ T_j^k|X_{j},...,X_{j+2k}]|X_{j},X_{j+1},...]\right\|_{L^a}
$$
$$
=2\sum_{k\geq 0}v_{j+k,a,\delta}(f_{j+k})\delta^{k}\leq 2\sup_{m}v_{m,a,\delta}(f_m)(1-\delta)^{-1}
 $$
 where we used that 
 $$
 \bbE[\bbE[f_{j+k}\circ T_j^k|X_{j},...,X_{j+2k}]|X_{j},X_{j+1},...]=\bbE[f_{j+k}\circ T_j^k|X_{j},...,X_{j+2k}].
 $$
 Notice that
 $$
u_j-u_{j+1}\circ T_j=f_j+\sum_{k=0}^\infty\left(\bbE[f_{j+1+k}\circ T_j^k|X_{j+1},X_{j+2},...]-\bbE[f_{j+k}\circ T_{j+1}^k|X_{j},X_{j+1},...]\right)
 $$
 and so $u_j-u_{j+1}\circ T_j-f_j$ depends only on the coordinates with indexes $j+k,k\geq 0$. Set $g_j=f_j+u_{j+1}\circ T_j-u_j$.

 In order to complete the proof of the lemma it is enough to show that $\sup_j v_{j,a,\delta^{1/2}}(u_j)<\infty$. For that purpose we write
 $$
\left\|u_j-\bbE[u_j|X_{j-r},...,X_{j+r}]\right\|_{L^a}\leq\sum_{k=0}^{r/2}\left\|f_{j+k}\circ T_j^k-\bbE[f_{j+k}\circ T_j^k|X_{j-r},...,X_{j+r-1},X_{j+r}]\right\|_{L^a}
$$
$$
+\sum_{k=0}^{r/2}\left\|\bbE[f_{j+k}\circ T_j^k|X_{j},X_{j+1},...]-\bbE[\bbE[f_{j+k}\circ T_j^k|X_{j},X_{j+1},...]|X_{j-r},...,X_{j+r}]\right\|_{L^a}+2\sum_{k>r/2}v_{j+k,a,\delta}(f_{j+k})\delta^{k}.
 $$
 Next, for $k\leq r/2$ write 
 $$
  j-r=j+k-(r-k)\,\, \text{ and }\,\,j+r=j+k+(r-k).    
 $$
Then
 \begin{equation}\label{Thus}
 \left\|f_{j+k}\circ T_j^k-\bbE[f_{j+k}\circ T_j^k|X_{j-r},...,X_{j+r}]\right\|_{L^a}\leq v_{j+k,a,\delta}(f_{j+k})\delta^{r-k}    
 \end{equation}
 Next, write $f_{j,k,r}=\bbE[f_{j+k}\circ T_j^n|X_{j-r},...,X_{j+r}]$. Then by \eqref{Thus} and the contraction properties of conditional expectations,
\begin{equation}\label{UPy}
\left\|\bbE[f_{j+k}\circ T_j^k|X_{j},X_{j+1},...]-\bbE[\bbE[f_{j+k}\circ T_j^k|X_{j},X_{j+1},...]|X_{j-r},...,X_{j+r}]\right\|_{L^a}\leq 
2v_{j+k,a,\delta}(f_{j+k})\delta^{r-k}    
\end{equation}
$$
+\left\|\bbE[f_{j,k,r}|X_{j},X_{j+1},...]-\bbE[\bbE[f_{j,k,r}|X_{j},X_{j+1},...]|X_{j-r},...,X_{j+r}]\right\|_{L^a}.
$$
Notice that by the Markov property we have
$$
\bbE[\bbE[f_{j,k,r}|X_{j},X_{j+1},...]|X_{j-r},...,X_{j+r}]=\bbE[\bbE[f_{j,k,r}|X_{j},X_{j+1},...X_{j+r}]|X_{j-r},...,X_{j+r}]
$$
$$=
\bbE[f_{j,k,r}|X_{j},X_{j+1},...X_{j+r}].
$$
Using again the Markov property we see that 
$$
\bbE[f_{j,k,r}|X_{j},X_{j+1},...]=\bbE[f_{j,k,r}|X_{j},X_{j+1},...,X_{j+r}].
$$
Thus the second term on the right hand side of \eqref{UPy} vanishes. By combining the above estimates we conclude that
$$
\left\|u_j-\bbE[u_j|X_{j-r},...,X_{j+r}]\right\|_{L^a}\leq(1-\delta^{1/2})^{-1}\sup_{m}v_{m,a,\delta}(f_m)\delta^{r/2}.
$$
\end{proof}

\begin{remark}\label{Sinai Rem}
In Assumption \ref{Ass2} we allowed that $\|f_{j}\|_{j,a,s,\delta}=O((j+1)^\zeta)$ for some $0<\zeta<1$. Using that $(j+m)^\zeta\leq j^\zeta+m^{\zeta}$ and that $\sum_{k=0}^{r/2}(k+1)^\zeta \delta^{r-k}$ is of order $\delta^{(\frac12-\rho)^r}$ for all $\rho>0$
it is not hard to show that in this case the arguments in the proof of Lemma \ref{Sinai} yield that 
$\|g_j\|_{j,a,s,\delta^{1/3}}=O((j+1)^\zeta)$ and similarly $\|u_j\|_{j,s,s,\delta^{1/3}}=O((j+1)^\zeta)$.
\end{remark}

The following result shows that the functions $g_j$ from Lemma \ref{Sinai} satisfy a certain conditional regularity condition that will ensure that the the operators $h\to \bbE[h(X_j,X_{j+1},...)e^{itg_j(X_{j},X_{j+1},...)}|X_{j+1},X_{j+2},...], t\in\bbR$ are of class $C^k$ in the parameter $t$ when acting on the space of functions with finite $\|\cdot\|_{j,\infty,\infty,\delta^{1/2}}$.
 
\begin{proposition}\label{Special Prop}
Let $f_j:\cY_j\to\bbR$ be such that $\sup_j\|f_j\|_{j,q,a,\delta}<\infty$ for some $q,a\geq 1$.
Let $k\in\bbN$. Suppose that 
there is a constant $C>0$ such that
\begin{equation}\label{1 cond1}
\bbE[|f_j(...,X_{j-1},X_j,X_{j+1},....)|^k|X_{j+1},X_{j+2},...]\leq C
\end{equation}
almost surely. 
Moreover, assume that for all $m\geq 0$ and $r\geq m$ we have,
\begin{equation}\label{1 cond2}
\bbE[|f_{j+m}-F_{j+m,r}|^k|X_{j},X_{j+1},...]\leq C\delta^{rk}
\end{equation}
where $F_{s,r}$ is an $\cF_{s-r,s+r}$ measurable function. Namely, let Assumption \ref{DomAss} be in force.
Let $g_j$ be the functions from Lemma \ref{Sinai}. Then there is a constant $C_1>0$ such that almost surely we have
$$
\bbE[|g_j(X_j,X_{j+1},....)|^k|X_{j+1},X_{j+2},...]\leq C_1
$$
and 
$$
\sup_r\delta^{-rk/2}\bbE[|g_j-g_{j,r}|^k|X_{j+1},X_{j+2},...]\leq C_1 
$$
where $g_{j,r}=\bbE[g_j|X_j,...,X_{j+r}]$.
\end{proposition}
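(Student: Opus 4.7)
The plan is to exploit the explicit formula
$$g_j = \bbE[f_j|X_j, X_{j+1},\ldots] + \sum_{m=0}^\infty h_{j,m},\qquad h_{j,m}:=\bbE[f_{j+m+1}|X_j,X_{j+1},\ldots]-\bbE[f_{j+m+1}|X_{j+1},X_{j+2},\ldots],$$
from \eqref{gj}, and to derive every bound termwise using conditional Jensen, the tower property, and hypotheses \eqref{1 cond1}--\eqref{1 cond2}.

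For the first estimate, since $F_{j+m+1,m}$ is $\cF_{j+1,j+2m+1}$-measurable it is in particular $\sigma(X_{j+1},X_{j+2},\ldots)$-measurable, so both conditional expectations defining $h_{j,m}$ are unchanged when $f_{j+m+1}$ is replaced by $f_{j+m+1}-F_{j+m+1,m}$. Conditional Jensen and \eqref{1 cond2} (applied with indices $j+1$, $m$, $r=m$) then give $\bbE[|h_{j,m}|^k|X_{j+1},X_{j+2},\ldots]\leq 2^k C\delta^{mk}$ almost surely; summing in conditional $L^k$ and handling the leading term $\bbE[f_j|X_j,X_{j+1},\ldots]$ via Jensen and \eqref{1 cond1} yields $\bbE[|g_j|^k|X_{j+1},X_{j+2},\ldots]\leq C_1$.

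For the second estimate, set $r_1=[r/2]$ and build the $\sigma(X_j,\ldots,X_{j+r})$-measurable proxy
$$\phi_r := \bbE[F_{j,r_1}|X_j,X_{j+1},\ldots] + \sum_{m=0}^{r_1-1}\bigl(\bbE[F_{j+m+1,r_1}|X_j,X_{j+1},\ldots] - \bbE[F_{j+m+1,r_1}|X_{j+1},X_{j+2},\ldots]\bigr);$$
by the Markov property each summand depends only on coordinates up to index $j+m+1+r_1\leq 2r_1\leq r$. Decomposing $g_j-\phi_r$ into (i) $\bbE[f_j-F_{j,r_1}|X_j,X_{j+1},\ldots]$, (ii) the truncation differences for $0\leq m<r_1$, and (iii) the tail $\sum_{m\geq r_1}h_{j,m}$, the same Jensen-plus-\eqref{1 cond2} argument, now conditioned on the \emph{larger} $\sigma$-algebra $\sigma(X_j,X_{j+1},\ldots)$, produces the pointwise almost sure estimate $\bbE[|g_j-\phi_r|^k|X_j,X_{j+1},\ldots]\leq C' r_1^k \delta^{r_1 k}$; the polynomial factor is absorbed by slightly enlarging $\delta$ to yield $\leq C''\delta^{rk/2}$ a.s.

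The final step transfers this control to $g_{j,r}$. Since $\phi_r$ is $\sigma(X_j,\ldots,X_{j+r})$-measurable, $g_{j,r}-\phi_r=\bbE[g_j-\phi_r|X_j,\ldots,X_{j+r}]$, so conditional Jensen and the tower property applied to the preceding pointwise bound give $|\phi_r-g_{j,r}|^k\leq C''\delta^{rk/2}$ a.s. pointwise. Combining this with $|g_j-g_{j,r}|^k\leq 2^{k-1}(|g_j-\phi_r|^k+|\phi_r-g_{j,r}|^k)$ and conditioning on $\sigma(X_{j+1},X_{j+2},\ldots)$ concludes. The main technical obstacle is that $g_{j,r}$ is defined by conditioning on $\sigma(X_j,\ldots,X_{j+r})$ while the target estimate conditions on $\sigma(X_{j+1},X_{j+2},\ldots)$; these two $\sigma$-algebras are incomparable, so the $L^k$-contraction of conditional expectation cannot be invoked directly. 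The workaround is to establish the key approximation bound pointwise almost surely by conditioning on $\sigma(X_j,X_{j+1},\ldots)$, the smallest $\sigma$-algebra containing both, after which the tower property handles every other conditioning for free.
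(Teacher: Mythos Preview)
There is a gap in your treatment of the tail (iii). Since $g_j-\phi_r$ is already $\sigma(X_j,X_{j+1},\ldots)$-measurable, your claimed estimate $\bbE[|g_j-\phi_r|^k\mid X_j,X_{j+1},\ldots]\leq C' r_1^k \delta^{r_1 k}$ is in fact a \emph{pointwise} bound $|g_j-\phi_r|^k\le C' r_1^k \delta^{r_1 k}$ a.s. For (i) and (ii) this is fine, but for the tail terms $h_{j,m}$ with $m\ge r_1$ your subtraction uses $F_{j+m+1,m}$; to bound $|\bbE[f_{j+m+1}-F_{j+m+1,m}\mid X_j,X_{j+1},\ldots]|$ pointwise via Jensen you would need \eqref{1 cond2} with base index $j$, target $f_{j+m+1}$, and radius $r=m$, i.e.\ you would need $m\ge m+1$. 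The first-part argument only delivers $\bbE[|h_{j,m}|^k\mid X_{j+1},X_{j+2},\ldots]\le 2^kC\delta^{mk}$, an average over $X_j$ given the future, \emph{not} a pointwise bound on $|h_{j,m}|$. Consequently your final transfer step, which deduces $|\phi_r-g_{j,r}|^k\le C''\delta^{rk/2}$ pointwise from the pointwise bound on $|g_j-\phi_r|^k$, does not follow.

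The paper avoids this by never seeking a pointwise bound. It expands $g_j-g_{j,r}=\sum_m(D_{j+m+1}-D_{j+m+1,r})$ and bounds each summand in conditional $L^k$ given $\sigma(X_{j+1},X_{j+2},\ldots)$. For the tail the crucial extra ingredient, absent from your proposal, is the \emph{Markov property}: it gives
\[
\bbE\bigl[\bbE[|D_{j+m+1}|^k\mid X_j,\ldots,X_{j+r}]\,\bigm|\,X_{j+1},X_{j+2},\ldots\bigr]
=\bbE\bigl[\bbE[|D_{j+m+1}|^k\mid X_{j+1},X_{j+2},\ldots]\,\bigm|\,X_{j+1},\ldots,X_{j+r}\bigr]\le(2C\delta^m)^k,
\]
which is exactly how one handles the incomparable $\sigma$-algebras $\sigma(X_j,\ldots,X_{j+r})$ and $\sigma(X_{j+1},X_{j+2},\ldots)$. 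Your proxy route can be repaired by the same device: replace the pointwise claim by the conditional one $\bbE[|g_j-\phi_r|^k\mid X_{j+1},X_{j+2},\ldots]\le C''\delta^{rk/2}$ a.s., and then invoke the Markov property when bounding $\bbE[|g_{j,r}-\phi_r|^k\mid X_{j+1},X_{j+2},\ldots]$.
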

\begin{proof}
Recall that  $g_j$ is given by
$$
g_j=\sum_{m=0}^\infty(\bbE[f_{j+m+1}|X_j,X_{j+1},...]-\bbE[f_{j+m+1}|X_{j+1},X_{j+2},...])+\bbE[f_j|X_j,X_{j+1},...].
$$
 Denote 
$$
D_{j+m+1}=\bbE[f_{j+m+1}|X_j,X_{j+1},...]-\bbE[f_{j+m+1}|X_{j+1},X_{j+2},...]
$$
and 
$$
\bar D_{j+m+1,m}=\bbE[F_{j+m+1,m}|X_j,X_{j+1},...]-\bbE[F_{j+m+1,m}|X_{j+1},X_{j+2},...].
$$
Then  $\bar D_{j+m+1,m}=0$ since $F_{j+m+1,m}$ is a function of $X_{j+1},X_{j+2},...$. Therefore, 
\begin{equation}\label{D1}
\left(\bbE[|D_{j+m+1}|^k|X_{j+1},X_{j+2},...]\right)^{1/k}=
\left(\bbE[|D_{j+m+1}-\bar D_{j+m+1,m}|^k|X_{j+1},X_{j+2},...]\right)^{1/k}\leq 2C\delta^m
\end{equation}
where the last inequality uses \eqref{1 cond2}.
Thus there is a constant $A_k>0$ such that
$$
\bbE[|g_j|^k|X_{j+1},X_{j+2},....]\leq A_k\left(\sum_{m=1}^\infty\delta^m+C\right)^k
$$
where $C$ comes from \eqref{1 cond1}.

Next, let $r\in\bbN$. Denote 
$$
D_{j+m+1,r}=\bbE[D_{j+m+1}|X_{j},...,X_{j+r}]=\bbE\left[\left(\bbE[f_{j+m+1}|X_{j+1},X_{j+2},...]-\bbE[f_{j+m+1}|X_{j},X_{j+1},...]\right)|X_{j},...,X_{j+r}\right].
$$
Then  by the conditional Jensen inequality
$$
|D_{j+m+1,r}|^k\leq \bbE[|D_{j+m+1}|^k|X_{j},...,X_{j+r}]
$$
and so by \eqref{D1},
$$
\bbE[|D_{j+m+1,r}|^k|X_{j+1},X_{j+2},...]\leq 
\bbE[\bbE[|D_{j+m+1}|^k|X_{j},...,X_{j+r}]|X_{j+1},X_{j+2},...]
$$
$$=
\bbE[\bbE[|D_{j+m+1}|^k|X_{j+1},...,X_{j+r}]|X_{j+1},X_{j+2},...]=
\bbE[\bbE[|D_{j+m+1}|^k|X_{j+1},X_{j+2},...]|X_{j+1},...,X_{j+r}]
\leq (2C\delta^m)^k
$$
Thus,
$$
\cD_{r,k}:=\sum_{m\geq [r/2]-1}\left(\bbE[|D_{j+m+1}|^k|X_{j+1},X_{j+2},....]\right)^{1/k}+
\sum_{m\geq [r/2]-1}\left(\bbE[|D_{j+m+1,r}|^k|X_{j+1},X_{j+2},....]\right)^{1/k}\leq C'\delta^{r/2}.
$$
On the other hand, if $-1\leq m<[r/2]-1$ then 
$$
|\bbE[f_{j+m+1}|X_j,X_{j+1},...]-\bbE[f_{j+m+1}|X_j,X_{j+1},...,X_{j+r}]|
$$
$$
\leq|\bbE[F_{j+m+1,r-m-1}|X_j,X_{j+1},...]-\bbE[F_{j+m+1,r-m-1}|X_j,X_{j+1},...,X_{j+r}]|
$$
$$
+\bbE[|f_{j+m+1}-F_{j+m+1,r-m-1}||X_j,X_{j+1},...]|=\bbE[|f_{j+m+1}-F_{j+m+1,r-m-1}||X_j,X_{j+1},...]
$$
where the last equality uses the Markov property.
Therefore, using again the conditional Jensen inequality we see that
$$
\left(\bbE\left[|\bbE[f_{j+m+1}|X_j,X_{j+1},...]-\bbE[f_{j+m+1}|X_j,X_{j+1},...,X_{j+r}]|^k\right|X_{j},X_{j+1},...]\right)^{1/k}
$$
$$
\leq\left(|\bbE[|f_{j+m+1}-F_{j+m+1,r-m-1}|^k|X_j,X_{j+1},...]\right)^{1/k}\leq C\delta^{r-m}
$$
where the last inequality uses \eqref{1 cond2}.
Similarly
$$
\left(\bbE\left[|\bbE[f_{j+m+1}|X_j,X_{j+1},...]-\bbE[f_{j+m+1}|X_j,X_{j+1},...,X_{j+r}]|^k\right|X_{j+1},X_{j+2},...]\right)^{1/k}\leq C\delta^{r-m}.
$$
By combining the above estimates and using the triangle inequality in $L^k$ with respect to  the conditional measure on $X_{j+1},X_{j+2},...$  we see that there exists a constant $C_0>0$ such that almost surely we have
$$
\left(\bbE[|g_{j}-g_{j,r}|^k|X_{j+1},X_{j+2},...]\right)^{1/k}\leq C_0\delta^{r/2}.
$$
\end{proof}

\subsection{Reduction of optimal CLT rates  from two sided functionals to one sided ones}\label{Red CLT}
Here we explain how to derive Theorems \ref{BE} and \ref{Var them} for the sums $S_nf$ from the corresponding results for either $S_ng=\sum_{j=0}^{n-1}g_j(X_j,X_{j+1},...)$ or $S_ng=\sum_{j=0}^{n-1}\bar g_{j,(n)}(X_{j-[c\ln n]},X_{j-[c\ln n]+1},...)$, where the functions $g_j$ are given in Lemma \ref{Sinai} and  $\bar g_{j,(n)}=G_{M_j}(\bbE[f_j|\cF_{j-[c\ln n],\infty}])$, with $M_j=(j+1)^d$ and $d=\frac{3(1-\zeta)}{a-3}+\zeta+\varepsilon-\te$ where $a,\zeta$ and $\varepsilon$ are specified in Assumption \ref{Ass2} and $\te$ is an arbitrary number such that $0<\te<\varepsilon$ (in the sequel we will take $\te$ close to $\varepsilon$). The same arguments will show how to reduce the results under Assumption \ref{Ass1.1}, but we decided to skip the details which are left for the reader.

\begin{proposition}\label{RedProp}
Assume that $\sup_j\|f_j\|_{j,a,a,\delta}<\infty$ for some $a\geq2$ and $\delta\in(0,1)$. 
If Theorems \ref{BE} and \ref{ThWass} hold for  $S_ng$ with rate $\sig_n^{-(1-u)}$ for some $u<1$
then the hold for $S_nf$ and with the same rate.    
\end{proposition}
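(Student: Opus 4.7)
The plan is to use Sinai's Lemma \ref{Sinai} to reduce the two-sided functional to a one-sided one plus a telescoping coboundary, and then transfer both Theorems \ref{BE} and \ref{ThWass} by a perturbation argument.

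First I would apply Lemma \ref{Sinai} with $q=a$ to obtain, under the hypothesis $\sup_j\|f_j\|_{j,a,a,\delta}<\infty$, a decomposition $f_j = u_{j+1}\circ T_j - u_j + g_j\circ\pi_j$ with $\sup_j\|u_j\|_{j,a,a,\delta}<\infty$ and $\sup_j\|g_j\|_{j,a,a,\delta^{1/2}}<\infty$. Summing $f_k\circ T_0^k$ over $0\le k<n$ makes the coboundary telescope, so
\[
S_nf = S_ng + R_n, \qquad R_n := u_n\circ T_0^n - u_0, \qquad \sup_n\|R_n\|_{L^a}\le 2\sup_j\|u_j\|_{L^a} =: C_0.
\]
A Cauchy--Schwarz estimate gives $|\sigma_n(f)^2-\sigma_n(g)^2|\le 2C_0\sigma_n(g)+C_0^2$, hence $|\sigma_n(f)-\sigma_n(g)|=O(1)$ and $\sigma_n(g)/\sigma_n(f)=1+O(\sigma_n^{-1})$. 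Writing $Z_n^f$ and $Z_n^g$ for the standardized sums and coupling on the same probability space gives the identity $Z_n^f = (\sigma_n(g)/\sigma_n(f))Z_n^g + (R_n-\bbE R_n)/\sigma_n(f)$.

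For Theorem \ref{ThWass} the transfer is direct: for any finite $b\le a$,
\[
\|Z_n^f-Z_n^g\|_{L^b} \le |\sigma_n(g)/\sigma_n(f)-1|\cdot\|Z_n^g\|_{L^b} + C_0/\sigma_n(f) = O(\sigma_n^{-1}),
\]
where $\|Z_n^g\|_{L^b}$ is uniformly bounded by Proposition \ref{Mom prop}. Combined with the hypothesized rate $W_b(\cL(Z_n^g),\Phi)=O(\sigma_n^{-(1-u)})$ and the triangle inequality for $W_b$, and using that $u\ge 0$, this yields the claimed rate for $W_b(\cL(Z_n^f),\Phi)$.

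The Berry--Esseen part (Theorem \ref{BE}) will be the main obstacle, because of the polynomial weight $(1+|t|^s)$. Fixing a truncation level $\tau_n$, I would use the sandwich
\[
F_n^g(t^-) - \bbP(|R_n-\bbE R_n|>\tau_n) \le F_n^f(t) \le F_n^g(t^+) + \bbP(|R_n-\bbE R_n|>\tau_n),
\]
where $t^\pm = (\sigma_n(f)/\sigma_n(g))\,t \pm \tau_n/\sigma_n(g)$; the tail is controlled by $C_0^a\tau_n^{-a}$ via Markov. Applying the weighted Berry--Esseen bound for $S_ng$ together with $|\Phi(t^\pm)-\Phi(t)|\le \|\phi\|_\infty(|t|/\sigma_n+\tau_n/\sigma_n)$ (carefully combined with the weight $1+|t|^s$ for $|t|$ in a moderate range), the choice $\tau_n\sim\sigma_n^u$ keeps the Gaussian shift at scale $\sigma_n^{-(1-u)}$ while the Markov tail scales as $\sigma_n^{-au}$, at most the target whenever $u\ge 1/(a+1)$, a condition that is automatic under each of Assumptions \ref{Ass1}, \ref{Ass2}, \ref{Ass1.1} and \ref{DomAss} (when $a=\infty$ one simply takes $\tau_n$ bounded). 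For $|t|$ large, the weight is instead absorbed by Markov applied to a uniform $L^{k_0}$ bound on $Z_n^f$, inherited from $Z_n^g$ via the $L^a$ bound on $R_n$, matching the decay of $(1+|t|^s)(1-\Phi(t))$; the delicate step is choosing the separating threshold between these two regimes uniformly in $n$ so that both contributions remain at the target scale $\sigma_n^{-(1-u)}$.
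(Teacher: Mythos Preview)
Your overall architecture matches the paper's: apply Lemma \ref{Sinai} to get $\sup_n\|S_nf-S_ng\|_{L^a}<\infty$, deduce $|\sigma_n(f)-\sigma_n(g)|=O(1)$, and transfer the two theorems by a perturbation/sandwich argument. Your Wasserstein argument is essentially the same as the paper's (the paper invokes the Berkes--Philipp lemma to couple, but your direct $L^b$ triangle inequality is equivalent).

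The gap is in the Berry--Esseen transfer. You use a \emph{$t$-independent} truncation level $\tau_n\sim\sigma_n^{u}$, which forces the Markov tail $\bbP(|R_n|>\tau_n)\le C\tau_n^{-a}$ to be of order $\sigma_n^{-au}$; to reach the target $\sigma_n^{-(1-u)}$ you need $au\ge 1-u$, i.e.\ $u\ge 1/(a+1)$. Your claim that this is ``automatic under each of Assumptions \ref{Ass1}, \ref{Ass2}, \ref{Ass1.1} and \ref{DomAss}'' is incorrect: under Assumption \ref{DomAss} one has $u=0$ while only $\sup_j\|f_j\|_{j,k,k,\delta}<\infty$ with $k\ge 3$ finite is guaranteed, so $a=k<\infty$ and your condition fails. (Under Assumption \ref{Ass1} the reduction is vacuous since $f_j$ is already one-sided; that is why the $a=\infty$ escape works there, but it does not cover \ref{DomAss}.) Your two-regime splitting does not rescue this: in the moderate-$|t|$ zone the unweighted Markov tail $C\tau_n^{-a}$ still has to absorb the weight $(1+|t|^s)$, and the resulting constraint again reduces to $u\ge 1/(a+1)$.

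The paper's remedy is precisely the idea you are missing: make the perturbation \emph{$t$-dependent}, choosing $\rho=\rho_n(t)$ via $\rho^a\asymp\sigma_n^{-(a-1)}(1+|t|^s)$ so that the Markov tail $A^a(\sigma_n\rho)^{-a}$ is \emph{exactly} $(1+|t|^s)^{-1}\sigma_n^{-1}$ for every $t$, with no constraint linking $u$ and $a$. The weighted Gaussian shift is then handled using $|\Phi(t+\rho)-\Phi(t)|\le C\rho e^{-ct^2}$, where the exponential kills the polynomial weight. This $t$-dependent choice of the slack is the key device you should adopt in place of the fixed $\tau_n$.
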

\begin{proof}
Note that it is enough to prove the proposition when $\bbE[f_j]=0$. Note also that in this case \eqref{gj} we also have $\bbE[g_j]=0$ and $\bbE[g_{j,(n)}]=0$. Now,  by Lemma either Lemma \ref{Sinai} or Lemma \ref{EST LEMMA},
\begin{equation}\label{App}
A=\sup_n\|S_nf-S_ng\|_{L^a}<\infty.
\end{equation}

Next, note that part (ii) of Theorem \ref{BE} is a direct consequence of part (i).  Theorem \ref{BE} (iii) 
also follows from Theorem \ref{BE}(i). Indeed, 
for every random variable $W$ with distribution function $F$ and a function $h$ 
satisfying $H_s(h)<\infty$
we have
$$
E[h(W)]-h(\infty)
=-E\left[\int_{W}^{\infty}h'(x)dx\right]=-\int_{-\infty}^{\infty}h'(x)P(W\leq x)dx= -\int_{-\infty}^{\infty}h'(x)F(x)dx.
$$
To show that Theorem \ref{BE}(i) for $S_n g$ implies Theorem \ref{BE}(i) for  $S_n f$,  
let 
$$
F_n(t)\!=\!\bbP\left(\frac{S_n f}{\sig_n}\!\leq\! t\right),%\quad\text{and}
\quad G_n(t)=\bbP\left(\frac{S_n g}{\kappa_n}\leq t\right)
\text{ where }\kappa_n=\|S_n g\|_{L^2}
\text{ and }\sig_n=\|S_nf\|_{L^2}.$$ 
By \eqref{App}  and the triangle inequality, $|\sig_n-\kappa_n|\leq \|S_nf-S_ng\|_{L^2}\leq A$. To complete the proof
 fix $s\geq 0$ and assume that Theorem \eqref{BE} (i) holds for $S_ng$ with that $s$. 
 Let $\rho=\rho_n(t)$ be given by $\rho^a=\delta_n\sigma_n^{-(a-1)}(1+|t|^s)$ for some positive sequence $\delta_n=\delta_n(t)$ which is bounded and bounded away from the origin and which will be specified latter (it will follow that we can take $\delta_n(t)$ to either $2$ or $\frac14$). Then,
$$
F_n(t)\leq G_n((t+\varepsilon)\sig_n/\kappa_n)+\bbP(|S_n f-S_n g|>\sig_n\varepsilon):=I_1+I_2.
$$
Now, by the Markov inequality we have 
$$
I_1=\bbP(|S_n f-S_n g|^a>\sig_n\rho^a)\leq \|S_nf-S_ng\|_{L^a}^a\sig_n^{-a}\rho^{-a}\leq C_1A^a(1+|t|^s)^{-1}\sigma_n^{-1}
$$
for some constant $C_1>0$.
Next, by the validity of Theorem \ref{BE} (i) for $S_ng$ with rate $\sig_n^{-(1-u)}$, for all $n$ large enough we have 
$$
G_n((t+\rho)\sig_n/\kappa_n)\leq C_s\sig_n^{-(1-u)}\left(1+\left|\frac{(t+\rho)\sig_n}{\kappa_n}\right|^s\right)^{-1}+\Phi((t+\rho)\sig_n/\kappa_n)).
$$
where we used that $\sig_n/\kappa_n\to 1$. Next, we claim that for all $n$ large enough and all $t$ we have we can choose $\frac14\leq \delta_n=\delta_n(t)\leq 2$ such that for all $t$,
$$
\sig_n^{-1}\left(1+\left|\frac{(t+\rho)\sig_n}{\kappa_n}\right|^s\right)^{-1}\leq B_s(1+|t|^s)^{-1}\sig_n^{-1}  
$$
for some constant $B_s>0$ which does not depend on $n$ and $t$. Indeed, since $\sig_n/\kappa_n\to 1$ the above estimate is equivalent to 
\begin{equation}\label{eq}
 1\leq B'_s\left(\frac1{1+|t|^s}+\left|\frac{t}{1+|t|}+\delta_n\sigma_n^{-(a-1)}(1+|t|)^{s-1}\right|^s\right).   
\end{equation}
Let $K>0$ be such that $\frac12\leq |\frac{t}{1+|t|}|\leq \frac32$ whenever $|t|\geq K$. When $|t|\leq K$ we just take $\delta_n(t)=1$ and then \eqref{eq} holds with some constant.
To show that the above estimate holds when $|t|>K$, if $\sigma_n^{-(a-1)}(1+|t|)^{s-1}\geq 1$ and $|t|\geq K$ then by taking $\delta_n(t)=2$ we see that
$$
\left|\frac{t}{1+|t|}+\delta_n\sigma_n^{-(a-1)}(1+|t|)^{s-1}\right|\geq \frac12
$$
and so \eqref{eq} holds. On the other hand, if $\sigma_n^{-(a-1)}(1+|t|)^{s-1}<1$ we take $\delta_n(t)=\frac14$ which together with $|\frac{t}{1+|t|}|\geq \frac12$ yields
$$
\left|\frac{t}{1+|t|}+\delta_n\sigma_n^{-(a-1)}(1+|t|)^{s-1}\right|\geq \frac14
$$
yielding \eqref{eq}.

Finally, let us write $(t+\rho)\sig_n/\kappa_n=t+\frac{\rho\sig_n}{\kappa_n}+\frac{t(\sig_n-\kappa_n)}{\kappa_n}$. Using that 
$|\Phi(x+\delta)-\Phi(x)|\leq C\delta e^{-x^2/2}$ for every $x$ and $\delta>0$ for some absolute constant $C>0$ and that $\frac{t(\sig_n-\kappa_n)}{\kappa_n}=O(t\sig_n^{-1})$ we see that 
$$
\Phi((t+\rho)\sig_n/\kappa_n))\leq\Phi(t)+\left|\frac{\rho\sig_n}{\kappa_n}+\frac{t(\sig_n-\kappa_n)}{\kappa_n}\right|e^{-ct^2}
$$
for some $c>0$ (and all $n$ large enough). Noticing that $\rho\leq C_0\sigma_n^{-1}(1+|t|^s)$ we get that the above right hand side does not exceed $C'_s(1+|t|)^{-s}\sig_n^{-1}$ for some constant $C'_s>0$. Combining the above estimates we see that 
$$
F_n(t)\leq  \Phi(t)+C''_s(1+|t|^s)^{-1}\sig_n^{-(1-u)}
$$
for some constant $C''_s>0$. A similar argument shows that 
$$
F_n(t)\geq \Phi(t)-C''_s(1+|t|^s)^{-1}\sig_n^{-(1-u)}.
$$

Finally to deduce Theorem \ref{ThWass} for $S_n f$ from the corresponding result for $S_ng$  let us take some $b<a-1$. By Theorem \ref{ThWass}, we can couple $S_ng$ with a standard normal random variable $Z$ 
so that $\|S_n g/\sig_n-Z\|_{L^b}\leq C\sig_n^{-(1-u)}$. Now by Berkes--Philipp Lemma \cite[Lemma A.1]{BP},  we can also couple all three random variables $S_n g$, $S_nf$ and $Z$ so that \eqref{App} still holds under the new probability law. 
\end{proof}
%\subsection{Reduction of the LLT from one sided to two sided functionals}

\section{Examples and applications}\label{AppSec}
Linear, Garch, things from statistics, random matrices, random Lyponov exponents, random operators, things from dynamics that can be modeled by non-stationary Bernoulli shifts

\subsection{Products of positive matrices and other operators}
Let us begin with a more abstract description. Let $(X_j)_{j\in\bbZ}$ be a Markov chain satisfying \eqref{mix} with some $p\geq1$. Let $\cX_j$ be the state space of $X_j$.
Let $B_j$ be (possibly random) Banach spaces of functions on some space equipped with a norm $\|\cdot\|_{B_j}$ satisfying $\|g\|_{B_j}\geq \sup|g|$. 
Let $A_j(X_j)$ be a bounded linear operator from $B_j$ to $B_{j+1}$ such that for some constant $C>0$ we have $\|A_j(X_j)\|_{B_j\to B_{j+1}}\leq C$. Define 
$$
\textbf{A}_j^n=A_{j+n-1}(X_{j+n-1})\cdots A_{j+1}(X_{j+1})A_j(X_j).
$$
We assume that there are (possibly random) Birkhoff cones $\cC_j\subset B_j$ and $n_0\in\bbN$ such that for all $j$ we have $A(X_j)\cC_j\subset\cC_{j+1}$ (almost surely) and the projective diameter of $\textbf{A}_j^{n_0}\cC_j$ inside $\cC_{j+n_0}$
does not exceed some constant $d_0<\infty$ which is independent of $j$ (see \cite[Appendix A]{HK} for an overview of projective metrics and cones). Finally, let us assume that the cones are regenerating in the sense of \cite[Section 5]{Rug}, namely  there exist $r\in\bbN$ and $C>0$ such that every $g\in B_j$ can be written as 
$$
g=\sum_{k=1}^r g_k,\,\, g_k\in\cC_j, \text{ and }\sum_{k=1}^r\|g_k\|_{B_j}\leq C\|g\|_{B_j}.
$$
Then by the arguments of \cite[Ch.4]{HK}
we obtain the following random Perron frobenious theorem. There are random variables $\la_j=\la_j(...,X_{j-1},X_j,X_{j+1},...)$ random vectors $h_j=h_j(...,X_{j-1},X_j,X_{j+1},...)\in B_j$ and random functionals $\nu_j=\nu_j(...,X_{j-1},X_j,X_{j+1},...)\in B_j^*$ and constants $C>0$ and $\delta\in(0,1)$ such that, for all $j$ and $n$, almost surely we have
\begin{equation}\label{RRPF}
\left\|\frac{\textbf{A}_j^n}{\la_{j,n}}-\nu_j\otimes h_{j+n}\right\|_{B_{j+n}}\leq C\delta^n    
\end{equation}
where $\la_{j,n}=\prod_{k=j}^{j+n-1}\la_k$ and $(\nu_j\otimes h_{j+n})(g)=\nu_j(g)h_{j+n}$. Moreover, $A_j(X_j)h_j=\la_jh_{j+1}, (A_j(X_j))^*\nu_{j+1}=\la_j\nu_j$ and $\nu_j(h_j)=1$. It also follows from the arguments in the proof (or directly from \eqref{RRPF}) that $h_j,\nu_j,\la_j$ can be approximated in $L^\infty$ exponentially fast in $r$ by variables taking values in appropriate spaces that depend on $X_{j+k}$ for $|k|\leq r$. Therefore, by taking logarithms we conclude that we get limit theorems for 
$$
\ln\|\textbf{A}_0^n\|\text{ and }\ln\left(\mu_{n}(\textbf{A}_0^n g)\right)
$$
where $g\in B_0$ and $\mu_n\in B_n^*$, $\sup_n\|\nu_n\|<\infty$ and we assume  that $g\in\cC_j$ and $\mu_n\in\cC_n^*$ (the dual is the set of functionals which take positive values on the cone).  
\begin{example}[Random functions with positive entries]
Each $A_j(X_j)$ is a random matrix of dimension $d$ with positive entries which are uniformly bounded and uniformly bounded away from the origin. Then all the conditions hold with $\cC_j$ being the first quadrant 
Our results sharpens the CLT in \cite{KestFurs61} in the Markov case.
\end{example}

\begin{example}[Random transfer operators]
Each $A_j(X_j)$ is the random transfer operator associated with a random expanding map $T_{X_j}$ satisfying the conditions of \cite[Ch.5]{HK}. That is 
$$
(A_j(X_j))g(x)=\sum_{y\in(T_{X_j})^{-1}\{x\}}e^{\phi_{X_j}(y)}g(y)
$$
for a H\"older continuous random functions $\phi_{X_j}$ (which are uniformly H\"older continuous) and H\"older continuous functions $g$. 
Then by \cite[Theorem]{HK} there are random cones satisfying the above conditions and \eqref{RRPF} holds.
\end{example}

\subsection{Random Lyapunov exponents}\label{Lyp Sec}
Let $d>1$ and let $A$ be a hyperbolic matrix with distinct eigenvalues $\la_1,...,\la_d$. Suppose that for some $k<d$ we have $\la_1<\la_2<...<\la_k<1<\la_{k+1}<...<\la_{d}$. Let $h_j$ be the corresponding eigenvalues.

Now, let $(A_j)$ be a sequence of matrices such that $\sup_j \|A_j-A\|\leq \ve$. Then, if $\ve$ is small enough there are numbers $\la_{j,1}<\la_{j,2}<...<\la_{j,k}<1<\la_{j, k+1}<...<\la_{j,d}$
and vectors $h_{j,i}$ such that 
$$
A_{j}h_{j,i}=\la_{j,i}h_{j+1,i}.
$$
Moreover, $\sup_j|\la_{j,i}-\la_i|$ and 
$\sup_j\|h_{j,i}-h_{i}\|$ converge to $0$ as $\ve\to 0$.

Now, the sequence $(A_j)$ is uniformly hyperbolic and the sequences 
$(\la_{1,j})_j,...,(\la_{d,j})_j$ can be viewed as its sequential Lyapunov exponents. Moreover,  
the one dimensional  spaces $H_{i,j}=\text{span}\{h_{i,j}\}$ can be viewed as its sequential Lyapunov spaces.
Next,  $\la_{i,j}$ and $h_{i,j}$ 
can be approximated exponentially fast in $n$ by functions of 
$$(A_{j-n},A_{j-n+1},...,A_j,A_{j+1},...,A_{j+n}),$$ uniformly in $j$. 

Finally, let us consider a sufficiently fast mixing Markov chain $(X_j)$ and let us take random matrices of the form $A_j=A_j(X_j)$ such that 
$$
\sup_{j}\|A_j-A\|_{L^\infty}\leq \varepsilon.
$$
Then if $\varepsilon$ is small enough the random variables $\la_{i,j}$ and $h_{i,j}$ can be approximated exponentially fast by functions of  $X_{j+k}, |k|\leq r$ as $r\to\infty$.
%\subsection{Products of invertible matrices: inhomogeneous Markovian random walks on $\text{GLd}(\bbR)$ for $d\geq 2$}

\subsection{Linear processes}
Let $h_{j}:\cX_j\to\bbR$ be measurable functions, let $(a_k)$ be a sequence of numbers and define 
$$
f_j(...,X_{j-1},X_j,X_{j+1},...)=\sum_{k\in\bbZ}a_kg_{j-k}(X_{j-k}),
$$
assuming that the above series converges. Note that 
$$
\|f_j\|_{L^p}\leq \sum_{k}|a_k|\|g_{j-k}\|_{L^p}
$$
and so if we assume that $\sup_k\|g_k(X_k)\|_{L^p}<\infty$ and that the series $\sum_{k}|a_k|$ converges we get that $\sup_j\|f_j\|_{L^p}<\infty$. Notice that 
$$
\|f_j-\bbE[f_j|X_{k}; |k-j|\leq r]\|_{L^p}\leq \sum_{|k-j|>r}|a_k|\|g_{j-k}\|_{L^p}\leq C\sum_{|k-j|>r}|a_k|
$$
Thus, if  $\sum_{|k-j|>r}|a_k|=O(\delta^r)$ for some $\delta\in(0,1)$ then we conclude that 
$$
\sup_j\|f_j\|_{j,p,p,\delta}<\infty.
$$
\subsection{Iterated random functions driven by inhomogeneous  Markov chains}\label{Iter}
 Iterated random functions (cf.
[15]) are an important class of processes. Many nonlinear models like ARCH,
bilinear and threshold autoregressive models fit into this framework.
We refer to [15 in JiraK be AoP paper] for a survey on such processes, where the case of iid $X_j$ is considered.
Here we describe a non-stationary version of such processes which are based on Markov chains $X_j$ instead of iid sequences.
We believe that considering such processes driven by inhomogeneous Markov chains rather than iid variables could be useful for practitioners.

\subsubsection{Processes with initial condition}
Let $G_k:\bbR\times\cX_k\to\bbR$ be measurable functions. Let
$L_k(x_k)$ denote the Lipschitz  constant of the function $G_k(\cdot,x_k)$.
Define a process recursively by setting $Y_0=y_0$ to be a constant then  setting $Y_k=G_k(Y_{k-1},X_k), k\geq 1$. 
 Notice that for $k\geq1$,
$$
Y_k=G_{k,X_k}\circ G_{k-1,X_{k-1}}\circ\cdots\circ G_{1,X_1}(y_0):=f_k(X_1,...,X_{k-1},X_k).
$$
where $G_{s,X_s}(y)=G_s(y,X_s)$. This fits our model of functions $f_k$ that depend on the entire path of a two sided Markov chain $(X_j)_{j\in\bbN}$ (note that one can always extend $X_j$ to a two sided sequence simple by considering iid copies of $X_0$, say, which are also independent of $X_j, j\geq0$). Namely, by abusing the notation we may write 
$$
f_k(X_1,...,X_{k-1},X_k)=f_k(...,X_{k-1},X_{k},X_{k+1},...)
$$
where the dependence is only on $X_1,...,X_{k-1},X_k$.
\begin{lemma}
Suppose that there are uniformly bounded sets $K_i(x_i)\subset\bbR$ such that 
 $G_{i,X_i}(K_i(X_i))\subset K_{i+1}(X_{i+1})$ almost surely for all $i$. Assume also that $K_1:=\cap_{x_1}K_1(x_1)\not=\emptyset$. Then if we start with   $y_0\in K_1$ then 
 $$
\sup_k\|Y_k\|_{L^\infty}<\infty.
 $$
 In particular this is the case when the functions $G_{i}$ are uniformly bounded.
\end{lemma}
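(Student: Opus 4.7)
The plan is to proceed by a straightforward induction on $k$, tracking which (random) set $Y_k$ lies in. The key observation is that the hypothesis $G_{i,X_i}(K_i(X_i)) \subset K_{i+1}(X_{i+1})$ is exactly the stability/invariance condition needed to propagate the ``stays in $K_i$'' property along the recursion, and uniform boundedness of the family $\{K_i(x_i)\}_{i,x_i}$ converts this into a uniform $L^\infty$ bound.

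More precisely, since $y_0 \in K_1 = \bigcap_{x_1} K_1(x_1)$, we have $y_0 \in K_1(X_1)$ almost surely (in fact surely). Suppose inductively that $Y_{k-1} \in K_k(X_k)$ almost surely. Then by the definition of $Y_k$ and the hypothesis,
$$
Y_k = G_{k,X_k}(Y_{k-1}) \in G_{k,X_k}(K_k(X_k)) \subset K_{k+1}(X_{k+1}),\qquad \text{a.s.}
$$
Hence $Y_k \in K_{k+1}(X_{k+1})$ almost surely for every $k \geq 0$. Let $M := \sup_{i,x_i} \sup_{y \in K_i(x_i)} |y|$, which is finite by the uniform boundedness hypothesis on $\{K_i(x_i)\}$. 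Then $\|Y_k\|_{L^\infty} \leq M$ for every $k$, giving $\sup_k \|Y_k\|_{L^\infty} \leq M < \infty$.

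For the final assertion, if there exists $M > 0$ such that $|G_i(y,x)| \leq M$ for all $i, y, x$, simply choose $K_i(x_i) := [-\max(M,|y_0|),\, \max(M,|y_0|)]$ for every $i$ and $x_i$. These sets are uniformly bounded, $K_1 = [-\max(M,|y_0|),\max(M,|y_0|)] \ni y_0$, and the inclusion $G_{i,X_i}(K_i(X_i)) \subset [-M,M] \subset K_{i+1}(X_{i+1})$ holds trivially, so the previous argument applies. No real obstacle arises here; the statement is essentially an unpacking of the invariance hypothesis, and the only thing to be careful about is ensuring that $y_0 \in K_1(X_1)$ holds surely (which is why the intersection $\cap_{x_1} K_1(x_1)$ appears in the assumption).
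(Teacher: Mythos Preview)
Your proof is correct; the paper states this lemma without proof, and your induction argument is exactly the straightforward verification the authors presumably had in mind.
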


\begin{lemma}
Let us assume that for some $p,q\geq 1$ we have  
\begin{equation}\label{ExpCond}
 \|L_k(X_k)\cdots L_{k-m+1}(X_{k-m+1})\|_{L^p}\leq C\delta^m   
\end{equation}
for all $k$ and $m<k$ and that $C_0:=\sup_s\|G(y_0,X_s)\|_{L^q}<\infty$. Let $a$ defined by $1/a=1/p+1/q$. Then 
$$
\sup_k\|Y_k\|_{L^a}<\infty.
$$
\end{lemma}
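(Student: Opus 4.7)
The plan is to write $Y_k - y_0$ as a telescoping sum whose individual terms can be controlled by peeling off one composed map at a time, and then to combine the hypothesis \eqref{ExpCond} with Hölder's inequality through the exponent relation $1/a=1/p+1/q$.

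Concretely, for $0\le m\le k$ set
\[
W_{k,m}=G_{k,X_k}\circ G_{k-1,X_{k-1}}\circ\cdots\circ G_{k-m+1,X_{k-m+1}}(y_0),\qquad W_{k,0}=y_0,
\]
so that $Y_k=W_{k,k}$. Telescoping,
\[
Y_k-y_0=\sum_{m=1}^{k}\bigl(W_{k,m}-W_{k,m-1}\bigr).
\]
The first step is to estimate a single increment. Since $W_{k,m}$ and $W_{k,m-1}$ differ only by whether the innermost argument of $G_{k-m+2,X_{k-m+2}}$ is $G_{k-m+1,X_{k-m+1}}(y_0)$ or $y_0$, peeling off the outer $m-1$ Lipschitz constants gives
\[
|W_{k,m}-W_{k,m-1}|\le L_k(X_k)L_{k-1}(X_{k-1})\cdots L_{k-m+2}(X_{k-m+2})\cdot\bigl|G_{k-m+1}(y_0,X_{k-m+1})-y_0\bigr|,
\]
with the outer product interpreted as $1$ when $m=1$.

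Next, I would invoke Hölder's inequality with $1/a=1/p+1/q$ (so $a\le\min(p,q)$, whence also $\|\cdot\|_{L^a}\le\|\cdot\|_{L^q}$ in a probability space). For $m\ge 2$, applying Hölder to the bound above yields
\[
\|W_{k,m}-W_{k,m-1}\|_{L^a}\le \bigl\|L_k(X_k)\cdots L_{k-m+2}(X_{k-m+2})\bigr\|_{L^p}\cdot\bigl\|G_{k-m+1}(y_0,X_{k-m+1})-y_0\bigr\|_{L^q},
\]
and the first factor is at most $C\delta^{m-1}$ by \eqref{ExpCond} applied with $m-1$ in place of $m$, while the second is $\le C_0+|y_0|$ by the assumption $\sup_s\|G_s(y_0,X_s)\|_{L^q}\le C_0$. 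For $m=1$ the increment is simply $G_k(y_0,X_k)-y_0$, whose $L^a$ norm is bounded by $C_0+|y_0|$ directly.

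Summing over $m$ and applying the triangle inequality in $L^a$ gives
\[
\|Y_k\|_{L^a}\le |y_0|+\|Y_k-y_0\|_{L^a}\le |y_0|+(C_0+|y_0|)\Bigl(1+\sum_{m=2}^{k}C\delta^{m-1}\Bigr)\le |y_0|+\frac{(1+C)(C_0+|y_0|)}{1-\delta},
\]
which is independent of $k$, finishing the proof. There is no real obstacle: the argument is a direct telescoping and a single application of Hölder; the only mild subtlety is keeping track of the shift in the index so that a product of $m-1$ Lipschitz constants is controlled via \eqref{ExpCond} with exponent $\delta^{m-1}$, and treating the boundary term $m=1$ separately (with an empty product equal to $1$).
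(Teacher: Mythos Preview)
Your proof is correct and follows essentially the same approach as the paper: the telescoping decomposition you derive by hand is precisely the content of \cite[Corollary 5.3]{[15]} that the paper invokes, and both proofs then apply H\"older with $1/a=1/p+1/q$ and sum the resulting geometric series. Your treatment is in fact slightly more explicit about the index shift and the boundary term $m=1$.
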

\begin{proof}
First, by \cite[Corollary 5.3]{[15]} we have 
$$
|Y_k-y_0|\leq |G_{k,X_k}(y_0)-y_0|+L_{k}(X_k)|G_{k-1,X_{k-1}}(y_0)-y_0|+
L_{k}(X_k)L_{k-1}(X_{k-1})|G_{k-2,X_{k-2}}(y_0)-y_0|
$$
$$
+...+L_{k}(X_k)L_{k-1}(X_{k-1})\cdots L_{2}(X_2)|G_{1,X_1}(y_0)-y_0|.
$$
 Then for $a$ defined by $1/a=1/p+1/q$ we get that 
$$
\|Y_k-y_0\|_{L^p}\leq (C_0+|y_0|)C(1-\delta)^{-1}<\infty.
$$
Thus, $\sup_k\|Y_k\|_{L^a}<\infty$. 
\end{proof}

Next, we need 
\begin{lemma}
Let $b$ be given by $1/b=1/a+1/p$.  Then under the assumptions of the previous Lemma we have 
$$
\sup_{k}\sup_r\delta^{-r}\|Y_k-\bbE[Y_k|X_{k},...,X_{k-r}]\|_{L^b}<\infty.
$$
\end{lemma}
\begin{proof}
Let $r\in\bbN$. We claim that 
$$
\|Y_k-\bbE[Y_k|X_{k},...,X_{k-r}]\|_{L^a}\leq C_1\delta^r
$$
for some constant $C_1$. If $r\geq k-1$ then there is nothing to prove since $Y_k$ depends only on $X_1,...,X_k$. Suppose that $r<k-1$ and let us take arbitrary points $x_1,...,x_{k-r-1}$ with $x_i\in\cX_i$. Then by the minimization property of conditional expectations we have
$$
\|Y_k-\bbE[Y_k|X_{k},...,X_{k-r}]\|_{L^b}\leq \left\|Y_k-G_{k,X_k}\circ G_{k-1,X_{k-1}}\circ\cdots\circ G_{k-r,X_{k-r}}\circ G_{k-r-1,x_{k-r-1}}\circ\cdots\circ G_{1,x_1}(y_0)\right\|_{L^b}
$$
$$
\leq C\delta^{r}\left\|Y_{k-r-1}-G_{k-r-1,x_{k-r-1}}\circ\cdots\circ G_{1,x_1}(y_0)\right\|_{L^a}\leq C\delta^r\left(\|Y_{k-r-1}\|_{L^a}+|G_{k-r-1,x_{k-r-1}}\circ\cdots\circ G_{1,x_1}(y_0)|\right)
$$
where the second inequality uses the H\"older inequality.
Notice next that by applying again  \cite[Corollary 5.3]{[15]} we see that 
$$
|G_{k-r-1,x_{k-r-1}}\circ\cdots\circ G_{1,x_1}(y_0)|\leq |G_{k-r-1,x_{k-r-1}}(y_0)-y_0|+L_{k-r-1}(x_{k-r-1})|G_{k-r-2,x_{k-r-2}}(y_0)-y_0|
$$
$$
+\dots+L_{k-r-1}(x_{k-r-1})\cdots L_2(x_2)|G_{1,x_1}(y_0)-y_0|.
$$
Since the $L^a$ norm of the expression on the right hand side above upper bound with respect to the distribution of $(X_{k-r-1},....,X_1)$ is bounded by some constant $A$ we can always choose points $x_{j}, 1\leq j\leq k-r-1$ such that the above upper bound does not exceed $A$. Thus, 
$$
\sup_{k}\sup_r\delta^{-r}\|Y_k-\bbE[Y_k|X_{k},...,X_{k-r}]\|_{L^b}<\infty.
$$
\end{proof}

Finally, let us discuss when  the condition $\|L_k(X_k)\cdots L_{k+m-1}(X_{k+m-1})\|_{L^p}\leq C\delta^m$ (i.e. condition \eqref{ExpCond}) holds. Clearly, it holds when   $\sup_k\|\bbE[|L_k(X_k)|^p|X_{k-1}]\|_{L^\infty}<1$, and in particular when $X_k$'s are independent and $\sup_k\|L_k(X_k)\|_{L^p}<1$ or when simply $\sup_k\|L_k(X_k)\|_{L^\infty}<1$ (and then we can take $p=\infty$).

Another example for finite $p$'s we have in mind is as follows. Let $\psi_U(1)$ be the first order upper $\psi$-mixing coefficient of the chain $(X_j)$, namely $\psi_U(1)$ is the smallest number such that 
$$
\bbP(A\cap B)-\bbP(A)\bbP(B)\leq \psi_U(1)\bbP(A)\bbP(B)
$$
for all $s\geq 0$ and measurable sets $A\in\sigma\{X_j:j\leq s\}$ and $B\in\sig\{X_{j}: j>s\}$. Note that in the notations of \cite[Eq. (1.6) and Eq. (2.2)]{BradMix} we have $\psi_U(1)=\psi^*(1)-1$.
\begin{lemma}\label{ProdLemm}
Suppose that  $\psi_U(1)<\infty$ (in particular we can just assume that the chain is $\psi$-mixing which implies \eqref{mix} with all $1\leq p\leq \infty$). Let $p\geq1$ and $0<\beta<1$ be numbers satisfying $\varepsilon:=\beta^p(1+\psi_U(1))<1$, and assume that $\sup_j\|L_j(X_j)\|_{L^p}\leq\beta$.
Then for all $k$ and $m<k$ we have 
$$
\left\|\prod_{j=k-m+1}^k L_k(X_k)\right\|_{L^p}\leq \left(\varepsilon^{1/p}\right)^m
$$
and so \eqref{ExpCond} holds with $\delta=\varepsilon^{1/p}$.
\end{lemma}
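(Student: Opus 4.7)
The plan is to convert the $\psi_U(1)$ mixing hypothesis into a one-step multiplicative upper bound for expectations, and then peel off one factor at a time from the right.

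First I would upgrade the defining inequality $\bbP(A\cap B)-\bbP(A)\bbP(B)\leq \psi_U(1)\bbP(A)\bbP(B)$ from indicator sets to the functional statement
\[
\bbE[fg]\leq (1+\psi_U(1))\,\bbE[f]\,\bbE[g]
\]
whenever $f\geq 0$ is $\sigma\{X_j:j\leq s\}$-measurable and $g\geq 0$ is $\sigma\{X_j:j>s\}$-measurable. This is the standard two-step approximation: rewrite the set inequality as $\bbP(A\cap B)\leq (1+\psi_U(1))\bbP(A)\bbP(B)$, extend by linearity to nonnegative simple functions on the two $\sigma$-algebras separately, and then apply monotone convergence (truncating if necessary to handle $L^p$-integrability first).

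Next, set $Y_j:=|L_j(X_j)|^p$, which is nonnegative and $\sigma\{X_j\}$-measurable, with $\bbE[Y_j]=\|L_j(X_j)\|_{L^p}^p\leq \beta^p$ by hypothesis. For fixed $k$ and $1\leq m\leq k$, the key move is to apply the functional mixing inequality with the split $s=k-1$: the product $Y_{k-m+1}\cdots Y_{k-1}$ is $\sigma\{X_j:j\leq k-1\}$-measurable, while $Y_k$ is $\sigma\{X_k\}\subset \sigma\{X_j:j>k-1\}$-measurable. This gives
\[
\bbE\!\left[\prod_{j=k-m+1}^{k}Y_j\right]\leq (1+\psi_U(1))\,\bbE\!\left[\prod_{j=k-m+1}^{k-1}Y_j\right]\bbE[Y_k]\leq \beta^p(1+\psi_U(1))\,\bbE\!\left[\prod_{j=k-m+1}^{k-1}Y_j\right]=\varepsilon\cdot\bbE\!\left[\prod_{j=k-m+1}^{k-1}Y_j\right].
\]
Iterating this peeling $m-1$ times and then bounding the remaining single factor by $\bbE[Y_{k-m+1}]\leq \beta^p\leq \varepsilon$ (since $1+\psi_U(1)\geq 1$) yields
\[
\bbE\!\left[\prod_{j=k-m+1}^{k}Y_j\right]\leq \varepsilon^{m-1}\cdot \beta^p\leq \varepsilon^m.
\]
Taking $p$-th roots then gives the desired $\|\prod_{j=k-m+1}^{k}L_j(X_j)\|_{L^p}\leq (\varepsilon^{1/p})^m$, which is \eqref{ExpCond} with $\delta=\varepsilon^{1/p}$.

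The only slightly delicate step is the upgrade from sets to nonnegative functions with genuine (finite) $L^p$ integrals: one wants to apply the inequality to products $fg$ where $f=Y_{k-m+1}\cdots Y_{k-1}$ might not be bounded. I would handle this by first proving the inequality for bounded $f,g$ via the simple-function extension and then truncating $f$ and $g$ at level $N$ and letting $N\to\infty$ using monotone convergence on both sides; the finiteness of $\bbE[f]$ and $\bbE[g]$ (guaranteed inductively by the previous step of the iteration, starting from $\bbE[Y_{k-m+1}]\leq \beta^p<\infty$) ensures the right-hand side is finite in the limit. Once this is in place the induction is completely routine.
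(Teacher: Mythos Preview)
Your proof is correct and follows essentially the same route as the paper: the paper invokes an external result (\cite[Lemma 60]{Advances}) to obtain $\bbE\big[\prod_j |L_j(X_j)|^p\big]\leq (1+\psi_U(1))^{m}\prod_j\bbE[|L_j(X_j)|^p]$ in one stroke, whereas you prove this inline by first upgrading the set inequality to $\bbE[fg]\leq(1+\psi_U(1))\bbE[f]\bbE[g]$ for nonnegative past/future measurable functions and then peeling off one factor at a time. The iteration and the final $p$-th root are identical.
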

\begin{proof}
First, by \cite[Lemma 60]{Advances} we have 
$$
\bbE\left[\prod_{j=k-m+1}^k |L_k(X_k)|^p\right]\leq (1+\psi_U(1))^{m}\prod_{j=k-m+1}^{k}\bbE[|L_j(X_j)|^p].
$$
Next, since $\sup_j\|L_j(X_j)\|_{L^p}\leq\beta$, 
$$
\prod_{j=k-m+1}^k\bbE\left[ |L_k(X_k)|^p\right]\leq \beta^{mp}
$$
and so 
$$
\bbE\left[\prod_{j=k-m+1}^k |L_k(X_k)|^p\right]\leq\left(\beta^p(1+\psi_U(1))\right)^m.
$$
Now, recalling that $\varepsilon=\beta^p(1+\psi_U(1))<1$ we get that 
$$
\left\|\prod_{j=k-m+1}^k L_k(X_k)\right\|_{L^p}\leq \left(\varepsilon^{1/p}\right)^m.
$$
\end{proof}

%\subsection{The case of converging chains}

%Second Sinai lemma for the types of continuity required in the LLT for the uncountable case...

%Probablyh need to define approximation coefficients in the one sided case with $j-r,...,j+r$....
\subsubsection{Processes without initial condition}
Note the the processes $Y_k$ defined in the previous section can never be stationary since $Y_0=y_0$ and $Y_k$ depends on $X_1,...,X_k$.  
Here we considering a related class of recursive sequences  which will be stationary when the chain $(X_j)$ is stationary and the functions $G_k$ conicide.  
Let $G_k:\bbR\times\cX_k\to\bbR$ be like in the previous section. We define recursively $Y_k=G_k(Y_{k-1},X_k)=G_{k,X_k}(Y_{k-1})$. Then there is a measurable functions $f_k$ on $\prod_{j\leq k}\cX_j$ such that 
$$
Y_k=f_k(...,X_{k-1},X_k).
$$
This fits our general framework by either considering functions $f_k$ which depend only the the coordinates $x_j, j\leq k$. 

\begin{lemma}
Let us assume that (see Lemma \ref{ProdLemm}) for some $p,q\geq 1$ there exist $C>0$ and $\delta\in(0,1)$ such that  
\begin{equation}\label{ExpCond11}
 \|L_k(X_k)\cdots L_{k-m+1}(X_{k-m+1})\|_{L^p}\leq C\delta^m   
\end{equation}
for all $k$ and $m<k$ and that $\sup_s\|G(y_0,X_s)\|_{L^q}<\infty$ for some $y_0\in\bbR$. Let $a$ defined by $1/a=1/p+1/q$. Then the above process is well defined and
$$
\sup_k\|Y_k\|_{L^a}<\infty.
$$
\end{lemma}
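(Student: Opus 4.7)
The plan is to construct the process $(Y_k)$ as an $L^a$-limit of ``backward-truncated'' iterates starting from the constant $y_0$ far in the past, and then to pass to the limit inside the recursion. Concretely, for each $k$ and each $n\geq 0$, define the $\sigma(X_{k-n},\ldots,X_k)$-measurable random variable
\[
Y_k^{(n)} := G_{k,X_k}\circ G_{k-1,X_{k-1}}\circ\cdots\circ G_{k-n,X_{k-n}}(y_0).
\]
Observe that $Y_k^{(n)} = G_{k,X_k}(Y_{k-1}^{(n-1)})$ by construction. The goal is to show that $Y_k^{(n)}$ converges in $L^a$ as $n\to\infty$ to a limit $Y_k$ with $\sup_k\|Y_k\|_{L^a}<\infty$, and that this limit satisfies $Y_k = G_k(Y_{k-1},X_k)$ almost surely.

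The core estimate is a telescoping inequality driven by the pointwise Lipschitz bound: the composition $G_{k,X_k}\circ\cdots\circ G_{k-n+1,X_{k-n+1}}$ has Lipschitz constant at most $\prod_{j=k-n+1}^k L_j(X_j)$, applied to the two points $G_{k-n,X_{k-n}}(y_0)$ and $y_0$ this gives
\[
|Y_k^{(n)}-Y_k^{(n-1)}| \leq \Big(\prod_{j=k-n+1}^k L_j(X_j)\Big)\cdot |G_{k-n,X_{k-n}}(y_0)-y_0|.
\]
Using H\"older's inequality with exponents $p$ and $q$ (recalling $1/a=1/p+1/q$), together with the hypothesis \eqref{ExpCond11} and the moment bound $C_0 := \sup_s\|G_s(y_0,X_s)\|_{L^q}<\infty$, I obtain
\[
\|Y_k^{(n)}-Y_k^{(n-1)}\|_{L^a} \leq C\delta^n\cdot (C_0+|y_0|),
\]
uniformly in $k$. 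This geometric summability shows $(Y_k^{(n)})_n$ is Cauchy in $L^a$ uniformly in $k$, and hence converges to some $Y_k\in L^a$ which, being an $L^a$-limit of $\sigma(X_j:j\leq k)$-measurable variables, is itself $\sigma(X_j:j\leq k)$-measurable. The telescoping bound then yields
\[
\sup_k\|Y_k\|_{L^a} \leq \sup_k\|G_{k,X_k}(y_0)\|_{L^a} + \sum_{n\geq 1}\|Y_k^{(n)}-Y_k^{(n-1)}\|_{L^a} \leq C_0 + \frac{C(C_0+|y_0|)\delta}{1-\delta} < \infty,
\]
using $a\leq q$ (so $\|\cdot\|_{L^a}\leq\|\cdot\|_{L^q}$) for the first term.

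To verify the recursion, I use $Y_k^{(n)} = G_{k,X_k}(Y_{k-1}^{(n-1)})$ and the pointwise bound $|G_{k,X_k}(u)-G_{k,X_k}(v)|\leq L_k(X_k)|u-v|$. Since $L_k(X_k)<\infty$ almost surely and $Y_{k-1}^{(n-1)}\to Y_{k-1}$ in $L^a$ (hence in probability), we get $G_{k,X_k}(Y_{k-1}^{(n-1)})\to G_{k,X_k}(Y_{k-1})$ in probability. But this limit must coincide with the $L^a$-limit $Y_k$, so $Y_k = G_k(Y_{k-1},X_k)$ almost surely. Uniqueness of the solution (among processes bounded in $L^a$) follows from the same contraction: any two solutions $Y_k,\tilde Y_k$ satisfy $|Y_k-\tilde Y_k|\leq \prod_{j=k-n+1}^k L_j(X_j)\cdot|Y_{k-n}-\tilde Y_{k-n}|$ for every $n$, and H\"older together with \eqref{ExpCond11} forces $Y_k=\tilde Y_k$ a.s. after letting $n\to\infty$.

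I do not expect any serious obstacle here; the proof is essentially a coupling-from-the-past construction combined with a single application of H\"older. The only bookkeeping issue is matching the norms: one must use \eqref{ExpCond11} in $L^p$ for the product of Lipschitz constants and the moment hypothesis in $L^q$ for the single increment $G_{k-n,X_{k-n}}(y_0)-y_0$, so that H\"older delivers exactly the exponent $a$ claimed in the lemma.
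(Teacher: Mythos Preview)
Your proof is correct and follows essentially the same route as the paper: both define the backward iterates $Y_k^{(n)}=G_{k,X_k}\circ\cdots\circ G_{k-n,X_{k-n}}(y_0)$ and show they form a Cauchy sequence in $L^a$ via the Lipschitz product bound combined with H\"older. Your telescoping over consecutive indices $n-1,n$ is slightly more direct than the paper's comparison of arbitrary levels $n\leq m$ (which requires an auxiliary bound on the full tail composition via \cite[Corollary~5.3]{[15]}), and you additionally verify the recursion at the limit and uniqueness, which the paper omits.
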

\begin{proof}
Let us take $k\in\bbZ$ and $1\leq n\leq m$
Notice that 
$$
\left|G_{k,X_k}\circ\cdots\circ G_{k-m,X_{k-m}}(y_0)-G_{k,X_k}\circ\cdots\circ G_{k-m,X_{k-n}}(y_0)\right|
$$
$$
\leq \left(\prod_{j=k-n}^{k}L_j(X_j)\right)\left|G_{k-n-1,X_{k-n-1}}\circ\cdots\circ G_{k-m,X_{k-m}}(y_0)\right|.
$$
Now, by applying again \cite[Corollary 5.3]{[15]} we see that 
$$
\left|G_{k-n-1,X_{k-n-1}}\circ\cdots\circ G_{k-m,X_{k-m}}(y_0)\right|\leq |G_{k-n-1,X_{k-n-1}}-y_0|+L_{k-n-1}|G_{k-n-2,X_{k-2,n}}-y_0|
$$
$$
+...+L_{k-n-1}(X_{k-n-1})\cdots L_{k-m}(X_{k-m})|G_{k-m,X_{k-m}}-y_0|.
$$
Combining the above estimates and using the H\"older ineuqlaity we get that 
$$
\left\|G_{k,X_k}\circ\cdots\circ G_{k-m,X_{k-m}}(y_0)-G_{k,X_k}\circ\cdots\circ G_{k-m,X_{k-n}}(y_0)\right\|_{L^p}\leq C\delta^n.
$$
Thus the sequence 
$$
A_n=G_{k,X_k}\circ\cdots\circ G_{k-n,X_{k-n}}(y_0)
$$
is Cauchy in $L^p$ and thus hence as a limit denoted by $Y_k$. To show that $\sup_k\|Y_k\|_{L^p}<\infty$ we use the above estimates with $n=1$ and take the limit as $m\to\infty$ to get 
$
\|Y_k\|_{L^p}\leq C\delta.
$
\end{proof}

\begin{lemma}
Suppose that there are bounded sets $K_i$ such that $G_{i,x_i}(K_i)\subset K_{i+1}$ for all $i$ and $x$. Suppose also that $K:=\cap K_i\not=0$. Then by taking $y_0\in K$ we get that there is a solution $Y_k$ such that 
$$
\sup_{k}\|Y_k\|_{L^\infty}<\infty.
$$
In particular this is the case when the functions $G_{i}$ are uniformly bounded.
 \end{lemma}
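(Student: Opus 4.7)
The plan is to use the existence of the limit process $Y_k$ established by the preceding lemma together with the invariance $G_{i,x_i}(K_i)\subset K_{i+1}$ to force the approximating iterates to lie a.s.\ in a uniformly bounded set, and then to pass this pointwise bound to the limit.

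First I would invoke the preceding lemma (whose contraction hypothesis is implicitly in force throughout this subsection, as it is needed for $Y_k$ to be well-defined as a process without initial condition): the sequence
$$
A_n^{(k)} := G_{k,X_k}\circ G_{k-1,X_{k-1}}\circ\cdots\circ G_{k-n,X_{k-n}}(y_0)
$$
is Cauchy in $L^p$, and its limit $Y_k$ does not depend on the choice of starting point $y_0$. I would therefore fix $y_0\in K=\bigcap_i K_i$, which is nonempty by assumption.

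Next comes the key induction. Since $y_0\in K\subset K_{k-n}$ and $G_{k-n,X_{k-n}}(K_{k-n})\subset K_{k-n+1}$, we have $G_{k-n,X_{k-n}}(y_0)\in K_{k-n+1}$ almost surely; iterating the invariance $n+1$ times gives $A_n^{(k)}\in K_{k+1}$ a.s. Extracting a subsequence $n_j$ along which $A_{n_j}^{(k)}\to Y_k$ almost surely (which exists thanks to the $L^p$ convergence) yields $Y_k\in\overline{K_{k+1}}$ a.s.; replacing each $K_i$ by its closure preserves both the invariance and the (uniform) boundedness. Reading ``bounded'' as ``uniformly bounded'' (matching the parallel statement for processes with initial condition stated earlier in the subsection), we conclude
$$
\sup_k\|Y_k\|_{L^\infty}\leq \sup_i\sup_{z\in\overline{K_i}}|z|<\infty.
$$

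For the ``in particular'' case, if $\sup_{i,y,x}|G_i(y,x)|\leq M$, simply take $K_i=[-M,M]$: the invariance $G_{i,x_i}(K_i)\subset K_{i+1}$ is automatic, $K=[-M,M]\neq\emptyset$, and the $K_i$ are trivially uniformly bounded. The only (minor) obstacle in the argument is that pointwise a.s.\ bounds are not directly preserved under $L^p$ convergence, which is why one extracts an a.s.\ convergent subsequence; once this step is performed the conclusion follows immediately.
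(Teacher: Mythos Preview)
Your proof is correct. The paper states this lemma without proof, treating it as a routine observation; your argument supplies exactly the natural details: invoke the preceding existence lemma (whose contraction hypothesis is indeed needed here, as you observe, since without it the limit defining $Y_k$ need not exist), track the iterates $A_n^{(k)}$ through the invariant chain $K_{k-n}\to K_{k-n+1}\to\cdots\to K_{k+1}$ via the inclusion $G_{i,x_i}(K_i)\subset K_{i+1}$, and then pass the pointwise bound to the limit along an a.s.\ convergent subsequence. Your remark that ``bounded'' must be read as ``uniformly bounded'' (paralleling the earlier lemma for processes with initial condition) is also well taken---without this the conclusion $\sup_k\|Y_k\|_{L^\infty}<\infty$ would not follow.
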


\begin{lemma}
Let $b$ be given by $1/b=1/a+1/p$.  Then under the assumptions of the previous Lemma we have 
$$
\sup_{k}\sup_r\delta^{-r}\|Y_k-\bbE[Y_k|X_{k},...,X_{k-r}]\|_{L^b}<\infty.
$$
\end{lemma}
\begin{proof}
Let $r\in\bbN$. Then
$$
Y_k=G_{k,X_k}\circ\cdots\circ G_{k-r,X_{k-r}}(Y_{k-r})
$$
and so
$$
\|Y_k-\bbE[Y_k|X_{k},...,X_{k-r}]\|_{L^b}\leq \left\|Y_k-G_{k,X_k}\circ G_{k-1,X_{k-1}}\circ\cdots\circ G_{k-r,X_{k-r}}(y_0)\right\|_{L^b}
$$
$$
\leq C\delta^{r}\|Y_k-y_0\|_{L^a}\leq C_1\delta^r
$$
for some constant $C_1>0$.
\end{proof}

\subsubsection{The case of a random environment}
Let $(M,\cB,\bbP_0,\te)$ be an ergodic probability preserving system with $\te$ being invertible. Let $(X_{\om,n})_{n\in\bbZ}, \om\in M$ be a Markov chain in the random environment $(M,\cB,\bbP_0,\te)$. We consider functions measurable $G_\om:\bbR\times\cX_\om\to\bbR$ and define 
$$
Y_{\om,k}=G_{\te^k\om}(Y_{\om,k-1},X_{\om,k}).
$$
Let $L_\om(x)$  denote the Lipschitz  constant of the function $G_\om(\cdot,x)$.  
Then if we assume that for $\bbP$-a.a. $\om$ we have 
$$
\left\|\prod_{j=0}^{n-1}L_{\te^{-j}\om}(X_{\om,k})\right\|_{L^p}\leq C\delta^n
$$
and $\sup_k\|G_{\te^k\om}(y_0,X_{\om,k})\|_{L^q}<\infty$ for some $C>0$, $y_0\in\bbR$ and $\delta\in(0,1)$ we get that 
$$
Y_{\om,k}=f_{\te^k\om}(...,X_{k-1,\om},X_{k,\om})
$$
namely we are in the setup of Section \ref{RDS} and the variance of $\sum_{j=0}^{n-1}Y_{\om,j}$ either grows linearly fast for $\bbP$-a.a. $\om$ or it is bounded for $\bbP$-a.a. $\om$.
\subsection{Application to $\text{GARCH}(\mathfrak p, \mathfrak q)$ sequences}
Assume that $X_j$ are real valued and have zero mean and that $\sup_j\|X_j\|_{L^p}<\infty$ for some $p\geq2$. Let $Y_k=X_kL_k$
where $L_k$ is defined in recursion by
$$
L_k^2=\mu+\alpha_1L_{k-1}^2+...+\alpha_{\mathfrak p}L^2_{k-\mathfrak p}+\beta_1X_{k-1}^2+...+\beta_{\mathfrak q}X^2_{k-\mathfrak q}
$$
with $\mu,\alpha_i,\beta_j\in\bbR, \mu>0$.  We refer to \cite[Example 3.5]{Jirak} for more references  and motivation for considering such processes. 
We assume here that with $r=\max(\mathfrak p,\mathfrak q)$,
$$
\gamma_C=\sum_{i=1}^r\|\alpha_i+\beta_iX_i^2\|_{L^2}<1.
$$
Now, as explained in \cite[Example 3.5]{Jirak}  we have
$$
Y_k=\sqrt{\mu}X_k\left(1+\sum_{n=1}^\infty\sum_{1\leq l_1,...,l_n\leq r}\prod_{i=1}^n(\al_{i_i}+\beta_{l_i}X^2_{k-l_1-...-l_n})\right).
$$
Arguing like in \cite[Example 3.5]{Jirak} one can show that 
$$
\sup_kv_{k,p,\delta}(Y_k)<\infty
$$
for some $\delta\in(0,1)$. Indeed, in \cite{Jirak} only the case when $X_j$ are iid was considered, which led to a similar statement which is suitable to the case of Bernoulli shifts. However, taking a careful look at the arguments shows that what can be done is to approximate exponentially fast in the above sense. Thus we generalize the results of Jirak to GARCH processes generated by inhomogenuous Markov chains, where already the case of independent and not identically distributed random variables $X_j$ seems to be a new result.

\subsection{Applications to dynamical systems: limit theorems for H\"older on average observables}\label{DSY}
In \cite{DolgHaf PTRF 2} %and \cite{DH LLT} 
we described a general method to obtain CLT rates %and local limit theorems 
for a wide class of expanding or hyperbolic maps and unifomly H\"older continuous functions. In this section we will explain how the methods in this paper can also provide similar results, and in some cases new results that do not follow from the latter papers since here we can consider functions $f_j$ which are only H\"older on average.
\subsection{Non-uniformly expanding maps via Korepanov's semi-conjugacy}

Let us begin with the setup of \cite{Korepanov}. Korepanov considered non-uniformly expanding maps $T:M\to M$ on a metric space $M$ which has a reference probability measure, and the system $(M,T)$ admits a tower extension. The reason this is relevant to our work is that he essentially proved the following theorem.

\begin{theorem}
Let $T:M\to M$ be the class of non-uniformly expanding maps considered in \cite{Korepanov}. Then there exists a two sided Bernoulli shift $(X,\sigma)$ which is semi conjugated with $T$. Moreover, if $(X_j)$ is the underlying iid sequence then for every H\"older continuous function $g:M\to\bbR$ the function $f=g\circ\pi$ (where $\pi$ ois the semi conjugation) satisfies
$$
\sup_{j}v_{j,p,\delta^{1/p}}(f\circ T^j)\leq \|g\|_{\text{Holder}}
$$
for some constant $\delta<1$.
\end{theorem}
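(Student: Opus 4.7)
The plan is to invoke Korepanov's construction, which builds a Young tower $F:\Delta\to\Delta$ over $M$ together with a projection $p:\Delta\to M$ satisfying $p\circ F=T\circ p$, and then produces a two sided Bernoulli shift $(X,\sigma,\bbP)$ and a measurable factor map $\pi:X\to M$ with $\pi\circ\sigma=T\circ\pi$. The underlying iid sequence $(X_j)$ parameterises the symbolic coding: the coordinates $X_j$ encode, in a two sided way, the return structure to the Young base at time $j$. Once this is in hand, $f:=g\circ\pi$ is a measurable function on the Bernoulli shift space and the equivariance $\pi\circ\sigma^j=T^j\circ\pi$ reduces everything to studying $f\circ\sigma^j$ in the symbolic coordinates.

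The central estimate is then
\[
\|f\circ T^j-\bbE[f\circ T^j\mid X_{j-r},\ldots,X_{j+r}]\|_{L^p}\leq \|g\|_{\text{Holder}}\,\delta^{r/p}.
\]
I would split the probability space into a \emph{good set} $\Om_r$ on which Korepanov's estimates guarantee contraction $d(\pi(x),\pi(y))\leq C\tilde\delta^{r}$ whenever $x_k=y_k$ for $|k|\leq r$, and an \emph{exceptional set} $\Om_r^c$ of measure at most $C\tilde\delta^{r}$, for some $\tilde\delta\in(0,1)$. On $\Om_r$ the oscillation of $f\circ T^j$ with respect to $\cF_{j-r,j+r}$ is pointwise bounded by $\|g\|_{\text{Holder}}(C\tilde\delta^{r})^{\alpha}$, where $\alpha$ is the H\"older exponent of $g$, which contributes at most $C'\|g\|_{\text{Holder}}\tilde\delta^{r\alpha}$ to the $L^p$ norm. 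On $\Om_r^c$ one applies the trivial bound $|f\circ T^j|\leq \|g\|_{\infty}\leq \|g\|_{\text{Holder}}$, yielding an $L^p$ contribution of at most $\|g\|_{\text{Holder}}\bbP(\Om_r^c)^{1/p}\leq C\|g\|_{\text{Holder}}\tilde\delta^{r/p}$. The slower of the two geometric rates dominates, and after absorbing constants by shrinking $\tilde\delta$ we obtain the claimed bound with some $\delta\in(0,1)$. Stationarity of the Bernoulli shift together with the equivariance of $\pi$ make the constant uniform in $j$, so the supremum in the definition of $v_{j,p,\delta^{1/p}}$ is controlled.

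The main obstacle will be to genuinely extract Korepanov's estimates in a form suitable for the \emph{two sided} conditioning implicit in the norm $v_{j,p,\delta^{1/p}}$. Korepanov's theorem is most naturally stated as an asymptotic matching of forward orbits, and one must verify that conditioning on the past coordinates $X_{j-r},\ldots,X_{j-1}$, in addition to the future ones, localises $\pi(x)$ inside a set of diameter $\tilde\delta^{r}$ outside an exceptional set of measure $\tilde\delta^{r}$. This in turn amounts to controlling how much additional information the past symbols carry about the current fibre of $\pi$, and is the key technical input required from the non-uniform hyperbolicity literature. Once this two sided refinement is granted, the exceptional-set decomposition above is routine and immediately gives the rate $\delta^{1/p}$ (the $1/p$ exponent being forced by $\bbP(\Om_r^c)^{1/p}$, not by the H\"older constant on the good set).
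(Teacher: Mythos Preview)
The paper does not actually prove this theorem; it attributes the result to Korepanov's paper \cite{Korepanov} with the phrase ``he essentially proved the following theorem'' and then immediately uses it as a black box to reduce non-uniformly expanding maps to the Markov-shift framework. So there is no proof in the paper to compare against.

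Your sketch is a reasonable outline of how one would extract the statement from Korepanov's construction, and it captures the correct mechanism: exponential tails for the return time in the Young tower translate into an exceptional set $\Om_r^c$ of measure $O(\tilde\delta^r)$, and on the complement the semiconjugacy $\pi$ contracts cylinder sets exponentially, so H\"older continuity of $g$ controls the oscillation. The exponent $1/p$ in $\delta^{1/p}$ indeed comes from the $\bbP(\Om_r^c)^{1/p}$ term, exactly as you say. One technical point you gloss over: to bound $\|f\circ\sigma^j-\bbE[f\circ\sigma^j\mid\cF_{j-r,j+r}]\|_{L^p}$ via the good/bad decomposition you implicitly use the minimisation property of conditional expectations, but for $p\neq 2$ the conditional expectation is not the $L^p$-minimiser. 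You should instead pick an explicit $\cF_{j-r,j+r}$-measurable approximant $G$ and use $\|f-\bbE[f\mid\cG]\|_{L^p}\leq 2\|f-G\|_{L^p}$ (by contraction of conditional expectation). Whether the good set $\Om_r$ is itself $\cF_{j-r,j+r}$-measurable, or only approximately so, depends on the details of Korepanov's coding and is precisely the ``two sided conditioning'' issue you flag as the main obstacle; resolving it does require going into \cite{Korepanov}.
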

Using this theorem the problem reduces to our setup (we can consider uniformly bounded functions which are H\"older continuous on average). 
\subsection{Subshifts of finite type and H\"older on average observables}
Another example which is relevant to our setup is when working with measure of maximal entropy of a subshift of finite type $T$. Let us briefly recall the definition of a subshift of finite type. Let $\cA$ be a finite set and let $(A_{i,j})_{i,j\in\cA}$ be a matrix with $0-1$ entries such that $A^M$ has only positive entries for some $M$. Let $\Sigma=\{(x_i)\in\cA^{\bbN}: A_{x_i,x_{i+1}}=1\}$ and let $\sigma:\Sigma\to\Sigma$ be the left shift. Let $\mu$ be the unique measure of maximal entropy (see \cite{Bowen}).
Then (see \cite{Bowen}) when viewed as random variables $(X_j)_{j\geq0}$ whose path is distributed according  to $\mu$ the coordinates $X_j$ form a $\psi$-mixing Markov chain, and so we can prove optimal CLT rates for partial sums of the form $\sum_{j=0}^{n-1}f_j\circ T^j$ for uniformly bounded functions $f_j$ which are only H\"older continuous on average, that is under the assumption that 
$$
\sup_jv_{j,s,\delta}(f_j)<\infty
$$
for some $s$ and $\delta$. Indeed, Assumption \ref{Ass1} is in force. Note that we can also consider Markov measures on Gibbs-Markov maps \cite{Jon} since also in that setup the coordinates are $\psi$-mixing exponentially fast. 
When the functions are not uniformly bounded but instead are bounded in some $L^p$ norm then we get rates $\sig_n^{1-u}$ where $u\to 0$ as $p\to\infty$.
In fact, even when $f_j=f$ does not depend on $j$ these results seem to be new. Moreover, we can also consider non-stationary SFT $(T_j)$ like in \cite{DolgHaf PTRF 2} since also in that case the coordinates are $\psi$-mixing (see \cite{DolgHaf PTRF 2,Nonlin}). This allows us to prove optimal CLT rates for non-stationary  Markovian piecewise expanding intervals maps and H\"older on average functions $f_j$, see \cite[Section 4]{DolgHaf PTRF 2}.

Another application is to Gibbs Markov maps considered in \cite{Jon}.

By considering symbolic representations we derive the following corollary.
\begin{corollary}
Let $T:M\to M$ be an Anosov map and let $\mu$ be the unique measure of maximal entropy. Let $f_j$ be uniformly bounded functions which are uniformly H\"older on average, that is for every $j$ there exists a measurable function $C_j:M\to\bbR$ such that $\sup_j\int|C_j(x)|^sd\mu(x)<\infty$ and 
$$
|f_j(x)-f_j(y)|\leq (C_j(x)+C_j(y))(\text{dist}(x,y))^\eta
$$
where $\eta\in(0,1]$. 
Then Assumption \ref{Ass1} is in force when lifting this system to the SFT and therefore all the results in Theorem \ref{BE} and \ref{ThWass} hold for $\sum_{j=0}^{n-1}f_j\circ T^j$ when viewed as random variables on the space $(M,\mu)$.
\end{corollary}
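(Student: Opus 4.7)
\medskip

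\noindent\textbf{Proof plan for the corollary.}

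\smallskip

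The plan is to reduce everything to the symbolic setup via Bowen's coding. Let me fix a Markov partition of $M$ of sufficiently small diameter. This yields a two-sided subshift of finite type $(\Sigma,\sigma)$ with alphabet $\cA$ and a H\"older continuous semi-conjugation $\pi:\Sigma\to M$ (i.e. $T\circ\pi=\pi\circ\sigma$), with the property that there exist $\gamma\in(0,1)$ and $C>0$ such that whenever two sequences $x=(x_k)_{k\in\bbZ}$ and $y=(y_k)_{k\in\bbZ}$ in $\Sigma$ satisfy $x_k=y_k$ for all $|k|\leq r$, then $\text{dist}(\pi(\sigma^j x),\pi(\sigma^j y))\leq C\gamma^r$ for every $j$. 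The measure of maximal entropy on $\Sigma$ (also denoted $\mu$ by abuse) projects to $\mu$ on $M$, and under this measure the coordinate process $(X_j)_{j\in\bbZ}$ defined by $X_j(x)=x_j$ is a stationary Markov chain which is $\psi$-mixing with exponential rate (this is a standard fact cited in the paragraph preceding the corollary). In particular Assumption~2 of the paper, namely \eqref{mix2}, holds with $q=\infty$ and $p=1$, so it holds with every $1\leq p\leq q\leq\infty$.

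\smallskip

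The next step is to lift the observables: set $\tilde f_j=f_j\circ\pi:\Sigma\to\bbR$. Uniform boundedness gives $\sup_j\|\tilde f_j\|_{L^\infty(\mu)}<\infty$. The heart of the argument is checking the exponential approximation condition $\sup_j v_{j,s,\delta_0}(\tilde f_j)<\infty$ for some $s$ and some $\delta_0\in(0,1)$. By Remark~\ref{Rem Hold}, instead of conditioning we may compare $\tilde f_j$ with its value on a representative sequence that agrees with $x$ on $|k-j|\leq r$. The H\"older on average hypothesis then yields, pointwise in $\mu$,
$$
\bigl|\tilde f_j(x)-\bbE[\tilde f_j\mid \cF_{j-r,j+r}](x)\bigr|
\leq \bigl(C_j(\pi(\sigma^j x))+C_j(\pi(\sigma^j y_x))\bigr)\bigl(C\gamma^r\bigr)^\eta
$$
for a suitable $\cF_{j-r,j+r}$-measurable choice $y_x$. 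Taking $L^s$ norms and using $\mu$-invariance together with $\sup_j\|C_j\|_{L^s(\mu)}<\infty$ yields $v_{j,s,\delta_0}(\tilde f_j)\leq C'$ with $\delta_0=\gamma^\eta$, uniformly in $j$.

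\smallskip

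Putting this together, Assumption~\ref{Ass1} is in force for the lifted sums $\tilde S_n=\sum_{j=0}^{n-1}\tilde f_j\circ\sigma^j$ (note that $\tilde f_j\circ\sigma^j$ depends only on coordinates with indices $\geq j$ after the standard identification, but since the chain is two-sided Markov we are in the setting of the paper either way; alternatively one may first apply Sinai's lemma \ref{Sinai} to replace $\tilde f_j$ by a forward-measurable cohomologous function with the same level of regularity). Since $\sup_j\|\tilde f_j\|_{\infty}<\infty$ and \eqref{mix} holds with $p=\infty$, Proposition~\ref{MomMom}(1) shows that Assumption~\ref{Stand MomAss} holds for every finite $k_0$. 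Therefore Theorems~\ref{BE} and~\ref{ThWass} apply to $\tilde S_n$, and the conclusion for $S_n=\sum_{j=0}^{n-1}f_j\circ T^j$ follows from $S_n\circ\pi=\tilde S_n$ together with $\pi_*\mu_{\Sigma}=\mu$, since both $F_n$ and $\sig_n$ are determined by the distribution of the sum.

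\smallskip

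The only genuine step, and the one I would expect to write most carefully, is the approximation estimate for $v_{j,s,\delta_0}(\tilde f_j)$: one must verify that the H\"older constants $C_j$, which a priori live on $M$ and are only in $L^s$, behave well when evaluated along the shift orbit and combined with the exponential contraction of cylinders under $\pi$. Uniform stationarity of $\mu$ on $\Sigma$ makes $\|C_j\circ\pi\circ\sigma^j\|_{L^s(\mu_\Sigma)}=\|C_j\|_{L^s(\mu)}$, which is where the uniform-in-$j$ bound on $\|C_j\|_{L^s}$ enters and closes the argument.
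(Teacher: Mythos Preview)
Your overall plan---code via a Markov partition, note that under the MME the coordinate process on the SFT is an exponentially $\psi$-mixing finite-state Markov chain, and verify $\sup_j v_{j,s,\delta_0}(\tilde f_j)<\infty$ from the H\"older-on-average hypothesis together with exponential contraction of cylinders under $\pi$---is exactly what the paper intends, and your check of the $L^s$ approximation coefficient is correct.

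There is, however, a genuine gap at the one-sidedness clause of Assumption~\ref{Ass1}. For an Anosov diffeomorphism the Markov partition produces a \emph{two-sided} subshift, and the semiconjugacy $\pi:\Sigma\to M$ depends on both past and future coordinates; the lifted $\tilde f_j=f_j\circ\pi$ are therefore genuinely two-sided, and your parenthetical claim that ``$\tilde f_j\circ\sigma^j$ depends only on coordinates with indices $\geq j$'' is false. Your fallback via Lemma~\ref{Sinai} is the natural move, but the phrase ``with the same level of regularity'' overstates what that lemma delivers: from $\sup_j\|\tilde f_j\|_{j,\infty,s,\delta}<\infty$ one only obtains $\sup_j\|g_j\|_{j,s,s,\delta^{1/2}}<\infty$ (the first index drops from $\infty$ to $\min(\infty,s)=s$), and there is no a priori reason for $g_j$ to be uniformly bounded when the H\"older constants $C_j$ are merely in $L^s$. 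Since the smoothness argument for $t\mapsto\cL_{j,t}$ under Assumption~\ref{Ass1} uses $\sup_j\|g_j\|_{L^\infty}<\infty$, that assumption is not verified for $g_j$ as written. The paper states the corollary without a detailed proof and does not address this passage from two-sided to one-sided; closing the gap would require either an additional argument that $g_j\in L^\infty$ uniformly (exploiting the finite-state Doeblin structure of the MME, cf.\ Appendix~A), or routing through Assumption~\ref{Ass2} at the cost of a growth hypothesis on $\sig_n$ and a weaker rate.
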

\begin{remark}
 When considering non stationary SFT (see \cite{DolgHaf PTRF 2}) we can get results for Markov measures, and so  also for small perturbations of Anosov maps, see \cite[Appendix C]{DolgHaf PTRF 2}.   
\end{remark}

%Add the part about...
\section{A sequential spectral gap and perturbation theory}
\subsection{A Perron-Frobenius theorem for the transfer operators}
Denote by $\cB_{j,p,a,\delta,+}$ the space of all functions $g$ on $\cZ_j$ such that $\|g\|_{j,p,a,\delta}<\infty$. Then $\cB_{j,p,a,\delta}$ is a Banach space. 
Let us denote by $\ka_j$ the probability law of $(X_j,X_{j+1},...)$. For $g\in L^1(\ka_j)$ define 
$$
\cL_jg(x_{j+1},x_{j+2},...)=\bbE[g(X_{j},X_{j+1},...)|X_{j+1}=x_{j+1},X_{j+2}=x_{j+2},...]=\int g(y,x_{j+1},x_{j+2},...)P_j(dy,x_{j+1})
$$
where $P_j(\cdot,z)$ is the measure given by $P_j(A,z)=\bbP(X_j\in A|X_{j+1}=z)$.
Then the following duality relation holds:
\begin{equation}\label{Dual}
\int g\cdot (f\circ T_j)\,d\ka_j=\int (\mathcal L_j g) f\,d\ka_{j+1}    
\end{equation}
for all functions $g\in L^1(\ka_j)$ and $f\in L^\infty(\ka_{j+1})$. Define
$$
\mathcal L_j^n=\mathcal L_{j+n-1}\circ\cdots\circ \mathcal L_{j+1}\circ \mathcal L_j.
$$
Then 
$$
\mathcal L_j^ng(x_{j+n},x_{j+n+1},...)=\bbE[g(X_{j},X_{j+1},...)|X_{j+n}=x_{j+n},X_{j+n+1}=x_{j+n+1},...].
$$
\begin{theorem}\label{RPF}
Suppose $\varpi_{q_0,p_0}(n)\to 0$ for some $1\leq q_0,p_0\leq \infty$.
Denote by $\textbf{1}$ the constant function taking the value $1$, regardless of its domain. Then for every $\delta\in(0,1)$ there exist a constants $A>0$  such that  for every $j\in\bbZ, n\in\bbN$ and $g\in\cB_{j,q_0,p_0,\delta,+}$,
$$
\|\mathcal L_j^n g-\ka_j(g)\textbf{1}\|_{j+n,p_0,p_0,\delta}\leq 
A\left(v_{j,p_0,\delta}(g)\delta^{n/2}+\|g\|_{L^{q_0}(\ka_j)}\varpi_{q_0,p_0}([n/2])\right)\leq
A\|g\|_{j,q_0,p_0,\delta}\left(\delta^{n/2}+\varpi_{q_0,p_0}([n/2])\right).
$$
If also $q_0\leq p_0$ then there exists a constant $\gamma\in(0,1)$ such that for every $j\in\bbZ, n\in\bbN$ and $g\in\cB_{j,q_0,p_0,\delta,+}$,
$$
\|\mathcal L_j^n g-\ka_j(g)\textbf{1}\|_{j+n,p_0,p_0,\delta}\leq A\|g\|_{j,q_0,p_0,\delta}\gamma^n.
$$
The constants  $A$ and $\gamma$ depend only on $\delta$ and $p_0$ and $q_0$, while the dependence on $q,p$ is through the sequence $\varpi_{q_0,p_0}(n)$ in \eqref{mix}.
\end{theorem}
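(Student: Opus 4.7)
The plan is to split the target norm into its two constituent pieces $\|\mathcal L_j^n g - \ka_j(g)\|_{L^{p_0}}$ and $v_{j+n, p_0, \delta}(\mathcal L_j^n g)$ and to estimate them by complementary mechanisms: a ``big block / small block'' truncation against the mixing rate for the former, and a Markov-property cancellation for the latter. The exponential decay when $q_0 \leq p_0$ will then come from a self-bootstrapping iteration of the resulting bound.

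For the $L^{p_0}$ piece, I would set $s = [n/2]$ and split $g = g_s + (g - g_s)$, where $g_s = \bbE[g \mid X_j, X_{j+1}, \ldots, X_{j+s}]$. The residual satisfies $\|g - g_s\|_{L^{p_0}} \leq v_{j, p_0, \delta}(g)\,\delta^{[n/2]}$, so by $L^{p_0}$-contractivity of conditional expectation both $\mathcal L_j^n(g - g_s)$ and the scalar $\ka_j(g - g_s)$ contribute at most a multiple of this quantity. The smoothed piece $g_s$ is measurable with respect to $\cF_{j, j+[n/2]}$, while $\mathcal L_j^n g_s$ is a version of $\bbE[g_s \mid \cF_{j+n, \infty}]$, and these two $\sigma$-algebras are separated by a gap of at least $[n/2]$. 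The mixing assumption \eqref{mix1} then yields
\begin{equation*}
\|\mathcal L_j^n g_s - \ka_j(g_s)\|_{L^{p_0}} \leq \|g_s\|_{L^{q_0}}\,\varpi_{q_0, p_0}([n/2]) \leq \|g\|_{L^{q_0}}\,\varpi_{q_0, p_0}([n/2]),
\end{equation*}
and adding the two contributions gives the $L^{p_0}$ half of the claimed bound.

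For the $v_{j+n, p_0, \delta}$ piece, I would exploit a cleaner Markov observation. Since $\mathcal L_j^n g$ depends only on $X_{j+n}, X_{j+n+1}, \ldots$, Markovianity and the tower rule give, for every $r \geq 0$,
\begin{equation*}
\mathcal L_j^n g - \bbE[\mathcal L_j^n g \mid X_{j+n}, \ldots, X_{j+n+r}] = \bbE[g \mid \cF_{j+n, \infty}] - \bbE[g \mid \cF_{j+n, j+n+r}].
\end{equation*}
Now decompose $g = g_{n+r} + (g - g_{n+r})$ with $g_{n+r} = \bbE[g \mid X_j, \ldots, X_{j+n+r}]$. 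Because $g_{n+r}$ is $\cF_{-\infty, j+n+r}$-measurable, applying the Markov property at time $j+n+r$ yields $\bbE[g_{n+r} \mid \cF_{j+n, \infty}] = \bbE[g_{n+r} \mid \cF_{j+n, j+n+r}]$, so the smoothed part cancels exactly. Only the residual survives, and contraction of conditional expectation gives
\begin{equation*}
\bigl\|\mathcal L_j^n g - \bbE[\mathcal L_j^n g \mid X_{j+n}, \ldots, X_{j+n+r}]\bigr\|_{L^{p_0}} \leq 2\,v_{j, p_0, \delta}(g)\,\delta^{n+r}.
\end{equation*}
Multiplying by $\delta^{-r}$ and taking the supremum in $r$ yields $v_{j+n, p_0, \delta}(\mathcal L_j^n g) \leq 2\,v_{j, p_0, \delta}(g)\,\delta^n$, which is in fact better than the stated $\delta^{n/2}$ rate; combined with the $L^{p_0}$ estimate and noting that constants contribute $0$ to the $v$-seminorm, the first conclusion of the theorem follows.

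For the exponential bound when $q_0 \leq p_0$, choose $n_0$ large enough that $A(\delta^{n_0/2} + \varpi_{q_0, p_0}([n_0/2])) \leq 1/2$, which is possible since $\varpi_{q_0, p_0}(n) \to 0$. The hypothesis $q_0 \leq p_0$ gives $\|\cdot\|_{j+n_0, q_0, p_0, \delta} \leq \|\cdot\|_{j+n_0, p_0, p_0, \delta}$, so the output of one application in the $\|\cdot\|_{p_0,p_0,\delta}$ norm dominates the input $\|\cdot\|_{q_0,p_0,\delta}$ norm demanded by the next application. Combined with the invariance $\ka_{j+n_0}(\mathcal L_j^{n_0} h) = \ka_j(h)$, this yields by induction
\begin{equation*}
\|\mathcal L_j^{k n_0} g - \ka_j(g)\textbf{1}\|_{j+k n_0, p_0, p_0, \delta} \leq 2^{-k}\,\|g\|_{j, q_0, p_0, \delta},
\end{equation*}
and writing an arbitrary $n$ as $n = k n_0 + r$ with $0 \leq r < n_0$ and absorbing the remainder into the constant produces the exponential rate $\gamma = 2^{-1/n_0}$. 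The main obstacle is the Markov cancellation in the second step: one must verify that conditioning the truncated $g_{n+r}$ on the full tail $\cF_{j+n, \infty}$ gives nothing more than conditioning on $\cF_{j+n, j+n+r}$, which is exactly where the one-sided structure of $\cZ_j$ is used in an essential way. Once this identity is in place, the rest is routine bookkeeping.
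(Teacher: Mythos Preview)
Your proposal is correct and follows essentially the same approach as the paper's own proof: the same truncation $g_{[n/2]}$ plus mixing for the $L^{p_0}$ piece, the same Markov cancellation with $g_{n+r}$ for the $v$-seminorm (indeed yielding the sharper $\delta^n$ rather than $\delta^{n/2}$), and the same choice of $n_0$ with iterated contraction for the exponential bound when $q_0\le p_0$. The paper organizes the iteration slightly differently (writing $D_j=\cL_j-\ka_j\textbf{1}$ and handling the residual block of length $n_0+s$ first), but the substance is identical.
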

Note that the theorem shows that the operator norm of $(\mathcal L_j^n-\ka_j):\cB_{j,q_0,p_0,\delta}\to \cB_{j+n,p_0,p_0,\delta}$ does not exceed either $A(\delta^{n/2}+\varpi_{q_0,p_0}([n/2]))$ or
$A\gamma^n$. 
If $\varpi_{q_0,p_0}(n)$ decays exponentially fast we get that the first estimate also provides exponential rates. In the special case when $X_j$ are independent (or are $m$-dependent) we get that for all $n\geq 1$ (or $n\geq 2m+1$),
$$
\|\cL_j^n g-\ka_j(g)\|_{j+n,p_0,p_0,\delta}\leq A\delta^{n/2}v_{j,p_0,\delta}(g).
$$
If $p\geq q$ then we automatically get exponential decay, and it is immediate that the operator norm  when viewed as map from $\cB_{j,p_0,p_0,\delta,+}$ to $\cB_{j+n,p_0,p_0,\delta,+}$ or from  $\cB_{j,q_0,p_0,\delta,+}$ to $\cB_{j+n,q_0,p_0,\delta,+}$  does not exceed  $A\gamma^n$. 
\begin{proof}[Proof of Theorem \ref{RPF}]
For each $m$ let $g_{m}=g_m(X_j,...,X_{j+m})=\bbE[g(X_{j},X_{j+1},...)|X_{j},X_{j+1},...,X_{j+m}]$. Then 
$$
\|g-g_{j,[n/2]}\|_{L^{p_0}(\ka_j)}\leq v_{j,p_0,\delta}(g)\delta^{[n/2]}
$$
and so by the contraction property of conditional expectations,
$$
\left\|\mathcal L_j^n g-\mathcal L_j^n g_{[n/2]}\right\|_{L^{p_0}(\ka_{j+n})}\leq v_{j,p_0,\delta}(g)\delta^{[n/2]}.
$$
where we view $g_{[n/2]}$ as a function on $\cY_j$ which depends only on finitely many coordinates. We also have 
$$
|\ka_j(g)-\ka_j(g_{[n/2]})|\leq v_{j,p_0,\delta}(g)\delta^{[n/2]}.
$$
Thus,
$$
\left\|\mathcal L_j^n g-\mu_j(g)\right\|_{L^{p_0}(\ka_{j+n})}\leq 2 v_{j,p_0,\delta}(g)\delta^{[n/2]}+\left\|\mathcal L_j^n g_{[n/2]}-\ka_j(g_{[n/2]})\right\|_{L^{p_0}(\ka_{j+n})}.
$$
Now by \eqref{mix} the last term on the above right hand side does not exceed $\|g\|_{L^{q_0}(\ka_j)}\varpi_{q_0,p_0}([n/2])$ and so 
$$
\|\mathcal L_j^n g-\ka_j(g)\|_{L^{p_0}(\ka_{j+n})}\leq A'\left(\delta^{n/2}v_{j,p_0,\delta}(g)+\varpi_{q_0,p_0}([n/2])\|g\|_{L^{q_0}(\kappa_j)}\right)
$$
for some constant $A'$. To estimate $v_{j+n,p_0,\delta}(\mathcal L_j^n g-\ka_j(g))$, notice that 
$$
v_{j+n,p_0,\delta}(\mathcal L_j^n g-\ka_j(g))=v_{j+n,p_0,\delta}(\mathcal L_j^n g).
$$
Now, we have 
$$
\|\mathcal L_j^ng-\bbE[\mathcal L_j^ng|X_{j+n+1},X_{j+n+2},...X_{j+n+r}]\|_{L^{p_0}(\ka_{j+n})}
$$
$$
=\left\|\bbE[g(X_{j},X_{j+1},...)|X_{j+n},X_{j+n+1},...]-\bbE[g(X_{j},X_{j+1},...)|X_{j+n},X_{j+n+1},...X_{j+n+r}]\right\|_{L^{p_0}(\ka_{j+n})}
$$
$$
\leq 2 v_{j,p_0,\delta}(g)\delta^{n+r}+\Big\|\bbE[g_{n+r}(X_{j},X_{j+1},...X_{j+n+r})|X_{j+n},X_{j+n+1},...]
$$
$$
-\bbE[g_{n+r}(X_{j},X_{j+1},...X_{n+r})|X_{j+n},X_{j+n+1},...X_{j+n+r}]\Big\|_{L^{p_0}(\ka_{j+n})}.
$$
Finally we note that due to the Markov property we have 
$$
\bbE[g_{n+r}(X_{j},X_{j+1},...X_{j+n+r})|X_{j+n},X_{j+n+1},...]=\bbE[g_{n+r}(X_{j},X_{j+1},...X_{j+n+r})|X_{j+n},X_{j+n+1},...X_{j+n+r}]
$$
and so 
$$
\left\|\bbE[g(X_{j},X_{j+1},...)|X_{j+n},X_{j+n+1},...]-\bbE[g(X_{j},X_{j+1},...)|X_{j+n},X_{j+n+1},...X_{j+n+r}]\right\|_{L^{p_0}(\ka_{j+n})}
\leq 2 v_{j,p_0,\delta}(g)\delta^{n+r}.
$$
Hence,
$$
\sup_r \delta^{-r}\|\mathcal L_j^n-\bbE[\mathcal L_j^n|X_{j+n+1},X_{j+n+2},...X_{j+n+r}]\|_{L^{p_0}(\ka_{j+n})}\leq 2 v_{j,p_0,\delta}(g)\delta^{n}.  
$$
We thus conclude that there exists a constant $A_0=A_0(\delta)$ such that
\begin{equation}\label{Ab}
 \|\mathcal L_j^ng-\ka_j(g)\|_{j+n,p_0,p_0,\delta}\leq A_0\left(v_{j,p_0,\delta}(g)\delta^{n/2}+\|g\|_{L^{q_0}(\ka_j)}\varpi_{q_0,p_0}([n/2])\right).   
\end{equation}
Next, let us assume that $q\leq p$.
 Denote $D_j=\cL_j-\ka_j\textbf{1}$ and $D_{j}^n=D_{j+n-1}\circ\cdots\circ D_{j+1}\circ D_j$. 
 Then, using that $\ka_{j+1}(\cL_j g)=\ka_j(g)$ and $\cL_j(\textbf{1})=\textbf{1}$ we have 
$$
D_j^ng=\mathcal L_j^ng-\ka_j(g)\textbf{1}.
$$
 Now, by \eqref{Ab} there is a constant $B_0>1$ such that for all $j$ and $n$ we have 
$$
\|D_{j,n}g\|_{j+n,p_0,p_0,\delta}\leq B_0\|g\|_{j,q_0,p_0,\delta}.
$$

Next, let us take $n_0$ large enough such that $A_0\left(\delta^{n/2}+\varpi_{q_0,p_0}([n/2])\right)<1/2$. Let us denote the operator norm of $D_{j}^n:\cB_{j,p_0,p_0,\delta}\to\cB_{j+n,p_0,p_0,\delta}$ simply by $\|D_{j}^n\|$. 
Then by \eqref{Ab} and since $p_0\geq q_0$, for all $n\geq n_0$ we have
$$
\|D_{j,n}\|\leq \frac12.
$$
We conclude that if $n=kn_0+s$ for some $k\in\bbN$ and $0\leq s<n_0$ then
$$ 
\|\mathcal L_j^n(g)-\ka_j(g)\textbf{1}\|_{j+n,p_0,p_0,\delta}\leq \left(\prod_{m=1}^{k-1}\left\|D_{j+s+mn_0}^{n_0}\right\|\right)\left\|D_j^{n_0+s}g\right\|_{j+n_0+s,q_0,q_0,\delta}\leq 2^{-k}\|g\|_{j,q_0,p_0,\delta}.
$$
Now, for $n\leq n_0$ we have 
$$
\|\mathcal L_j^n(g)-\ka_j(g)\textbf{1}\|_{j+n,p_0,p_0\delta}=\|D_j^n g\|_{j+n,p_0,p_0,\delta}\leq B_0\|g\|_{j,q_0,p_0,\delta}
$$
and so for all $n\geq 1$ we have 
$$
\|\mathcal L_j^n(g)-\ka_j(g)\textbf{1}\|_{j+n,p_0,p_0\delta}\leq 2B_0 2^{-n/n_0}\|g\|_{j,q_0,p_0,\delta}.
$$
\end{proof}

\subsection{Small complex  perturbations}
Next, given a triangular array $g_{j,n}:\cZ_j\to\bbR, j\leq n$ of functions and $t\in\bbR$ we define 
$$
\cL_{j,t,(n)} h(x)=\bbE[e^{it g_{j,N}(X_j,X_{j+1},,,)}g(X_j,X_{j+1},...)|(X_{j+1},X_{j+2},...)=x]=\int e^{itg_{j,n}(y,x)}h(y,x)P_j(dy,x_{j+1})
$$
and 
$$
\cL_{j,t,(n)}^{m}=\cL_{j+n-1,t,(n)}\circ\cdots\circ\cL_{j+1,t,(n)}\circ\cL_{j,t,(n)}.
$$
Denote 
$$
S_{j,m}g=S_{j,m,(n)}g=\sum_{k=j}^{j+m-1}g_{k,n}(X_k,X_{k+1},...).
$$
When $g_{j,n}=g_j$ does not depend on $n$ we drop the subscript $n$ and write $\cL_{j,t,(n)}=\cL_{j,t}$ and $\cL_{j,t,(n)}^m\cL_{j,t}^m$.
Then by \eqref{Dual} and induction on $m$ we have the following result.
\begin{lemma}\label{CharLemma}
For all $j\in\bbZ$, $n\in\bbN$, $t\in\bbR$ and $h\in L^1(\kappa_j)$ we have $\mathcal L_j^{t,m,(n)}h=\mathcal L_j^m(he^{itS_{j,m}g})$ and 
$$
\kappa_j(e^{itS_{j,m}g})=\kappa_{j+m}(\mathcal L_{j,t,(n)}^m\textbf{1}).
$$
\end{lemma}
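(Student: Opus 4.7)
The plan is to proceed by induction on $m$, relying on two essentially trivial ingredients: the identity $\cL_{j,t,(n)} h = \cL_j(h\,e^{itg_{j,n}})$ which is immediate from the definition, and the pull-out property that functions of $(X_{j+m},X_{j+m+1},\ldots)$ factor out of $\cL_j^m$. Indeed, since
$$
\cL_j^m h(x_{j+m},x_{j+m+1},\ldots)=\bbE[h(X_j,X_{j+1},\ldots)\mid X_{j+m}=x_{j+m},X_{j+m+1}=x_{j+m+1},\ldots],
$$
whenever $h=h_1\cdot h_2$ with $h_2$ depending only on $(X_{j+m},X_{j+m+1},\ldots)$, we have $\cL_j^m(h_1 h_2)=h_2\cdot \cL_j^m(h_1)$.

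For the first identity, the base case $m=1$ is just the definition of $\cL_{j,t,(n)}$ (with $S_{j,1}g=g_{j,n}(X_j,X_{j+1},\ldots)$). For the inductive step, assume $\cL_{j,t,(n)}^m h=\cL_j^m(h e^{itS_{j,m}g})$. Then
$$
\cL_{j,t,(n)}^{m+1}h=\cL_{j+m,t,(n)}\bigl(\cL_j^m(h e^{itS_{j,m}g})\bigr)=\cL_{j+m}\bigl(e^{itg_{j+m,n}}\cL_j^m(h e^{itS_{j,m}g})\bigr),
$$
again by definition of the perturbed operator. The factor $e^{itg_{j+m,n}}=e^{itg_{j+m,n}(X_{j+m},X_{j+m+1},\ldots)}$ depends only on coordinates from $j+m$ onwards, so by the pull-out property it may be absorbed inside $\cL_j^m$:
$$
e^{itg_{j+m,n}}\cL_j^m(h e^{itS_{j,m}g})=\cL_j^m\bigl(h e^{itS_{j,m}g}\cdot e^{itg_{j+m,n}}\bigr)=\cL_j^m\bigl(h e^{itS_{j,m+1}g}\bigr),
$$
using $S_{j,m+1}g=S_{j,m}g+g_{j+m,n}(X_{j+m},X_{j+m+1},\ldots)$. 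Composing with $\cL_{j+m}$ and using $\cL_j^{m+1}=\cL_{j+m}\circ\cL_j^m$ completes the induction.

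For the second identity, apply the duality relation \eqref{Dual} with $f\equiv\textbf{1}$, which yields $\kappa_{j+1}(\cL_j g)=\kappa_j(g)$; iterating gives $\kappa_{j+m}(\cL_j^m g)=\kappa_j(g)$ for every $g\in L^1(\kappa_j)$. Taking $h=\textbf{1}$ in the first identity and applying this with $g=e^{itS_{j,m}g}$ gives
$$
\kappa_{j+m}(\cL_{j,t,(n)}^m\textbf{1})=\kappa_{j+m}\bigl(\cL_j^m(e^{itS_{j,m}g})\bigr)=\kappa_j(e^{itS_{j,m}g}),
$$
as claimed. There is no real obstacle here; the only point to verify carefully is the pull-out step, which reduces to the measurability of $e^{itg_{j+m,n}}$ with respect to $\sigma(X_{j+m},X_{j+m+1},\ldots)$ and standard properties of conditional expectation.
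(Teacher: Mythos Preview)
Your proof is correct and follows exactly the approach indicated in the paper, which simply states that the lemma follows ``by \eqref{Dual} and induction on $m$''. You have spelled out the induction and the use of duality in full detail, which is precisely what the paper leaves implicit.
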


\subsection{Smoothness of the perturbation with respect to the parameter under one of Assumptions \ref{Ass1}. \ref{Ass2}, \ref{Ass1.1} or \ref{DomAss}}
\begin{proposition}
(i) Under Assumption \ref{Ass1.1}  the operators $\cL_{j,t}$ with $g_j=f_j$ are of class $C^\infty$ in $t$ with uniformly bounded norms in both $j$ and $t\in[-1,1]$ when viewed as linear maps between $\cB_{j,q,p,\delta}$ to $\cB_{j+1,q,p,\delta}$. 
\vskip0.1cm
(ii) Under Assumption \ref{Ass2} the operators generated by the triangular array $n^{-d}\tilde g_{j,n}$ constructed in Section \ref{RedSec1} are  of class $C^\infty$ in $t$ with uniformly bounded norms in both $j,n$ and $t\in[-1,1]$ when viewed as linear maps between  between $\cB_{j,q,p,\delta'}$ to $\cB_{j+1,q,p,\delta'}$ (for some $\delta'<1$ close enough to $1$).  
\vskip0.1cm
(iii) Under Assumption \ref{Ass1.1} the operators generated by the triangular array $n^{-2a/p}\tilde Y_{j,n}$ constructed in Section \ref{RedSec1} are  of class $C^\infty$ in $t$ with uniformly bounded norms in both $j,n$ and $t\in[-1,1]$ when viewed as linear maps between  between $\cB_{j,q,p,\delta'}$ to $\cB_{j+1,q,p,\delta'}$ (for some $\delta'<1$ close enough to $1$)..  
\vskip0.1cm
(iv) Under Assumption \ref{DomAss} the operators $\cL_{j,t}$ generated by the functions $g_j$ from Lemma \ref{Sinai} are of class $C^3$ in $t$, uniformly in $j$ when viewed as linear operators between $\cB_{j,\infty,\infty,\delta^{1/2}}$ to $\cB_{j+1,\infty,\infty,\delta^{1/2}}$.
\end{proposition}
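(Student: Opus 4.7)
The plan is to verify that $t\mapsto \cL_{j,t}$ is of class $C^k$ (with $k=\infty$ in (i)--(iii) and $k=3$ in (iv)) by computing the formal derivatives
\[
\frac{d^m}{dt^m}\cL_{j,t}h = \cL_j\bigl((ig_j)^m h\, e^{itg_j}\bigr),
\]
where $g_j$ stands for $f_j$ in (i), the truncated and rescaled triangular array in (ii)--(iii), and the Sinai output from Lemma \ref{Sinai} in (iv); and then showing that each such expression defines a bounded operator from the prescribed source to the prescribed target space with norm uniform in $j$, $t\in[-1,1]$ (and in $n$ in (ii)--(iii)). The passage from formal to actual derivatives will be a routine Taylor remainder argument using $\bigl|e^{itg}-\sum_{m\le k}(itg)^m/m!\bigr|\le |tg|^{k+1}/(k+1)!$.

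For cases (i)--(iii), the relevant $g_j$ is uniformly bounded in $L^\infty$: in (i) by Assumption \ref{Ass1}, and in (ii), (iii) by the explicit truncations constructed in Sections \ref{RedSec1} and \ref{RedSec2}. I would combine the submultiplicative estimate
\[
\|\varphi\psi\|_{j,q,p,\delta'}\le \|\varphi\|_{L^\infty}\|\psi\|_{j,q,p,\delta'}+v_{j,\infty,\delta'}(\varphi)\,\|\psi\|_{L^p}
\]
with $|e^{itg}-e^{itg'}|\le |t||g-g'|$ to show that each $(ig_j)^m e^{itg_j}$ has $\|\cdot\|_{j,\infty,\infty,\delta'}$-norm uniformly controlled; here Lemmas \ref{v g} and \ref{v g 2} are what bound the approximation coefficients of the rescaled $g_j$, at the price of replacing $\delta$ by a slightly larger $\delta'<1$. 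Multiplying by $h\in\cB_{j,q,p,\delta'}$ and applying $\cL_j$ (bounded by Theorem \ref{RPF}) then gives $C^\infty$ smoothness, uniformly in the relevant parameters.

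Case (iv) is the real obstacle, because the function $g_j$ from Lemma \ref{Sinai} need not be in $L^\infty$, so $g_j^m h$ cannot be placed in $\cB_{j,\infty,\infty,\delta^{1/2}}$ directly. The escape is that the target norm $\|\cdot\|_{j+1,\infty,\infty,\delta^{1/2}}$ only requires $L^\infty$ \emph{after} applying $\cL_j$, i.e.\ after conditioning on $X_{j+1},X_{j+2},\ldots$, and this is exactly what Proposition \ref{Special Prop} provides. Its first estimate $\bbE[|g_j|^k\mid X_{j+1},\ldots]\le C_1$ almost surely, combined with $|e^{itg_j}|=1$, immediately yields
\[
\bigl\|\cL_j\bigl(g_j^m h\, e^{itg_j}\bigr)\bigr\|_{L^\infty}\le C\|h\|_{L^\infty}
\]
for every $m\le k=3$. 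For the approximation part of the target norm I would write $g_j^m h\, e^{itg_j}-g_{j,r}^m h_r\, e^{itg_{j,r}}$ as a telescoping sum of three pieces (one per factor), apply conditional H\"older, and invoke the second bound of Proposition \ref{Special Prop} together with the $v_{j,\infty,\delta^{1/2}}$-regularity of $h$ and the Lipschitz estimate for the exponential; the rate $\delta^{rk/2}$ supplied by Proposition \ref{Special Prop} is precisely what produces the exponent $\delta^{1/2}$ in the target space, and the ceiling $k=3$ in Assumption \ref{DomAss} is what restricts smoothness to $C^3$ rather than $C^\infty$.
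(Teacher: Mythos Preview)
Your overall strategy matches the paper's: compute the formal derivatives $\cL_j\bigl((ig_j)^m h\, e^{itg_j}\bigr)$ and bound their norms in the target space. Your treatment of (iv) via Proposition \ref{Special Prop} is exactly right and in fact more detailed than the paper's one-line appeal to that proposition.

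There is, however, a technical slip in your handling of (i)--(iii). You propose to first show that $(ig_j)^m e^{itg_j}$ has bounded $\|\cdot\|_{j,\infty,\infty,\delta'}$-norm and then multiply by $h$ via your submultiplicative estimate involving $v_{j,\infty,\delta'}(\varphi)$. But under Assumption \ref{Ass1} one only has $\sup_j\|f_j\|_{j,\infty,s,\delta}<\infty$: the function $f_j$ is bounded in $L^\infty$, yet its approximation coefficient is only controlled in $L^s$, not in $L^\infty$. Likewise, Lemmas \ref{v g} and \ref{v g 2} supply $v_{\cdot,s,\delta_w}$ bounds, not $v_{\cdot,\infty,\delta_w}$, and the Lipschitz truncation $G_M$ does not upgrade $L^s$-approximation to $L^\infty$-approximation. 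So $v_{j,\infty,\delta'}\bigl((ig_j)^m e^{itg_j}\bigr)$ need not be finite, and your submultiplicative inequality cannot be applied as written.

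The fix, which is what the paper does, is not to pass through the $(\infty,\infty)$ norm of the multiplier at all. Instead one bounds $v_{j+1,p,\delta}\bigl(\cL_j(e^{itf_j}f_j^m h)\bigr)$ directly: by the minimization property replace $f_j$, $h$ by $f_{j,r}=\bbE[f_j\mid\cF_{j-r,j+r}]$, $h_r=\bbE[h\mid\cF_{j-r,j+r}]$, telescope, and then use the H\"older relation $1/p=1/s+1/q$ built into the assumptions to pair $\|f_j-f_{j,r}\|_{L^s}$ with $\|h\|_{L^q}$. The $L^\infty$ bound on $f_j$ is used only to control the size of the undisturbed factors $f_j^m$ and $e^{itf_j}$. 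Once you make this adjustment, your argument goes through.
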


\begin{proof}
(i) Let $h\in\cB_{j,q,p,\delta}$ be such that $\|h\|_{j,q,p,\delta}\leq1$. Then for every $k$ we have 
$$
\|\mathcal L_{j,t}(f_j^kh)\|_{L^q}\leq \sup_{j}\|f_j\|_{L^\infty}\|\cL_j(|h|)\|_{L^q}\leq C\|h\|_{L^q}\leq C
$$
for some constant $C$. Next, let us take some $r\geq1$. Then by the minimization and contraction properties of conditional expectations,
$$
\left\|\mathcal L_{j,t}(f_j^kh)-\bbE[\mathcal L_{j,t}(f_j^kh)|\cF_{j+1-r-1},\cF_{j+r+1}]\right\|_{L^p}\leq
\leq\|\mathcal L_{j}(e^{itf_j}f_j^kh)-L_{j}(e^{itf_{j,r}}f_{j,r}^kh_{r})\|_{L^p} 
$$
$$
\leq\|e^{itf_j}f_j^kh-e^{itf_{j,r}}f_{j,r}^kh_{r}\|_{L^p}\leq\|f_j\|_{L^\infty}^k\|(e^{itf_j}-e^{itf_{j,r}})h\|_{L^p}+\|(f_{j}^k-f_{j,r}^k)h\|_{L^p}+\|f_{j,r}^k(h-h_r)\|_{L^p}:=I
$$
where $f_{j,r}=\bbE[f_j|\cF_{j-r,j+r}]$ and $h_r=\bbE[h|\cF_{j-r,j+r}]$. Now, since $\sup_j\|f_j\|_{L^\infty}<\infty$ by the mean value theorem we have $|e^{itf_j}-e^{itf_{j,r}}|\leq |f_j-f_{j,r}|$. Since $1/p=1/q+1/q$ we conclude that there is a constant $C_k$ such that
$$
I\leq C_k(\|h\|_{L^q}\|f_j-f_{j,r}\|_{L^s}+\|h-h_r\|_{L^p})\leq C'_k\delta^r. 
$$
Therefore $t\to\cL_{j,t}$ is of class $C^\infty$ and the operator norms are uniforly bounded in $j$ and $t\in[-1,1]$.
\vskip0.1cm
(ii)+(iii) These results are proved similarly to (i) since the reduction is to triangular arrays of functions with uniformly bounded $\|\cdot\|_{\cdot,\infty,s,\delta'}$-norms with $\delta'$ close enough to 1.
\vskip0.1cm
(iv) In view of Proposition \ref{Special Prop} it is clear that under Assumption \ref{DomAss} the operators $\cL_{j,t}$ corresponding to $g_j$ are of class $C^3$ in the sense described in part (iv).  

\end{proof}
\subsection{A complex Perron Frobenius theorem}
Under one of Assumptions \ref{Ass1}, \ref{Ass2} and \ref{Ass1.1} denote $B_j=\cB_{j,q,p,\delta'}$ (where under Assumption \ref{Ass1} we have $\delta'=\delta$). Under Assumption \ref{DomAss} denote $B_j=\cB_{j,\infty,\infty,\delta^{1/2}}$. 
Next, by applying \cite[Theorem D.2]{DolgHaf PTRF 2} we get the following corollary of Theorem \ref{RPF}.
\begin{corollary}\label{Cor11}
There exists $0<\delta_0<1$ such that for every $t\in\bbR$ with $|t|\leq\delta_0$ there are $\lambda_j(t)\in\bbC\setminus\{0\}$, $h_j^{(t)}\in b_J$ and $\ka_j^{(t)}\in B_j^*$ such that $\mu_j^{(t)}(\textbf{1})=\mu_j^{(t)}(h_j^{(t)})=1$, 
 $\lambda_j(0)=1$, $h_j^{(0)}=\textbf{1}$, $\ka_j^{(0)}=\ka_j$
and
\begin{equation}\label{UP}
 \mathcal L_{j,t} h_j^{(t)}=\lambda_j(t)h_{j+1}^{(t)},\, (\mathcal L_{j,t})^*\ka_{j+1}^{(t)}=\lambda_j(t)\ka_{j}^{(t)}.   
\end{equation}
Moreover, $t\to\lambda_j(t)$, $t\to h_j^{(t)}$ and $t\to\mu_j^{(t)}$ are $C^3$ functions of $t$ with uniformly (over $t$ and $j$) bounded $C^3$ norm (under one of Assumptions \ref{Ass1}, \ref{Ass2} or \ref{Ass1.1} they are $C^\infty$). 
Finally,  there are $C_1>0, \delta_1\in(0,1)$ such that for every $g\in B_j$ and all $n$,
\begin{equation}\label{Exp}
 \left\|\mathcal L_j^{t,n}g-\lambda_{j,n}(t)\ka_j^{(t)}(g)h_{j+n}^{(t)}\right\|_{B_{j+n}}\leq C_1\|g\|_{B_j}\delta_1^n   
\end{equation}
where $\lambda_{j,n}(t)=\prod_{k=j}^{j+n-1}\lambda_k(t)$. 
\end{corollary}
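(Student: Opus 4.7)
The plan is to obtain Corollary \ref{Cor11} as a direct invocation of the abstract sequential perturbation result \cite[Theorem D.2]{DolgHaf PTRF 2}, applied to the family $\{\mathcal{L}_{j,t}\}_{j,t}$ acting on the scale of Banach spaces $B_j$. That abstract theorem requires two ingredients: (i) a uniform sequential spectral gap for the unperturbed operators $\mathcal{L}_{j,0}=\mathcal{L}_j$, and (ii) uniform smoothness in $t$ of the perturbed family at $t=0$. Both are already in place by Theorem \ref{RPF} and by the preceding proposition on smoothness of the perturbation, so the corollary will follow essentially for free once one checks the hypotheses.

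\textbf{Step 1: the sequential spectral gap at $t=0$.} By Theorem \ref{RPF} (taking $q_0 \leq p_0$ under Assumptions \ref{Ass1}, \ref{Ass2}, \ref{Ass1.1}, and similarly using Proposition \ref{Special Prop} to pass to the Banach space $\mathcal{B}_{j,\infty,\infty,\delta^{1/2}}$ under Assumption \ref{DomAss}) there exist $A>0$ and $\gamma\in(0,1)$, independent of $j$, such that
\[
\bigl\|\mathcal{L}_j^n g - \kappa_j(g)\mathbf{1}\bigr\|_{B_{j+n}} \leq A\gamma^n\|g\|_{B_j}.
\]
Equivalently, $\mathcal{L}_j^n = \kappa_j\otimes\mathbf{1} + R_j^n$ with the remainder $R_j^n:B_j\to B_{j+n}$ of norm at most $A\gamma^n$. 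This exhibits, at each site $j$, the simple leading eigen-triple $(1,\mathbf{1},\kappa_j)$ with a spectral gap that is uniform in $j$, which is exactly the ``sequential hyperbolicity'' hypothesis of the abstract theorem.

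\textbf{Step 2: smoothness of the perturbation.} The preceding proposition establishes that $t\mapsto \mathcal{L}_{j,t}\in L(B_j,B_{j+1})$ is of class $C^r$ (with $r=\infty$ under Assumptions \ref{Ass1}, \ref{Ass2}, \ref{Ass1.1} and $r=3$ under Assumption \ref{DomAss}), with derivatives of all orders up to $r$ bounded uniformly in $j$ and in $t$ in a neighborhood of $0$. Under Assumptions \ref{Ass2} and \ref{Ass1.1} this is applied to the rescaled/truncated arrays from Sections \ref{RedSec1}--\ref{RedSec2}, so that the input operators to the abstract perturbation result are genuinely small $C^r$ perturbations of the $\mathcal{L}_j$'s in the appropriate uniform sense.

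\textbf{Step 3: conclusion and main obstacle.} Feeding Steps 1 and 2 into \cite[Theorem D.2]{DolgHaf PTRF 2} yields, for all $|t|\leq \delta_0$ with $\delta_0>0$ sufficiently small, $C^r$-functions $t\mapsto \lambda_j(t)\in\mathbb{C}\setminus\{0\}$, $t\mapsto h_j^{(t)}\in B_j$, and $t\mapsto \kappa_j^{(t)}\in B_j^*$ satisfying the eigenvalue relations \eqref{UP}, agreeing at $t=0$ with $(1,\mathbf{1},\kappa_j)$, with $C^r$-norms bounded uniformly in $j$ and $t$, and satisfying the exponential convergence \eqref{Exp} with constants $C_1$ and $\delta_1\in(0,1)$ independent of $j$ and $t$. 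The normalizations $\kappa_j^{(t)}(\mathbf{1})=\kappa_j^{(t)}(h_j^{(t)})=1$ are obtained by the usual rescaling of $h_j^{(t)}$, which is possible because $\kappa_j^{(t)}(\mathbf{1})$ is close to $\kappa_j(\mathbf{1})=1$ for $|t|$ small. The only real obstacle is bookkeeping: in each of the four cases of Assumptions \ref{Ass1}--\ref{DomAss} one must identify the correct Banach-space scale $B_j$ so that both the spectral gap of Theorem \ref{RPF} and the $C^r$ smoothness of the perturbation hold simultaneously and uniformly in $j$; once this scale is fixed (as described in the statement), the rest is a mechanical invocation of the abstract theorem.
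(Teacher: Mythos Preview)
Your proposal is correct and matches the paper's approach exactly: the paper simply states that Corollary~\ref{Cor11} follows by applying \cite[Theorem~D.2]{DolgHaf PTRF 2} together with Theorem~\ref{RPF}, and your three steps are precisely the unpacking of that one-line invocation (sequential spectral gap at $t=0$, uniform $C^r$ smoothness of the perturbation, then the abstract theorem). There is no substantive difference.
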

We note that the above formulation is for sequences of operators instead of arrays like in the circumstances of Assumptions \ref{Ass2} or \ref{Ass1.1}. However, the result also holds for arrays by considering the operators themselves as the parameters and by setting $\tilde g_{j,n}=\tilde Y_{j,n}=0$ for $j>n$.

%I need to be able to get the estimate with the derivatives of the pressure functions $\Pi$, can this be done?

\section{Limit theorems for one sided Markov shifts: proofs}

 \subsection{A martingale coboundary representations, the asymptotic behavior of the variance }
  \begin{lemma}\label{Mart lemm}
  Let $g_j:\cZ_j\to\bbR$ be measurable functions. Let \eqref{mix2} hold\footnote{Recall that by Theorem \ref{RPF} \eqref{mix} is equivalent to \eqref{mix2} when $p\geq q$} with some $1\leq q,p\leq\infty$ and suppose that $G:=\sup_j\|g_j\|_{j,q,p,\delta}<\infty$.
Then there are functions $M_j=M_j(g)$ and $h_j=h_j(f)$ on $\cZ_j$ such that almost surely we have
\begin{equation}
\label{MartCob}
g_j(X_j,X_{j+1},...)-\bbE[g_j(X_j,X_{j+1},...)]=M_{j}(X_j,X_{j+1},...)+h_{j+1}(X_{j+1},X_{j+2},...)-h_j(X_j,X_{j+1},...).
\end{equation}
Moreover, $\sup_j\|h_j\|_{j,p,p,\del}<\infty$, $\sup_j\|M_j\|_{j,q,p,\delta}<\infty$ and $M_j(X_j,X_{j+1},...)$  is a reverse martingale difference  with respect to the reverse filtration $\cF_{j,\infty}=\sigma\{X_k: k\geq j\}$.
\end{lemma}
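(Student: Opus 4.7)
The plan is to construct $h_j$ as a backward series built from the transfer operator $\cL_j$, and then take $M_j$ as the induced coboundary remainder. The desired identity $g_j-\mu_j(g_j)=M_j+h_{j+1}\circ\tau_j-h_j$ with $M_j$ a reverse martingale difference for $(\cF_{\cdot,\infty})$ translates, via the duality \eqref{Dual}, into $\cL_j M_j=0$. Applying $\cL_j$ to both sides and using $\cL_j(h_{j+1}\circ\tau_j)=h_{j+1}$ (the integrand is already $\cF_{j+1,\infty}$-measurable) together with $\cL_j\mathbf{1}=\mathbf{1}$, one arrives at the fixed-point recursion $h_{j+1}=(\cL_j g_j-\mu_j(g_j))+\cL_j h_j$ on $\cZ_{j+1}$. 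Iterating this relation backwards $n$ times yields $h_j=\cL_{j-n}^n h_{j-n}+\sum_{k=1}^{n}(\cL_{j-k}^{k}g_{j-k}-\mu_{j-k}(g_{j-k}))$, which motivates the ansatz
\[
h_j:=\sum_{k=1}^{\infty}\bigl(\cL_{j-k}^{k}g_{j-k}-\mu_{j-k}(g_{j-k})\bigr),
\]
viewed as an element of $\cB_{j,p,p,\delta}$.

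To justify this I would invoke Theorem \ref{RPF} with the indices $q,p$ of \eqref{mix2}: it delivers the exponential bound $\|\cL_{j-k}^{k}g_{j-k}-\mu_{j-k}(g_{j-k})\|_{j,p,p,\delta}\leq AG\rho^k$ for some $\rho<1$, so the series converges absolutely in $\cB_{j,p,p,\delta}$ and $\sup_j\|h_j\|_{j,p,p,\delta}<\infty$. The recursion itself is then verified term by term from the series: the reindexing $k\mapsto k-1$ in the expansion for $h_{j+1}$, together with the semigroup identity $\cL_j\circ\cL_{j-k}^{k}=\cL_{j-k}^{k+1}$ and the fact that $\cL_j$ preserves constants, peels off the $k=0$ contribution $\cL_j g_j-\mu_j(g_j)$ and leaves $\cL_j h_j$. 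Setting $M_j:=g_j-\mu_j(g_j)+h_j-h_{j+1}\circ\tau_j$ and applying $\cL_j$ to both sides then gives $\cL_j M_j=0$, which is the reverse martingale difference condition. A direct manipulation also yields the closed form
\[
M_j=(g_j+h_j)-\bbE[g_j+h_j\mid\cF_{j+1,\infty}],
\]
exhibiting $M_j$ as the reverse-martingale increment of $g_j+h_j$ at step $j$; this form is convenient for the remaining estimates.

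For the norm bound on $M_j$, the $v_{j,p,\delta}$-part of $\|M_j\|_{j,q,p,\delta}$ follows from the triangle inequality, contraction of conditional expectations, and the Markov-property observation that composing with $\tau_j$ changes the $v$-coefficient only by a bounded factor, since it merely shifts the conditioning window by one coordinate. The step I expect to require the most care is the $L^q$-bound on $M_j$: because \eqref{mix2} imposes $q>p$, the series defining $h_j$ converges in $L^p$ but not in $L^q$, so a termwise triangle inequality in $L^q$ is insufficient. The remedy is to exploit the martingale-increment form $M_j=(g_j+h_j)-\bbE[g_j+h_j\mid\cF_{j+1,\infty}]$ together with conditional Jensen, and to bound the tail cross terms in the $\cL_{j-k}^k$-expansion by splitting each $g_{j-k}$ into a finite-coordinate truncation (controlled in $L^q$ by the $v$-coefficient) plus a residue (controlled in $L^p$ by Theorem \ref{RPF}) and then applying \eqref{mix1} with indices $q,p$, rather than estimating term by term.
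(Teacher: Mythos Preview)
Your construction is exactly the paper's: it defines $h_j=\sum_{k\geq 1}\cL_{j-k}^k\tilde g_{j-k}$ (with $\tilde g_s=g_s-\mu_s(g_s)$), bounds $\|h_j\|_{j,p,p,\delta}$ via Theorem~\ref{RPF}, sets $M_j=\tilde g_j+h_j-h_{j+1}\circ\tau_j$, and checks $\bbE[M_j\mid\cF_{j+1,\infty}]=0$ by the same telescoping of the series that your recursion encodes. Your extra concern about the $L^q$-part of $\|M_j\|_{j,q,p,\delta}$ when $q>p$ is legitimate and is not addressed in the paper's proof either; your proposed workaround is in the right direction but the description (``truncation controlled in $L^q$ by the $v$-coefficient'') inverts the roles of $p$ and $q$, since $v_{j,p,\delta}$ only controls approximation in $L^p$.
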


\begin{proof}
Denote 
$\tilde g_j(X_j,X_{j+1},...)=g_j(X_j,X_{j+1},...)-\bbE[g_j(X_j,X_{j+1},...)]$.
Set 
\begin{equation}
\label{DefUj}
h_j=\sum_{k=1}^{\infty}\cL_{j-k}^k\tilde g_{j-k}=\sum_{k=1}^\infty \bbE[\tilde g_{j-k}|\cF_{j,\infty}]
\end{equation}
where for $s<0$ we set $g_s=0$.
Then by Theorem \ref{RPF}, 
$$
\|h_j\|_{j,p,p,\delta}\leq 2A\sum_{k=1}^\infty \gamma^k\|g_{j-k}\|_{j-k,q,p,\delta}\leq 2A(1-\gamma)^{-1}G.
$$
 Set $M_{j}=\tilde g_j+h_j-h_{j+1}\circ T_j$, namely 
$$
M_j(X_j,X_{j+1},...)=\tilde g_j(X_j,X_{j+1},...)+h_j(X_j,X_{j+1},...)-h_{j+1}(X_{j+1},X_{j+2},...).
$$
 It remains to show that $M_{j}(X_j,X_{j+1},...)$ is indeed a reverse martingale difference. To prove that, using that $h_{j+1}$ is measurable with respect to $\cF_{j+1,\infty}$ we have
$$
\bbE[M_j|\cF_{j+1,\infty}]=\bbE[\tilde g_j|\cF_{j+1,\infty}]+\bbE[h_j|\cF_{j+1,\infty}]-h_{j+1}
=\bbE[\tilde g_j|\cF_{j+1,\infty}]+\sum_{k=1}^\infty \bbE[\tilde g_{j-k}|\cF_{j+1,\infty}]-\sum_{k=1}^\infty\bbE[\tilde g_{j+1-k}|\cF_{j+1,\infty}]=0.
$$
\end{proof}

\begin{remark}\label{Mart Rem}
In Assumption \ref{Ass2} we allowed that $\|f_{j}\|_{j,a,s,\delta}=O((j+1)^\zeta)$ for some $0<\zeta<1$. Using that $(j+m)^\zeta\leq j^\zeta+m^{\zeta}$ and that $\sum_{k=0}^{r/2}(k+1)^\zeta \delta^{r-k}$ is of order $\delta^{(\frac12-\rho)^r}$ for all $\rho>0$
it is not hard to show that in this case the arguments in the proof of Lemma \ref{Sinai} yield that 
$\|M_j\|_{j,a,s,\delta^{1/3}}=O((j+1)^\zeta)$ and similarly $\|h_j\|_{j,s,s,\delta^{1/3}}=O((j+1)^\zeta)$.  
\end{remark}

\begin{proof}[Proof of Theorem \ref{Var them}]
First, by Lemma \ref{Sinai} it is enough to prove the theorem for the measurable functions $g_j:\cZ_j\to\bbR$ described there instead of $f_j$.
Let us show that conditions (1)-(3) are equivalent.
By Lemma   \ref{Mart lemm} we can write
$$
\tilde g_j=M_j+h_{j+1}-h_j
$$ 
with $h_j$ and $M_j$ with the properties described in  Lemma   \ref{Mart lemm}, except that in general the sum of the variances of $M_j$ might not converge. Notice now that since $p\geq2$ we have
\begin{equation}\label{M apprx}
\|S_ng-S_nM\|_{L^2}\leq \|S_ng-S_nM\|_{L^p}\leq 2\sup_{j}\|h_j\|_{L^p}<\infty
\end{equation}
where $S_nM=\sum_{j=0}^{n-1}M_j(X_j,X_{j+1},...)$.  

Now assume (1), and let $n_k$ be an increasing sequence such that $n_k\to\infty$ and $\sig_{n_k}\!\!=\!\!\|S_{n_k}g\|_{L^2}\!\!\leq\!\! C$ for some constant $C>0$. Then by \eqref{M apprx}, 
$
B:=\sup_k\|S_{n_k}M\|_{L^2}<\infty.
$
However, since $M_j(X_j,X_{j+1},...)$ is a reverse martingale, we have
$$
\sum_{j=0}^{n_k-1}\text{Var}(M_j)=\|S_{n_k}M\|_{L^2}^2\leq B^2.
$$
Now, since  $V_n:=\|S_nM\|_{L^2}^2=\sum_{j=0}^{n-1}\text{Var}(M_j)$ is increasing we conclude that the summability condition in (3) holds. This shows that (1) implies (3). 

Next,  (2) clearly implies (1). 
Thus, to complete the proof it is enough to show that (3) implies (2), but this also follows from \eqref{M apprx} since the latter yields
$
\|S_{n}g\|_{L^2}^2\leq (V_n+U)^2<\infty.
$

Finally, the proof of the last statement proceeds like the proof of \cite[Theorem 3.5]{BDH}, with minor modifications. 
\end{proof}

\subsection{Quadratic variation and moment estimates}
%It seems that if we assume that $p=\infty$ here then one can possibly use that $\phi(n_0)<1/2$? no since we have functions of the entire orbit. I should comment on that.
Recall that the (unconditioned)  quadratic variation difference of the reverse  martingale difference $M_{j}(X_j,X_{j+1},...)$ is given by 
$
Q_j=Q_j(M):=M_{j}^2
$ 
Henceforth we denote $Q_j=M_j^2$ and let
$$
S_{j,n}f=\sum_{k=j}^{j+n-1}f_k \circ T_j^k.
$$
$S_{j,n}g$, $S_{j,n}M$ and $S_{j,n}Q$ are defined similarly. Denote 
$$
G_j=Q_{j}-\bbE[Q_{j}(X_j,X_{j+1},...)]=M_j^2(X_j,X_{j+1},...)-\bbE[M_j^2(X_j,X_{j+1},...)].
$$

\begin{proposition}\label{Thm 4.1'}
Let \eqref{mix} or \eqref{mix2} hold  some $1\leq q, p$, $p\geq2$. Denote $a=\max(q,p)$. 
Assume that $\sup_j\|G_j\|_{j,q,p,\delta^{1/2}}<\infty$ (which by Lemmata
\ref{Sinai} and \ref{Mart lemm} is always the case when $\sup_{j}\|f_j\|_{j,2a,2a,\delta}<\infty$).
Let $u$ be the conjugate exponent of $p$. 
 Then there is a constant $C$ such that for all $j\in\bbZ$ and $n\in\bbN$ have
$$
\text{Var}(S_{j,n} Q)\leq C\left(\sum_{j\leq \ell<j+n}\left(\bbE[(G_\ell)^2]+\|G_\ell\|_{L^u}\right)\right).
$$
When $p=\infty$ (so $u=1$) we have
$$
\text{Var}(S_{j,n} Q)\leq C(1+\text{Var}(S_{j,n} f)).
$$
\end{proposition}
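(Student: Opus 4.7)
The plan is to expand the variance as a sum of covariances and control the cross terms using Theorem \ref{RPF} to bound the conditional expectations of $G_k$ given the future.

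First I would write
\[
\text{Var}(S_{j,n}Q)=\sum_{k=j}^{j+n-1}\bbE[G_k^2]+2\sum_{j\leq k<\ell<j+n}\bbE[G_k G_\ell],
\]
and observe that each $G_k$ is $\cF_{k,\infty}$-measurable (since $M_k$ is a function of $X_k,X_{k+1},\ldots$) with $\bbE[G_k]=0$. For $k<\ell$ the variable $G_\ell$ is $\cF_{\ell,\infty}$-measurable, so conditioning gives
\[
\bbE[G_k G_\ell]=\bbE\!\left[G_\ell\,\bbE[G_k\mid \cF_{\ell,\infty}]\right].
\]
The key step is to identify, via the duality \eqref{Dual} and the Markov property, $\bbE[G_k\mid\cF_{\ell,\infty}]=(\cL_k^{\ell-k}G_k)(X_\ell,X_{\ell+1},\ldots)$. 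Since $\kappa_k(G_k)=0$, Theorem \ref{RPF} applied with exponent $\delta^{1/2}$ (using the hypothesis $\sup_k\|G_k\|_{k,q,p,\delta^{1/2}}<\infty$) gives exponential decay
\[
\|\bbE[G_k\mid\cF_{\ell,\infty}]\|_{L^p}=\|\cL_k^{\ell-k}G_k\|_{L^p(\kappa_\ell)}\leq C\gamma^{\ell-k}
\]
for some $\gamma\in(0,1)$; this works both when $p\geq q$ (the $q$-mixing case in Theorem \ref{RPF} yielding exponential decay directly) and when $q>p$ under \eqref{mix2} (where $\varpi_{q,p}$ already decays exponentially).

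Next, by H\"older's inequality with conjugate exponents $p$ and $u$,
\[
|\bbE[G_k G_\ell]|\leq\|G_\ell\|_{L^u}\,\|\bbE[G_k\mid\cF_{\ell,\infty}]\|_{L^p}\leq C\gamma^{\ell-k}\|G_\ell\|_{L^u}.
\]
Summing over $k<\ell$ yields $\sum_{k<\ell}|\bbE[G_k G_\ell]|\leq \frac{C}{1-\gamma}\sum_\ell\|G_\ell\|_{L^u}$, which together with the diagonal terms $\sum_k \bbE[G_k^2]$ gives the first claimed bound.

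For the final statement with $p=\infty$ (so $u=1$), I would use that Lemmata \ref{Sinai} and \ref{Mart lemm} together with $\sup_j\|f_j\|_{j,\infty,\infty,\delta}<\infty$ yield $\sup_j\|M_j\|_{L^\infty}<\infty$. Hence $\bbE[G_\ell^2]\leq C\bbE[M_\ell^2]$ and $\|G_\ell\|_{L^1}\leq 2\bbE[M_\ell^2]$, so the previous bound reduces to
\[
\text{Var}(S_{j,n}Q)\leq C'\sum_{\ell=j}^{j+n-1}\bbE[M_\ell^2]=C'\|S_{j,n}M\|_{L^2}^2,
\]
the last equality using $L^2$-orthogonality of reverse martingale differences. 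The martingale-coboundary representation from Lemma \ref{Mart lemm} combined with Sinai's decomposition gives $S_{j,n}M=(S_{j,n}f-\bbE[S_{j,n}f])+(\text{telescoping terms involving }u_\cdot,h_\cdot)$ with all boundary terms uniformly bounded in $L^2$, so $\|S_{j,n}M\|_{L^2}^2\leq C''(1+\text{Var}(S_{j,n}f))$, completing the bound.

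The main obstacle is the clean identification of $\bbE[G_k\mid\cF_{\ell,\infty}]$ with iterated transfer operator outputs and the verification that the regularity hypothesis on $G_j$ (at exponent $\delta^{1/2}$) propagates to uniform bounds in the norms required by Theorem \ref{RPF}; everything else is then a mechanical sum.
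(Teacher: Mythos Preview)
Your proposal is correct and follows essentially the same route as the paper: expand the variance into diagonal and off-diagonal covariances, rewrite $\bbE[G_kG_\ell]$ for $k<\ell$ via duality as $\bbE[G_\ell\cdot \cL_k^{\ell-k}G_k]$, apply Theorem~\ref{RPF} to get exponential decay of $\|\cL_k^{\ell-k}G_k\|_{L^p}$, and then use H\"older with the conjugate pair $(p,u)$ to sum. For the $p=\infty$ part you likewise match the paper's argument, using $\sup_j\|G_j\|_{L^\infty}<\infty$ to reduce both $\bbE[G_\ell^2]$ and $\|G_\ell\|_{L^1}$ to multiples of $\bbE[M_\ell^2]$, invoking orthogonality to get $\sum_\ell\bbE[M_\ell^2]=\text{Var}(S_{j,n}M)$, and closing with the bounded coboundary from Lemmata~\ref{Sinai} and~\ref{Mart lemm}.
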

\begin{proof}
First, to simplify the notation let us assume that $j=0$. 
 The argument below is similar to 
 the first part of the proof of \cite[Theorem 4.1]{CR}. First, we write
$$
\bbE[(S_n G)^2] \leq 2
\sum_{0\leq \ell <n}\sum_{0\leq k\leq \ell}\left|\bbE\big[(G_k\circ T_0^k)\cdot (G_\ell\circ T_0^\ell)\big]\right|
=\sum_{k=0}^{n-1}\bbE[(G_k)^2]
+2
\sum_{0\leq \ell <n}\sum_{0\leq k<\ell} \left| \bbE[G_{\ell} \cdot \cL_{k}^{\ell-k}G_{k}]\right|:=I_1+I_2.
$$
Next, by Theorem \ref{RPF}, we have
$$
I_2\leq C_0\sum_{0\leq \ell <n}\sum_{0\leq k<\ell}\|G_\ell\|_{L^u} \|G_{k}\|_{k,q,p,\delta}\gamma^{\ell-k}= C_0\sum_{0\leq \ell<n}\|G_\ell\|_{L^u}\left(\sum_{0\leq k< \ell}\|G_{k}\|_{k,q,p,\delta^{1/2}}\gamma^{\ell-k}\right)
$$
$$
\leq c_0\sum_{0\leq \ell<n}\|G_\ell\|_{L^u}\leq 2c_0\sum_{0\leq \ell<n}\|Q_\ell\|_{L^u}.
$$
for some constant $c_0$ (the first inequality of the last line   uses that $\sup_j\|G_j\|_{j,q,p,\delta^{1/2}}<\infty$). This finishes the proof of the first estimate.

Note that when $p=\infty$ then $u=1$ and $a=\infty$ so that $C_0:=\sup_{j}\|G_{j}\|_{j,\infty,\infty,\delta^{1/2}}<\infty$ and so the above bound yields
$$
I_1\leq 2C_0\sum_{0\leq \ell<n}\bbE[Q_j]+2C_0c_0\sum_{0\leq \ell<n}\bbE[Q_\ell]=
2C_0(1+c_0)\sum_{0\leq \ell<n}\bbE[Q_\ell].
$$
Finally, recall that  
$
\bbE[Q_\ell]\!\!=\!\!\bbE\left[(M_{\ell}(X_\ell,X_{\ell+1},...))^2\right]
$
and, because of the orthogonality property,
$$
\sum_{0\leq \ell<n}\bbE\left[(M_{\ell}(X_\ell,X_{\ell+1},...))^2\right]=\text{Var}(S_{0,n} M).
$$
 Now the second estimate follows from \eqref{M apprx} together with Lemma \ref{Sinai}.
\end{proof}

\subsection{Proof of of Proposition \ref{Mom prop} (i)-(iii)}\label{MomSec1}

To simplify the notation, we will only  prove the proposition when $j=0$.  Moreover, by replacing $f_j$  with $f_j-\bbE[f_j]$ we can and will assume that $\bbE[S_{n}f]=\bbE[S_ng]=0$ for all $n$.
First, let us prove Proposition \ref{Mom prop} (i). By Lemmata \ref{Sinai} and \ref{Mart lemm}, we have 
$$
\|S_nf\|_{L^q}\leq C_q+\|S_n M\|_{L^q}
$$
for some constant $C_q>0$.
Recall the following version of Burkholder's inequality for martingales (see \cite[Theorem 2.12]{PelBook}). Let $\mathfrak{d}_1,....,\mathfrak{d}_n$ be a martingale difference with respect to a filtration $(\cG_j)_{j=1}^n$ on a probability space. Let $D_n=\mathfrak{d}_1+\mathfrak{d}_2+...+\mathfrak{d}_n$ and
$E_n=\mathfrak{d}_1^2+\mathfrak{d}_2^2+...+\mathfrak{d}_n^2$.
Then, for every $s\geq 2$ there are constants $c_s,C_s>0$ depending only on $s$ such that 
\begin{equation}\label{Burk}
 c_p\|E_n\|_{L^{s/2}}^{1/2}\leq \|D_n\|_{L^s}\leq C_p\|E_n\|_{L^{s/2}}^{1/2}. 
\end{equation}
Now, applying \eqref{Burk} with the reverse martingale $(M_j)$ we conclude that 
$$
\|S_n M\|_{L^q}\leq\left(\sum_{j=0}^{n-1}\|M_j\|_{L^q}^2\right)^{1/2}\leq \sum_{j=0}^{n-1}\|M_j\|_{L^q}\leq A_qn
$$
for some constant $A_q$.

Next, let us prove Proposition \ref{Mom prop} (ii). Henceforth we denote $\|\cdot\|_q=\|\cdot\|_{L^q}$. Note that by Lemma \ref{Sinai} it is enough to prove the claim for one sided functionals $g_j$,
Notice also that it is enough to prove the claim for $b$ of the form $b=2^m$ for some $m$. 
We use induction on $m$, with induction hypothesis being that the claim is true with $b=2^m$ and all sequences $(g_j)$ with $\sup_j\|g_j\|_{j,\infty,\infty,\delta^{1/2}}$ 

For  $m=1$ the result  is trivial. 
Suppose that the statement is true for some $m\geq 1$. In order to estimate $\| S_ng\|_{2^{m+1}}$ we first use that by Lemma \ref{Mart lemm},
$$
\|S_ng\|_{2^{m+1}}\leq C+\|S_n M\|_{2^{m+1}}
$$
for some constant $C$, since actually
$\|S_ng-S_nM\|_{L^\infty}$ is bounded in $n$. So it suffices to show that
 \begin{equation}\label{en}
\|S_n M\|_{2^{m+1}}\leq C(1+\|S_n g\|_{2})
\end{equation}
 for an appropriate constant $C$.
 
Applying \eqref{Burk}  with   the (reverse) martingale difference $(M_j)$ we see that 
\begin{equation}\label{Burk1}
\|S_n M\|_{2^{m+1}}\leq a_m\|S_nQ\|_{2^m}^{1/2}
\end{equation}
where $S_n Q$ and
$a_m$ 
depends only on $m$. Applying the induction hypothesis with the sequence of functions $\tilde Q_j=Q_j-\bbE[Q_j]$ which also satisfies $\sup_{j}\|\tilde Q_j\|_{j,\infty,\infty,\delta^{1/2}}<\infty$
 we see that there is a constant $R_m>0$ depending only of $m$ and the constants in the formulation of Proposition \ref{Mom prop} such that 
 $$
 \|S_n\tilde Q\|_{2^m}\leq R_m(1+\|S_n\tilde Q\|_{2}).
$$
 Since $\bbE[S_nQ]=\text{Var}(S_n M)$, 
 Proposition \ref{Thm 4.1'} 
 gives
$$
\|S_n Q\|_{2^m}\leq  \|S_n\tilde Q\|_{2^m}+\bbE[S_nQ]\leq R_m\left(1+C(1+\text{Var}(S_n
g))\right)+ \text{Var}(S_n M)
$$
$$
\leq R'_m(1+\text{Var}(S_n g))+\text{Var}(S_n M)
$$
for some other constant $R_m'$.
Using that  $\sup_n\|S_n g-S_nM\|_{L^\infty}<\infty$ we see that there is a constant 
$C>0$ such that
$
\text{Var}(S_n M)\!\!\leq\!\! C\!
\left(1+\text{Var}(S_n g)\right).
$
 Thus, there is a constant $R_m''>0$ such that  
$$
 \|S_n\tilde Q\|_{2^m}\leq R_m''(1+\text{Var}(S_n g)).
$$
Now \eqref{en} follows from \eqref{Burk1}, completing the proof of the Proposition \ref{Mom prop} (ii).

Now let us prove Proposition \ref{Mom prop} (iii). Let us first focus on the reverse martingale case.
We begin similarly to the proof of part (ii). Applying \eqref{Burk}  with   the (reverse) martingale difference $(M_j)$ we see that 
\begin{equation}\label{Burk11}
\|S_n M\|_{4}\leq a_4\|S_nQ\|_{2}^{1/2}\leq a_4\left(\bbE[S_nQ]\right)^{1/2}+a_4\|S_nQ-\bbE[S_nQ]\|_{2}^{1/2}.
\end{equation}
Notice that $\bbE[S_nQ]=\bbE[(S_nM)^2]\leq C+\|S_nf\|_{L^2}^2$. 
Now Proposition \ref{Mom prop} (iii) follows from Proposition \ref{Thm 4.1'}.

 Next, let us assume that $f_j=f_j(...,X_{j-1},X_j)$ is a forward martingale difference with respect to the filtration $\cF_{-\infty,j}$. We first fix $n$ and for $k\geq 0$ define $Z_{k}=Z_{k,n}=X_{n-k}$, while for $k<0$ we take an iid sequence $(Y_k)_{k<0}$ which is independent of the chain $(X_j)$ and set $Z_k=Y_k, k<0$. 
Let us define $\bar f_{j,n}(Z_j,Z_{j+1},...)=f_{j-n}(...,X_{j-n-1},X_{j-n}), j<n$. 
Then 
$$
\sum_{j=0}^{n-1}f_j(...,X_{j-1},X_j)=\sum_{j=0}^{n-1}\bar f_{j,n}(Z_{j,n},Z_{j+1,n},...).
$$
Notice also that \eqref{mix} holds for the chain $Z_{j}$, uniformly in $n$ (recall that in this case $q\geq p$). Moreover, notice that 
$$
\bbE[\bar f_{j,n}(Z_{j,n},Z_{j+1,n},...)|Z_{j+1,n},....]=\bbE[f_{j-n}(...,X_{j-n-1},X_{j-n})|X_{n-j-1},X_{n-j-2},...]=0
$$
since $f_j$ is a forward martingale. Namely, $\bar f_{j,n}, j<n$ is a triangular array of reversed martingales and so the problem reduces to the case of a reverse martingales.

\subsection{A direct fourth moment estimate-proof of Proposition \ref{Mom prop} (iv)}\label{MomSec2}
Proposition \ref{Mom prop} (iv) follows by expanding 
$$
\bbE[(S_{j,n}f)^4=\sum_{\ell=j}^{j+n-1}\bbE[f_\ell^4]+C_1\sum_{j\leq m<\ell<j+n}\bbE[f_m^2 f_\ell^2]+C_2\sum_{j\leq m<\ell<j+n}\bbE[f_m f_\ell^3]+C_2\sum_{j\leq m<\ell<j+n}\bbE[f_m^3 f_\ell]
$$
for some constants $C_1,C_2>0$
and using the following simple result with $F_j\in\{f_j,f_j^2,f_j^3\}$.
\begin{lemma}
Let $F_j:\cY_j\to\bbR$ be measurable functions. Then for all $j\geq0$   and $k>0$ and $p,q\geq1$ and conjugate exponents $(p_0,q_0)$ and $(p_1,q_1)$  we have 
$$
\left|\mu_j(F_j\cdot(F_{j+k}\circ T_j^k))-\mu_j(F_j)\mu_{j+k}(F_{j+k})\right|
$$
$$
\leq 2\delta^{k/4}(\|F_j\|_{L^{p_0}}v_{j+k,q_0,\delta}(F_{j+k})+\|F_{j+k}\|_{L^{p_1}}v_{j,q_1,\delta}(F_{j}))+2\varpi_{q,p}([k/2])\|F_{j}\|_{L^q}\|F_{j+k}\|_{L^v}
$$
where $v$ is the conjugate exponent of $p$.
\end{lemma}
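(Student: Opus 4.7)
The plan is a standard two-scale decoupling argument. Fix a positive integer $r$ with $r\geq k/4$ and $k-2r\geq [k/2]$; such $r$ exists once $k$ is large enough, and for small $k$ the claim holds trivially after adjusting constants. Set $\tilde F_j=\bbE[F_j\mid\cF_{j-r,j+r}]$ and $\tilde F_{j+k}=\bbE[F_{j+k}\mid\cF_{j+k-r,j+k+r}]$. Since $(T_j^k)_*\mu_j=\mu_{j+k}$, we have $\mu_j(F_{j+k}\circ T_j^k)=\mu_{j+k}(F_{j+k})$, so I would view the quantity of interest as a covariance and split it as
$$
\mu_j\!\left(F_j\cdot F_{j+k}\circ T_j^k\right)-\mu_j(F_j)\mu_{j+k}(F_{j+k})=A+B+C,
$$
with $A=\text{Cov}(\tilde F_j,\tilde F_{j+k}\circ T_j^k)$, $B=\text{Cov}(F_j-\tilde F_j,F_{j+k}\circ T_j^k)$, and $C=\text{Cov}(\tilde F_j,(F_{j+k}-\tilde F_{j+k})\circ T_j^k)$.

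For the main term $A$, note that $\tilde F_j$ is $\cF_{-\infty,j+r}$-measurable while $\tilde F_{j+k}\circ T_j^k$ is $\cF_{j+k-r,\infty}$-measurable, and these $\sigma$-algebras are separated by $k-2r\geq[k/2]$. Writing
$$
A=\bbE\bigl[\bigl(\bbE[\tilde F_j\mid\cF_{j+k-r,\infty}]-\bbE\tilde F_j\bigr)\cdot(\tilde F_{j+k}\circ T_j^k)\bigr]
$$
and invoking the very definition of $\varpi_{q,p}$ together with H\"older's inequality in the conjugate pair $(p,v)$ gives
$$
|A|\leq\varpi_{q,p}([k/2])\,\|\tilde F_j\|_{L^q}\|\tilde F_{j+k}\|_{L^v}\leq\varpi_{q,p}([k/2])\,\|F_j\|_{L^q}\|F_{j+k}\|_{L^v},
$$
where the last step uses $L^{\cdot}$-contractivity of conditional expectations and $\|F_{j+k}\circ T_j^k\|_{L^v(\mu_j)}=\|F_{j+k}\|_{L^v(\mu_{j+k})}$. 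For the error terms I would apply H\"older in the conjugate pairs $(q_1,p_1)$ and $(p_0,q_0)$ respectively, combined with the definition of $v_{\cdot,\cdot,\delta}$ and the elementary bound $|\text{Cov}(X,Y)|\leq 2\|X\|_{L^a}\|Y\|_{L^b}$ for conjugates $(a,b)$:
$$
|B|\leq 2\|F_j-\tilde F_j\|_{L^{q_1}}\|F_{j+k}\|_{L^{p_1}}\leq 2\delta^r\,v_{j,q_1,\delta}(F_j)\,\|F_{j+k}\|_{L^{p_1}},
$$
$$
|C|\leq 2\|F_j\|_{L^{p_0}}\|F_{j+k}-\tilde F_{j+k}\|_{L^{q_0}}\leq 2\delta^r\,\|F_j\|_{L^{p_0}}\,v_{j+k,q_0,\delta}(F_{j+k}).
$$
Since $r\geq k/4$ we have $\delta^r\leq\delta^{k/4}$, and summing $|A|+|B|+|C|$ yields the asserted inequality.

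This is essentially bookkeeping rather than an obstacle-rich argument; the only genuine design choice is the balance between the approximation parameter $r$ and the decoupling gap $k-2r$. Choosing $r\approx k/4$ is precisely what pairs a $\delta^{k/4}$ approximation error with a $\varpi_{q,p}([k/2])$ mixing tail, matching the form of the stated bound.
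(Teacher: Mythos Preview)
Your approach is exactly the standard approximate-then-decouple argument the paper has in mind (the paper states this lemma without proof, calling it a ``simple result''), and the three-term split $A+B+C$ together with the H\"older/mixing bounds is correct.

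One small arithmetic slip: you claim that an integer $r$ with $r\geq k/4$ \emph{and} $k-2r\geq[k/2]$ exists ``once $k$ is large enough'', but in fact these two constraints force $k/4\leq r\leq\lceil k/2\rceil/2$, which admits no integer solution when $k\equiv 1,2\pmod 4$, no matter how large $k$ is. The fix is routine: take $r=[k/4]$, so that the gap satisfies $k-2[k/4]\geq[k/2]$ in all cases, and accept that $\delta^{[k/4]}\leq\delta^{-1}\delta^{k/4}$. This costs a harmless factor of $\delta^{-1}$ in the constant, which is immaterial since the lemma is only used to feed into Proposition~\ref{Mom prop}(iv) where the constant is unspecified anyway. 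Also note that because $\bbE[F_j-\tilde F_j]=0$ and $\bbE[F_{j+k}-\tilde F_{j+k}]=0$, the factor $2$ in your bounds for $|B|$ and $|C|$ is not actually needed.
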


\subsection{Proof of Theorem \ref{CLT}}
In the circumstances of Theorem \ref{CLT} (i) we will obtain optimal CLT rates later, so let us focus on Theorem \ref{CLT} (ii). Let us first assume that $f_j$ is a reversed martingale difference with respect to $\cF_{j,\infty}$. Then 
 $f_j=M_j$. Since $f_j$ satisfies the Lindeberg condition by a reversed version a Theorem of Brown \cite{Brown} to prove Theorem \ref{CLT} (ii) it is enough to show that
\begin{equation}\label{Veri}
\lim_{n\to\infty}\left(\sig_n^{-2}\sum_{j=0}^{n-1}\bbE[M_j^2|\cF_{j+1,\infty}]\right)=1    
\end{equation}
in probability. Let $D_j=M_j^2-\bbE[M_j^2|\cF_{j+1,\infty}]=G_j-\bbE[G_j|\cF_{j+1,\infty}]$, $G_j=M_j^2-\mu_j(M_j^2)$. Then $Q_j$ is by itself a reverse martingale difference and so
$$
\left\|\sum_{j=0}^{n-1}D_j\right\|_{L^2}^2=\sum_{j=0}^{n-1}\|D_j\|_{L^2}^2=
\sum_{j=0}^{n-1}\|G_j\|_{L^2}^2=o(\sigma_n^4)
$$
where the last inequality uses \eqref{Special condition}. Thus, 
$$
\lim_{n\to\infty}\left\|\sig_n^{-2}\sum_{j=0}^{n-1}D_j\right\|_{L^2}=0
$$
and so in order to prove \eqref{Veri} it is enough to prove that 
\begin{equation}\label{Veri1}
\lim_{n\to\infty}\left(\sig_n^{-2}\sum_{j=0}^{n-1} M_j^2\right)=1    
\end{equation}
in probability. To prove that we notice that for all $\varepsilon>0$,
$$
\bbP\left(\left|\sig_n^{-2}\sum_{j=0}^{n-1} M_j^2-1\right|\geq\varepsilon\right)
=\bbP\left(\left|\sum_{j=0}^{n-1}Q_j\right|\geq\sig_n^2\varepsilon\right)\leq\frac{\bbE[|S_nQ|^2]}{\sigma_n^4\varepsilon} 
$$
and that by Proposition \eqref{Thm 4.1'} under \eqref{Special condition} we have $\bbE[|S_nG|^2]=o(\sig_n^4)$.

Next, let us assume that $f_j=f_j(...,X_{j-1},X_j)$ is a forward martingale difference with respect to the filtration $\cF_{-\infty,j}$. For a fixed $n$ let $\bar f_{j,n},j<n$ and $Z_{j,n}, j\in\bbZ$ be like at the end of the proof of Proposition \ref{Mom prop}. 
Then 
$$
\sum_{j=0}^{n-1}f_j(...,X_{j-1},X_j)=\sum_{j=0}^{n-1}\bar f_{j,n}(Z_{j,n},Z_{j+1,n},...).
$$
and $\bar f_{j,n}, j<n$ is a triangular array of reversed martingales. Now the result follows by the arguments in the case of reversed martingales applied for a fixed $n$ to the above functions and the chain and using a version of the theorem by Brown for arrays of forward martingales.  
 
\subsection{The sequential pressure function and its approximation properties}
Henceforth we assume that the operators $\cL_{j,t}$ are of class $C^k$ in $t$ and that $\bbE[f_j(...,X_{j-1},X_j,X_{j+1},...)]=0$ for all $j$. Then by \eqref{gj}, $\bbE[g_j(X_j,X_{j+1},...)]=0$ for all $j$.

Next since $|\lambda(t)-1|\leq C|t|$ for some $C>0$ for $t$ small enough we can develop a $C^3$ branch $\Pi_j(t)$ of $\lambda(t)$ such that $\Pi_j(0)=0$ and $\Pi_j(t)$ is uniformly bounded.
\begin{lemma}\label{Approx Lemma}
There exist $r_0>0,C_0>0$ and $n_0$ such that on $t\in[-r_0,r_0]$ for all $n\geq n_0$ we can develop a  branch $\Lambda_{j,n}(t)$ of $\ln\mu_{j}(e^{itS_{j,n}g
})=\bbE[e^{it S_{j,n}g}]$ such that for $s=0,1,2,3,...,k$ we have
$$
\left|\Lambda_{j,n}^{(s)}(t)-\sum_{k=j}^{j+n-1}\Pi_k^{(s)}(t)\right|\leq C_0
$$
\end{lemma}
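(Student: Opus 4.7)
The plan is to start from the representation $\mu_j(e^{itS_{j,n}g}) = \kappa_{j+n}(\cL_{j,t}^n\mathbf{1})$ provided by Lemma \ref{CharLemma}, and then insert the spectral decomposition of Corollary \ref{Cor11}. Writing $A_j(t) = \kappa_j^{(t)}(\mathbf{1})$ and $B_j(t) = \kappa_j(h_j^{(t)})$, one obtains
\begin{equation*}
\mu_j\!\left(e^{itS_{j,n}g}\right) = \lambda_{j,n}(t)A_j(t)B_{j+n}(t) + E_{j,n}(t), \qquad |E_{j,n}(t)| \le C_1\delta_1^n,
\end{equation*}
where $A_j, B_j$ are $C^k$ in $t$ uniformly in $j$, with $A_j(0)=B_j(0)=1$. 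Since $g_j$ is centered, $\Pi_j'(0) = i\mu_j(g_j) = 0$, so a Taylor expansion yields $|\Pi_j(t)| \le Ct^2$ uniformly in $j$ on a fixed neighbourhood of $0$. Hence $|\lambda_{j,n}(t)| \ge e^{-Cnt^2}$, and choosing $r_0 > 0$ with $Cr_0^2 < \tfrac12|\ln\delta_1|$ (together with a small enough neighbourhood so $|A_j(t)B_{j+n}(t)| \ge c > 0$) ensures
\begin{equation*}
R_{j,n}(t) := \frac{E_{j,n}(t)}{\lambda_{j,n}(t)A_j(t)B_{j+n}(t)}, \qquad |R_{j,n}(t)| \le C' \delta_1^{n/2},
\end{equation*}
so $|R_{j,n}(t)| \le 1/2$ for $n \ge n_0$ and $|t| \le r_0$.

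With these bounds one defines
\begin{equation*}
\Lambda_{j,n}(t) := \sum_{k=j}^{j+n-1}\Pi_k(t) + \ln A_j(t) + \ln B_{j+n}(t) + \ln(1+R_{j,n}(t)),
\end{equation*}
which is a well-defined $C^k$ branch of $\ln\mu_j(e^{itS_{j,n}g})$ since all four summands are continuous at $t=0$ with value $0$. The desired estimate then reduces to controlling the $C^k$-norm of the correction $\ln A_j(t) + \ln B_{j+n}(t) + \ln(1+R_{j,n}(t))$ uniformly in $j,n$. The first two logarithms are uniformly bounded in $C^k$ on $[-r_0,r_0]$ because $A_j$ and $B_j$ themselves are uniformly $C^k$-bounded with values close to $1$, so composing with $\ln$ is harmless after shrinking $r_0$.

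The genuine difficulty, and the main obstacle, is the $C^k$-control of $\ln(1+R_{j,n}(t))$. While $R_{j,n}(t)$ itself is exponentially small, its derivatives involve $1/\lambda_{j,n}(t)$, whose $s$-th derivative grows like $n^{s}e^{Cnt^2}$. To compensate this growth one must prove a $C^k$-strengthening of Corollary \ref{Cor11}: there exist $\delta_2 \in (0,1)$ and constants $C_s$ such that
\begin{equation*}
\bigl\|\partial_t^s D_{j,n}^{(t)}\bigr\|_{B_j \to B_{j+n}} \le C_s\,\delta_2^n, \qquad s=0,1,\ldots,k,
\end{equation*}
where $D_{j,n}^{(t)} = \cL_{j,t}^n - \lambda_{j,n}(t)\,h_{j+n}^{(t)} \otimes \kappa_j^{(t)}$. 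This is a standard sequential perturbation-theoretic fact: one proves it by induction on $s$, differentiating the factorisation $\cL_{j,t}^n = \cL_{j+n-1,t}\cdots\cL_{j,t}$, applying Leibniz, and using at each step the spectral-gap estimate for the inner blocks together with the uniform $C^k$-boundedness of $\cL_{j,t}$, $\lambda_j(t)$, $h_j^{(t)}$, $\kappa_j^{(t)}$. (Alternatively one may differentiate the resolvent formula $P_j^{(t)} = \frac{1}{2\pi i}\oint_\gamma (z - \cL_{j,t})^{-1}\,dz$ jointly in $(t,z)$.) Once this exponential $C^k$-decay is available, $E_{j,n}^{(s)}(t) = \kappa_{j+n}(\partial_t^s D_{j,n}^{(t)}\mathbf{1}) = O(\delta_2^n)$, and a straightforward Leibniz and Faà di Bruno computation shows that $R_{j,n}$ and $\ln(1+R_{j,n})$ have $C^k$-norms of order $O(\delta_3^n)$ on $[-r_0,r_0]$ for some $\delta_3 \in (0,1)$, which not only is bounded by $C_0$ but actually tends to $0$. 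Combining the three contributions yields the stated uniform bound.
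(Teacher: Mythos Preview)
Your overall strategy is correct and matches the paper's: write $\mu_j(e^{itS_{j,n}g})=\lambda_{j,n}(t)\,[\text{bounded }C^k\text{ factor}]+[\text{exponentially small remainder}]$, take logarithms, and bound the correction in $C^k$ uniformly. The substantive difference is in how the $C^k$-decay of the remainder is obtained. The paper first proves the algebraic factorization $D_{j,n}^{(t)}=E_{j+n-1}^{(t)}\circ\cdots\circ E_{j}^{(t)}$ (using that the projections $\Pi_\ell^{(t)}$ commute with $\cL_{\ell,t}$ in the appropriate sense) and then \emph{normalizes each factor} to $\bar E_\ell^{(t)}=E_\ell^{(t)}/\lambda_\ell(t)$ before differentiating. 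Since $\bar E_j^{t,n}=D_{j,n}^{(t)}/\lambda_{j,n}(t)$ is now a product of $n$ uniformly $C^k$ operators whose consecutive blocks contract exponentially, Leibniz on \emph{this} product gives $\partial_t^s\bar E_j^{t,n}=O(\delta_2^n)$ directly, and the division by $\lambda_{j,n}(t)$ never has to be undone. Your route---bound $\partial_t^s D_{j,n}^{(t)}$ first, then divide by $\lambda_{j,n}(t)$ and absorb the $n^s e^{Cnt^2}$ growth by shrinking $r_0$---also works, but is more laborious and your sketch of the first step is not quite right: applying Leibniz to the factorization $\cL_{j,t}^n=\cL_{j+n-1,t}\cdots\cL_{j,t}$ produces terms with long $\cL$-blocks that do \emph{not} decay, so subtracting $\partial_t^s(\lambda_{j,n}(t)h_{j+n}^{(t)}\otimes\kappa_j^{(t)})$ does not obviously leave something exponentially small. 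The factorization $D_{j,n}^{(t)}=\prod_\ell E_\ell^{(t)}$ (or, as you mention, a resolvent/Cauchy-integral argument) is what makes this step go through.
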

\begin{proof}
Let us define $\Pi_{j}^{(t)}(g)=\mathcal L_j^{(it)}\mu_j^{(t)}(g)h_{j+1}^{(t)}$ and $E_{j}^{(t)}=\mathcal L_j^{it}-\Pi_{j}^{(t)}$. Then, $t\to E_j^{(t)}$ is of class $C^3$ and by \eqref{UP} we have 
$$
\mathcal L_{j+1}^{it}\circ\Pi_{j}^{(t)}=\Pi_{j+1}^{(t)}\circ \mathcal L_j^{(it)}=\Pi_{j+1}^{(t)}\circ\Pi_{j}^{(t)}.
$$
Therefore, if follows by induction on $n$ that
$$
\mathcal L_j^{it,n}-\lambda_{j,n}(t)\mu_j^{(t)}h_{j+n}^{(t)}=\mathcal L_j^{it,n}-   \Pi_j^{it,n}=E_{j+n-1}^{(t)}\circ \cdots \circ E_{j+1}^{(t)}\circ E_{j}^{(t)}:=E_{j}^{t,n}.
$$
Note that by \eqref{Exp},
$$
\|E_{j}^{t,n}\|_{j+n}\leq C_1\delta_1^n.
$$
 Define $\bar E_j^{(t)}=E_j^{(t)}/\lambda_j(t)$. Then by taking $t$ small enough we can ensure that for all $k$ and $m$,
\begin{equation}\label{UP1}
\|\bar E_{k}^{m,t}\|_{k+m}\leq C\delta_2^m    
\end{equation}
where $C>0$ and $\delta_2\in(0,1)$ are constants.

Using the above notations, we have
$$
\mu_j(e^{itS_{j,n}g})=\mu_{j+n}(\mathcal L_j^{it,n}\textbf{1})=e^{\sum_{k=j}^{j+n-1}\Pi_k(t)}\left(1+(\mu_{j+n}(h_{j+n}^{(t)}-1)+\mu_{j+n}(\bar E_{j}^{t,n}\textbf{1})\right)
$$
Notice that $U_{j+n}(t):=\mu_{j+n}(h_{j+n}^{(t)})-1=O(t)$.
Thus for $n$ large enough and $t$ close enough to $0$ we can develop a branch of $\Lambda_{j,n}(t)=\log\mu_j(e^{itS_{j,n}g})$ and 
$$
\Lambda_{j,n}(t)=\sum_{k=j}^{j+n-1}\Pi_k(t)+\ln\left(1+U_{j+n}(t)+\mu_{j+n}(\bar E_{j}^{t,n}\textbf{1})\right).
$$
Notice that the first $k$ derivatives of $t\to \bar E_{j}^{t,n}$ are also of order $O(\delta_2^n)$ (by the differentiation rule for derivatives of products). Hence for $s=0,1,2,3,...,k$ we have 
$$
\left|\Lambda_{j,n}^{(s)}(t)-\sum_{k=j}^{j+n-1}\Pi_k^{(s)}(t)\right|\leq C_0
$$
for some constant $C_0>0$ that might depend on $k$,
and the proof of the lemma is complete.
\end{proof}

\subsubsection{Proof of Theorems \ref{BE} and \ref{ThWass} when the variance grows linearly fast}
First, by Proposition \ref{RedProp} it is enough to prove Theorems \ref{BE} and \ref{ThWass} for the sums $S_ng$. 
Now, by applying Lemma \ref{Approx Lemma} with $j=0$ we see that there exist $C_k,\del_k>0$ such that for all $s\leq k$ and $t\in[-\delta_k,\delta_k]$ we have 
$$
|\Lambda_{j,n}^{(s)}(t)|\leq C_kn.
$$
Now for $n$ large enough we have $\sig_n^2\geq cn$ and so 
$$
|\Lambda_{j,n}^{(s)}(t)|\leq C_k'\sig_n^2.
$$
for some constant $C'_k$. Thus Theorems \ref{BE} and \ref{ThWass} follow from \cite[Theorem 5]{H} and \cite[Theorem 9]{H} and \cite[Corollary 11]{H}.

%Recall that the (unconditioned)  quadratic variation difference of the reverse  martingale difference $M_{j}$ is given by $Q_j=M_j^2$. Let
%$$
%S_{j,n}=S_{j,n}f=\sum_{k=j}^{j+n-1}f_k \circ T_j^k=\sum_{k=j}^{j+n-1}f_k(...,X_{k-1},X_k,X_{k+1},...)
%$$
%$S_{j,n}M$ and $S_{j,n}Q$ are defined similarly. We also denote $S_{0,n}$ simply by $S_n$.

\subsection{Proof of Theorems \ref{BE} and \ref{ThWass}}\label{Sec Log}
As before, by Proposition \ref{RedProp} it is enough to prove both theorems for $S_ng$ (or $\tilde S_n$ in the notations of Section \ref{RedSec2}).
Let  $\Lambda_{j,n}(t)$ be the branch of $\ln\bbE[e^{itS_{j,n}g
}]$ from Lemma \ref{Approx Lemma}. Note also that by rescaling under one of Assumptions \ref{Ass2} or \ref{Ass1.1} we can always assume that $u=0$ in Theorems \ref{BE} and \ref{ThWass}.
Recall that we assumed that $\bbE[f_j]=0$ which implies that $\bbE[g_j]=0$.
By applying again \cite[Theorem 5]{H} and \cite[Theorem 9]{H} and \cite[Corollary 11]{H}. 
Theorems \ref{BE} and \ref{ThWass} for $S_ng$ will follow from the following result.
\begin{proposition}\label{Growth Prop}
In the circumstances of  Theorems \ref{BE} and \ref{ThWass} there are constants
 $\del_k>0$ and $C_k>0$ such that for all $s\leq k$ we have
\begin{equation}\label{Add}
\sup_{t\in[-\del_k,\del_k]}|\Lambda^{(s)}_{0,n}(t)|\leq C_3\sig_n^2.    
\end{equation}
\end{proposition}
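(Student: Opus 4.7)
The plan is to combine the blockwise decomposition afforded by Lemma~\ref{Approx Lemma} with the moment estimates on blocks provided by Assumption~\ref{Stand MomAss}. By Lemma~\ref{Approx Lemma} applied with $j=0$, it suffices to show that
\[
\sup_{|t|\leq\del_k}\left|\sum_{k=0}^{n-1}\Pi_k^{(s)}(t)\right|\leq C\sig_n^2
\]
for all $s\leq k$ and all $n$ large enough. When $\liminf_n \sig_n^2/n>0$, the functions $\Pi_k^{(s)}(t)$ are uniformly bounded in $k$ and $|t|\leq \del_k$ by Corollary~\ref{Cor11} (which furnishes $C^k$-dependence of $t\mapsto\lambda_k(t)$ with uniformly bounded norms), so the sum is of order $n=O(\sig_n^2)$; this is essentially the linearly-growing case already handled just above.

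Otherwise, I would invoke the variance partition guaranteed by Assumption~\ref{Stand MomAss}: there is a partition $B_1,\ldots,B_{k_n}$ of $\{0,\ldots,n-1\}$ with $A\leq\text{Var}(S_{B_j}g)\leq 2A$, $k_n\leq A^{-1}\sig_n^2$, and $\sum_{j=1}^{k_n}\bbE[|S_{B_j}g-\bbE[S_{B_j}g]|^{k_0}]=O(\sig_n^2)$ for some $3\leq k_0\leq k$. Applying Lemma~\ref{Approx Lemma} to each block separately (with the left endpoint of $B_j$ playing the role of $j$ and $|B_j|$ the role of $n$) produces a branch $\Lambda_{B_j}(t)$ of $\log\bbE[\exp(it S_{B_j}g)]$ satisfying
\[
\left|\Lambda_{B_j}^{(s)}(t)-\sum_{k\in B_j}\Pi_k^{(s)}(t)\right|\leq C_0
\]
for every $s\leq k$ and $|t|\leq\del_k$. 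Summing over $j$ yields
\[
\sum_{k=0}^{n-1}\Pi_k^{(s)}(t)=\sum_{j=1}^{k_n}\Lambda_{B_j}^{(s)}(t)+O(k_n)=\sum_{j=1}^{k_n}\Lambda_{B_j}^{(s)}(t)+O(\sig_n^2).
\]

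It therefore suffices to bound $\sum_j |\Lambda_{B_j}^{(s)}(t)|$ by $O(\sig_n^2)$. Since $\bbE[S_{B_j}g]=0$ and $\bbE[(S_{B_j}g)^2]\leq 2A$, for $|t|\leq\delta$ (depending only on $A$) the characteristic function $\phi_j(t)=\bbE[\exp(itS_{B_j}g)]$ satisfies $|\phi_j(t)|\geq 1/2$, so $\log\phi_j$ is analytic there. By a Fa\`a~di~Bruno-type expansion, $\Lambda_{B_j}^{(s)}(t)$ is a finite sum of terms of the form $c_\pi\prod_{B\in\pi}\phi_j^{(|B|)}(t)/\phi_j(t)^{|\pi|}$ over partitions $\pi$ of $\{1,\ldots,s\}$. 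Bounding $|\phi_j^{(r)}(t)|\leq \bbE[|S_{B_j}g|^r]$ and collapsing products of moments of orders $r_1,\ldots,r_m$ with $r_1+\cdots+r_m=s$ via Lyapunov's inequality $\bbE[|X|^r]\leq\bbE[|X|^s]^{r/s}$ gives
\[
|\Lambda_{B_j}^{(s)}(t)|\leq C_s\,\bbE[|S_{B_j}g|^s]
\]
for $s\geq 1$, and $|\Lambda_{B_j}(t)|\leq C$ for $s=0$. For $2\leq s\leq k_0$, Young's inequality applied to the log-convex interpolation gives $\bbE[|S_{B_j}g|^s]\leq\bbE[(S_{B_j}g)^2]+\bbE[|S_{B_j}g|^{k_0}]$, hence
\[
\sum_{j=1}^{k_n}|\Lambda_{B_j}^{(s)}(t)|\leq C_s\left(2Ak_n+\sum_{j=1}^{k_n}\bbE[|S_{B_j}g|^{k_0}]\right)=O(\sig_n^2),
\]
while for $s\leq 2$ the bound follows directly from $\bbE[(S_{B_j}g)^2]\leq 2A$ and $k_n=O(\sig_n^2)$.

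The main obstacle is controlling $\Lambda_{B_j}^{(s)}(t)$ by a single $s$-th absolute moment of $S_{B_j}g$ rather than by products of mixed moments of lower order. This is precisely where the bounded-variance property of the variance partition is essential, since it keeps $|\phi_j(t)|$ uniformly away from zero and lets Lyapunov's inequality collapse arbitrary products $\prod_r \bbE[|S_{B_j}g|^{r}]^{m_r}$ with $\sum_r r\,m_r=s$ into a single $\bbE[|S_{B_j}g|^s]$. When the proposition is applied under Assumption~\ref{Ass2} or \ref{Ass1.1}, the same argument is performed on the rescaled triangular arrays $\tilde g_{j,(n)}$ and $\tilde Y_{j,n}$ from Sections~\ref{RedSec1} and \ref{RedSec2}, which are uniformly bounded in the norms controlling the spectral theory; the blockwise argument is otherwise identical.
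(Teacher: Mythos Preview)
Your proof is correct and follows the same approach as the paper: block decomposition via the variance partition, a moment bound on each block's log-characteristic function (the paper packages your Fa\`a~di~Bruno/Lyapunov argument as Lemma~\ref{L35}, citing \cite[Lemma~43]{DolgHaf PTRF 1}), and summation using \eqref{LypAss} together with $k_n=O(\sig_n^2)$. The only cosmetic differences are that the paper treats only the top derivative $s=k_0$ (invoking Remark~\ref{k remark} to deduce lower $s$) and constructs the partition explicitly from the martingale coboundary of Lemma~\ref{Mart lemm} rather than reading it off Assumption~\ref{Stand MomAss}.
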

\begin{remark}\label{k remark}
Notice that it is enough to prove \eqref{Add} with $s=k$. Indeed,  if $|t|\leq \delta_3$ then
$$
\Lambda^{(k-1)}_{0,n}(t)=\int_{0}^t\Lambda^{(k)}_{0,n}(x)dx=O(\sig_n^2)
$$
and similarly $\Lambda^{(s)}_{0,n}(t)=O(\sig_n^2)$ for $s<k-1$.
\end{remark}

Set 
\begin{equation}\label{pi def}
\Pi_{j,n}(t)=\sum_{\ell=j}^{j+n-1}\Pi_\ell(t).
\end{equation}

\begin{lemma}\label{L35}
Let $B$ be a constant and let $s\geq 2$. Then if $B$ is
sufficiently large there are constants $D$ and $r_0$ depending only on $B$ and $s$ so that
for every $t\in[-r_0,r_0]$ and each $j,n$ such that $B\leq \text{Var}(S_{j,n}g)\leq 2B$ we have
$$
|\Pi_{j,n}^{(k_0)}(t)|\leq D(1+\bbE[|S_{j,n}f-\bbE[S_{j,n}f]|^{k_0}])
$$
\end{lemma}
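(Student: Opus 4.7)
\textbf{Plan for the proof of Lemma \ref{L35}.}

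The plan is to use Lemma \ref{Approx Lemma} to reduce the estimate on $\Pi_{j,n}^{(k_0)}(t)$ to an estimate on the $k_0$-th derivative of the logarithm of the characteristic function of the block sum $\tilde S_{j,n}:=S_{j,n}g-\bbE[S_{j,n}g]$, and then to bound the latter by $\bbE[|\tilde S_{j,n}|^{k_0}]$ via Fa\`a di Bruno. The final step is a standard transfer from $g$ to $f$ using Lemma \ref{Sinai} (or Lemma \ref{EST LEMMA} / the truncation of Section \ref{TruncSec} under Assumptions \ref{Ass2}, \ref{Ass1.1}).

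First I would invoke Lemma \ref{Approx Lemma}, which for $n\geq n_0$ gives a branch $\Lambda_{j,n}(t)$ of $\log\bbE[e^{it\tilde S_{j,n}}]$ on a fixed neighborhood of $0$ and the estimate $|\Pi_{j,n}^{(k_0)}(t)-\Lambda_{j,n}^{(k_0)}(t)|\leq C_0$. (For $n<n_0$ the sum defining $\Pi_{j,n}$ has boundedly many terms, each with uniformly bounded derivatives by Corollary \ref{Cor11}, so the conclusion is immediate.) Set $\phi_{j,n}(t):=\bbE[e^{it\tilde S_{j,n}}]$. Using $|e^{ix}-1|\leq|x|$ together with Jensen and the hypothesis $\text{Var}(\tilde S_{j,n})\leq 2B$,
\[
|\phi_{j,n}(t)-1|\leq |t|\,\|\tilde S_{j,n}\|_{L^1}\leq |t|\sqrt{2B}.
\]
Hence choosing $r_0=r_0(B):=(4\sqrt{2B})^{-1}$ we have $|\phi_{j,n}(t)|\geq 3/4$ for all $|t|\leq r_0$, uniformly in the admissible $j,n$, and a principal branch of $\log\phi_{j,n}$ is well defined on that interval.

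By Fa\`a di Bruno's formula, $\Lambda_{j,n}^{(k_0)}(t)$ is a universal polynomial in the ratios $\phi_{j,n}^{(\ell)}(t)/\phi_{j,n}(t)$, $\ell=1,\ldots,k_0$. Differentiating under the expectation gives $\phi_{j,n}^{(\ell)}(t)=\bbE[(i\tilde S_{j,n})^\ell e^{it\tilde S_{j,n}}]$, so $|\phi_{j,n}^{(\ell)}(t)|\leq \bbE[|\tilde S_{j,n}|^\ell]\leq 1+\bbE[|\tilde S_{j,n}|^{k_0}]$ by Lyapunov. Combined with $|\phi_{j,n}(t)|^{-1}\leq 4/3$ this yields a bound of the form
\[
\sup_{|t|\leq r_0}|\Lambda_{j,n}^{(k_0)}(t)|\leq D_1\bigl(1+\bbE[|\tilde S_{j,n}|^{k_0}]\bigr).
\]
Finally, to replace $\tilde S_{j,n}=S_{j,n}g$ by $S_{j,n}f-\bbE[S_{j,n}f]$, note that by Lemma \ref{Sinai} their difference is a telescoping expression $u_{j+n}\circ T_0^{j+n}-u_j\circ T_0^{j}$ (up to centering), and $\sup_k\|u_k\|_{L^{k_0}}<\infty$ under the standing hypotheses (using Remark \ref{Sinai Rem} and the corresponding bounds after truncation in Assumptions \ref{Ass2}, \ref{Ass1.1}, or the boundedness in Assumption \ref{DomAss}). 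The triangle inequality in $L^{k_0}$ therefore gives $\bbE[|\tilde S_{j,n}|^{k_0}]\leq C(1+\bbE[|S_{j,n}f-\bbE S_{j,n}f|^{k_0}])$, and inserting this together with the $C_0$-error from Lemma \ref{Approx Lemma} finishes the argument.

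The main obstacle is the uniform-in-$j,n$ control of both $r_0$ and $D$ in terms of $B$ alone; this is precisely what the hypothesis $B\leq \text{Var}(S_{j,n}g)\leq 2B$ buys, since it forces a uniform modulus of continuity of $\phi_{j,n}$ at the origin via the Cauchy--Schwarz bound $\|\tilde S_{j,n}\|_{L^1}\leq\sqrt{2B}$, thereby producing a lower bound for $|\phi_{j,n}|$ on a neighborhood of size depending only on $B$. Once this lower bound is in hand the Fa\`a di Bruno computation is entirely mechanical.
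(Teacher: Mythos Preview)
Your plan is correct and follows essentially the same route as the paper: the paper simply packages your Fa\`a di Bruno/characteristic function step as a citation to \cite[Lemma~43]{DolgHaf PTRF 1} to obtain $|\Lambda_{j,n}^{(k_0)}(t)|\le D_0\,\bbE[|S_{j,n}g|^{k_0}]$ on an interval $[-r(B),r(B)]$, then finishes exactly as you do using $\|S_{j,n}f-S_{j,n}g\|_{L^{k_0}}\le C$ (together with the $C_0$-comparison of $\Pi_{j,n}$ and $\Lambda_{j,n}$ from Lemma~\ref{Approx Lemma}).

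One small point to tighten in your Fa\`a di Bruno step: bounding each factor by $1+\bbE[|\tilde S_{j,n}|^{k_0}]$ and then multiplying would a priori give a power $(1+\bbE[|\tilde S_{j,n}|^{k_0}])^m$; to get the \emph{linear} bound you should instead use the sharper Lyapunov form $\bbE[|\tilde S_{j,n}|^{\ell}]\le(\bbE[|\tilde S_{j,n}|^{k_0}])^{\ell/k_0}$ and the fact that the block sizes in each partition sum to $k_0$, so that $\prod_i\bbE[|\tilde S_{j,n}|^{\ell_i}]\le\bbE[|\tilde S_{j,n}|^{k_0}]$. With that adjustment your argument is complete and matches the paper's.
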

\begin{proof}
Applying \cite[Lemma 43]{DolgHaf PTRF 1} with $S=S_{j,n}g=S_{j,n}g-\bbE[S_{j,n}g]$ we see that  there is $r=r(B)$ such that if $t\in[-r,r]$ then
\begin{equation}\label{Then}
 |\Lambda_{j,n}^{(k_0)}(t)|\leq D_0\bbE[|S|^{k_0}]   
\end{equation}
for some constant $D_0$. Now the result follows since $\|S_{j,n}f-S_{j,n}g\|_{L^{k_0}}\leq C$ for some constant $C$ (see \eqref{approx11}, \eqref{Sg approx 3} and Lemma \ref{Sinai}).
\end{proof}

\begin{proof}[Proof of Proposition \ref{Growth Prop}]
Since $\sig_n=\|S_n\|_{L^2(m_0)}\to\infty$, using the martingale coboundary representation from Lemma \ref{Mart lemm}, given $B>0$ large enough we can decompose 
$\{0,...,n\!-\!1\!\}$ into a disjoint union of intervals $I_1,...,I_{k_n}$ in $\bbZ$ so that $I_j$ is to the left of $I_{j+1}$  and 
\begin{equation}\label{Var block}
B\leq \text{Var}(S_{I_j}g)\leq 2B
\end{equation}
where $S_Ig=\sum_{j\in I}g_j(X_j,X_{j+1},...)$ for every interval $I$. Under Assumption \ref{Ass1.1} we can just work with such a decomposition which is given there.
Now, by Lemma \ref{Mart lemm} there is a constant $A>0$ independent of $B$ such that 
$
\left|\|S_ng-\bbE[S_ng]\|_{L^2}-\left(\sum_{\ell=1}^{n-1}\text{Var}(M_\ell)\right)^{1/2}\right|\leq A
$
and for each $j$ we have
$
\left|\|S_{I_j}g-\bbE[S_{I_j}g]\|_{L^2}-\left(\sum_{k\in I_j}\text{Var}M_k\right)^{1/2}\right|\leq A.
$

Hence, if we also assume that $B>(4A)^2$ then it follows that 
\begin{equation}\label{kn prop}
C_1\leq k_n/\sig_n^2\leq C_2
\end{equation}
for some constants $C_1,C_2>0$ which depend only on $B$. Again, under Assumption \ref{Ass1.1} this is also guaranteed.
Next, let $\Pi_{I}(t)=\sum_{k\in I}\Pi_k(t)$. Then
by Lemma \ref{L35} there are constants $r_0>0$ and $D_0$ such that
$$
\sup_{t\in[-r_0,r_0]}\left|\Pi_{I_j}^{(k_0)}(t)\right|\leq D_0(1+\bbE[|S_{I_j}f|^{k_0}]).
$$
Hence,
\begin{equation}\label{pii bound}
\sup_{t\in[-r_k,r_k]}\left|\Pi_{0,n}^{(k_0)}(t)\right|\leq D_0 k_n+\sum_{j=1}^{k_n}\bbE[|S_{I_j}f|^{k_0}]=O(\sig_n^2)
\end{equation}
where we used Assumption \ref{Stand MomAss}.
Combining this  with Lemma \ref{Approx Lemma} and taking into account that $\sig_n\to\infty$ we see that 
$$
\sup_{t\in[-r_0,r_0]}\left|\Lambda_n^{(k_0)}(t)\right|\leq \tilde D\sig_n^2
$$
for some constant $\tilde D$, and the proof of the proposition is complete. 
\end{proof}

\section{Large deviations}
For a complex number $z$ let us define 
$$
L_{j,z}(h)=\cL_j(e^{zg_j}h)
$$
where $g_j$ are the functions from Lemma \ref{Sinai}. Denote $B_j=\cB_{j,\infty,\infty,\delta^{1/2}}$. 
Then, since $\sup_j\|g_j\|_{j,\infty,\infty,\delta^{1/2}}<\infty$ we see that $L_{j,z}$ are uniformly analytic in $z$.
By applying \cite[Theorem D.2]{DolgHaf PTRF 2} we get the following corollary of Theorem \ref{RPF}. 
\begin{corollary}\label{Cor1}
There exists $0<\delta_0<1$ such that for every $z\in\bbC$ with $|z|\leq\delta_0$ there are $\lambda_j(z)\in\bbC\setminus\{0\}$, $h_j^{(z)}\in b_J$ and $\ka_j^{(z)}\in B_j^*$ such that $\mu_j^{(z)}(\textbf{1})=\mu_j^{(z)}(h_j^{(z)})=1$, 
 $\lambda_j(0)=1$, $h_j^{(0)}=\textbf{1}$, $\ka_j^{(0)}=\ka_j$
and
\begin{equation}\label{UP11}
 L_{j,z} h_j^{(z)}=\lambda_j(z)h_{j+1}^{(z)},\, (\mathcal L_{j,z})^*\ka_{j+1}^{(z)}=\lambda_j(z)\ka_{j}^{(z)}.   
\end{equation}
Moreover, $z\to\lambda_j(z)$, $z\to h_j^{(z)}$ and $z\to\mu_j^{(z)}$ are analytic functions of $z$ with uniformly (over $z$ and $j$) bounded derivatives.
Finally,  there are $C_1>0, \delta_1\in(0,1)$ such that for every $g\in B_j$ and all $n$,
\begin{equation}\label{Exp11}
 \left\|L_j^{z,n}g-\lambda_{j,n}(z)\ka_j^{(z)}(g)h_{j+n}^{(z)}\right\|_{B_{j+n}}\leq C_1\|g\|_{B_j}\delta_1^n
\end{equation}
where $\lambda_{j,n}(z)=\prod_{k=j}^{j+n-1}\lambda_k(z)$. 
\end{corollary}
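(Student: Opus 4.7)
The plan is to derive Corollary \ref{Cor1} by invoking the abstract sequential perturbation theorem \cite[Theorem D.2]{DolgHaf PTRF 2} with unperturbed family $L_{j,0} = \cL_j$ and analytic perturbation $z \mapsto L_{j,z}$. The three ingredients are: uniform analyticity of the perturbation, the uniform spectral gap at $z = 0$ already provided by Theorem \ref{RPF}, and then the abstract theorem itself.

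For uniform analyticity, I would expand
\begin{equation*}
L_{j,z}(h) = \sum_{k \geq 0} \frac{z^k}{k!} \cL_j(g_j^k h)
\end{equation*}
and bound each coefficient in the operator norm $B_j \to B_{j+1}$. Under the ambient LDP hypothesis $\sup_j \|f_j\|_{j,\infty,\infty,\delta} < \infty$, Lemma \ref{Sinai} yields $M := \sup_j \|g_j\|_{j,\infty,\infty,\delta^{1/2}} < \infty$. Combining this with sub-multiplicativity of $\|\cdot\|_{j,\infty,\infty,\delta^{1/2}}$ (via the standard splitting $fg = f_r g_r + f_r(g-g_r) + (f-f_r)g_r + (f-f_r)(g-g_r)$ with $f_r = \bbE[f|\cF_{j-r,j+r}]$, the first piece being killed by conditioning) and the contraction properties of $\cL_j$ used in the proof of Theorem \ref{RPF}, the $k$-th Taylor coefficient has operator norm at most $C M^k / k!$ uniformly in $j$, so the series converges in operator norm on a $j$-independent disk $|z| \leq \delta_0$.

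At $z = 0$, Theorem \ref{RPF} says that $\cL_j^n g - \kappa_j(g) \textbf{1}$ contracts in $B_{j+n}$-norm at a uniform geometric rate $\gamma^n$, meaning the eigenvalue $1$ is simple with spectral projector $\textbf{1} \otimes \kappa_j$ and separated from the rest of the spectrum by a uniform gap. This is precisely the standing hypothesis of \cite[Theorem D.2]{DolgHaf PTRF 2}. Applying that theorem (possibly shrinking $\delta_0$) produces analytic $z \mapsto \lambda_j(z), h_j^{(z)}, \nu_j^{(z)}$ satisfying \eqref{UP11}, with derivatives uniformly bounded in $j$. The normalizations $\nu_j^{(z)}(\textbf{1}) = \nu_j^{(z)}(h_j^{(z)}) = 1$ are imposed by rescaling, which preserves analyticity because the unrescaled quantities are analytic and equal to $1$ at $z=0$. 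Finally, \eqref{Exp11} follows by splitting $g = \nu_j^{(z)}(g) h_j^{(z)} + (g - \nu_j^{(z)}(g) h_j^{(z)})$ and using that the second summand lies in the $L_{j,z}$-invariant kernel of $\nu_j^{(z)}$ on which $L_{j,z}/\lambda_j(z)$ contracts at a uniform rate $\delta_1 < 1$ supplied by the perturbation theorem.

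The main obstacle is purely bookkeeping: ensuring every constant (the disk radius $\delta_0$, the contraction rate $\delta_1$, the operator-norm bounds on $h_j^{(z)}$ and $\nu_j^{(z)}$) is uniform in $j$. This is automatic because both Theorem \ref{RPF} and Lemma \ref{Sinai} produce uniform-in-$j$ constants, and the perturbation theorem of \cite{DolgHaf PTRF 2} is formulated precisely to preserve such uniformity along the sequence, so no genuinely new estimate is required.
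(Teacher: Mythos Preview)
Your proposal is correct and follows exactly the paper's approach: the paper simply states that since $\sup_j\|g_j\|_{j,\infty,\infty,\delta^{1/2}}<\infty$ the operators $L_{j,z}$ are uniformly analytic in $z$, and then obtains the corollary by applying \cite[Theorem D.2]{DolgHaf PTRF 2} together with the uniform sequential spectral gap from Theorem \ref{RPF}. Your write-up supplies the routine details (the Taylor expansion bound and the uniformity-in-$j$ bookkeeping) that the paper leaves implicit, but the argument is the same.
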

Arguing like in the previous section we can prove the following result.
\begin{lemma}\label{Approx Lemma1}
There exist $r_0>0,C_0>0$ and $n_0$ such that for every complex number $z$ with $|z|\leq r_0$ for all $n\geq n_0$ we can develop a  branch $\Lambda_{j,n}(z)$ of $\ln\mu_{j}(e^{zS_{j,n}f})$ such that for $s=0,1,2,3$ we have
$$
\left|\Lambda_{j,n}^{(s)}(z)-\sum_{k=j}^{j+n-1}\Pi_k^{(s)}(z)\right|\leq C_0.
$$
\end{lemma}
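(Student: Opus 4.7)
The plan is to imitate the proof of Lemma \ref{Approx Lemma} line by line, replacing the real parameter $it$ with a complex parameter $z$ in a small disk around the origin, and combining the spectral decomposition of $L_{j,z}$ from Corollary \ref{Cor1} with Sinai's reduction to pass from $f_j$ to the one-sided functions $g_j$. More precisely, I would first invoke Lemma \ref{Sinai} to write
\[
S_{j,n}f=S_{j,n}g+u_{j+n}\circ T_j^{\,n}-u_j,
\]
where under our standing hypothesis $\sup_j\|f_j\|_{j,\infty,\infty,\delta}<\infty$ of Theorem \ref{LDP thm} the functions $u_j$ (and hence $e^{\pm zu_j}$ for $z$ in a small disk) lie in $\cB_{j,\infty,\infty,\delta^{1/2}}=B_j$ with uniformly bounded norms. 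Then I would use the analogue of Lemma \ref{CharLemma} for the operator $L_{j,z}(h)=\cL_j(e^{zg_j}h)$, namely the identity $\mu_j(F\cdot e^{zS_{j,n}g})=\mu_{j+n}(L_j^{z,n}F)$, applied with $F=e^{-zu_j}$ and then multiplied by $e^{zu_{j+n}}$ inside $\mu_{j+n}$, to obtain
\[
\mu_j(e^{zS_{j,n}f})=\mu_{j+n}\bigl(e^{zu_{j+n}}\,L_j^{z,n}(e^{-zu_j})\bigr).
\]

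Next I would apply Corollary \ref{Cor1}: write $L_j^{z,n}=\lambda_{j,n}(z)\,\nu_j^{(z)}\otimes h_{j+n}^{(z)}+E_j^{z,n}$ with $\|E_j^{z,n}\|_{B_j\to B_{j+n}}\le C_1\delta_1^n$, and set $\bar E_j^{z,n}=E_j^{z,n}/\lambda_{j,n}(z)$. Substituting gives
\[
\mu_j(e^{zS_{j,n}f})=\lambda_{j,n}(z)\bigl[A_{j,n}(z)+R_{j,n}(z)\bigr],
\]
where
\[
A_{j,n}(z)=\nu_j^{(z)}(e^{-zu_j})\,\mu_{j+n}\bigl(e^{zu_{j+n}}h_{j+n}^{(z)}\bigr),\qquad R_{j,n}(z)=\mu_{j+n}\bigl(e^{zu_{j+n}}\bar E_j^{z,n}(e^{-zu_j})\bigr).
\]
At $z=0$ we have $A_{j,n}(0)=\nu_j(\mathbf{1})\mu_{j+n}(\mathbf{1})=1$ and $R_{j,n}(z)=O(\delta_2^n)$ uniformly in $j$ for $z$ in some fixed complex neighborhood of $0$ (the $\bar E$ argument is identical to the one in the proof of Lemma \ref{Approx Lemma}, since $|\lambda_j(z)|$ is uniformly close to $1$ and $\bar E_j^{z,n}$ inherits the exponential contraction). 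Hence for $|z|\le r_0$ and $n\ge n_0$ large enough, $A_{j,n}(z)+R_{j,n}(z)$ stays in a small neighborhood of $1$ uniformly in $j,n$, so a continuous branch of its logarithm exists, and we can set
\[
\Lambda_{j,n}(z)=\sum_{k=j}^{j+n-1}\Pi_k(z)+\log\bigl(A_{j,n}(z)+R_{j,n}(z)\bigr),
\]
where $\Pi_k$ is the analytic branch of $\log\lambda_k$ with $\Pi_k(0)=0$.

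The derivative estimate $|\Lambda_{j,n}^{(s)}(z)-\sum_k\Pi_k^{(s)}(z)|\le C_0$ for $s=0,1,2,3$ then reduces to bounding the first three derivatives in $z$ of $\log(A_{j,n}+R_{j,n})$ uniformly in $j,n$ on $|z|\le r_0$. For this I would simply shrink $r_0$ a little and use Cauchy's integral formula: $A_{j,n}$ is analytic and uniformly bounded on a slightly larger disk (the factors $\nu_j^{(z)},h_{j+n}^{(z)},e^{\pm zu_j}$ are, by Corollary \ref{Cor1} and boundedness of $u_j$, analytic with uniformly bounded $B$-norms), and the same argument applied to $R_{j,n}$ gives derivatives of order $O(\delta_3^n)$ for some $\delta_3<1$; dividing by the lower bound on $|A_{j,n}+R_{j,n}|$ in the formula for $(\log(\cdot))^{(s)}$ produces the required uniform bound. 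The only real point requiring care — and hence the main obstacle — is the verification that $\lambda_j(z)^{-1}E_j^{z,n}$ actually contracts exponentially uniformly in $j$ on the full complex disk $|z|\le r_0$ (not just on the imaginary axis as in Lemma \ref{Approx Lemma}); but this is built into \cite[Theorem D.2]{DolgHaf PTRF 2} whose hypotheses are met once the operators $L_{j,z}$ are shown to be analytic in $z$ with uniformly bounded derivatives, which in turn follows from $\sup_j\|g_j\|_{j,\infty,\infty,\delta^{1/2}}<\infty$ established in Lemma \ref{Sinai}.
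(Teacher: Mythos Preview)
Your core strategy—apply Corollary~\ref{Cor1} to factor out $\lambda_{j,n}(z)$, write the remainder as $1+O(z)+O(\delta_2^n)$, take logarithms, and control derivatives via Cauchy estimates—is exactly what the paper means by ``arguing like in the previous section'': it is the proof of Lemma~\ref{Approx Lemma} with $it$ replaced by a small complex $z$, which goes through verbatim because $z\mapsto L_{j,z}$ is uniformly analytic.

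There is, however, a genuine gap in your passage from $f$ to $g$. The coboundary functions $u_j$ produced by Lemma~\ref{Sinai} are \emph{two-sided}: they depend on the full path $(X_{j+k})_{k\in\bbZ}$, not only on $(X_{j+k})_{k\ge 0}$. Consequently $e^{-zu_j}$ does not belong to the one-sided space on which $L_{j,z}$ and Corollary~\ref{Cor1} operate (the $B_j$ there is the space of one-sided functionals, despite the notational overlap with the two-sided $\cB_{j,\infty,\infty,\delta^{1/2}}$ of Section~\ref{Sec1}), and your identity $\mu_j(e^{zS_{j,n}f})=\mu_{j+n}\bigl(e^{zu_{j+n}}L_j^{z,n}(e^{-zu_j})\bigr)$ is not well-defined as written; the duality behind Lemma~\ref{CharLemma} applies only to one-sided test functions. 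The paper's one-line proof in fact establishes the lemma with $S_{j,n}g$ in place of $S_{j,n}f$, and this is all that is used downstream: Theorems~\ref{LDP thm} and~\ref{MDP} invoke only real parameters $t$, for which the bounded coboundary gives the elementary estimate $\bigl|\ln\mu_j(e^{tS_{j,n}f})-\ln\mu_j(e^{tS_{j,n}g})\bigr|\le 2|t|\sup_k\|u_k\|_{L^\infty}=O(1)$. If you want to prove the stated $f$-version for complex $z$, you must first condition on $\cF_{j,\infty}$—replacing $e^{z(u_{j+n}\circ T_j^n-u_j)}$ by its one-sided projection $H_{j,n}=\bbE[e^{z(u_{j+n}\circ T_j^n-u_j)}\mid\cF_{j,\infty}]$, so that $\mu_j(e^{zS_{j,n}f})=\kappa_{j+n}(L_j^{z,n}H_{j,n})$—and then verify that $H_{j,n}$ has uniformly bounded $B_j$-norm and is close to $\textbf{1}$ for small $z$.
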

Relying on the above corollary and lemma the proof of Theorems \ref{LDP thm} and \ref{MDP} is standard and it is based  on the Gartner Ellis theorem (see \cite{LD}). Indeed, in the case of random environment we have $\la_j(z)=\la_{\te^\omega}(z)$ and so by the mean ergodic theorem
$$
\lim_{n\to\infty}\frac1n\ln\bbE[e^{tS_n}]=\int\ln\la_\omega(t)d\mathbb P(\omega).
$$
Now, notice that (see the arguments in \cite[Ch.5]{HK}) $\int\ln\la_\omega(t)d\mathbb P(\omega)=1-\frac {t^2\Sigma^2}{2}+O(t^3)$ where 
$$
\Sigma^2=\lim_{n\to\infty}\frac1n\text{Var}(S_n^\omega f)>0.
$$
This completes the proof of Theorem \ref{LDP thm}.

The proof of Theorem \ref{MDP} proceeds similarly to the proof of \cite[Theorem]{Nonlin} and it is based on Taylor expansion of order $2$ of the functions $\ln(\la_j(z))$ around the origin. The exact details are left for the reader.

\section{Special cases with linearly fast growing variances}\label{LinSec}
\subsection{Markov shifts in random dynamical environment}\label{RDS}

Let $\cZ_\om=\prod_{j\geq 0}\cX_{\te^j\om}$ and $\cY_\om=\prod_{j\in\bbZ}\cX_{\te^j\om}$. Let $\pi_\om:\cY_\om\to\cZ_\om$ be the natural projection. Let us first formulate a version of Lemma \ref{Sinai} that allows some growth rates. 
\begin{lemma}\label{Sinai1}
Let $f_\om:\cY_\om\to\bbR$ be random measurable functions such that $\om\to \|f_\om\|_{\om,q,a,\delta}\in L^d(\bbP_0)$ for some $d,a, q\geq 1$. Then there exist random functions $u_\om:\cY_\om\to\bbR$ and $g_\om:\cZ_j\to\bbR$ such that $|u_\om\|_{\om,a,a,\delta^{12/-\eta}}\in L^d(P_0)$, $\|g_\om\|_{\om,\min(a,q),a,\delta^{1/2-\eta}}\in L^d(P_0)$, when $\eta$ is an arbitrarily small positive number.
and
$$
f_\om=u_{\te\om}\circ T_\om-u_\om+g_\om\circ\pi_\om.
$$
\end{lemma}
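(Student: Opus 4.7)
The plan is to repeat the construction of Lemma \ref{Sinai} fiber by fiber, replacing the uniform-in-$j$ bounds there with an orbit-wise Borel--Cantelli argument that exploits only the $L^d$-integrability of $\om\mapsto\|f_\om\|_{\om,q,a,\delta}$. The price is the arbitrarily small loss $\eta$ in the regularity exponent announced in the statement.

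Define
$$
u_\om=\sum_{k=0}^\infty\Bigl(f_{\te^k\om}\circ T_\om^k-\bbE\bigl[f_{\te^k\om}\circ T_\om^k\mid X_{s,\om}:s\ge 0\bigr]\Bigr)
$$
and set $g_\om\circ\pi_\om:=f_\om+u_{\te\om}\circ T_\om-u_\om$. The algebraic identity showing that the right hand side actually depends only on the one-sided future coordinates (and therefore defines a measurable $g_\om:\cZ_\om\to\bbR$) is the same telescoping computation as in Lemma \ref{Sinai}. Exactly as there one gets $\|u_\om\|_{L^a(\mu_\om)}\le 2\sum_{k\ge 0}\delta^k v_{\te^k\om,a,\delta}(f_{\te^k\om})$, so by Minkowski in $L^d(\bbP_0)$ together with $\te$-invariance,
$$
\bigl\|\,\|u_\om\|_{L^a(\mu_\om)}\,\bigr\|_{L^d(\bbP_0)}\le\frac{2}{1-\delta}\,\|v_{\om,a,\delta}(f_\om)\|_{L^d(\bbP_0)}<\infty.
$$

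For the regularity bound, I would fix $\lambda>1$ and observe that by Markov's inequality and $\te$-invariance,
$$
\sum_{k\ge 0}\bbP_0\bigl(v_{\te^k\om,a,\delta}(f_{\te^k\om})>\lambda^k\bigr)\le\|v_{\om,a,\delta}(f_\om)\|_{L^d}^d\sum_{k\ge 0}\lambda^{-kd}<\infty,
$$
so by Borel--Cantelli $C_\om:=\sup_{k\ge 0}\lambda^{-k}v_{\te^k\om,a,\delta}(f_{\te^k\om})$ is $\bbP_0$-a.s.\ finite, and the bound $\sup\le\sum$ shows $\bbE_{\bbP_0}[C_\om^d]\le(1-\lambda^{-d})^{-1}\|v_{\om,a,\delta}(f_\om)\|_{L^d}^d$, whence $C_\om\in L^d(\bbP_0)$. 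Plugging $v_{\te^k\om,a,\delta}(f_{\te^k\om})\le C_\om\lambda^k$ into the splitting at $k=r/2$ used in Lemma \ref{Sinai} gives, for any $\lambda<\delta^{-1}$,
$$
\|u_\om-\bbE[u_\om\mid X_{k,\om}:|k|\le r]\|_{L^a(\mu_\om)}\le K\,C_\om\,(\lambda\delta)^{r/2}.
$$

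Multiplying by $\delta^{-r(1/2-\eta)}$ produces $(\lambda^{1/2}\delta^\eta)^r$, which is bounded in $r$ whenever $\lambda<\delta^{-2\eta}$, a condition that can be satisfied for any prescribed $\eta\in(0,1/2)$ by choosing $\lambda>1$ sufficiently close to $1$. Hence $v_{\om,a,\delta^{1/2-\eta}}(u_\om)\le K_\eta C_\om\in L^d(\bbP_0)$. The bound on $\|g_\om\|_{\om,\min(a,q),a,\delta^{1/2-\eta}}$ then follows from its definition and the triangle inequality, using that $v_{\om,a,\delta^{1/2-\eta}}(u_{\te\om}\circ T_\om)$ is, up to a one-step shift of $r$, equal to $v_{\te\om,a,\delta^{1/2-\eta}}(u_{\te\om})$, whose $L^d(\bbP_0)$ norm equals that of $v_{\om,a,\delta^{1/2-\eta}}(u_\om)$ by $\te$-invariance, and that the $\delta$-regularity of $f_\om$ dominates $\delta^{1/2-\eta}$-regularity. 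The main obstacle is precisely this replacement of a uniform sup-bound by Borel--Cantelli; it is what forces the arbitrarily small loss $\eta$ in the regularity exponent.
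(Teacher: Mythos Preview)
Your proof is correct and follows essentially the same route as the paper: the same definition of $u_\om$, the same telescoping to produce $g_\om$, and the same splitting at $k=r/2$ inherited from Lemma~\ref{Sinai}. The only difference is cosmetic: to control $v_{\te^k\om,a,\delta}(f_{\te^k\om})$ along the orbit, the paper uses polynomial weights (bounding $\sup_k (k+1)^{-2}v_{\te^k\om,a,\delta}(f_{\te^k\om})$ by the corresponding sum to obtain $v_{\te^k\om,a,\delta}(f_{\te^k\om})\le A(\om)(k+1)^2$), whereas you use exponential weights $\lambda^k$. Both choices produce the same $\delta^{(1/2-\eta)r}$ regularity with the same arbitrarily small loss $\eta$, and both rely on the identical $\sup\le\sum$ trick together with $\te$-invariance; your Borel--Cantelli remark is correct but superfluous, since the $L^d$ bound on $C_\om$ already follows directly from $\sup\le\sum$.
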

\begin{proof}
The proof of this lemma proceeds similarly to the proof of Lemma \ref{Sinai}. Let us prove a  brief explanation. We define 
$$
u_\om=\sum_{k=0}^\infty \left(f_{\te^{k\om}}\circ T_\om^k-\bbE[f_{\te^k\om}\circ T_\om^k|X_{\om,0},X_{\om,1},...]\right)
 $$
 Then like in the proof of Lemma \ref{Sinai} we get that
 $$
\|u_\om\|_{L^a}\leq 2\sum_{k\geq 0}v_{\te^k\om,a,\delta}(f_{\te^k\om})\delta^{k}.
$$
Now, since $Q(\om)=v_{\om,a,\delta}(f_{\om})\in L^d(P_0)$ we see that 
$$
\left\|\sup_{k\geq 0}(k+1)^{-2} v_{\te^k\om,a,\delta}(f_{\te^k\om})\right\|_{L^d}\leq \|Q\|_{L^d(\bbP_0)}\sum_{k\geq 1}k^{-2}.
$$
Thus, there exists a random variable $A(\om)\in L^d(\bbP_0)$ such that 
$$
v_{\te^k\om,a,\delta}(f_{\te^k\om})\leq A(\om)(k+1)^2, k\geq0.
$$
Hence $\om\in \|u_\om\|_{L^a}\in L^d(\bbP)$.

Next, like in the proof of Lemma \ref{Sinai},
 $$
\left\|u_\om-\bbE[u_\om|X_{\om,-r},...,X_{\om,r}]\right\|_{L^a}\leq 2\sum_{k>r/2}v_{\te^k,a,\delta}(f_{\te^k\om})\delta^{k}+3\sum_{k=0}^{r/2}v_{\te^k\om,a,\delta}\delta^{r-k}\leq CA(\om)\delta^{(\frac12-\eta)r}.
 $$
\end{proof}

\begin{proof}[Proof of Theorem \ref{VarRDS}]
Since $d>2$ by Lemma \ref{Sinai1} it is enough to consider the case when $f_\om$ is actually a function on $\cZ_\om$. Let $\cL_\om$ denote the transfer operator of $\tau_\om$ with respect to the measures $\ka_\om$ and $\kappa_{\te\om}$, where $\ka_\om$ is the law of $(X_{\om,k})_{k\geq0}$.
Let $\chi_\om=\sum_{k=1}^{\infty}\cL_{\te^{-k}\om}^k\tilde f_{\te^{-k}\om}$ where 
$$
\cL_\om^k=\cL_{\te^{k-1}\om}\circ\cdots\circ\cL_{\te\om}\circ\cL_\om
$$
and $\tilde f_\om=f_\om-\ka_\om(f_\om)$. Then $\om\to \|\chi_\om\|_{\om,q,p,\delta}\in L^d(P_0)$.
Indeed, by Applying Theorem \ref{RPF} and taking into account that $\varpi_{\om,q,p}(n)=\varpi_{q,p}(n)=O(\gamma^n)$ for some $\gamma\in(0,1)$ we get that  
$$
L_{\om,k}:=\|\cL_{\te^{-k}\om}^k\tilde f_{\te^{-k}\om}\|_{\om,p,p,\delta}\leq C\gamma^k\|f_{\te^{-k}\om}\|_{\te^{-k}\om,p,q,\delta}
$$
and so 
$$
\|L_{\om,k}\|_{L^d}\leq C'\gamma^k
$$
for some constant $C'$. Next, like in the proof of Lemma \ref{Mart lemm}
$$
\tilde f_\om=M_\om+\chi_{\te\om}\circ\tau_\om-\chi_\om
$$
where $M_{\te^k\om}\om(X_{k,\om},X_{k+1,\om},...)$ is a revers martingale difference. 
Note that $\om\to\|M_\om\|_{\om,p,q,\delta}\in L^{d}(P_0)$. Since $\|\chi_\om\|_{L^2(\ka_\om)}\in L^2(P_0)$ by the mean ergodic theorem we have $\|\chi_{\te^k\om}\|_{L^2(\ka_{\te^k\om})}=o((k+1)^{1/2})$ and so 
$$
\text{Var}(S_n^\om f)=o(n)+\text{Var}(S_n^\om M)=o(n)+\sum_{j=0}^{n-1}\ka_{\te^j\om}[(M_{\te^j\om})^2].
$$
Since $\om\to\|M_{\om}\|_{L^2(\kappa_\om)}\in L^2(P_0)$  it follows by the mean ergodic theorem that 
$$
\lim_{n\to\infty}\frac1n\text{Var}(S_n^\om f)=\int_{M}\int_{\cZ_\om}|M_\om(x)|^2d\kappa_\om(x)\, d\bbP_0(\om):=\Sigma^2.
$$
Now, $\Sigma^2=0$ if and only if $M_\om=0$ for $\bbP_0$-a.a $\om$. The equivalence between the representation $\tilde f_\om=\chi_{\te\om}\circ\tau_\om-\chi_\om$ to the more general representation $f_\om=H_{\te\om}\circ T_\om-H_\om$ is obtained using the same arguments like in the proof of \cite[Theorem 4.1]{BDH}.
\end{proof}

\subsection{Small perturbations of homogeneous  Markov shifts}\label{Pert}
Suppose that there is a measurable space $\cX$ such that $\cX_j=\cX$ for all $j$, that all the state space of $X_j$  coincide. 
Suppose also that for every $j$ and $x\in\cX$ there is a measure $P_j(\cdot,x)$ on $\cX$ such that $\bbP(X_{j}\in A|X_{j+1}=x)=P_j(A,x)$. Then 
$$
\cL_j g(x)=\int g(y,x)P_j(dy,x_{j+1}), x=(x_j,x_{j+1},...).
$$
Then the proof of Theorem \ref{RPF} when $p=\infty$ proceeds the same with the norm $\|\cdot\|_{j,\infty,\infty,\delta}$ also work with the following variation $\tilde v_{j,\infty,\delta}$ of $v_{j,\infty,\delta}$ which is given by 
$$
\tilde v_{\infty,\delta}(g)=\sup_r\delta^{-r}\inf_{G}\sup_{x=(x_k)_{k\geq0}\in\cX^\bbN}\left|g(x)-G(x_0,x_{1},...,x_{r})\right|
$$
where the supremum is taking over all measurable functions $G:\cX^{r+1}\to\bbR$. We can also replace the $L^\infty$ norm by the usual supremum norm. That is, we consider the following norm instead of $\|\cdot\|_{j,\infty,\delta}$,
$$
\|g\|_{1,\infty,\delta}=\sup_{y\in\cX}|g(y)|+\tilde v_{\infty,\delta}(g).
$$

Next, let $Z_j$ be an homogeneous\footnote{Namely, $\bbP(Z_{j+1}\in \Gamma|Z_j=x)=\bbP(Z_1\in\Gamma|Z_0=x)$ for all $j$, $x$ and a measurable set $\Gamma$ on the common state space of $Z_j$.} Markov chain satisfying \eqref{mix}. 
Let us denote by $T$ the corresponding left shift. Let us suppose that there is a family of measures $P(\cdot,x),x\in\cX$ on $\cX$ such that 
$$
Lg(x)=\bbE[g(Z_0,Z_1,...)|(Z_1,Z_2,...)=x]=\int g(y,x)P(dy,x).
$$

Let us assume that 
Let $f:\cY_0\to\bbR$ be a measurable function such that $\|f\|_{1,\infty,\delta}<\infty$ for some $\delta$. We assume that 
$$
\sigma_f^2:=\lim_{n\to\infty}\frac1n\text{Var}(\cS_nf)>0
$$
where $\cS_nf=\sum_{j=0}^{n-1}f\circ T^j$. This limit exists by Theorem \ref{VarRDS} in the case when $M$ is a singelton.
 Let $\varepsilon>0$. We assume that 
$$
\sup_{j}\sup_{z\in\cX}\sup_{A\in\cG}\left|\bbP(X_j\in A|X_{j+1}=z)-\bbP(Z_0\in A|Z_1=z)\right|\leq \varepsilon.
$$
Next, let us take a sequence of measurable functions $f_j:\cY_0\to\bbR$ such that 
$$
\sup_j\left(\sup_{y\in\cY_0}|f_j(y)-f(y)|+\tilde v_{\infty,\delta}(f_j-f)\right)\leq\varepsilon
$$
\begin{theorem}
There exist $\varepsilon_0,c>0$ and $m_0\in\bbN$ such that if $\varepsilon\leq\varepsilon_0$  then for all $n\geq n_0$ we have $\text{Var}(S_nf)\geq cn$.   
\end{theorem}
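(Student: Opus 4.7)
The plan is to deduce the linear growth of $\text{Var}(S_n f)$ from a perturbation argument applied to the spectral framework already built for inhomogeneous chains. First I would reduce to one-sided functionals: applying Lemma \ref{Sinai} to both $f$ (producing $g$ on $\cZ_0$, with an associated homogeneous eigenvalue $\lambda(t)$ of $L_t h = L(he^{itg})$) and to the sequence $f_j$ (producing $g_j$ on $\cZ_0$), and noting that the construction \eqref{gj} is linear in the input with conditional expectations controlled in operator norm by the one-step kernels. The closeness hypotheses on both $f_j - f$ and on the conditional laws then give
$$
\sup_j \|g_j - g\|_{1,\infty,\delta^{1/2}} \leq C\varepsilon,
$$
and the coboundary terms satisfy $\sup_n \|S_n f - S_n g\|_{L^\infty} < \infty$ (and similarly for the homogeneous side), so it is enough to prove $\text{Var}(S_n g) \geq c' n$ for $n \geq m_0$.

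Next I would invoke the transfer-operator picture. On the space $\mathcal B = \{h : \|h\|_{1,\infty,\delta^{1/2}} < \infty\}$, the homogeneous operator $L$ satisfies the Perron-Frobenius estimate of Theorem \ref{RPF} (using the supremum-norm variant described just before the theorem), hence has a simple leading eigenvalue $1$ with a spectral gap. The perturbation $\cL_j - L$ is of order $\varepsilon$ in operator norm, since for $h \in \mathcal B$,
$$
(\cL_j - L)h(x) = \int h(y,x)\bigl(P_j(dy,x_1) - P(dy,x_1)\bigr),
$$
and the hypothesis yields $\|P_j(\cdot,z) - P(\cdot,z)\|_{\mathrm{TV}} \leq 2\varepsilon$ uniformly in $z$. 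Likewise, the complex-perturbed operators $\cL_{j,t}h = \cL_j(he^{itg_j})$ differ from $L_t h = L(he^{itg})$ by $O(\varepsilon(1+|t|))$ in operator norm for $|t| \leq 1$. Standard analytic perturbation theory (as in Corollary \ref{Cor11}, applied uniformly across $j$) then produces, for $\varepsilon$ small enough and $|t|$ in a fixed disc, a simple leading eigenvalue $\lambda_j(t)$ of $\cL_{j,t}$ satisfying $\sup_{|t|\leq \delta_0} |\lambda_j(t) - \lambda(t)| \leq C\varepsilon$, and by Cauchy estimates on a slightly smaller disc the same bound transfers to derivatives up to order two in $t$.

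With $\Pi_j(t) = \log \lambda_j(t)$ and $\Pi(t) = \log \lambda(t)$ well defined near $0$ (both vanishing there), centering $g_j$ and $g$ gives $\Pi_j'(0) = \Pi'(0) = 0$, while the homogeneous analogue of Lemma \ref{Approx Lemma} identifies $-\Pi''(0)$ with $\sigma_f^2 > 0$. The closeness of second derivatives just obtained yields
$$
\sup_j |{-\Pi_j''(0)} - \sigma_f^2| \leq C'\varepsilon,
$$
so choosing $\varepsilon_0$ with $C'\varepsilon_0 \leq \sigma_f^2/2$ forces $-\Pi_j''(0) \geq \sigma_f^2/2$ for every $j$. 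Applying Lemma \ref{Approx Lemma} in the inhomogeneous setting with $s=2$, $t=0$, $j=0$ gives
$$
\text{Var}(S_n g) = -\Lambda_{0,n}''(0) \geq -\sum_{k=0}^{n-1}\Pi_k''(0) - C_0 \geq \frac{\sigma_f^2}{2}\,n - C_0,
$$
from which $\text{Var}(S_n f) \geq \tfrac{\sigma_f^2}{4} n$ for all $n \geq m_0$ follows with a standard bookkeeping of the bounded correction between $S_n f$ and $S_n g$.

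The main obstacle I expect is the uniform-in-$j$ part of the second step: one must verify that both $L$ and the family $\{\cL_j\}$ share a common Banach space on which the quasi-compactness and spectral-gap bounds of Theorem \ref{RPF} hold with constants independent of $j$, and that the operator-norm estimate $\|\cL_j - L\|_{\mathcal B \to \mathcal B} \leq C\varepsilon$ is genuinely in the $\|\cdot\|_{1,\infty,\delta^{1/2}}$ norm (including the seminorm $\tilde v_{\infty,\delta^{1/2}}$), not merely in a weaker norm. Once this is in hand, the extension of analytic perturbation theory to complex $t$ and the passage to logarithms are routine, and the conclusion follows immediately.
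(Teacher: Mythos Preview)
Your proposal is correct and follows essentially the same route as the paper: both arguments compare the perturbed transfer operators $\cL_{j,t}$ to the homogeneous $L_t$, invoke the analytic perturbation machinery (the paper cites \cite[Theorem~D.2]{DolgHaf PTRF 2}, you equivalently use Corollary~\ref{Cor11} plus Cauchy estimates) to get $\sup_j|\Pi_j''(0)-\Pi''(0)|\to 0$ as $\varepsilon\to 0$, and then apply Lemma~\ref{Approx Lemma}/\ref{Approx Lemma1} to identify $\text{Var}(S_n f)$ with $\sum_j\Pi_j''(0)$ up to a bounded error. The obstacle you flag about controlling $\|\cL_j-L\|$ in the full $\|\cdot\|_{1,\infty,\delta^{1/2}}$ norm (not just the sup norm) is exactly the point the paper's brief proof glosses over, so your caution there is well placed.
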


\begin{proof}
 There exists an absolute constant $C>0$ such that for every $j$ and every complex $\zeta$ such that $|\zeta|\leq 1$ we have
$$
\|\cL_{j,\zeta}g-L_{\zeta}g\|_{\infty}\leq C\|g\|_\infty\varepsilon.
$$
Now by applying Lemma \ref{Approx Lemma1} in the homogeneous setting and denoting the pressure function simply by $\Pi(t)$, we see that
$$
|\text{Var}(\cS_n f)-n\Pi''(0)|\leq C
$$
and similarly by Lemma \ref{Approx Lemma1} applied with the inhomogeneous chain,
$$
\left|\text{Var}(S_n f)-\sum_{j=0}^{n-1}\Pi_j''(0)\right|\leq C.
$$
By applying \cite[Theorem D.2]{DolgHaf PTRF 2} with the parametrized family of operators we see that if $\varepsilon$ is small enough then 
$$
\sup_{j}|\Pi''(0)-\Pi_j''(0)|\leq \delta(\varepsilon)\to 0\,\text{ as }\,\varepsilon\to 0.
$$
Thus for $\varepsilon$ small enough and $n$ large enough we have 
$$
\text{Var}(S_n f)\geq \frac n2\sig_f^2.
$$
\end{proof}

\begin{remark}\label{ConvRem}
 The proof reveals that when 
 $$
 \lim_{j\to\infty}\left(\sup_{y\in\cY_0}|f_j(y)-f(y)|+\tilde v_{\infty,\delta}(f_j-f)\right)=0
 $$   
 and 
 $$
\lim_{j\to\infty}\sup_{z\in\cX}\sup_{A\in\cG}\left|\bbP(X_j\in A|X_{j+1}=z)-\bbP(Z_0\in A|Z_1=z)\right|=0
 $$
 then 
 $$
\lim_{n\to\infty}\frac1n\text{Var}(S_nf)=\sig_f^2.
 $$
 Indeed, one can omit the first $j$ summands for $j$ large enough and then repeat the arguments with an arbitrarily small $\varepsilon$. Moreover, in this case also Theorem \ref{LDP} holds for $S_nf$. Indeed, it follows that 
 $$
\lim_{n\to\infty}\left|\frac1n\sum_{j=0}^{n-1}\ln\bar\la_j(t)-\ln\bar\la(t)\right|=0.
 $$
\end{remark}
\subsubsection{A coupling approach}
Another approach to considering small perturbations of a given homogeneous chain $Z=(Z_j)_{j\geq 0}$ with state space $\cZ$ passes through coupling. Fix some $\delta\in(0,1)$. Let us an inhomogeneous chain $X=(X_j)_{j\ge q0}$ and let $p,q\geq 1$. We define 
$$
d_{p,q}(X,Z)=\inf_{\ka\in\cC(X,Z)}\sup_{\|g\|_{j,p,q,\delta}}\sup_{j}\left\|\bbE[g(X_j,X_{j+1},...)|X_{j+1},X_{j+2},...]-\bbE[g(Z_0,Z_{1},...)|Z_{1},Z_{2},...]\right\|_{j+1,p,q,\delta}
$$
where $\cC(X,Z)$ is the set of all couplings of $X$ and $Z$ and the norm of $g$ is with respect to the measure $\frac12(k_1+\kappa_2)$ where $\kappa_i,i=1,2$ are the marginals of $\kappa$. Moreover, the approximation coefficients are defined using the $\sig$-algebras $\tilde\cF_{m,n}=\sigma\{(X_j,Z_j): m\leq j\leq n\}$.
We can also define 
$$
d_{p,q}((f_j),f)=\inf_{\ka\in\cC(X,Z)}\sup_j\|f_j(X)-f(Z)\|_{j,p,q,\delta}
$$
Then the arguments in the previous section yield:
\begin{lemma}
There exists $\varepsilon>0$ such that if $d_{3,\infty}(X,Z)\leq\varepsilon$ and $d_{3,\infty}((f_j),f)\leq\varepsilon$ then 
$$
\liminf_{n\to\infty}\frac1n\text{Var}(S_nf)\geq \frac12\lim_{n\to\infty}\frac1n\text{Var}(\cS_nf).
$$
\end{lemma}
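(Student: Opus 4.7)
The plan is to adapt the perturbative spectral approach from the previous subsection, using the coupling to convert the $d_{3,\infty}$ closeness into an operator-norm closeness of the associated transfer operators on a common Banach space of observables.

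First I would fix a coupling $\ka\in\cC(X,Z)$ which nearly achieves the infimum in $d_{3,\infty}(X,Z)$, and (using Berkes--Philipp if necessary) also witnesses closeness of $(f_j)$ to $f$ in the sense of $d_{3,\infty}((f_j),f)\leq\varepsilon$. On this common probability space the $d_{3,\infty}$ estimate says exactly that the transfer operator $\cL_j^X$ of the inhomogeneous chain and the transfer operator $L^Z$ of the homogeneous chain satisfy, for every $g$ with $\|g\|_{j,3,\infty,\delta}\leq 1$,
$$
\bigl\|\cL_j^X g-L^Z g\bigr\|_{j+1,3,\infty,\delta}\leq\varepsilon.
$$

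Next I would introduce the complex perturbations $\cL_{j,\zeta}^{X}(h)=\bbE[e^{\zeta f_j}h\mid X_{j+1},X_{j+2},\ldots]$ and $L_{\zeta}^{Z}(h)=\bbE[e^{\zeta f}h\mid Z_{1},Z_{2},\ldots]$. Since $\|f_j-f\|_{L^\infty}\leq\varepsilon$ (so in particular $\sup_j\|f_j\|_\infty<\infty$), a Taylor expansion of the complex exponential together with the functional part of the coupling estimate yields, uniformly for $|\zeta|\leq r_0$ and uniformly in $j$, a bound of the form
$$
\bigl\|\cL_{j,\zeta}^{X}-L_\zeta^{Z}\bigr\|_{\cB_{j,3,\infty,\delta}\to\cB_{j+1,3,\infty,\delta}}\leq C\varepsilon.
$$
With this in hand, I would invoke \cite[Theorem D.2]{DolgHaf PTRF 2} to transfer the spectral gap and analytic perturbation data from $L_\zeta^Z$ to $\cL_{j,\zeta}^X$, obtaining leading eigenvalues $\lambda_j(\zeta)$ satisfying $\sup_{|\zeta|\leq r_0,\,j}|\lambda_j(\zeta)-\lambda(\zeta)|\leq \delta(\varepsilon)$, with $\delta(\varepsilon)\to 0$ as $\varepsilon\to 0$. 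In particular, writing $\Pi_j=\log\lambda_j$ and $\Pi=\log\lambda$, we get $\sup_j|\Pi_j''(0)-\Pi''(0)|\leq\delta(\varepsilon)$.

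To conclude I would combine this with Lemma \ref{Approx Lemma1} applied to each chain. This gives
$$
\Bigl|\mathrm{Var}(S_nf)-\sum_{j=0}^{n-1}\Pi_j''(0)\Bigr|\leq C,\qquad \bigl|\mathrm{Var}(\cS_nf)-n\Pi''(0)\bigr|\leq C,
$$
with $\Pi''(0)=\sigma_f^2>0$. Taking $\varepsilon$ so small that $\delta(\varepsilon)\leq \sigma_f^2/2$ then yields
$$
\liminf_{n\to\infty}\frac{1}{n}\mathrm{Var}(S_nf)\geq \Pi''(0)-\delta(\varepsilon)\geq \tfrac{1}{2}\sigma_f^2,
$$
as required. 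The main obstacle I anticipate is the Banach-space bookkeeping needed for the operator-norm bound on $\cL_{j,\zeta}^{X}-L_\zeta^{Z}$: the coupling only controls the unperturbed operator in the $\|\cdot\|_{j,3,\infty,\delta}$ norm, and one must carefully propagate this control through the Taylor expansion of $e^{\zeta f_j}-e^{\zeta f}$ while simultaneously keeping track of the functional approximation coefficients appearing in $d_{3,\infty}((f_j),f)$ and verifying uniformity across the index $j$ required by \cite[Theorem D.2]{DolgHaf PTRF 2}.
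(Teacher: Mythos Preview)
Your proposal is correct and follows essentially the same approach as the paper: the paper's proof is just the one-line remark ``in this setting we use Assumption \ref{Ass1} and perturb the transfer operator of $Z$ with respect to the $\|\cdot\|_{\cdot,\infty,3,\delta}$ norms,'' together with the preceding sentence ``the arguments in the previous section yield,'' and you have faithfully reproduced those arguments (operator-norm closeness from the coupling, \cite[Theorem D.2]{DolgHaf PTRF 2} for closeness of $\Pi_j''(0)$ to $\Pi''(0)$, and Lemma \ref{Approx Lemma1} to pass to the variance). One small caution: your claim that $\|f_j-f\|_{L^\infty}\leq\varepsilon$ does not follow directly from $d_{3,\infty}((f_j),f)\leq\varepsilon$, since the first index in $\|\cdot\|_{j,3,\infty,\delta}$ controls only the $L^3$ norm; the paper sidesteps this by invoking Assumption \ref{Ass1} (i.e.\ working in $\|\cdot\|_{\cdot,\infty,s,\delta}$ with $s=3$), which already presupposes uniform boundedness of the $f_j$.
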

Indeed, in this setting we use Assumption \ref{Ass1} and perturb the transfer operator of $Z$ with respect to the $\|\cdot\|_{\cdot,\infty,3,\delta}$ norms.
\section*{Appendix A. A detailed discussion on conditions \eqref{1 cond} and \eqref{2 cond}}\label{Discussion}

Assumption \ref{DomAss} is less explicit that Assumptions \ref{Ass1} and \ref{Ass2}, and so we decided to include a detailed discussion when it holds beyond the trivial case that $\sup_{j\geq 0}\|f_j\|_{j,\infty,\infty,\delta}<\infty$.

Condition \ref{1 cond} means that the functions $f_j$ are dominated by functions of the ``present`` $X_j$ and the ``past`` $X_k, k<j$ in an appropriate sense. Indeed the condition holds if there is probability measure $\nu_j$ on 
$\cX_{-\infty,j}=\prod_{k\leq j}\cX_k$ such that
for all measurable sets $A\subset\cX_{-\infty,j}$ we have 
\begin{equation}\label{DomCond}
\bbP((...,X_{j-1},X_{j})\in A|X_{j+1}=z)\leq C\nu_j(A)    
\end{equation}
for a.a. $z$ with respect to the law of $X_{j+1}$ and $B_j:\cX_{-\infty, j}\to\bbR$ such that $\sup_j\|B_j\|_{L^k(\nu_j)}<\infty$ and for $\nu_j$-a.a. $x=(x_{k+j})_{k\in\bbZ}$ and all $r\geq1$,
$$
|f_j(x)|\leq B_j(...,x_{j-1},x_j).
$$
Then case when $B_j$ are uniformly bounded corresponds to $\sup_{j}\|f_j\|_{L^\infty(\mu_j)}<\infty$. Similarly, under \eqref{DomCond} condition \eqref{2 cond} holds if there are functions $B_{s,m}:\cX_{-\infty,s-m}$ such that $\sup_{s,m}\|B_{s,m}\|_{L^k(\nu_{s-m})}<\infty$ and for all $s\geq0$, $r\geq1$ and $0\leq m\leq r$, for $\mu_s$-a.a. $x$ we have
$$
|f_s(x)-F_{s,r}(x)|\leq \delta^r B_{s,m}(....,x_{s-m-1},x_{s-m})
$$
for some functions $F_{s,r}$.
Like before, the case when $B_{s,m}$ are uniformly bounded corresponds to $\sup_sv_{j,\infty,\delta}(f_s)<\infty$. Another interesting situation when condition \eqref{2 cond} holds is when  all $\cX_j$ are metric spaces and $f_j$ are locally H\"older continuous functions as discussed in Remark \ref{Rem Hold}. Namely, we assume 
$$
|f_s(x)-f_s(y)|\leq A_s(x)\left(\rho_s(x,y)\right)^\alpha
$$
for all $x$ and $y$ in $\cY_s$, where $\rho_s$ is defined in \eqref{rho metric}, $\al\in(0,1]$ is a constant and $A_s:\cY_s\to\bbR$ is a measurable function. 
Let $F_{s,r}=f_s(a,X_{s-r},...,X_{s+r},b)$ for  arbitrary points $a\in\prod_{\ell<s-r}\cX_\ell$ and $b\in\prod_{\ell>s+r}\cX_\ell$. The case when $A_s$ are uniformly bounded corresponds to the case when $\sup_{j}v_{j,\infty,\delta}(f_j)<\infty$ where $\delta=2^{-\alpha}$. Then
$$
|f_{s}(x)-F_{s,r}(x)|\leq A_s(x)\delta^{r}
$$
since $x=(x_{s+j})_{j\in\bbZ}$ and $(a,x_{s-r},...,x_{s+r},b)$ identify on the coordinates indexed by $s+j$ with $|j|\leq r$ and $\delta=2^{-\alpha}$. Taking $s=j+m$ with $m\leq r$ and writing $f_s=f_{s}(...,X_{s-1},X_{s},X_{s+1},..)$ and similarly for $F_{s,r}$ we see that
$$
\bbE[|f_{j+m}-F_{j+m,r}|^k|X_j,X_{j+1},...]\leq \delta^{rk}\bbE[|A_{j+m}(...,X_{j+m-1},X_{j+m},X_{j+m+1},...)|^k|X_{j},X_{j+1},...]
$$
and so under \eqref{DomCond} we have 
$$
\bbE[|f_{j+m}-F_{j+m,r}|^k|X_j,X_{j+1},...]\leq C\delta^{rk}\int |A_{j+m}(x,X_j,X_{j+1},...)|^kd\nu_{j-1}(x).
$$
Thus \eqref{2 cond} holds when $A_{s}(...,x_{s-1},x_{s},x_{s+1},...)\leq B_s(...,x_{-2},x_{-1})$ for $s\geq0$ and $\sup_{s,j\geq0}\|B_s\|_{L^k(\nu_{j})}<\infty$ where we view $B_s$ as a function on $\cX_{-\infty,j}$. Note that if the measures $\nu_j$ are consistent (i.e. the restriction of $\nu_{j+1}$ to $\cX_{-\infty,j}$ is equivalent to $\nu_{j}$) then we can just assume  $\sup_{s\geq0}\|B_s\|_{L^k(\nu_{0})}<\infty$. Note also that we can exploit the restriction that $m\leq r$ and assume instead that there are points $a_r\in\cX_{-\infty,-r}$ such that $\|B_s(a_r,\cdot)\|_{L^k(\nu_j)}\leq C2^{\beta r}$ for some $0<\beta<\alpha$. Indeed, in that case we can replace $\delta=2^{-\alpha}$ by $\delta'=2^{-(\alpha-\beta)}$. Similar conditions can be imposed.

Condition \eqref{DomCond} holds, for instance, when there exists a constant $C>0$ such that for all $j$ for almost all $a\in\cX_j$ with respect to the law of $X_j$ for every measurable set $\Gamma\in\cX_{j+1}$ we have 
$$
\bbP(X_{j+1}\in\Gamma|X_j=a)\leq C\bbP(X_{j+1}\in\Gamma).
$$
In this case we can take $\nu_j$ to be the law of $(X_m)_{m\leq j}$,  and so the above consistency condition holds.
In particular, condition \eqref{DomCond} is satisfied when there are probability measures $\nu_j$ on $\cX_j$ and constants $C_1,C_2>0$ such that for every measurable set $\Gamma\subset\cX_{j+1}$ for a.a. $x\in\cX_j$ with respect to the law of $X_j$ we have 
\begin{equation}\label{Doeblin}
C_1\nu_{j+1}(\Gamma)\leq \bbP(X_{j+1}\in\Gamma|X_j=x)\leq C_2\nu_{j+1}(\Gamma).    
\end{equation}
This is the, so called, two sided Doeblin conditions which ensures that the chain is exponentially fast $\psi$-mixing  and then \eqref{mix} holds with every $p$ and $q$.

\end{document}